\newtheorem{teor}{Theorem}[section]
\newtheorem{lema}[teor]{Lemma}
\newtheorem{prop}[teor]{Proposition}
\newtheorem{coro}[teor]{Corollary}
\theoremstyle{definition}
\newtheorem{defi}[teor]{Definition}
\newtheorem{hipo}[teor]{Hypothesis}
\newtheorem{nota}[teor]{Remark}
\newtheorem{rmk}[teor]{Remark}
\newtheorem{notas}[teor]{Remarks}
\numberwithin{equation}{section}
\newcommand{\R}{\mathbb R}
\newcommand{\Z}{\mathbb{Z}}
\newcommand{\Q}{\mathbb{Q}}
\newcommand{\N}{\mathbb{N}}
\newcommand{\mB}{\mathcal{B}}
\newcommand{\mF}{\mathcal{F}}
\newcommand{\mR}{\mathcal{R}}
\newcommand{\mU}{\mathcal{U}}
\newcommand{\ep}{\varepsilon}
\newcommand{\mI}{\mathcal{I}}
\newcommand{\W}{\Omega}
\newcommand{\lb}{\lambda}
\newcommand{\wma}{\wit{\mathfrak{a}}}
\newcommand{\wmr}{\wit{\mathfrak{r}}}
\newcommand{\ma}{\mathfrak{a}}
\newcommand{\mb}{\mathfrak{b}}
\newcommand{\mr}{\mathfrak{r}}
\newcommand{\mah}{\mathfrak{a}_h}
\newcommand{\mrh}{\mathfrak{r}_h}
\newcommand{\mach}{\mathfrak{a}_{c,h}}
\newcommand{\mrch}{\mathfrak{r}_{c,h}}
\newcommand{\G}{\Gamma}
\newcommand{\wit}{\widetilde}
\newcommand{\n}[1]{\left\|#1\right\|}
\newcommand{\Frac}[2]{\displaystyle\frac{#1}{#2}}
\newcommand{\lsm}{\left[\begin{smallmatrix}}
\newcommand{\rsm}{\end{smallmatrix}\right]}
\newcommand\note[1]{\marginpar{\flushleft\sffamily\tiny\textcolor{red}{#1}}}
\begin{document}
\title[Rigorous estimates for critical transitions]
{Critical transitions for scalar nonautonomous systems with concave
nonlinearities: some rigorous estimates}
\author[I.P.~Longo]{Iacopo P.~Longo}
\author[C.~N\'{u}\~{n}ez]{Carmen N\'{u}\~{n}ez}
\author[R.~Obaya]{Rafael Obaya}
\address[I.P.~Longo]{Technische Universit\"{a}t M\"{u}nchen,
Forschungseinheit Dynamics, Zentrum Mathematik, M8,
Boltzmannstra{\ss}e 3, 85748 Garching bei M\"{u}nchen, Germany.}
\address[C. N\'{u}\~{n}ez and R. Obaya]{Universidad de Valladolid,
Departamento de Matem\'{a}tica Aplicada,
EII, Pso. Prado de la Magdalena 3-5,
47011 Valladolid, Spain.}
\email[Iacopo Longo]{longoi@ma.tum.de}
\email[Carmen N\'{u}\~{n}ez]{carmen.nunez@uva.es}
\email[Rafael Obaya]{rafael.obaya@uva.es}
\thanks{
All the authors were partly supported by Ministerio de Ciencia, Innovaci\'{o}n y Universidades (Spain)
under project PID2021-125446NB-I00 and by Universidad de Valladolid under project PIP-TCESC-2020.
I.P.~Longo was also partly supported by the European Union's
Horizon 2020 research and innovation programme under the Marie Skłodowska-Curie
grant agreement No 754462, by the European Union’s Horizon 2020 -- Societal Challenges
grant agreement No 820970, and by TUM International Graduate School of Science
and Engineering (IGSSE)}
\keywords{Rate-induced tipping, critical transition, nonautonomous
bifurcation}
\subjclass{37B55, 37G35, 37M22, 34C23, 34D45}
\date{}
\begin{abstract}
The global dynamics of a nonautonomous Carath\'{e}odory scalar ordinary differential equation $x'=f(t,x)$,
given by a function $f$ which is concave in $x$, is determined by the existence or absence of
an attractor-repeller pair of hyperbolic solutions. This property, here extended to a very
general setting, is the key point to classify the dynamics of an equation which is a transition
between two nonautonomous asypmtotic limiting
equations, both with an attractor-repeller pair. The main focus of the paper is to get rigorous
criteria guaranteeing tracking (i.e., connection between the attractors of the past and the
future) or tipping (absence of connection) for the particular case of equations $x'=f(t,x-\G(t))$,
where $\G$ is asymptotically constant. Some computer simulations show the accuracy of the
obtained estimates, which provide a powerful way to determine the occurrence of critical
transitions without relying on a numerical approximation of the (always existing)
locally pullback attractor.
\end{abstract}
\maketitle

\section{Introduction}
Tipping points, also called critical transitions, are sudden and large changes
of a system's dynamics as a consequence of small changes in its input.
Under this headline, we find phenomena like disruption of
climate~\cite{boers2021,lenton2008, lenton2019}, of ecological
environments~\cite{okwi,osullivan2022rate}, and of financial
markets~\cite{may2008,yukalov2009}, among others. From a mathematical standpoint,
the understanding of certain critical transitions is still limited. Namely,
when  a time-dependent variation of parameters intervenes~\cite{kaszas2019}.
This is the case of rate-induced tipping~\cite{aspw,awvc,kulo,lno,lnor,walc},
size-induced tipping~\cite{lno,duno3}, and phase-induced tipping \cite{alas,altw,duno3,lno}.
\par
The setting of these problems is usually the same: an asymptotically constant time-dependent
and known variation of a parameter takes place between a so-called past limit-problem
(obtained as $t\to-\infty)$ and a future limit-problem (obtained as $t\to\infty)$;
the past and the future limit-problems are also assumed to be known and at least a
local attractor exists for each one of them. Depending on the nature of the time-dependent
variation of parameters (e.g., its rate, phase and/or size) the ``connection" between
the attractors of the past and of the future can vary considerably or even break up.
Every such qualitative change of dynamics is associated with a tipping point.
Their analysis in asymptotically autonomous systems has been carried out with
several mathematical techniques including compactification arguments
\cite{wixt,wieczoreck2021rate}, statistical early-warning signals \cite{risi1,risi2},
and the tipping probability of physical measures \cite{ashwin2021physical}.
Numerous are also the instances of real models which have been studied,
for example in climate \cite{bastiaansen2022climate},
ecology \cite{osullivan2022rate,walc}, and population dynamics~\cite{vahf}.

Recently, an interpretation of these phenomena through nonautonomous bifurcation
theory has arisen \cite{lno,lnor}. In this new formulation, all the terms involved
(the past and the future systems and the transition equations between them)
are nonautonomous.
There are two promising advantages in this new theory: on the one hand, this approach provides
a unified framework for both asymptotically autonomous and nonautonomous systems; and,
on the other hand, the assumptions required on the time-dependent variation of parameters,
as well as the time-dependent forcing, are particularly mild compared to the rest of the
literature. In the wake of such theory, this paper deals with the bifurcations occurring
in coercive concave scalar nonautonomous ordinary differential equations. Although this class of
problems might seem abstract, (globally or locally) concave or convex models are widely
used in applied sciences \cite{bass,botg,fraed,rens}.
As an example and a motivation, we start our work in
Section \ref{2.sec} by studying the occurrence of rate-induced and size-induced tipping
in simple locally concave models from climate science and neural network theory.

The bulk of our results is theoretical, with potential relevance in the kind of problems
mentioned above. We investigate families of scalar Carath\'{e}odory ordinary differential
equations admitting bounded $m$-bounds and $l$-bounds, which allows the compactification
of the equations in their hulls using a weak topology in the temporal variation of
the vector fields. This type of coefficients and topologies are the natural language
to formulate and solve control theory problems; for the same reasons, they are also
appropriate for formulating and investigating problems related to critical transitions.
In this paper, we show that, if one of these equations is concave, then
the only possible bifurcation is the
nonautonomous saddle-node bifurcation of a pair of hyperbolic solutions, and that
all the tipping points occurring for these equations are in fact bifurcations of such type.
This extends the previous conclusions obtained in \cite{lno,lnor} for quadratic concave
differential equations to this more general framework.

From here, we focus on transition equations of the
form $x'=f(t,x-\G(t))$, where the asymptotic limits $\gamma_\pm$ of $\G$
at $\pm\infty$ are well-defined and finite, and give rise to the past and
future equations. To understand the geometry of the solutions of the
transition equation is fundamental to achieve our main goal of obtaining
calculable rigorous criteria which permit to identify some scenarios
of tracking (with connection of attractors) and tipping (where the connection is lost).
These criteria give a more trustworthy result as opposed to a finite
numerical approximation of the locally pullback attractive solutions, which,
by its own limited nature, can be wrong or misled.
It is important to highlight that the inequalities that our criteria require
are uncoupled: they depend on $\G$ and on $x'=f(t,x)$, but
not on the transition equation itself.

Our criteria are obtained in two steps. In the first one, we deduce a set of inequalities
providing tracking or tipping in the case of a piecewise constant transition function
$\G$. This kind of transitions have physical relevance and have been studied in some previous
references in the literature \cite{altw,hamw,lodi}.
For instance, they appear as limits of
Carath\'{e}odory equations with continuous transitions, when some of the parameters decrease
to $0$ or increase to infinity, providing a natural framework to study models with big or small
size of physical parameters.
Moreover, every continuous transition function $\G$
can be well approximated by a suitable family of piecewise continuous functions.
This property combined with the fact that some of the previously obtained
inequalities hold uniformly for these piecewise continuous functions approximating
$\G$ allows us to obtain rigorous criteria for tracking or tipping also for
continuous transition functions, which is the second step.

Let us delve with a bit more details into the main results. After the already mentioned
Section \ref{2.sec}, we present general results for coercive concave
Carath\'{e}odory ordinary differential equations $x'=f(t,x)$ in Section \ref{3.sec}. Theorem \ref{3.teoruno}
characterizes the sets of solutions bounded in the past and/or in the future,
respectively bounded from above by a certain solution $a$ (defined
at least on a negative half-line)
and from below by a certain solution $r$ (defined
at least on a positive half-line).
Theorem \ref{3.teorhyp} shows that $a$ and $r$ are globally defined and uniformly
separated if and only if they are hyperbolic. This situation is referred to as the {\em existence
of an attractor-repeller pair $(a,r)$}. The crucial property of persistence of hyperbolic solutions
under small perturbations, a well-known property for more regular
systems, is proved in Proposition \ref{3.proppersiste} for the specific class of
Carath\'{e}odory differential equations used in our work.
Theorem \ref{3.teorlb*} describes a
nonautonomous global saddle-node bifurcation pattern, which is
fundamental for the rest of the paper, and completes this section.

From this point, we consider a transition equation $x’=f(t,x-\G(t))$,
where $f$ is a concave and coercive Carath\'{e}odory function with smooth variation
in the second component, and $\G$ is an essentially bounded function
with finite asymptotic limits $\gamma_{\pm}=\lim_{t\to\pm\infty}\G(t)$.
Our main hypothesis, also in force for the rest of the paper,
is the existence of an attractor repeller pair $(a,r)$ for $x'=f(t,x)$,
which implies this same property for the past and future equations $x'=f(t,x-\gamma_\pm)$.
In Section \ref{4.sec},
we prove that the special solutions $a_\G$ and $r_\G$ provided by
Theorem \ref{3.teoruno} have additional properties: $a_\G$ is locally pullback attractive
and connects with the attractor of the past as time decreases, and $r_\G$ is locally pullback
repulsive and connects with the repeller of the future as time increases.
In addition, we classify in three cases the internal dynamics of the transition equation
in terms of the domains and the relative position of both solutions.
This classification
combined with the previous bifurcation result shows that the collision of the
attractor and the repeller is the unique mechanism leading to a critical transition
for this type of equations.

For the rest of the paper, $\G$ is also assumed to be continuous.
A crucial role in our analysis will be played by some special maps:
the concavity of $f$ ensures that, for any $\nu\in(0,1)$,
the convex combination $b^\nu=\nu a+(1-\nu)r$ of the attractor $a$ and the repeller $r$ is a strict
lower solution of $x'=f(t,x)$.
For each $h>0$, we define $\G^h$ as the piecewise constant function which coincides
with $\G(t)$ at $t=jh$ for all $j\in\Z$.
In Section \ref{5.sec}, we analyze the transition equation $x’=f(t,x-\G^h(t))$. Let
us denote $a_h=a_{\G^h}$ and $r_h=r_{\G^h}$.
The rigorous tracking and tipping criteria which we obtain in
this section are based on the construction of a {\em tracking route} for the graph of
$a_h$: the set $\mathcal R:=\bigcup_{m=1,\ldots n}\{(t,x)\,|\;t\in[(j_0+m-1)h,(j_0+m)h],
\,x\ge b^{\nu_m}(t)+\G((j_0+m)t)\}$.
Theorem \ref{5.teortrack} provides suitable
indices $j_0,n\in\Z$, parameters $\nu_1,\ldots,\nu_m\in[0,1]$ and
a list of inequalities guaranteeing that the graph of $a_h$ remains in
$\mathcal R$ for $t\in[j_0h,(j_0+n)h]$,
and that $a_h((j_0+n)h)>r_h((j_0+n)h)$: this inequality ensures tracking.
As pointed out before, the choice of the indices and parameters, and the
inequalities to be satisfied, depend on $\G$ and on the dynamics of
$x'=f(t,x)$, but not on that of $x'=f(t,x-\G(t))$.
In the case of a nondecreasing $\G$, Theorem \ref{5.teortipping}
provides new lists of indices, parameters and inequalities (depending only
on $\G$ and on $x'=f(t,x)$) which ensure that either $a_h$ blows up before
$t=(j_0+n)h$ or its graph on $[j_0h,(j_0+n)h]$ is strictly below $\mathcal R$,
with $a_h((j_0+n)h)<r_h((j_0+n)h)$: this inequality ensures tipping.
Some easy applications of these results are also included in this section.

Section \ref{6.sec} deals with $x'=f(t,x-\G(t))$. Although the theory
admits a general version, for the sake of simplicity we reduce the analysis
to the quadratic function $f(t,x)=-x^2+p(t)$.
Equations of this type are analyzed, for instance, in
\cite{aspw,awvc,risi1}. We point out that this
transition equation can be rewritten as $x'=-x(x-2\G(t))+q(t)$
for $q=-\G^2+p$: logistic nonautonomous models for population dynamics with migration
term are included in our analysis.
The unique (saddle-node) bifurcation value for
$x'=-(x-\G(t))^2+p(t)+\lb$ is the key point to provide
rigorous conditions of tracking and total tracking on the hull of $p$; and,
if $p$ is almost periodic, we describe simple situations with infinite tipping
points caused by a phase shift on $p$.
These are the main points of Theorem \ref{6.teorconthull}.
Then, coming back to the main objective of the paper,
we check that, for adequate choices of indices and parameters, the
results obtained in the previous section remain uniformly valid for the piecewise
transitions $\G^h$ with $h>0$ small, and use this fact combined with
the uniform convergence of $\G^h$ to $\G$ as $h\downarrow 0$ to
provide rigorous criteria of tracking or tipping for the continuous transition.
These criteria are given in Theorems \ref{6.teortrack1}, \ref{6.teortrack2}
and \ref{6.teortip1}. Once more, the inequalities involved in the criteria depend
only on $\G$ and on $x'=-x^2+p(t)$. Section \ref{6.sec} is completed with
the particular case of a simple piecewise linear function $\G$.
We show that a simple application of the previous criteria
allows us to distinguish the dynamical case of the
transition equation in many cases,
even for all the values of the rate and of the phase of the transition.

Some numerical examples intended to show the scope of our conclusions
are included in Sections \ref{5.sec} and \ref{6.sec}. In these examples, a big percentage
of the cases of tracking or tipping satisfy some of the criteria of our theorems.
To some extent,
it seems correct to say that our sufficient conditions are a potent
tool to determine the type of global dynamics in a high proportion of cases.

The paper ends with an Appendix on the compactification of the mild
Cara\-th\'{e}odory differential equation by means of the construction of the hull
of $f$, and on the description of the main dynamical properties of
the local skewproduct flow defined by the solutions of the equations
given by the elements of the hull.
These properties play an essential role in the development of the theory of this paper.

\section{Examples of tipping in nonautonomous concave or convex models}\label{2.sec}
Concave (or respectively convex) dynamics is the main focus of this paper. Although an assumption of concavity might seem strong, models which are at least locally concave (or convex) are greatly important and wide-spread in applications. As an example and a motivation for the subsequent theoretical analysis, we hereby include a short presentation of some locally concave or locally convex models which are susceptible to the tipping phenomena studied in the rest of the paper. The first example deals with three-parametric qualitative models of climate undergoing a rate-induced tipping, phase-induced and size-induced tipping.
The second one shows a tipping point in a synchronized Hopfield neural network
due to a change in the size of the one-parametric transition.
\subsection{Tipping points for Fraedrich's zero-dimensional climate models}
In this section, we present three nonautonomous zero-dimensional models of the
global climate obtained from the respective autonomous versions presented by Klaus
Fraedrich in 1978~\cite{fraed}. The underlying physical ground is given by an energy
conservation law after integrating over the total mass (per unit area) of the system.
Specifically, it is assumed that the average surface temperature $T(t)$ of a spherical
planet subjected to radiative heating changes depending on the balance between incoming
solar radiation $R\downarrow$ and outgoing emission $R\uparrow$:
\begin{equation}\label{eq:climate_cons_law}
 \tau\,T'=R\downarrow-R\uparrow.
\end{equation}
The temperature is measured in Kelvin degrees and therefore the model is considered for $T>0$
(i.e., above the absolute zero), and the time is measured in seconds.
The constant $\tau$
is the thermal inertia of a well-mixed ocean of depth 30 meters,
covering 70.8\% of the planet (see \eqref{2.cons} for its value and those of the remaining
constants). We model the incoming solar radiation by
\[
 R\!\downarrow\,= \frac{1}{4}\,\mu(t)\,I_0\,(1-\alpha)\,,
\]
where $I_0$
is the average total solar irradiance at Earth and $\alpha$
is the albedo of the planet, i.e.,~the fraction of solar radiation which is reflected
from the surface of Earth outside the atmosphere (for example due to ice, deserts or clouds).
The positive function $\mu(t)$ allows for variations of the total solar irradiance.
In contrast to Fraedrich, who takes $\mu=1$ for his first models,
we consider a periodic coefficient $\mu(t)$ which simulates the following astronomical
phenomena: the 11-year cycle of solar activity (also known as the Schwabe cycle)
with amplitude $0.05\%$ of $I_0$, and the annual
revolution of Earth along its elliptic orbit with amplitude $0.02\%$ of $I_0$
(see \cite{Hathaway2015}).
In other words, we consider
\begin{equation}\label{2.defmu}
 \mu(t)=1+0.0005\,\sin\!\left(\frac{2\pi t}{11\,\kappa}\right)+0.0002\,\sin\left(\frac{2\pi t}{\kappa}\right),
\end{equation}
where $\kappa:=60{\cdot}60{\cdot}24{\cdot}365.25$ is the average number of seconds per year.
Several other time-dependent components on time scales that vary from minutes till
tens or hundreds of millennia could be included. We avoid it for at least two reasons:
the most relevant time-scale for our subsequent tipping analysis is the one of decades;
and our aim is to analyze the different causes for the occurrence of
a tipping point in the asymptotically time-dependent case, not to provide a definitive
quantitative estimate of the current or future climate on Earth.
\par
The outgoing radiation $R\uparrow$ is modelled via the Stefan-Boltzmann law, i.e.,~proportional to a blackbody surface emission
\[
R\!\uparrow\,=\varepsilon_{sa}\,\sigma\,T^4\,,
\]
where $\varepsilon_{sa}=\varepsilon_{s}-\varepsilon_{a}$
stands for the effective emissivity  obtained as the difference between
the surface emissivity and the atmospheric emittance, and $\sigma$ is the Stefan–Boltzmann constant.
With the previous definitions, \eqref{eq:climate_cons_law} reads as the following zero-dimensional model
for the planet's average temperature,
\begin{equation}\label{eq:Fraed0}
T'=\frac{1}{\tau}\,\left(-\varepsilon_{sa}\,\sigma\,T^4+\frac{1}{4}\,\mu(t)\,I_0\,(1-\alpha)\right).
\end{equation}
A first refinement of this model is given by adding an albedo feedback law. Indeed, the albedo of a
planet can be related to the average temperature via the amount of ice on the planet's surface.
Fraedrich \cite{fraed} employs and motivates the use of a quadratic law $\alpha=a -b\,T^2$,
which yields
\begin{equation}\label{eq:Fraed1}
 T'=\frac{1}{\tau}\,\left(-\varepsilon_{sa}\,\sigma\,T^4+\frac{1}{4}\,\mu(t)\,I_0\,b\,T^2+ \frac{1}{4}\,\mu(t)\,I_0\,(1-a)\right)=:f_1(t,T)\,.
\end{equation}
The equilibria, their hyperbolicity and the bifurcation points of the autonomous model \eqref{eq:Fraed1}
(with $\mu(t)\equiv 1$) have been studied in detail by Fraedrich~\cite{fraed}, taking
\begin{equation}\label{2.cons}
\begin{split}
 &\tau:=10^8\,kg\,K^{-1}s^{-2}\,,\quad
 \varepsilon_{sa}:=0.62\,,\quad
 \sigma:=5.6704{\cdot}10^{-8}\,W m^{-2} K^{-4}\,,\\
 &I_0:=1366\, Wm^{-2}\,,\quad
 b:=10^{-5}K^{-2}\,,\quad
 a:=1.2\,.
\end{split}
\end{equation}
For these values, there are always (at least) two hyperbolic equilibria,
one stable $T_s$ and the other unstable $T_u$, which form an attractor-repeller pair for $T>0$.

To simplify the notation in the next paragraphs, we call $I(t):=\mu(t)\,I_0$.

In what follows, we adapt an idea used by Ashwin et al.~\cite{awvc} (in the case $\mu(t)\equiv 1)$
to our nonautonomous setting: we
substitute the parameters $a$ and $b$ by asymptotically autonomous functions of time,
accounting for the anthropogenic emissions (combined greenhouse gasses and industrial aerosols)
in the atmosphere; and we let $a$ vary with respect to a new parameter $d$.
More precisely, once a function $b(t)>0$ is chosen, we take
$a(t)=a_d(t):=1+d\,m(t)$ for $m(t):=I_- b^2(t)/(16\,\ep_{sa}\,\sigma)$,
with $I_-\le I(t)$ for all $t\in\R$, and where $d$ varies in a range which we will specify later.
Then, for each fixed time $s$, we rewrite \eqref{eq:Fraed1} as the differential equations
\begin{equation}\label{eq:Fraed1ts}
 T'=\frac{1}{\tau}\,\left(-\varepsilon_{sa}\,\sigma\,T^4+\frac{1}{4}\,I(t)\,b(s)\,T^2- \frac{1}{4}\,I(t)\,d\,m(s)\right)=:g_s^d(t,T)\,.
\end{equation}
The family $\{T'=g_s^d(t,T)\,|\;s\in\R\}$ plays a fundamental role to analyze
$T'=g_t^d(t,T)$ (i.e., \eqref{eq:Fraed1ts} with $s=t$): this last equation
can be understood as a transition between the past
and future equations, which are obtained by changing $b(s)$ and $m(s)$ by the values of their limits
at $-\infty$ and $+\infty$, and which are nonautonomous due to $I(t)$.
Note that analyzing the occurrence of critical transitions for $T'=g_{c(t+l)}^d(t,T)$
as either $c$, $d$, or $l$ vary, we are analyzing the
occurrence of either rate-induced, size-induced, or phase-induced critical points
due to the transition $(b,a_d)$.

A simple quantitative study of \eqref{eq:Fraed1ts} 
gives immediate qualitative information on its possible dynamics.
Below, we will find a constant $T_1$ such that $g_s^d(t,T)<0$ for all $t,s\in\R$ whenever $T<T_1$. Hence, any constant temperature $T<T_1$ is an upper solution
for \eqref{eq:Fraed1ts} for any $s\in\R$, which means that any solution starting
below such threshold is destined to decrease as time increases.
Furthermore, we will find a second constant $T_2<T_1$ such that, for all $t,s\in\R$,
$g_s^d(t,T)$ is concave for $T>T_2$. In other words, in the region of the phase space where the dynamics is not already clear, the problem is concave and therefore can be completely investigated using the theory developed in the rest of the paper.

It is easy to check that, whenever $0<d<1$, the map $T\mapsto g_s^d(t,T)$ (a fourth-degree polynomial)
has two different positive roots for each $(s,t)$, namely
\[
 T_s^\pm(t)=\left(\frac{b(s)\sqrt{I(t)}}{8\,\ep_{sa}\,\sigma}\,
 \left(\sqrt{I(t)}\pm\sqrt{I(t)-d\,I_-}\right)\right)^{1/2}\,.
\]
The temperatures $T_1$ and $T_2$ before described can be taken as
\[
 T_1:=\left(\frac{b_+I_+}{8\,\ep_{sa}\,\sigma}\right)^{1/2}
 \qquad\text{and}\qquad T_2:=\left(\frac{b_+I_+}{24\,\ep_{sa}\,\sigma}\right)^{1/2}\,,
\]
where $I_+\ge I(t)$ for all $t\in\R$ and $b_-\le b(s)\le b_+$ for all $s\in\R$:
$T_1$ satisfies $T_1\ge \sup_{t,s\in\R}T_s^+(t)$, and $\sup_{t,s\in\R}(\partial^2/\partial T^2)\,g_s^d(t,T)<0$ for
$T>T_2$.

Note that
$\sup_{t\in\R}T_s^-(t)<\inf_{t\in\R}T_s^+(t)$ for each $s\in\R$ if
$I_+-\sqrt{I_+\,I_-\,(1-d)}<I_-+I_-\sqrt{1-d}$ for $I_+\ge \sup_{t\in\R}I(t)$;
i.e., if $0<d<d_1:=1-(\sqrt{I_+/I_-} -1)^2$. In this case, a strict lower solution exists
for each equation \eqref{eq:Fraed1ts}$_s$ (including $s=\pm\infty$, i.e., the past and future equations), and it lies in
the area of concavity whenever $T_2\le\sup_{s,t\in\R}T_s^-(t)$, which we assume from now on (and which occurs if
$d>5/9$).
 This existence ensures the occurrence of an attractor-repeller pair of solutions for
\eqref{eq:Fraed1ts}$_s$: a pair of hyperbolic solutions, attractive the upper one and repulsive the
lower one, which determine the global dynamics in $\R\times(T_2,\infty)$.
Similarly, $\sup_{t,s\in\R}T_s^-(t)<\inf_{t,s\in\R}T_s^+(t)$ if
$b_+\big(I_+-\sqrt{I_+\,I_-\,(1-d)}\big)<b_-\big(I_-+I_-\sqrt{1-d}\big)$ where $b_-\le b(s)\le b_+$ for all $s\in\R$; i.e.,
if $0<d<d_2:=1-\big(b_+I_+/(b_-I_-)-1\big)^2\big(b_+\sqrt{I_+}/(b_-\sqrt{I_-})+1\big)^{-2}$ (which satisfies
$d_2\le d_1$). This property ensures that
also the transition equation $T'=g_t^d(t,T)$ has a strict lower solution in the area of concavity, and hence an
atractor-repeller pair. In addition, using similar techniques to those of \cite{lno,lnor}, it can be proved that this attractor-repeller pair approaches that of the past equation as time decreases and that of the future
equation as time increases, which is the situation usually called {\em tracking}. And this is true for all the equations
$T'=g_{c(t+l)}^d(t,T)$ for $c>0$ and $l\in\R$. In other words: there are no critical transitions
of rate-induced, size-induced or phase-induced type for $d\in(5/9,d_2)$.
\par
It can also be proved that, if $d\in(5/9,d_1)$, the existence of an attractor-repeller pair for $T'=g_s^d(t,T)$ for each
$s\in\R$ ensures the absence of critical transitions for a rate $c>0$ small enough.
We will now construct an example showing that, for a fixed $d\in(d_2,d_1)$, rate-induced tipping may occur
as $c$ increases, and that the value of the critical point may change with the initial phase.
And will also show that size-induced tipping may occur for a fixed small rate for values of $d$ slightly greater than $d_1$. We take the same $b(t)$ as in \cite{awvc}, namely
\[
 b(t):=1.690{\cdot}10^{-5}(1-\lambda(t/\kappa))+1.835{\cdot}10^{-5}\lambda(t/\kappa)
\]
where $\lambda(t)=ke^t/(1+ke^t)$ satisfies $\lambda(0)=10^{-6}$. Recall that
$I(t):=\mu(t)\,I_0$ for $\mu$ given by \eqref{2.defmu}, and that
$m(t):=I_- b^2(t)/(16\,\ep_{sa}\,\sigma)$. We consider the three-parametric family of equations
\begin{equation}\label{eq:Fraedcdl}
 T'=\frac{\kappa}{\tau}\,\left(-\varepsilon_{sa}\,\sigma\,T^4+\frac{1}{4}\,I(\kappa\,u)\,b(c\kappa(u+l))\,T^2-
 \frac{1}{4}\,I(\kappa\,u)\,d\,m(c\kappa(u+l))\right)
\end{equation}
where $u$ is time in years, $T(u)$ is the temperature in Kelvin and, for the transition: $c$ is the rate,
$d$ determines its size, and $l$ is its initial phase. Taking $\mu_-:=0.99993$, $\mu_+:=1.00007$,
$b_-=1.690{\cdot}10^{-5}$ and $b_+:=1.835{\cdot}10^{-5}$, we get
$d_1=0.9999995097$, $d_2=0.9982486778$, $T_1=298.6397736$ and $T_2=172.4197537$.
The numerical evidence, depicted in Figure \ref{fig:f1-tipping}, suggests that any tipping point for this model happens
through the collision of an attractor and a repeller
which do no longer exist after the critical transition.
This is in fact the only mechanism leading to a tipping point in ``the concave region" of this model,
as we shall see in Section \ref{3.sec}.
\begin{figure}
    \centering
    \includegraphics[width=0.98\textwidth]{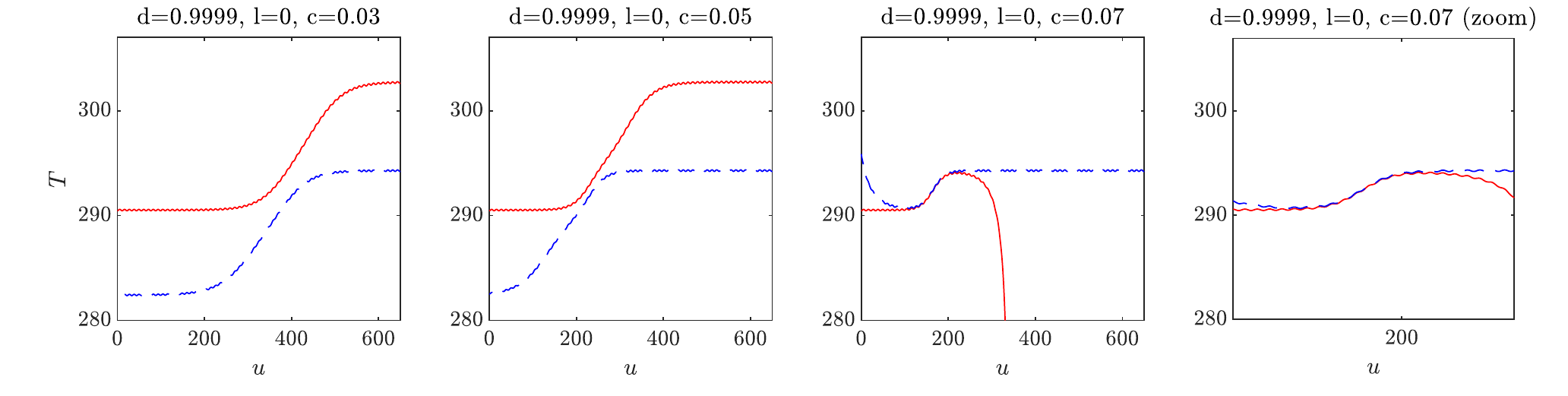}
    \includegraphics[width=0.98\textwidth]{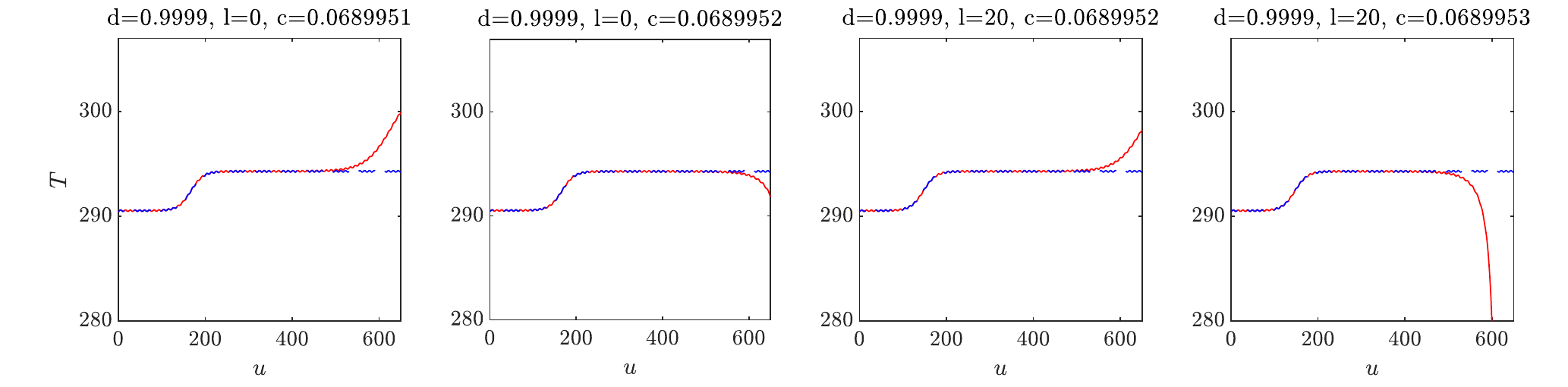}
    \includegraphics[width=0.98\textwidth]{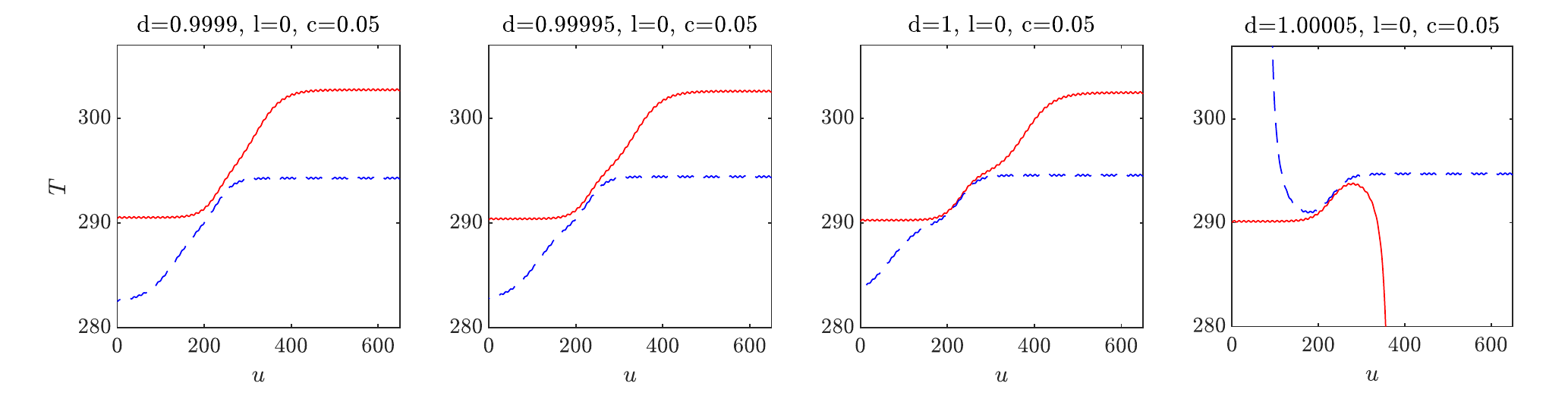}
    \caption{Different critical transitions occur for \eqref{eq:Fraedcdl} depending on the values of $d$, $l$ and $c$. In the horizontal axes, time in years; in the vertical ones, temperature in Kelvin. The solid red and dashed blue lines are respectively the locally pullback attractive and locally pullback repulsive solutions. Tracking (or tipping) occurs whenever the red curve is above (or below) the blue one at a common value of the time. The first row shows that, for a fixed value of $d\in(d_1,d_2)$ and for $l=0$, a tipping rate $c$ occurs between $0.05$ and $0.07$; and a zoom shows the (always present) oscillatory character of the solutions. The first two panels in the second row provide a more accurate approximation for the critical rate, while the third and forth ones show that a different initial phase in the transition ($l=20$) means a different value for the critical rate. The third row shows the occurrence of a size-induced critical transition as $d$ increases for fixed values of $c$ and $l$.}
    \label{fig:f1-tipping}
\end{figure}

Fraedrich~\cite{fraed} presents two additional models with a greenhouse feedback law (respectively derived from \eqref{eq:Fraed0} and \eqref{eq:Fraed1}).  The underlaying motivation is that the temperature and the amount of vapour water in the air depend on each other. An empirical relation between the temperature and the long-wave radiation $L\!\uparrow$ of the atmosphere is the so-called Swinbank's formula according to which $L\!\uparrow\,\sim T^6$.~Therefore,
it is assumed that $\varepsilon_a$ follows a quadratic law, $\varepsilon_a=\varepsilon_c+k\,T^2$, where $\varepsilon_c=0.0235\, \textrm{ln}(CO_2)$ (with $CO_2$ in ppm) is the $CO_2$ emittance, and $k=3{\cdot}10^{-6}K^{-2}$~\cite{fraed}. The basic climate model \eqref{eq:Fraed0} with the greenhouse feedback law becomes
\begin{equation}\label{eq:Fraed2}
 T'=\frac{1}{\tau}\left(k\,\sigma \,T^6 -\varepsilon_{sc}\,\sigma\,T^4+\frac{1}{4}\,\mu(t)\,I_0\,(1-\alpha)\right)=:f_2(t,T)\,,
\end{equation}
with $\alpha=0.284$, $\varepsilon_{sc}=\varepsilon_{s}-\varepsilon_{c}=0.87$, and the rest of the constants as
in \eqref{2.cons}. Finally, \eqref{eq:Fraed0}
with both the ice-albedo feedback and the greenhouse feedback laws becomes
\begin{equation}\label{eq:Fraed3}
T'=\frac{1}{\tau}\left(k\,\sigma\,T^6 -\varepsilon_{sc}\,\sigma\,T^4+\frac{1}{4}\,\mu(t)\,I_0\,b\,T^2+ \frac{1}{4}\,\mu(t)\,I_0\,(1-a)\right)=:f_3(t,T)\,.
\end{equation}
It is easy to check that $f_2(t,T)$ and $f_3(t,T)$ are convex in $T$ for all $t$ if $T$ is larger than a certain constant.
Numerical simulations show that \eqref{eq:Fraed2} and \eqref{eq:Fraed3} also have an attractor-repeller pair in both their autonomous
(with $\mu(t)\equiv 1$) and nonautonomous (with $\mu(t)$ given by \eqref{2.defmu}) versions.
Importantly, this implies that a similar mechanism for tipping as the one in \eqref{eq:Fraed1}, due to the collision of the attractor and the repeller,
is possible also in  \eqref{eq:Fraed2} and \eqref{eq:Fraed3}.
However, the relative position of the attractor and repeller (and hence the whole dynamics) is inverted in the convex case with respect to that in the concave one. Hence, tipping points appearing in \eqref{eq:Fraed2} and \eqref{eq:Fraed3} are opposite in nature with respect to those in \eqref{eq:Fraed1}. Specifically, the concave problems passing through a bifurcation point will end up in a deep-frozen state, while the convex ones in an arid-desert state. It is worth noting that such a tipping point for the later models would take place at an earth temperature which is already beyond the survival of life on Earth, as we know it.
\subsection{Critical transitions for Hopfield synchronized neural networks}
A biological neuron is an electrically charged cell which is able to receive and transmit a signal via a complex electrochemical mechanism.
In either case, each neuron has an equilibrium voltage where it tends to spontaneously remain in the absence of inputs. 
An inhibitory or excitatory signal pulse triggers a resistor-capacitor type exponential response followed by
relaxation. Specifically, when the injected current causes the voltage to exceed a certain threshold $\Theta$, 
a special electrochemical process generates a rapid
increase in the potential leading to a single non-weakening and propagating pulse. 
This phenomenon is called neuron firing. The pulse halts at the synapses where the transmission of the information to the next neurons is taken care of by special neurotransmitter molecules. Depending on the physiological effect that these neurotransmitter induces in the next neuron, the synapse is termed excitatory or inhibitory, whether they facilitate or not the occurrence of a pulse by increasing or lowering the potential of the next neuron.

An artificial neural network mimics the same process either using electrical components or as a software simulation on a digital computer. For this reason, the construction of a differential model of a  neural network of $N$ neurons of either type can be carried out analogously. We  follow the presentation by Wu~\cite{wu}, to which we point the interested reader for further details. We consider a network of $N>0$ neurons and denote by $x_i$, for $i=1,\dots, N$, the potential of the $i$-th neuron in terms of deviation from the equilibrium potential. The variable $x_i$ is also called the action potential or \emph{short-term memory} or {\em STM}.  
The change of neuron potential can be caused by internal and external processes,
\[
x_i'=\left(x_i'\right)_{\text{internal}}+
\left(\left(x_i'\right)_{\text{excitatory}}+\left(x_i'\right)_{\text{inhibitory}}+\left(x_i'\right)_{\text{stimuli}}\right)_{\text{external}}\,.
\]
For the internal dynamics, it is assumed that, in absence of external inputs, the neurons' potential decays
exponentially to the equilibrium; i.e.,~for $i=1,\dots,N$,
\[
 \left(x_i'\right)_{\text{internal}}=-a_i(t)\,x_i   \quad\text{for some } a_i\colon\R\to(0,\infty)\,.
\]
For the sake of simplicity, we assume that the external processes can be only excitatory or external
stimuli (i.e., $(x_i')_{\text{inhibitory}}\equiv 0$), following the laws
\[
 \left(x_i'\right)_{\text{excitatory}}=\!\sum_{\substack{k=1\\k\neq i}}^N Z_{ik}(t)\,\wit f_k(x_k-\Theta_k)\,,
 \qquad \left(x_i'\right)_{\text{stim.}}=\wit I_i(t)\,,
\]
where $\wit f_k:\R\to [0,\infty)$ is the \emph{signal function} of the $k$-th neuron, $\Theta_k$ is the firing threshold,
$Z_{ik}(t)=Z_{ki}(t)>0$
(also called the \emph{long-term memory} or {\em LTM}) represents the synaptic coupling coefficient,
and $\wit I_i(t)$ is the input from a contiguous patch of neurons.
The signal function $\wit f_k$ is typically a step function, a piecewise linear function, or a sigmoid limiting at $0$ as $y\to-\infty$ and at $1$ as $y\to\infty$.
In order to read the model as a classical Hopfield-type neural network (see \cite[Chapter 4]{wu}),
two steps are needed.
First, we assume that the signal function
$\wit f_k$ is a common increasing sigmoid $\wit f\colon\R\to[0,1]$ for all the neurons, and define $f(x)=2\,\wit f(x)-1$, which is strictly increasing and
limits at $\pm 1$ as $y\to\pm\infty$. That is, we obtain
\[
 x_i'=-a_i(t)\,x_i+\frac{1}{2}\,\sum_{\substack{k=1\\k\neq i}}^N Z_{ik}(t)\,f(x_k-\Theta_k)+\wit I_i(t)+\frac{1}{2}\,\sum_{\substack{k=1\\k\neq i}}^N Z_{ik}(t)\,.
\]
Second, we make the changes of variables $y_i=x_i-\Theta_i$, so that $y_i$ measures the deviation of the voltage of the $i-th$ neuron
from its activation threshold, getting
\[
 y_i'=-a_i(t)\,y_i+\frac{1}{2}\,\sum_{\substack{k=1\\k\neq i}}^N Z_{ik}(t)\,f(y_k)+\wit I_i(t)+\frac{1}{2}\,\sum_{\substack{k=1\\k\neq i}}^N Z_{ik}(t)-\Theta_i\,a_i(t)\,.
\]
Now, in order to get a synchronized neural network, we assume that
$Z_{ik}(t)$ synchronize at a certain bounded continuous function
$\wit z\colon\R\to(0,\infty)$ which can be fed into the previous equation as a reference trajectory.
And we also assume that $a_i=a$, $\Theta_i=\Theta$, and $\wit I_i=\wit I$
for $i=1,\ldots,N$. So, whenever the system is initialized with synchronized (i.e., equal) initial conditions $y_i(0)=y_0$ for $i=1,\dots, N$,
the resulting solutions remain synchronized for all $t>0$. In this case, the previous system reduces to a unique scalar differential equation
satisfied by all the neurons,
\begin{equation}\label{eq:hopf}
 y'=-a(t)\,y_i+z(t)\,f(y)+I(t)\,,
\end{equation}
where $z(t):=(N-1)\,\wit z(t)/2$ and $I(t):=\wit I(t)+(N-1)\,\wit z(t)/2-\Theta\,a(t)$.
The function $a$ will be important for our tipping analysis, as better explained in what follows.
\par
In our presentation, we will assume that $z(t)$ is positive and bounded, that $f(0)=0$, and that
the resulting input $I(t)$ is mildly inhibitory: upper-bounded by a small negative constant $k<0$.
Since, for $y>0$, $-a(t)\,y+z(t)\,f(y)+I(t)<f(y)\n{z}_\infty+k$,
there is a value $\bar y>0$ such that, for all $y\in(0,\bar y)$, the right-hand side of \eqref{eq:hopf} is negative.
In other words, every $y\in(0,\bar y)$ is an upper solution  for \eqref{eq:hopf}, which means that any
solution starting below $\bar y$ is destined to decrease as time increases while it is positive. Hence,
if a bounded and positive solution exists, then it is located above $\bar y$.
\begin{figure}
    \centering
    \includegraphics[width=\textwidth]{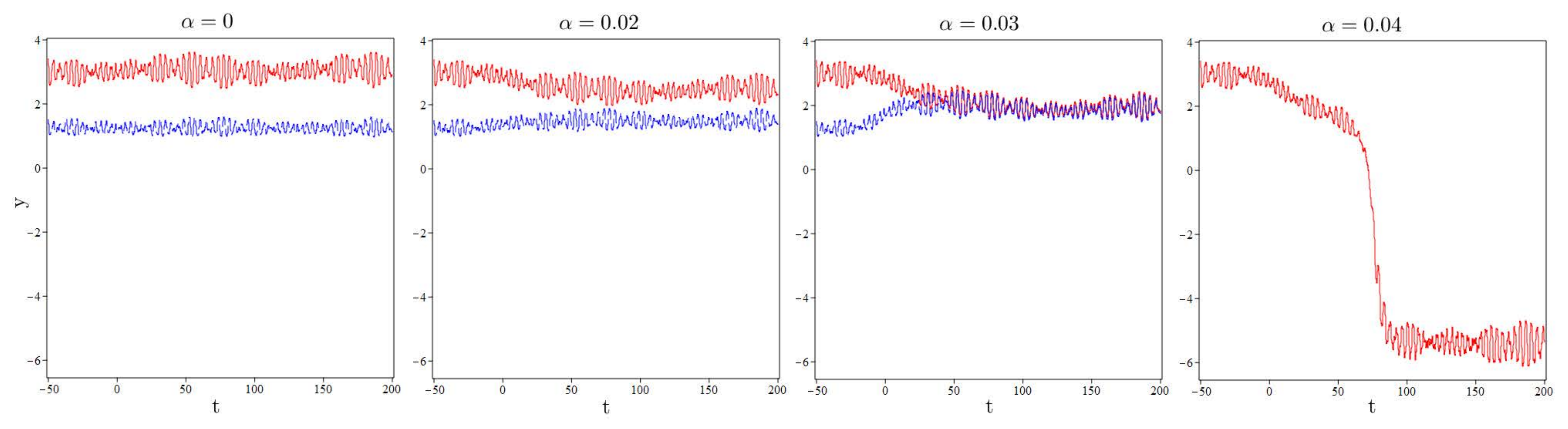}
    \caption{A time-dependent tipping scenario is depicted for the Hopfield-type differential equation \eqref{eq:hopf}
        with $I(t)$ and $z(t)$ given by \eqref{2.defz}, and with $a(t)=a_\alpha(t)$ given by \eqref{eq:incr_internal_decay_rate} and
        hence depending on the parameter $\alpha$ which characterizes the internal decay rate of a neuron:
        this rate increases with $\alpha$. After tipping
        (when the dashed blue line is no longer below the solid red one: see Figure \ref{fig:f1-tipping}),
        the set of synchronized neurons collapses to a (stable)
        ``inactive'' state. Indeed, after some time, the potential of each neuron remains uniformly bounded below
        the activation threshold 0.}
    \label{fig:neurons_tipping}
\end{figure}
For our tipping analysis, we assume that the internal decay rate $a(t)$ has a baseline behaviour $a_0(t)$ and
an increasing trend with time (for example due to ageing): $a(t)$ becomes
\begin{equation}\label{eq:incr_internal_decay_rate}
\begin{split}
 a_\alpha(t)&:=a_0(t)+\alpha\,(1+e^{-t/5})^{-1}\\
 &:=0.2\,\left(1+\sin(\pi t)\,\sin\big(((1+\sqrt{5})\,t)/2\big)\right)
 +\alpha\,(1+e^{-t/5})^{-1}\,.
\end{split}
\end{equation}
In addition, we take
\begin{equation}\label{2.defz}
\begin{split}
 f(y)&:=(1-e^{-y})/(1+e^{-y})\,,\\
 I(t)&:=0.2\,\big(\sin(2\pi t/\sqrt5)\,\cos(t/7)-1.5\big)\,,\\
 z(t)&:= 1+0.75\,\sin(t/7)\sin\big((1+\sqrt{5})\,t/2\big)\,.
\end{split}
\end{equation}
Note that the function $-a(t)\,y_i+z(t)\,f(y)+I(t)$ is concave in $y$
for $y>0$ and convex for $y<0$, since so is $f$ and $z(t)\ge0.25$.

As shown in Figure \ref{fig:neurons_tipping}, there exists a value $\alpha>0$ beyond which the set of synchronized neurons collapses to a (stable)
``inactive'' state: some time after $t=0$ the state of each neuron remains uniformly bounded below the activation threshold
(which is 0 after the change of variables).
Observe finally that the critical transition occurs as $\alpha$ increases.
Since $\alpha$ determines the size of the transition, we have a size-induced tipping point.
\section{General results for concave and coercive Carath\'{e}odory ODEs}\label{3.sec}
Throughout the paper, $L^\infty=L^\infty(\R,\R)$ is the Banach space of
essentially bounded functions $q\colon\R\to\R$ endowed
with the norm $\n{q}_{L^\infty}$ given by the inferior of the set of
real numbers $k\ge 0$ such that of
$l(\{\,t\in\R\:|\;|q(t)|>k\,\})=0$, where $l(\mI)$ is the Lebesgue measure of
$\mI\subseteq\mR$.
We write ``\,$l$-a.a."~instead of ``for Lebesgue almost all $t\in\R$", ``Lebesgue almost always"
and ``Lebesgue almost all".
\par
We consider nonautonomous scalar equations of the type
\begin{equation}\label{3.ecu}
 x'=f(t,x)\,,
\end{equation}
where $f:\R^2\to\R$ is assumed to satisfy (all or part of) the next conditions:
\begin{list}{}{\leftmargin 17pt}
\item[\hypertarget{f1}{{\bf f1}}] $f$ is Borel measurable;
\item[\hypertarget{f2}{{\bf f2}}] for all $j\in\N$ there exists
$m^f_j\in\R$ such that, $l$-a.a., $\sup_{x\in[-j,j]}|f(t,x)|\le m^f_j$.
\item[\hypertarget{f3}{{\bf f3}}] for all $j\in\N$ there exists
$l^f_j\in\R$ such that, $l$-a.a., $\sup_{x_1,\,x_2\in[-j,j]}|f(t,x_2)-f(t,x_1)|/|x_2-x_1|\le l^f_j$;
\item[\hypertarget{f4}{{\bf f4}}] $l$-a.a., the map $x\mapsto f(t,x)$ is $C^1$;
\item[\hypertarget{f5}{{\bf f5}}] the map $x\mapsto
f_x(t,x)$ satisfies condition \hyperlink{f3}{\bf f3},
where $f_x=\partial f/\partial x$;
\item[\hypertarget{f6}{{\bf f6}}] (coercivity) there exists a subset
$\mR\subseteq\R$ with full Lebesgue measure and $\delta>0$ such that
$\limsup_{x\to\pm\infty} f(t,x)/x^2<-\delta$ uniformly on $\mR$;
\item[\hypertarget{f7}{{\bf f7}}] (strict concavity) for all $j\in\N$
there exists a constant $\delta_j>0$ such that, $l$-a.a.,
$\sup_{x_1,\,x_2\in[-j,j],\,x_1\ne x_2}
(f_x(t,x_2)-f_x(t,x_1))/(x_2-x_1)<-\delta_j$.
\end{list}
All these conditions are in force unless otherwise stated.
The results establishing the existence, uniqueness and properties of
maximal solutions of \eqref{3.ecu} for $f$ satisfying
\hyperlink{f1}{\bf f1}, \hyperlink{f2}{\bf f2}
and \hyperlink{f3}{\bf f3} (less restrictive conditions, in fact) can be
found in~\cite[Chapter 2]{cole}.
In the usual terminology for Carath\'{e}odory and Lipschitz-Carath\'{e}odory
equations, conditions \hyperlink{f2}{\bf f2} and \hyperlink{f3}{\bf f3}
mean the {\em existence of $L^\infty$ $m$-bounds and $l$-bounds}, respectively.
It is easy to check that, if $f$ satisfies \hyperlink{f1}{\bf f1} and
\hyperlink{f3}{\bf f3}, then it is {\em strong Carath\'{e}odory}: the map $x\mapsto f(t,x)$
is continuous on $\R$ for $l$-a.a.~$t\in\R$.
Conditions \hyperlink{f4}{\bf f4} and \hyperlink{f5}{\bf f5} require more
regularity on the state variable $x$, and will allow us to discuss hyperbolicity of
solutions of \eqref{3.ecu}.
Note that, if \hyperlink{f4}{\bf f4} holds, condition \hyperlink{f3}{\bf f3}
ensures \hyperlink{f2}{\bf f2} for $f_x$.
Conditions \hyperlink{f6}{\bf f6} and \hyperlink{f7}{\bf f7} provide us with
a framework where, for instance, the existence
of an attractor-repeller pair of hyperbolic solutions can be discussed.
\par
Let $t\mapsto x(t,s,x_0)$ denote
the maximal solution of \eqref{3.ecu}
satisfying $x(s,s,x_0)=x_0$, defined on $\mI_{s,x_0}=(\alpha_{s,x_0},\beta_{s,x_0})$
with $-\infty\le\alpha_{s,x_0}<s<\beta_{s,x_0}\le\infty$. Recall that,
in this setting, a solution is an absolutely continuous function
on each compact interval of $\mI_{s,x_0}$
which satisfies \eqref{3.ecu} at Lebesgue a.a.~$t\in\mI_{s,x_0}$;
and that $\mI_{s,x_0}=\R$ if $x(t,s,x_0)$ is bounded.
The results establishing the existence, uniqueness and properties of this
maximal solution for $f$ satisfying \hyperlink{f1}{\bf f1}, \hyperlink{f2}{\bf f2}
and \hyperlink{f3}{\bf f3}
(less restrictive conditions, in fact) can be found in~\cite[Chapter 2]{cole}.
Recall also that the real map $x$, which is
defined on an open subset of $\R\times\R\times\R$ containing
$\{(s,s,x_0)\,|\;s,x_0\in\R\}$, satisfies $x(s,s,x_0)=x_0$ and
$x(t,l,x(l,s,x_0))=x(t,s,x_0)$ whenever all the involved terms
are defined. In addition, since the countable intersection of
subsets of $\R$ with full Lebesgue measure has full Lebesgue measure,
property \hyperlink{f7}{\bf f7}
ensures that, for $l$-a.a.~$t\in\R$, the map $x\mapsto f_x(t,x)$ is strictly
decreasing, and hence $f(t,(1-\rho)x_1+\rho x_2)>(1-\rho)f(t,x_1)+\rho f(t,x_2)$ for all $x_1,x_2\in\R$
and $\rho\in(0,1)$. In turn, this last property guarantees
the strict concavity of $x_0\mapsto x(t,s,x_0)$ for fixed $t>s$:
the statements and proof of ~\cite[Proposition 2.3]{lno} are also valid
in the more general setting here considered.
\subsection{The special solutions $\ma$ and $\mr$}
The coercivity property \hyperlink{f6}{\bf f6} has consequences on the properties of the sets
\[
\begin{split}
 \mB^-&:=\Big\{(s,x_0)\in\R^2\,\Big|\;\sup_{t\in(\alpha_{s,x_0},s]}x(t,s,x_0)
 <\infty\Big\}\,,\\
 \mB^+&:=\Big\{(s,x_0)\in\R^2\,\Big|\;\inf_{t\in[s,\beta_{s,x_0})}x(t,s,x_0)
 >-\infty\Big\}\,,\\
\end{split}
\]
which may be empty.
We fix any $\ep>0$ and take $m>0$ large enough to satisfy
$f(t,x)\le-\ep$ for all $t\in\mR$, and $|x|\ge m$.
By rewriting as $x(t)=x(s)+\int_s^t f(l,x(l))\,dl$, it is easy to check
that {\bf any solution remains upper bounded as time increases
and lower bounded as time decreases},
with $\limsup_{t\to(\beta_{s,x_0})^-}x(t)<m$ and
$\liminf_{t\to(\alpha_{s,x_0})^+}x(t)>-m$.
We will often use these properties
in the paper without further reference.
They imply that $\mB^-$ (resp.~$\mB^+$) is the set of pairs $(s,x_0)$
giving rise to solutions which remain bounded in the past (resp.~in the future).
Note that $\alpha_{s,x_0}=-\infty$ for all $(s,x_0)\in\mB^-$ and
$\beta_{s,x_0}=\infty$ for all $(s,x_0)\in\mB^+$.
The set $\mB:=\mB^-\cap\mB^+$ is the (possibly empty) set of
pairs $(s,x_0)$ giving rise to
(globally defined) bounded solutions of~\eqref{3.ecu}.
\par
The next result is proved by repeating the arguments leading to
\cite[Theorem 2.5]{lno} (in turn based on~\cite[Theorem 3.1]{lnor}).
Conditions \hyperlink{f4}{\bf f4}, \hyperlink{f5}{\bf f5}
and \hyperlink{f7}{\bf f7} are not required.
\begin{teor}\label{3.teoruno}
Let $\mB^\pm,\mB$ and $m$ be
the sets and constant above defined.
\begin{itemize}
\item[\rm(i)] If $\mB^-$ is nonempty, then there exist
a set $\mR^-$ coinciding with $\R$ or with
a negative open half-line and a maximal solution
$a\colon\mR^-\to(-\infty,m)$ of~\eqref{3.ecu} such that,
if $s\in\mR^-$, then
$x(t,s,x_0)$ remains bounded as $t\to-\infty$ if and only if $x_0\le a(s)$; and if
$\sup\mR^-<\infty$, then $\lim_{t\to(\sup\mR^-)^-} a(t)=-\infty$.
\item[\rm(ii)] If $\mB^+$ is nonempty, then there exist
a set $\mR^+$ coinciding with $\R$ or with
a positive open half-line and a maximal solution
$r\colon\mR^+\to(-m,\infty)$ of~\eqref{3.ecu} such that,
if $s\in\mR^+$, then $x(t,s,x_0)$ remains bounded as $t\to+\infty$
if and only if $x_0\ge r(s)$; and
if $\inf\mR^+>-\infty$, then $\lim_{t\to(\inf\mR^+)^+} r(t)=\infty$.
\item[\rm(iii)] Let $x$ be a solution defined on a maximal interval $(\alpha,\beta)$.
If it satisfies $\liminf_{t\to\beta^-}x(t)=-\infty$, then $\beta<\infty$; and if
$\limsup_{t\to\alpha^+}x(t)=\infty$, then $\alpha>-\infty$.
In particular, any globally defined solution is bounded.
\item[\rm(iv)] The set $\mB$ is nonempty if and only if $\mR^-=\R$ or $\mR^+=\R$, in
which case both equalities hold, $a$ and $ r$ are
globally defined and bounded solutions of~\eqref{3.ecu}, and
$\mB=\{(s,x_0)\in\R^2\,|\; r(s)\le x_0\le a(s)\}\subset\R\times[-m,m]$.
\item[\rm(v)]
Let the function $b\colon\R\to\R$ be bounded, continuous,
of bounded variation and with nonincreasing singular part on every compact interval of
$\R$. Assume that $b'(t)\le f\big(t,b(t)\big)$
for $l$-a.a.~$t\in\R$. Then, $\mB$ is nonempty, and $r\le b\le a$.
If, in addition, there exists $ t_0\in\R$ such that
$b'(t_0)<f\big(t_0,b(t_0)\big)$, then $r<a$.
And, if $b'(t)<f\big(t,b(t)\big)$ for $l$-a.a.~$t\in\R$,
then $r<b<a$.
\end{itemize}
\end{teor}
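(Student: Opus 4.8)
The plan is to follow the scheme of \cite[Theorem~2.5]{lno} (itself modelled on \cite[Theorem~3.1]{lnor}), which carries over here since only \hyperlink{f1}{\bf f1}, \hyperlink{f2}{\bf f2}, \hyperlink{f3}{\bf f3} and the coercivity \hyperlink{f6}{\bf f6} intervene. The organizing fact is the sign-definite dynamics outside the strip $\{|x|\le m\}$: with $\ep>0$ and $m$ as in the statement, $x'=f(t,x)\le-\ep<0$ for $l$-a.a.\ $t$ whenever $|x|\ge m$, so $\{x<m\}$ is positively invariant and $\{x>m\}$ negatively invariant (a trajectory crosses the level $m$, if ever, only downwards), and symmetrically $\{x<-m\}$ is positively invariant and $\{x>-m\}$ negatively invariant. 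Sharpening $-\ep$ to the quadratic bound contained in \hyperlink{f6}{\bf f6} (some $M$ with $f(t,x)\le-\delta x^2/2$ for $|x|\ge M$, $t\in\mR$) and integrating the resulting inequality $(1/x)'\ge\delta/2$ along a trajectory kept inside $\{x>M\}$ (resp.\ $\{x<-M\}$) gives finite-time backward (resp.\ forward) blow-up. This is precisely (iii), and it forces any globally defined solution to stay in $(-m,m)$ for all $t$ — a value $\ge m$ somewhere would blow up backward, a value $\le-m$ forward — hence to be bounded.

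For (i) I would fix $s\in\R$ and identify the fibre $\mB^-_s:=\{x_0\,|\,(s,x_0)\in\mB^-\}$ with $\{x_0\,|\,x(t,s,x_0)<m\text{ for all }t\le s\}$: if the solution attains the level $m$ backward it then lies in $\{x>m\}$ and blows up, while if it stays below $m$ in the past it is also bounded below there (coercivity), hence defined on $(-\infty,s]$ and past-bounded. By monotonicity of $x_0\mapsto x(t,s,x_0)$, $\mB^-_s$ is a half-line bounded above by $m$, so set $a(s):=\sup\mB^-_s$. The first delicate point is that this supremum is attained and lies \emph{strictly} below $m$: since the constant $m$ is not a solution and $x(\cdot,s,m)\ge m$ on $(-\infty,s]$, there exist $t_1<s$ with $x(t_1,s,m)>m$, so for any $x_0^n\uparrow a(s)$ in $\mB^-_s$ continuous dependence gives $x(t_1,s,x_0^n)\to x(t_1,s,a(s))$, which is incompatible with $a(s)=m$ and yields $a(s)\in\mB^-_s$. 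That $a$ solves~\eqref{3.ecu} then follows from the cocycle identity and the maximality of $a(\cdot)$: $x(t,s,a(s))\in\mB^-_t$ gives $x(t,s,a(s))\le a(t)$, and a strict inequality would push, by monotonicity of the flow, the point $x(s,t,a(t))$ above $\max\mB^-_s$. Finally $\mR^-:=\{s\,|\,\mB^-_s\ne\emptyset\}$ is a lower set because past-bounded solutions are defined on $(-\infty,s]$; it is open when proper (otherwise the maximal solution $a$ would continue past $\sup\mR^-$) and $a(t)\to-\infty$ as $t\uparrow\sup\mR^-<\infty$, since once $a<-m$ one has $a'\le-\ep$ and $a$ decreases monotonically. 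Part (ii) is the dual construction, based instead on the positively invariant strip $\{x<-m\}$ and the forward blow-up.

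Parts (iv) and (v) are then largely assembly. For (iv), $\mB\ne\emptyset$ iff some globally defined — hence, by (iii), bounded — solution exists, and such a solution meets every fibre, so this is equivalent to $\mR^-=\R$, and then symmetrically to $\mR^+=\R$; in that case $(s,x_0)\in\mB$ iff the solution through $(s,x_0)$ is both past- and future-bounded, i.e.\ iff $r(s)\le x_0\le a(s)$ by (i)--(ii), with $r(s),a(s)\in(-m,m)$ giving the inclusion. For (v), the engine is a comparison principle for~\eqref{3.ecu} against $b$, valid although $b$ is only continuous of bounded variation with nonincreasing singular part: enlarging $m$ so that $m>\sup_\R|b|$, for $x_0\ge b(\sigma)$ the function $w:=x(\cdot,\sigma,x_0)-b$ satisfies, as a signed measure on $[\sigma,\infty)$, $dw=\big(f(t,x)-b'(t)\big)dt-db_{\mathrm{sing}}$ with $-db_{\mathrm{sing}}\ge0$ and $f(t,x)-b'(t)\ge f(t,x)-f(t,b)\ge-L\,w$ on $\{w\ge0\}$, so a Gronwall argument keeps $w\ge0$: solutions meeting $b$ from above stay above $b$ forward. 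Taking $x_0=b(s)$ shows $x(\cdot,s,b(s))$ is future-bounded (trapped between $\inf_\R b$ and $m$), so $\mR^+=\R$ and hence, by (iv), $\mB\ne\emptyset$ and $r\le b$; for $b\le a$ I would take $\xi_n:=x(\cdot,-n,b(-n))$, note that $b(t)\le\xi_n(t)<m$ for $t\ge-n$, extract a convergent subsequence from $\xi_n(0)\in[b(0),m)$, and check that the limiting solution $\xi$ satisfies $b\le\xi\le m$ everywhere, hence is bounded, so past-bounded, so $\xi\le a$ and $b\le\xi\le a$. The strict statements follow from the strict form of the comparison lemma run in the right time direction: if $a(s_1)=b(s_1)$ then, using $b'<f(t,b)$ $l$-a.a., the map $t\mapsto e^{Lt}(a-b)(t)$ is nondecreasing with strictly increasing absolutely continuous part and nonnegative singular increments, which forces $(a-b)(t)<0$ for $t<s_1$, against $b\le a$; symmetrically $t\mapsto e^{-Lt}(b-r)(t)$ excludes $r(s_1)=b(s_1)$; and under the single-point hypothesis, $r\equiv a$ (two solutions agreeing at one time) would make $b\equiv a$ an absolutely continuous solution, contradicting $b'(t_0)<f(t_0,b(t_0))$.

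The genuine obstacle sits in two places. In (i)/(ii), it is proving that the pointwise supremum $a(s)=\sup\mB^-_s$ is attained and defines an honest solution with exactly the stated domain — this is where the negative invariance of $\{x>m\}$ and the finite-time backward blow-up are indispensable. In (v), it is setting up the Carath\'eodory comparison/Gronwall inequality against a lower solution of bounded variation with nonincreasing singular part, and extracting from its strict version the separations $r<b<a$ (and $r<a$ from a single strict point). Everything else is essentially formal once the coercivity dichotomy outside $\{|x|\le m\}$ has been recorded.
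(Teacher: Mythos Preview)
Your proposal is correct and follows exactly the approach the paper takes: the paper's proof consists entirely of the sentence ``proved by repeating the arguments leading to \cite[Theorem 2.5]{lno} (in turn based on \cite[Theorem 3.1]{lnor}); conditions \hyperlink{f4}{\bf f4}, \hyperlink{f5}{\bf f5} and \hyperlink{f7}{\bf f7} are not required'', and you have reproduced precisely that scheme with a faithful and correct sketch of those arguments.
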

\begin{nota}\label{3.notabasta}
As explained in~\cite[Remark 2.6]{lno}, \eqref{3.ecu} has a bounded solution
if and only if there exists a time $t_0$
such that the solutions $a$ and $r$ defined in Theorem~\ref{3.teoruno}
are respectively defined at least on $(-\infty,t_0]$ and $[t_0,\infty)$,
with $a(t_0)\ge r(t_0)$; and the inequality $a(t_0)>r(t_0)$
is equivalent to the existence of at least two bounded solutions.
If $t_0$ with $a(t_0)<r(t_0)$ exists, then there are no bounded solutions.
\end{nota}
\subsection{Hyperbolic solutions, their persistence, and local pullback attractor and repellers}
\label{3.subsechyp}
In this subsection, conditions \hyperlink{f6}{\bf f6}
and \hyperlink{f7}{\bf f7} are not required.
A bounded solution $\wit b\colon\R\to\R$ of~\eqref{3.ecu} is said to be
{\em hyperbolic\/} if the corresponding variational equation $z'=f_x(t,\wit b(t))\,z$
has an exponential dichotomy on $\R$. That is (see~\cite{copp1}), if
there exist $k\ge 1$ and $\beta>0$ such that either
\begin{equation}\label{3.masi}
 \exp\int_s^t f_x(l,\wit b(l))\,dl\le k\,e^{-\beta(t-s)} \quad
 \text{whenever $t\ge s$}
\end{equation}
or
\begin{equation}\label{3.menosi}
 \exp\int_s^t f_x(l,\wit b(l))\,dl\le k\,e^{\beta(t-s)} \quad
 \text{whenever $t\le s$}
\end{equation}
holds. In the case~\eqref{3.masi}, the hyperbolic solution $\wit b$ is said to be
{\em (locally) attractive}, and in the case \eqref{3.menosi}, $\wit b$ is {\em (locally) repulsive}.
In both cases, we call $(k,\beta)$ a (non-unique)
{\em dichotomy constant pair\/} for the hyperbolic solution $\wit b$, or for the linear
equation $z'=f_x(t,\wit  b(t))\,z$.
\par
Condition \hyperlink{f5}{\bf f5} shows the existence of the Lipschitz coefficient $l^{f_x}=l^{f_x}(\rho)$
appearing in the statement of the next result.
\begin{prop}\label{3.proppersiste}
Assume the existence of an attractive (resp.~repulsive) hyperbolic solution
$\wit b_f$ of $x'=f(t,x)$ with dichotomy constant pair $(k_f,\beta_f)$.
Take $\rho>0$ and a constant $l^{f_x}\ge\sup_{x_1,x_2\in[-\rho,\rho],x_1\ne x_2} |f_x(t,\wit b_f(t)+x_1)-f_x(t,\wit b_f(t)+x_2)|/|x_1-x_2|\,$ $l$-a.a.
Then, given $\beta\in(0,\beta_f)$, there exists
$\delta^*=\delta^*(k_f,\beta,\rho,l^{f_x})>0$ such that, for any $\delta_*\in(0,\delta^*]$, the equation $x'=g(t,x)$ has
an attractive (resp.~repulsive) hyperbolic solution $\wit b_g$ with
$\big\|\wit b_f-\wit b_g\big\|_\infty\le\delta_*$
and dichotomy constant pair $(k_f,\beta)$ provided that
\begin{itemize}
\item[\rm 1.] $g$ satisfies \hyperlink{f1}{\bf f1}, \hyperlink{f2}{\bf f2},
\hyperlink{f3}{\bf f3}, \hyperlink{f4}{\bf f4} and \hyperlink{f5}{\bf f5},
\item[\rm 2.] $\big|g(t,\wit b_f(t))-f(t,\wit b_f(t))\big|\le \beta\,\delta_*/(2k_f)$ $l$-a.a.,
\item[\rm 3.] $\sup_{x\in[-\rho,\rho]}|f_x(t,\wit b_f(t)+x)-g_x(t,\wit b_f(t)+x)|<\beta_f-\beta$ $l$-a.a.,
\item[\rm 4.] $\sup_{x_1,x_2\in[-\rho,\rho],x_1\ne x_2} |g_x(t,\wit b_f(t)+x_2)-
g_x(t,\wit b_f(t)+x_1)|/|x_2-x_1|\le l^{f_x}$ $l$-a.a.
\end{itemize}
\end{prop}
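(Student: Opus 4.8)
The natural approach is a Lyapunov--Perron/Banach fixed point argument after centering the equation at $\wit b_f$. Setting $y=x-\wit b_f(t)$, a function $\wit b_g$ is a bounded solution of $x'=g(t,x)$ with $\n{\wit b_f-\wit b_g}_\infty\le\delta_*$ precisely when $y$ is a bounded solution of $y'=G(t,y)$ with $\n{y}_\infty\le\delta_*$, where $G(t,y):=g(t,\wit b_f(t)+y)-f(t,\wit b_f(t))$. Since $g$ satisfies \hyperlink{f4}{{\bf f4}} by item~1, I would write $G(t,y)=G(t,0)+\wit c(t)\,y+\psi(t,y)$ with $\wit c(t):=g_x(t,\wit b_f(t))$ and $\psi(t,y):=\int_0^y\big(g_x(t,\wit b_f(t)+u)-g_x(t,\wit b_f(t))\big)\,du$. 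Item~2 gives $|G(t,0)|\le\beta\,\delta_*/(2k_f)$ $l$-a.a., and item~4 gives $|\psi(t,y)|\le\tfrac12\,l^{f_x}|y|^2$ for $|y|\le\rho$, together with $|\psi(t,y_1)-\psi(t,y_2)|\le l^{f_x}\delta_*\,|y_1-y_2|$ for $|y_1|,|y_2|\le\delta_*\le\rho$; the quadratic smallness of $\psi$ is what makes everything close.

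Next I would record that the scalar linear part carries the desired dichotomy: since item~3 at $x=0$ gives $|\wit c(t)-f_x(t,\wit b_f(t))|<\beta_f-\beta$ $l$-a.a., the scalar principal fundamental solution $\wit\Phi(t,s):=\exp\int_s^t\wit c(l)\,dl$ of $z'=\wit c(t)\,z$ satisfies $\wit\Phi(t,s)\le k_f\,e^{-\beta(t-s)}$ for $t\ge s$. On the complete metric space of bounded continuous functions $\R\to\R$ with the sup norm, restricted to the closed ball of radius $\delta_*$, I would define
\[
 (\mathcal T y)(t):=\int_{-\infty}^t\wit\Phi(t,s)\,\big(G(s,0)+\psi(s,y(s))\big)\,ds\,,
\]
which converges absolutely by the dichotomy bound. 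Choosing $\delta^*=\delta^*(k_f,\beta,\rho,l^{f_x})$ of the form $\min\{\rho,\ \beta/(2\,k_f\,l^{f_x})\}$, the estimates above yield $\n{\mathcal T y}_\infty\le\tfrac12\delta_*+\tfrac{k_f\,l^{f_x}}{2\beta}\delta_*^2\le\delta_*$ and $\n{\mathcal T y_1-\mathcal T y_2}_\infty\le\tfrac{k_f\,l^{f_x}}{\beta}\delta_*\,\n{y_1-y_2}_\infty\le\tfrac12\n{y_1-y_2}_\infty$, so $\mathcal T$ is a contraction of the ball into itself and has a unique fixed point $y^*$ with $\n{y^*}_\infty\le\delta_*$.

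Finally I would verify that $y^*$ really solves the equation and that $\wit b_g:=\wit b_f+y^*$ is hyperbolic. Differentiating the fixed point identity — legitimate in the Carath\'{e}odory setting since $t\mapsto\int_s^t\wit c$ is absolutely continuous and the integrand is dominated, so Leibniz's rule applies $l$-a.a.\ — gives $y^{*\prime}(t)=\wit c(t)\,y^*(t)+G(t,0)+\psi(t,y^*(t))=G(t,y^*(t))$, hence $\wit b_g$ is a globally defined, bounded (therefore maximal) solution of $x'=g(t,x)$ with $\n{\wit b_f-\wit b_g}_\infty\le\delta_*$. For hyperbolicity one writes $g_x(t,\wit b_g(t))=\wit c(t)+\big(g_x(t,\wit b_g(t))-g_x(t,\wit b_f(t))\big)$, with the parenthesis bounded $l$-a.a.\ by $l^{f_x}|y^*(t)|\le l^{f_x}\delta_*$ thanks to item~4; combined with the dichotomy of $\wit\Phi$ and with $\delta^*$ taken small enough, the variational equation $z'=g_x(t,\wit b_g(t))\,z$ has an exponential dichotomy on $\R$ with constant pair $(k_f,\beta)$, so $\wit b_g$ is attractive hyperbolic. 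The repulsive case is obtained by the analogous construction with $(\mathcal T y)(t)=-\int_t^{+\infty}\wit\Phi(t,s)\,(\cdots)\,ds$ (equivalently, by reversing time).

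I expect the delicate point to be the joint calibration of $\delta^*$: it must be small enough that $\mathcal T$ contracts the $\delta_*$-ball into itself (which forces $\delta^*\lesssim\beta/(k_f\,l^{f_x})$), while the perturbation of the exponential dichotomy caused by passing from $\wit b_f$ to $\wit b_g$ must still leave the dichotomy exponent at $\beta$; this is exactly why item~2 carries the factor $1/(2k_f)$, why item~3 allocates the whole budget $\beta_f-\beta$ only to the $C^1$-gap between $f$ and $g$, and why $\psi$ has to be quadratically (not merely Lipschitz-) small in $y$. Everything else is bookkeeping imposed by the Carath\'{e}odory framework — pointwise inequalities become $l$-a.a.\ ones, "solution" means absolutely continuous on compact intervals — but no new idea is needed.
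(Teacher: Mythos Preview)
Your fixed-point construction is essentially the paper's: same change of variable $y=x-\wit b_f$, same linear part $\wit c(t)=g_x(t,\wit b_f(t))$, same Lyapunov--Perron operator, same contraction estimates leading to $\delta^*$ of order $\beta/(k_f\,l^{f_x})$. On this part there is no disagreement.

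The gap is in your hyperbolicity step. You use item~3 \emph{only at $x=0$} to obtain $\wit\Phi(t,s)\le k_f e^{-\beta(t-s)}$, which already consumes the full budget $\beta_f-\beta$. Then you perturb $\wit c$ by $g_x(t,\wit b_g(t))-g_x(t,\wit b_f(t))$, bounded by $l^{f_x}\delta_*$, and claim the resulting dichotomy pair is still $(k_f,\beta)$. But in the scalar case this perturbation gives only
\[
 \exp\int_s^t g_x(l,\wit b_g(l))\,dl \le k_f\,e^{-(\beta-l^{f_x}\delta_*)(t-s)}\,,
\]
so you get exponent $\beta-l^{f_x}\delta_*$, not $\beta$, no matter how small $\delta_*>0$ is. The phrase ``with $\delta^*$ taken small enough'' does not save you here.

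The paper avoids this by using item~3 on the \emph{whole} ball $[-\rho,\rho]$, not just at $x=0$. Writing
\[
 g_x(t,\wit b_f(t)+\bar y(t))=f_x(t,\wit b_f(t)+\bar y(t))+\big(g_x-f_x\big)(t,\wit b_f(t)+\bar y(t))\,,
\]
the strict inequality in item~3 leaves a positive slack which, combined with the Lipschitz control on $f_x$ (condition~\hyperlink{f5}{\bf f5}), is enough to absorb \emph{both} the $f_x\!\to\! g_x$ gap and the shift $\wit b_f\!\to\!\wit b_f+\bar y$, yielding directly $g_x(t,\wit b_f(t)+\bar y(t))\le f_x(t,\wit b_f(t))+(\beta_f-\beta)$ and hence the dichotomy $(k_f,\beta)$ for \emph{any} $\bar y$ in the ball. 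That is why item~3 is stated on all of $[-\rho,\rho]$ and with a strict inequality: the slack is not cosmetic, it is what pays for the last step.
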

\begin{proof}
The proof, similar to that of~\cite[Proposition 3.2]{lnor},
is based on~\cite[Lemma 3.3]{aloo}. We include the details in this Carath\'{e}odory setting for the reader's
convenience.
\par
Let us work in the attractive case. We call $\mB_\nu:=
\{\bar y\in C(\R,\R)\,|\;\n{\bar y}_\infty\le\nu\}$ for $\nu>0$ and,
to simplify the notation, $b:=\wit b_f$. We assume that $g$ satisfies
conditions 1, 3 and 4, and call $l^{f,g}_\rho:=\beta_f-\beta-
\sup_{|x|\le\rho}|f_x(t,\wit b_f(t)+x)-g_x(t,\wit b_f(t)+x)|$.
Condition \hyperlink{f5}{\bf f5} allows us to find $\delta\in(0,\rho]$
such that $|f_x(t,b(t)+\bar y(t))-f_x(t,b(t))|\le l^{f,g}_\rho$
$\,l$-a.a.~if $\bar y\in\mB_\delta$, and hence, using hypothesis 3,
\[
\begin{split}
 g_x(t,b(t)+\bar y(t))&=f_x(t,b(t)+\bar y(t))+(g_x(t,b(t)+\bar y(t))-f_x(t,b(t)+\bar y(t)))\\
 &\le
 f_x(t,b(t))+(\beta_f-\beta)
\end{split}
\]
$l$-a.a. This bound combined with relation \eqref{3.masi} for $b$ ensures that, if $\bar y\in\mB_\delta$, then
\begin{equation}\label{3.byhyper}
 \exp\int_s^t g_x(l,b(l)+\bar y(l))\,ds \le k_f\,e^{-\beta(t-s)}\quad \text{whenever $t\ge s$}\,.
\end{equation}
In particular, if $\wit b_g=b+\bar y$ is a solution of $x'=g(t,x)$, then it is hyperbolic attractive
with dichotomy pair $(k_f,\beta)$ and satisfies $\big\|\wit b_g-b\big\|_\infty=\n{\bar y}_\infty\le\delta$.
Our goal is hence to look for such a function $\bar y$, which will be obtained as the unique fixed
point of a contractive operator $T\colon\mB_{\delta_*}\to\mB_{\delta_*}$
if $\delta_*\in (0,\delta^*]$ for a suitable $\delta^*\in (0,\delta]$.
\par
Let us define $T$.
The change of variable $y=x-b(t)$ takes $x'=g(t,x)$ to
\[
 y'=g_x(t,b(t))\,y+h(t,y)\,,
\]
where $h(t,y):=g(t,b(t))-f(t,b(t))+r(t,y)$ for $r(t,y):=g(t,b(t)+y)-g(t,b(t))-g_x(t,b(t))\,y$.
Relation \eqref{3.byhyper} for $\bar y=0$ ensures that
\[
 u(t,s):=\exp\int_s^t g_x(l,b(l))\,dl\le k_f e^{-\beta(t-s)}\quad \text{whenever $t\ge s$}\,.
\]
Clearly, $t\mapsto u(t,s)$ solves $y'=g_x(t,b(t))\,y$ with $u(s,s)=0$. It is easy to check
(see~\cite[Section 3]{copp1}) that, for each $\bar y\in \mB_\delta$,
\[
 T\bar y(t):=\int_{-\infty}^t u(t,s)\,h(s,\bar y(s))\,ds
\]
is the unique bounded solution of $y'=g_x(t,b(t))\,y+h(t,\bar y(t))$, and that
\begin{equation}\label{3.Ty}
\begin{split}
 \n{T\bar y}_\infty&\le \n{h(\cdot,\bar y(\cdot))}_{L^\infty} k_f/\beta\,,\\
 \n{T\bar y_1-T\bar y_2}_\infty&\le \n{r(\cdot,\bar y_1(\cdot))-r(\cdot,\bar y_2(\cdot))}_{L^\infty}k_f/\beta\,.
\end{split}
\end{equation}
Let us fix $\delta^*\in(0,\delta]$ such that $l^{f_x}\delta^*< 0.9\,\beta/k_f$,
take any $\delta_*\in(0,\delta^*]$, and assume that $g$ also satisfies 2.
To check that $T\colon\mB_{\delta_*}\to\mB_{\delta_*}$ is well-defined, it suffices to check that
$|h(t,\bar y(t))|\le \beta\,\delta_*/k_f$ $l$-a.a. if $\bar y\in\mB_{\delta_*}$, and this bound follows from
$\big|g(t,b(t))-f(t,b(t))\big|\le \beta\,\delta_*/(2k_f)$ $l$-a.a.~(according to hypothesis 2) and
$|r(t,\bar y(t))|\le l^{f_x}|\bar y(t)|^2/2\le l^{f_x}(\delta_*)^2/2< \beta\,\delta_*/(2k_f)$ $l$-a.a.~(by
Taylor's theorem, hypothesis 4 and the choice of $\delta^*$). Similarly,
$|r(t,\bar y_1(t))-r(t,\bar y_2(t))|\le l^{f_x}|\bar y_1(t)-\bar y_2(t)|^2/2\le l^{f_x}\delta_*|\bar y_1(t)-\bar y_2(t)|\le
(0.9\,\beta/k_f)\,|\bar y_1(t)-\bar y_2(t)|$ $l$-a.a. if $\bar y_1,\,\bar y_2\in\mB_{\delta_*}$,
which combined with the second bound in \eqref{3.Ty} shows that $T\colon\mB_{\delta_*}\to\mB_{\delta_*}$
is contractive. We represent its unique fixed point by $\bar y_g\in\mB_{\delta_*}$. Then
$\wit b_g:=\wit b_f+\bar y_g$ solves $x'=g(t,x)$. As said before, this completes the proof.
\end{proof}
Let us define local {\em pullback\/} attractors and
repellers, which may appear also in the absence of hyperbolic solutions,
and which will play a fundamental role in the dynamical description of the next sections.
A solution $\bar a\colon(-\infty,\beta)\to\R$ (with $\beta\le\infty$) of~\eqref{3.ecu} is
{\em locally pullback attractive\/} if there exist $s_0<\beta$ and $\delta>0$ such that,
if $s\le s_0$
then the solutions $x(t,s,a(s)\pm\delta)$ are defined on
$[s,s_0]$ and, in addition,
\[
 \lim_{s\to-\infty}|\bar a(t)-x(t,s,\bar a(s)\pm\delta)|=0
 \quad \text{for all $t\le s_0$}\,.
\]
A solution $\bar r\colon(\alpha,\infty)\to\R$ (with $\alpha\ge-\infty$) of~\eqref{3.ecu} is
{\em locally pullback repulsive\/} if the solution $\bar r^*\colon(-\infty,-\alpha)\to\R$ of
$y'=-h(-t,y)$ given by $\bar r^*(t)=\bar r(-t)$ is locally pullback attractive.
That is, if there exist $s_0>\alpha$ and $\delta>0$ such that,
if $s\ge s_0$, then the solutions $x(t,s,\bar r(s)\pm\delta)$ are defined on
$[s_0,s]$ and, in addition,
\[
 \lim_{s\to\infty}|\bar r(t)-x(t,s,\bar r(s)\pm\delta)|=0
 \quad \text{for all $t\ge s_0$}\,.
\]
\subsection{Occurrence of an attractor-repeller pair}\label{3.subsecAR}
The main results in this paper rely on the fact that if the considered
equation has a local attractor, such attractor is in fact part of
a {\em classical attractor-repeller pair of hyperbolic solutions\/}.
Theorem \ref {3.teorhyp} (proved in Appendix \ref{appendix})
shows that
the solutions $a$ and $r$ associated to \eqref{3.ecu} by
Theorem~\ref{3.teoruno} are globally defined and uniformly separated
if and only if they are hyperbolic, in which case they provide the mentioned
attractor-repeller pair. The global dynamics in the case of existence of
such a pair is jointly described by Theorems~\ref{3.teoruno} and~\ref{3.teorhyp}.
\par
Recall that conditions \hyperlink{f1}{\bf f1}-\hyperlink{f7}{\bf f7}
are assumed on $f$, unless otherwise indicated.
\begin{teor}\label{3.teorhyp}
Assume that~\eqref{3.ecu} has bounded solutions,
and let $a$ and $r$ be the (globally defined) functions provided by
Theorem~{\em \ref{3.teoruno}}.
Then, the following assertions are equivalent:
\begin{itemize}
\item[\rm \rm(a)] The solutions $a$ and $r$ are uniformly separated: $\inf_{t\in\R}(a(t)-r(t))>0$.
\item[\rm \rm(b)] The solutions $a$ and $r$ are hyperbolic, with $a$ attractive and $r$ repulsive.
\item[\rm \rm(c)] The equation~\eqref{3.ecu} has two different hyperbolic solutions.
\end{itemize}
In this case,
\begin{itemize}
\item[\rm \rm(i)] let $(k_a,\beta_a)$ and $(k_r,\beta_r)$ be dichotomy constant pairs for
the hyperbolic solutions $a$ and $r$, respectively, and let us choose
any $\bar\beta_a\in(0,\beta_a)$ and any $\bar\beta_r\in(0,\beta_r)$.
Then, given $\ep>0$, there exist $k_{a,\ep}\ge 1$ and
$k_{r,\ep}\ge 1$ (respectively depending also on the choices of $\bar\beta_a$ and
of $\bar\beta_r$) such that
\[
\begin{split}
 &\quad\qquad |a(t)-x(t,s,x_0)|\le k_{a,\ep}\,e^{-\bar\beta_a(t-s)}|a(s)-x_0|
 \quad\text{if $x_0\ge r(s)+\ep$ and $t\ge s$}\,,\\
 &\quad\qquad |r(t)-x(t,s,x_0)|\le k_{r,\ep}\,e^{\bar\beta_r(t-s)}|r(s)-x_0|
 \quad\text{if $x_0\le a(s)-\ep$ and $t\le s$}\,.
\end{split}
\]
In addition,
\[
\begin{split}
 &\quad\qquad |a(t)-x(t,s,x_0)|\le k_a\,e^{-\beta_a(t-s)}|a(s)-x_0|
 \quad\text{if $x_0\ge a(s)$ and $t\ge s$}\,,\\
 &\quad\qquad |r(t)-x(t,s,x_0)|\le k_r\,e^{\beta_r(t-s)}|r(s)-x_0|
 \quad\text{if $x_0\le r(s)$ and $t\le s$}\,.
\end{split}
\]
\item[\rm \rm(ii)] The equation~\eqref{3.ecu} has no more
hyperbolic solutions, and
$a$ and $r$ are the unique bounded solutions of
\eqref{3.ecu} which are uniformly separated.
\end{itemize}
\end{teor}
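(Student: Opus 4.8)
The plan is to prove the equivalence as the cycle (a)$\Rightarrow$(b)$\Rightarrow$(c)$\Rightarrow$(a), and then to deduce (i) and~(ii). Since $\mB\ne\emptyset$, Theorem~\ref{3.teoruno}(iv) already provides that $a,r$ are globally defined bounded solutions with $r\le a$ and $\mB=\{(s,x_0)\,|\;r(s)\le x_0\le a(s)\}\subseteq\R\times[-m,m]$; put $M:=\sup_t(a(t)-r(t))<\infty$ and let $\delta_0>0$ be the strict concavity modulus of~\hyperlink{f7}{{\bf f7}} on an interval containing $[-m,m]$. Implication (b)$\Rightarrow$(c) is immediate: a single solution cannot be both hyperbolic attractive and hyperbolic repulsive, since its variational integral over forward intervals would have to tend to $-\infty$ and to $+\infty$ simultaneously; so under (b) the solutions $a$ and $r$ are distinct and we have two hyperbolic solutions.

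The implication (c)$\Rightarrow$(a) I would obtain by passing to the compactification of the equation constructed in the Appendix. If $b_1\ne b_2$ are two hyperbolic solutions then they are bounded, hence their graphs lie in $\mB$, i.e.\ $r\le b_i\le a$; assume without loss of generality $b_1<b_2$. By the standard theory of the compact hull flow (see the Appendix; cf.\ Proposition~\ref{3.proppersiste}), each $b_i$ extends to a \emph{continuous} equilibrium $\wit b_i$ of the induced skew-product flow on $\W$; since $b_1<b_2$ along the dense base orbit of $f$, continuity gives $\wit b_1\le\wit b_2$ on $\W$, and if they agreed at a single point the corresponding solutions would coincide — so $\wit b_1<\wit b_2$ on all of $\W$, whence $\inf_\W(\wit b_2-\wit b_1)>0$ by compactness. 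Restricting to the orbit of $f$ this yields $\inf_t(b_2(t)-b_1(t))>0$, and then $\inf_t(a(t)-r(t))\ge\inf_t(b_2(t)-b_1(t))>0$, which is (a).

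The core is (a)$\Rightarrow$(b). Assume $\rho:=\inf_t(a(t)-r(t))>0$. Because $a(t),r(t)\in[-m,m]$ and $x\mapsto f_x(t,x)$ is, $l$-a.a., strictly decreasing with modulus $\ge\delta_0$ there, one has the pointwise \emph{spectral gap} $f_x(t,a(t))\le f_x(t,r(t))-\delta_0\rho$ ($l$-a.a.). Passing again to the compact skew-product flow of the Appendix, $a$ and $r$ lift to continuous equilibria $\wit a\ge\wit r$ with $\inf_\W(\wit a-\wit r)\ge\rho$, the region $\{\wit r\le x\le\wit a\}$ is compact and invariant with $\wit a$ (resp.\ $\wit r$) as its upper (resp.\ lower) equilibrium, and — by the concavity of the fibre maps and the resulting attractor/repeller structure recorded in the Appendix — every orbit of its interior is forward asymptotic to $\wit a$ and backward asymptotic to $\wit r$. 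This forces the dynamical (Sacker--Sell) spectral interval along $\wit a$ to lie in $(-\infty,0]$ and that along $\wit r$ in $[0,\infty)$; the spectral gap separates the two intervals by at least $\delta_0\rho$, and a separate argument excludes $0$ from either spectrum — were $0$ a spectral value, the strict lower solutions $b^\nu=\nu a+(1-\nu)r$ together with the saddle-node mechanism of the Appendix would force the collision $\wit a=\wit r$, contradicting $\rho>0$. Hence both intervals avoid $0$, i.e.\ $a$ is hyperbolic attractive and $r$ hyperbolic repulsive. I expect this last step — turning the qualitative attractor/repeller picture into a uniform, hyperbolic one (equivalently, excluding $0$ from the spectrum) — to be the main obstacle; it is genuinely a feature of the compact skew-product, since the purely scalar estimate obtainable from concavity of $x_0\mapsto x(t,s,x_0)$, namely $\exp\int_s^t f_x(l,a(l))\,dl\le (a(t)-r(t))/(a(s)-r(s))\le M/\rho$, gives only a uniform bound and no decay.

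Under the equivalent hypotheses, (ii) follows because a hyperbolic solution lifts to a continuous equilibrium on $\W$ and a concave scalar skew-product has at most two equilibria (the upper $\wit a$ and the lower $\wit r$), so $a$ and $r$ are the only hyperbolic solutions; and since every bounded solution in the open strip is heteroclinic from $r$ (as $t\to-\infty$) to $a$ (as $t\to+\infty$), any pair of uniformly separated bounded solutions must be $(r,a)$. For (i), writing $\frac{d}{dt}(a(t)-x(t,s,x_0))=c(t)\,(a(t)-x(t,s,x_0))$ with $c(t)=\int_0^1 f_x(t,x(t,s,x_0)+\theta(a(t)-x(t,s,x_0)))\,d\theta$, and noting that $x_0\ge a(s)$ gives $x(t,s,x_0)\ge a(t)$ for $t\ge s$, hence $c(t)\le f_x(t,a(t))$ $l$-a.a., the dichotomy~\eqref{3.masi} yields $|a(t)-x(t,s,x_0)|\le k_a\,e^{-\beta_a(t-s)}|a(s)-x_0|$; the estimate for $r$ when $x_0\le r(s)$, $t\le s$, is symmetric through~\eqref{3.menosi}. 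For the weaker condition $x_0\ge r(s)+\ep$, hyperbolicity of $r$ together with compactness of $\W$ gives a time $\sigma_\ep>0$ depending only on $\ep$ after which the solution has left a fixed neighbourhood of $r$ and entered a compact tube around $a$; applying the previous estimate from that moment, with the small loss of rate ($\bar\beta_a<\beta_a$) coming from the roughness bound of Proposition~\ref{3.proppersiste}, and absorbing the transient on $[s,s+\sigma_\ep]$ into the multiplicative constant, produces $k_{a,\ep}$ and the rate $\bar\beta_a$; the statement for $r$ is analogous.
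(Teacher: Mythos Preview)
Your overall architecture matches the paper's: both argue the cycle (a)$\Rightarrow$(b)$\Rightarrow$(c)$\Rightarrow$(a) and derive (i)--(ii) afterwards, and both rely on the compact skew-product flow on the hull $\W(f)$ built in the Appendix. Two points deserve comment.

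\emph{(c)$\Rightarrow$(a).} Here you take a different, and arguably more direct, route than the paper. You use that a hyperbolic solution extends (by roughness of exponential dichotomy along limits of time-translates) to a continuous equilibrium on $\W(f)$, and then compactness of $\W(f)$ turns the strict inequality $\wit b_1<\wit b_2$ into uniform separation. The paper instead deduces (c)$\Rightarrow$(a) from Theorem~\ref{3.teorlb*}: two hyperbolic solutions at $\lambda=0$ persist under small negative $\lambda$ (Proposition~\ref{3.proppersiste}), forcing $\lambda^*(f)<0$, and then the elementary proof of Theorem~\ref{3.teorlb*}(iv) given in the Appendix yields $\inf_t(a-r)>0$. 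Both arguments are sound; yours stays inside the hull picture, while the paper's leverages the bifurcation parameter.

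\emph{(a)$\Rightarrow$(b).} Your acknowledged gap is real, and the fix you propose does not close it. The sentence ``were $0$ a spectral value, the strict lower solutions $b^\nu$ together with the saddle-node mechanism of the Appendix would force $\wit a=\wit r$'' is circular: the saddle-node statement that excludes hyperbolic solutions at $\lambda^*$ is Theorem~\ref{3.teorlb*}(iii), and the paper explicitly notes that its proof \emph{uses} (a)$\Rightarrow$(b). So you cannot invoke it here. The paper does not write out (a)$\Rightarrow$(b) either, but it is precise about what fills the gap: the long technical proof of \cite[Theorem~3.5]{lnor} carries over to the present Carath\'eodory setting, and the one step that needs checking is the first-approximation theorem (Hale, Theorem~III.2.4) for the equation $z'=f_x(t,\wit b(t))\,z+h(t,z)$, which goes through because the $L^\infty$ $l$-bounds on $f_x$ give $|h(t,z)|\le\ep|z|$ $l$-a.a.\ whenever $|z|$ is small. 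That first-approximation step is also what makes your transient-time argument for the first pair of estimates in (i) rigorous. Your derivation of the second pair of estimates in (i) (the cases $x_0\ge a(s)$ and $x_0\le r(s)$), via the mean-value coefficient $c(t)\le f_x(t,a(t))$ obtained from the monotonicity of $f_x$, is clean and correct.
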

\subsection{A nonautonomous saddle-node bifurcation pattern.}
Let us take $f$ as in \eqref{3.ecu} (i.e., satisfying
\hyperlink{f1}{\bf f1}-\hyperlink{f7}{\bf f7}),
and consider the parametric perturbations
\begin{equation}\label{3.ecuconlb}
 x'=f(t,x)+\lambda\,.
\end{equation}
Let $\mB_\lb$ be the (possibly empty) set of bounded solutions, and
$a_\lb$ and $r_\lb$ the upper and lower bounded solutions provided by
Theorem \ref{3.teoruno} when $\mB_\lb$ is nonempty.
Next, we aim to show the existence of a bifurcation value
$\lb^*$ for \eqref{3.ecuconlb}, which splits the space of parameters
in values where there are no bounded solutions and values
where two hyperbolic solutions exist. We will
talk hence about nonautonomous saddle-node bifurcation.
The proof of the next result, which is required for the dynamical
classification that we explain in Section \ref{4.sec},
is given in Appendix \ref{appendix}. The existence of the constants
$\rho$ and $m_\rho^f$ in the statement is ensured by
\hyperlink{f6}{\bf f6} and \hyperlink{f2}{\bf f2}.
\begin{teor}\label{3.teorlb*}
Let $\rho>0$ satisfy $f(t,x)<0$ if $|x|>\rho$ for all $t\in\mR$,
and let $m_\rho^f:=\big\|\sup_{|x|\le\rho}f({\cdot},x)\big\|_{L^\infty}$.
There exists a unique $\lb^*=\lb^*(f)\in[-\|m_\rho^f\|_{L^\infty},
\n{f({\cdot},0)}_{L^\infty}]$ such that
\begin{itemize}
\item[\rm \rm(i)] $\mB_\lb$ is empty if and only if $\lb<\lb^*$.
\item[\rm \rm(ii)] If $\lb^*\le\lb_1<\lb_2$,
then $\mB_{\lb_1}\varsubsetneq\mB_{\lb_2}$. More precisely,
\[
 r_{\lb_2}<r_{\lb_1}\le a_{\lb_1}<a_{\lb_2}\,.
\]
In addition,
$\lim_{\lb\to\infty}a_\lb(t)=\infty$ and $\lim_{\lb\to\infty}r_\lb(t)=-\infty$
uniformly on $\R$.
\item[\rm \rm(iii)] $\inf_{t\in\R}(a_{\lb^*}(t)-r_{\lb^*}(t))=0$,
and~\eqref{3.ecuconlb}$_{\lb^*}$ has no hyperbolic solutions.
\item[\rm(iv)] If $\lb>\lb^*$, then $a_\lb$ and $r_\lb$ are
uniformly separated and the unique hyperbolic solutions
of~\eqref{3.ecuconlb}$_\lb$.
\item[\rm \rm(v)] $\lb^*(f+\lb)=\lb^*(f)-\lb$ for any $\lb\in\R$.
\end{itemize}
\end{teor}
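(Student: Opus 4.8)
**Proof plan for Theorem \ref{3.teorlb*}.**

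The strategy is to exploit monotonicity in $\lambda$ together with the existence/nonexistence criterion from Remark \ref{3.notabasta}, and then invoke Theorem \ref{3.teorhyp} to upgrade the boundary case to a statement about hyperbolicity. First I would set
\[
 \lb^* := \inf\{\lb\in\R\,|\;\mB_\lb\neq\emptyset\}
\]
and show this infimum is over a nonempty set bounded below, so $\lb^*$ is a well-defined real number in the asserted interval. For the upper bound: if $\lb\ge\n{f({\cdot},0)}_{L^\infty}$, then $b\equiv 0$ satisfies $b'(t)=0\le f(t,0)+\lb$ $l$-a.a., so part (v) of Theorem \ref{3.teoruno} gives $\mB_\lb\ne\emptyset$. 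For the lower bound: if $\lb<-\|m_\rho^f\|_{L^\infty}$, then $f(t,x)+\lb<0$ for $l$-a.a.\ $t$ and \emph{all} $x$ with $|x|\le\rho$, while $f(t,x)+\lb<0$ for $|x|>\rho$ by the choice of $\rho$ (since adding a negative $\lb$ only helps); hence every solution is strictly decreasing wherever it is defined and no bounded solution can exist, so $\mB_\lb=\emptyset$. This gives $\lb^*\in[-\|m_\rho^f\|_{L^\infty},\n{f({\cdot},0)}_{L^\infty}]$.

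Next, the \textbf{monotonicity backbone}. If $\lb_1<\lb_2$ and $\mB_{\lb_1}\neq\emptyset$, then $a_{\lb_1}$ is a globally defined bounded solution of \eqref{3.ecuconlb}$_{\lb_1}$, so it satisfies $a_{\lb_1}'(t)=f(t,a_{\lb_1}(t))+\lb_1<f(t,a_{\lb_1}(t))+\lb_2$ $l$-a.a.; applying part (v) of Theorem \ref{3.teoruno} to equation \eqref{3.ecuconlb}$_{\lb_2}$ with $b=a_{\lb_1}$ yields $\mB_{\lb_2}\neq\emptyset$ and $r_{\lb_2}<a_{\lb_1}<a_{\lb_2}$. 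A symmetric argument using $r_{\lb_1}$ as a \emph{strict upper} solution of \eqref{3.ecuconlb}$_{\lb_2}$ (or equivalently, applying the same reasoning to the time-reversed equation) gives $r_{\lb_2}<r_{\lb_1}$. Combining, $r_{\lb_2}<r_{\lb_1}\le a_{\lb_1}<a_{\lb_2}$ for $\lb^*\le\lb_1<\lb_2$, which is (ii); note this monotonicity also shows that $\{\lb\,|\;\mB_\lb\neq\emptyset\}$ is an interval that is open to the left endpoint unless it contains it, and that for $\lb<\lb^*$ we get $\mB_\lb=\emptyset$, which is (i). For the blow-up statements in (ii), I would compare with constant sub/supersolutions: for $\lb$ large, any large constant $C$ with $f(t,C)+\lb>0$ for $l$-a.a.\ $t$ is a strict lower solution, forcing $a_\lb\ge C$; coercivity \hyperlink{f6}{\bf f6} and the $m$-bound \hyperlink{f2}{\bf f2} make this quantitative and uniform in $t$, giving $a_\lb\to\infty$ uniformly, and symmetrically $r_\lb\to-\infty$.

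For \textbf{part (iii)}, the key is to show $\mB_{\lb^*}\ne\emptyset$ (so that the infimum is attained) and that $a_{\lb^*}$ and $r_{\lb^*}$ touch. Attainment follows from a compactness argument: pick $\lb_n\downarrow\lb^*$, take bounded solutions $u_n\in\mB_{\lb_n}$ all valued in $[-m,m]$ (uniform bound from Theorem \ref{3.teoruno}(iv), with $m$ depending only on $f$ for, say, $\lb\le\lb^*+1$), and extract via Ascoli--Arzel\`a — the $m$- and $l$-bounds \hyperlink{f2}{\bf f2}, \hyperlink{f3}{\bf f3} give equicontinuity on compacta — a locally uniform limit $u$ which, by the standard passage to the limit in the integral form $u_n(t)=u_n(s)+\int_s^t(f(\cdot,u_n)+\lb_n)$, solves \eqref{3.ecuconlb}$_{\lb^*}$ and is bounded. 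Hence $\mB_{\lb^*}\neq\emptyset$. Now suppose for contradiction that $\inf_t(a_{\lb^*}-r_{\lb^*})>0$; then by Theorem \ref{3.teorhyp}, $a_{\lb^*}$ and $r_{\lb^*}$ are hyperbolic, and the persistence Proposition \ref{3.proppersiste} applied with the perturbation $g(t,x)=f(t,x)+\lb$ for $\lb$ slightly less than $\lb^*$ (hypotheses 2--4 are trivially met since $g-f=\lb$ is constant and $g_x=f_x$) produces hyperbolic — in particular bounded — solutions of \eqref{3.ecuconlb}$_\lb$ for $\lb<\lb^*$, contradicting $\mB_\lb=\emptyset$. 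Therefore $\inf_t(a_{\lb^*}-r_{\lb^*})=0$, and by Theorem \ref{3.teorhyp} (the equivalence (a)$\Leftrightarrow$(b)) this also means \eqref{3.ecuconlb}$_{\lb^*}$ has no hyperbolic solutions. \textbf{Part (iv)} is then immediate: for $\lb>\lb^*$ the inequalities in (ii) with $\lb_1=\lb^*$... actually I need uniform separation, which I would get either by a further persistence/continuation argument or more directly by noting that if $a_\lb$ and $r_\lb$ were not uniformly separated then Theorem \ref{3.teorhyp}(c) would fail, so I would instead argue: strict concavity \hyperlink{f7}{\bf f7} implies $b=(a_{\lb^*}+r_\lb)/2$ type combinations are strict lower solutions, pushing a genuine gap — cleanest is to show directly that for $\lb>\lb^*$ there is a strict lower solution with a uniform gap below $m$, forcing $\inf_t(a_\lb-r_\lb)>0$, then Theorem \ref{3.teorhyp} gives hyperbolicity and uniqueness. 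Finally \textbf{part (v)} is a tautology: $\mB_\lb$ for the vector field $f+\mu$ perturbed by $\lb$ equals $\mB_{\lb+\mu}$ for $f$, so the threshold shifts by $-\mu$.

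The \textbf{main obstacle} I anticipate is part (iii) — specifically, proving $\inf_t(a_{\lb^*}-r_{\lb^*})=0$ cleanly. The contradiction via persistence is the natural route, but one must be careful that the constant $m$ bounding solutions, the dichotomy constant pairs, and the $\delta^*$ in Proposition \ref{3.proppersiste} can all be chosen uniformly for $\lb$ in a small left-neighbourhood of $\lb^*$; since the perturbation here is merely an additive constant and $f_x$ is unchanged, this uniformity should hold, but it is the point requiring genuine care. The attainment of the infimum (that $\mB_{\lb^*}\ne\emptyset$) via Ascoli--Arzel\`a is routine given the $m$- and $l$-bounds, and everything else reduces to applying Theorem \ref{3.teoruno}(v) and Theorem \ref{3.teorhyp}.
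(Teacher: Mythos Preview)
Your approach to parts (i), (ii), (iii) and (v) is essentially the paper's: define $\lambda^*$ as the infimum, use Theorem \ref{3.teoruno}(v) for the monotone inclusions, obtain $\mB_{\lambda^*}\neq\emptyset$ by Ascoli--Arzel\`a, and rule out $\inf_t(a_{\lambda^*}-r_{\lambda^*})>0$ via (a)$\Rightarrow$(b) of Theorem \ref{3.teorhyp} together with persistence. One small slip in (iii): ``the equivalence (a)$\Leftrightarrow$(b)'' alone only excludes \emph{two} hyperbolic solutions, not one; but your persistence argument applied to any single putative hyperbolic solution at $\lambda^*$ fixes this immediately.

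The genuine gap is in \textbf{(iv)}, and your self-assessment mislocates the difficulty. You correctly notice you need $\inf_t(a_\lambda-r_\lambda)>0$ for $\lambda>\lambda^*$, but none of your three suggested routes gets there. The strict inequalities from (ii) are pointwise only; the contrapositive via Theorem \ref{3.teorhyp}(c) is circular, since in the paper (c)$\Rightarrow$(a) is itself \emph{deduced from} (iv); and a strict lower solution (be it $a_{\lambda^*}$, or a convex combination) placed between $r_\lambda$ and $a_\lambda$ gives only $r_\lambda<b<a_\lambda$ pointwise via Theorem \ref{3.teoruno}(v), not a uniform gap. The paper singles out (iv) as the only step needing a new argument in this Carath\'eodory setting, and proves it by a direct differential inequality: for fixed $\lambda>\lambda^*$ set $d_s(t):=x_\lambda(t,s,a_{\lambda^*}(s))-a_{\lambda^*}(t)\ge 0$, use the mean value theorem and the $L^\infty$ bound on $f_x$ (condition \hyperlink{f3}{\bf f3}) to get $d_s'(t)\ge -\kappa\,d_s(t)+(\lambda-\lambda^*)$, integrate to obtain $d_s(t)\ge(\lambda-\lambda^*)(1-e^{-\kappa(t-s)})/\kappa$, and then observe that since $r_\lambda<a_{\lambda^*}$ and $x_\lambda(\cdot,s,a_{\lambda^*}(s))<a_\lambda$, the quantity $a_\lambda(t_0)-r_\lambda(t_0)$ dominates $d_{t_0-l}(t_0)\ge (\lambda-\lambda^*)/(2\kappa)$ for a suitable $l>0$. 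This yields the uniform lower bound directly, with no appeal to Theorem \ref{3.teorhyp}.
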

\section{More dynamical properties under an additional hypothesis}\label{4.sec}
Let $f\colon\R\to\R$ satisfy the conditions described in Section \ref{3.sec}.
From now on, we focus on equations of the type
\begin{equation}\label{4.ecDelta}
 x'=f(t,x-\Delta(t))\,,
\end{equation}
where $\Delta\colon\R\to\R$ is an $L^\infty$ function
with the fundamental property of existence of finite asymptotic limits
$\delta_\pm:=\lim_{t\to\pm\infty}\Delta(t)$.
It is easy to check that the map $f_\Delta(t,x):=f(t,x-\Delta(t))$
also satisfies conditions \hyperlink{f1}{\bf f1}-\hyperlink{f7}{\bf f7}.
Hence, Theorem \ref{3.teorlb*} shows that the dynamics induced
by \eqref{4.ecDelta} fits in one of the next situations:
\begin{itemize}
\item[\rm -] \hypertarget{CA}{{\sc Case A}}: \eqref{4.ecDelta}
has two hyperbolic solutions.
\item[\rm -] \hypertarget{CB}{{\sc Case B}}: \eqref{4.ecDelta}
has at least one bounded solution but no hyperbolic ones.
\item[\rm -] \hypertarget{CC}{{\sc Case C}}: \eqref{4.ecDelta}
has no bounded solutions.
\end{itemize}
According to Theorem~\ref{3.teorhyp}, \hyperlink{CA}{\sc Case A}
is equivalent to the existence of an attractor-repeller pair.
As in the quadratic case analyzed in~\cite{lno}, much more can be said in
any of the three situations if the next condition (assumed when indicated) holds
for the ``unperturbed" equation \eqref{3.ecu} (i.e., for $x'=f(t,x)$):
\begin{hipo}\label{4.hipo}
The equation $x'=f(t,x)$ has an attractor-repeller pair $(\wit a,\wit r)$.
\end{hipo}
This section is devoted to describe these ``extra"~properties,
which relate the dynamics of \eqref{4.ecDelta} to those
of its ``past" and ``future" instances,
\begin{eqnarray}
 x'&=f(t,x-\delta_-)\,,\label{4.ecu-}\\
 x'&=f(t,x-\delta_+)\,.\label{4.ecu+}
\end{eqnarray}
By {\em past\/} and {\em future instances\/} we mean that \eqref{4.ecDelta}
is approximated by \eqref{4.ecu-} when $-t$ is large enough and by \eqref{4.ecu+} when
$t$ is large enough.
Note that equation \eqref{4.ecDelta} can be understood as a transition from \eqref{4.ecu-} to
\eqref{4.ecu+} as time increases.
\par
The next technical result, based on Proposition \ref{3.proppersiste},
plays a fundamental role.
\begin{prop}\label{4.proppersiste}
Assume Hypothesis {\rm\ref{4.hipo}}, let $(k,\beta_f)$ be a common dichotomy
constant pair for $\wit a$ and $\wit r$, and fix $\beta\in(0,\beta_f)$. Given $\ep>0$ there exists
$\delta_\ep>0$ such that, if $\Delta$ is an
$L^\infty$ function with $\n{\Delta}_{L^\infty}<\delta_\ep$, then the equation
$x'=f(t,x-\Delta(t))$ has an attractor-repeler pair $(\wit a_\Delta,\wit r_\Delta)$
with common dichotomy constant pair $(k,\beta)$, and with
$\n{\wit a-\wit a_\Delta}_\infty\le\ep$ and $\n{\wit r-\wit r_\Delta}_\infty\le\ep$.
\end{prop}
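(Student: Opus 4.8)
The plan is to realize $(\wit a_\Delta,\wit r_\Delta)$ as small perturbations of $(\wit a,\wit r)$ by invoking Proposition~\ref{3.proppersiste} twice — once for the attractive solution and once for the repulsive one — with perturbed vector field $g(t,x):=f(t,x-\Delta(t))=f_\Delta(t,x)$, and then to promote the resulting pair of hyperbolic solutions to a genuine attractor--repeller pair via Theorem~\ref{3.teorhyp}. First I would observe that it suffices to prove the statement for small $\ep$: if it holds for some $\ep_0\le\ep$ with a constant $\delta_{\ep_0}$, then the very same perturbed pair witnesses it for $\ep$. So from now on I assume $3\,\ep<\inf_{t\in\R}(\wit a(t)-\wit r(t))$.

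Next I would fix the constants feeding into Proposition~\ref{3.proppersiste}. Set $\rho:=1$ and $\rho_0:=1$, and restrict attention to $L^\infty$ functions $\Delta$ with $\n{\Delta}_{L^\infty}<\rho_0$. Since $\wit a$ and $\wit r$ are bounded, pick $j\in\N$ large enough that $[-j,j]$ contains $\wit a(t)+x-\Delta(t)$ and $\wit r(t)+x-\Delta(t)$ for all $t\in\R$, all $|x|\le\rho$ and all such $\Delta$, and let $l^f:=l^f_j$ and $l^{f_x}:=l^{f_x}_j$ be the Lipschitz bounds given by \hyperlink{f3}{\bf f3} and \hyperlink{f5}{\bf f5}. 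This single $l^{f_x}$ dominates the a priori Lipschitz quotient required by Proposition~\ref{3.proppersiste} for $\wit b_f=\wit a$ and for $\wit b_f=\wit r$, as well as the quotient of condition~4 built from $g$, uniformly in the admissible $\Delta$. Applying Proposition~\ref{3.proppersiste} with $\wit b_f=\wit a$ (attractive), dichotomy pair $(k,\beta_f)$, the given $\beta$, and this $\rho$ and $l^{f_x}$, yields a number $\delta^*_a>0$; repeating with $\wit b_f=\wit r$ (repulsive) yields $\delta^*_r>0$. I would then set $\delta_*:=\min\{\delta^*_a,\delta^*_r,\ep\}$ and
\[
 \delta_\ep:=\min\bigl\{\rho_0,\ \beta\,\delta_*/(2\,k\,l^f),\ (\beta_f-\beta)/(2\,l^{f_x})\bigr\}\,.
\]

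Now, for any $L^\infty$ function $\Delta$ with $\n{\Delta}_{L^\infty}<\delta_\ep$, I would check conditions~1--4 of Proposition~\ref{3.proppersiste} for $g=f_\Delta$ relative to both $\wit a$ and $\wit r$: condition~1 holds because $f_\Delta$ inherits \hyperlink{f1}{\bf f1}--\hyperlink{f5}{\bf f5} from $f$; condition~2 because, $l$-a.a., $|g(t,\wit a(t))-f(t,\wit a(t))|\le l^f|\Delta(t)|\le l^f\delta_\ep\le\beta\delta_*/(2k)$ by \hyperlink{f3}{\bf f3} (and similarly with $\wit r$); condition~3 because $g_x(t,y)=f_x(t,y-\Delta(t))$, whence $|f_x(t,\wit a(t)+x)-g_x(t,\wit a(t)+x)|\le l^{f_x}|\Delta(t)|<\beta_f-\beta$ by \hyperlink{f5}{\bf f5}; and condition~4 because $x\mapsto g_x(t,\wit a(t)+x)=f_x(t,\wit a(t)+x-\Delta(t))$ has, $l$-a.a., Lipschitz quotient on $[-\rho,\rho]$ at most $l^{f_x}$, its arguments lying in $[-j,j]$. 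Proposition~\ref{3.proppersiste} then produces an attractive hyperbolic solution $\wit a_\Delta$ and a repulsive hyperbolic solution $\wit r_\Delta$ of $x'=f(t,x-\Delta(t))$, each with dichotomy constant pair $(k,\beta)$ and with $\n{\wit a-\wit a_\Delta}_\infty\le\delta_*\le\ep$ and $\n{\wit r-\wit r_\Delta}_\infty\le\delta_*\le\ep$. Since $3\,\ep<\inf_t(\wit a(t)-\wit r(t))$, one has $\wit a_\Delta(t)-\wit r_\Delta(t)\ge\inf_s(\wit a(s)-\wit r(s))-2\ep>0$, so these two hyperbolic solutions are distinct; as $f_\Delta$ satisfies \hyperlink{f1}{\bf f1}--\hyperlink{f7}{\bf f7} and $x'=f_\Delta(t,x)$ has bounded solutions, Theorem~\ref{3.teorhyp} identifies $\wit a_\Delta$ and $\wit r_\Delta$ with the globally defined, uniformly separated, hyperbolic solutions associated with $x'=f_\Delta(t,x)$ by Theorem~\ref{3.teoruno}; that is, $(\wit a_\Delta,\wit r_\Delta)$ is an attractor--repeller pair.

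The hard part is not any single estimate but the order of quantifiers in the choice of constants: the compact interval $[-j,j]$, and hence $l^f$ and $l^{f_x}$, must be fixed \emph{before} Proposition~\ref{3.proppersiste} is invoked, so that one Lipschitz bound simultaneously controls $\wit a$, $\wit r$, and every shifted field $g_x(t,\cdot)$ with $\n{\Delta}_{L^\infty}<\rho_0$; and one must ensure (via $\delta_*\le\ep$ together with the preliminary reduction to small $\ep$) that the perturbed solutions stay inside the original gap $\wit a-\wit r$, which is precisely what makes them automatically distinct and lets Theorem~\ref{3.teorhyp} close the argument.
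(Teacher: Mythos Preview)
Your proof is correct and follows essentially the same approach as the paper: both verify the hypotheses of Proposition~\ref{3.proppersiste} for the perturbed field $g(t,x)=f(t,x-\Delta(t))$ relative to $\wit a$ and to $\wit r$, choosing $\delta_\ep$ small enough to force conditions~2--4. The only cosmetic differences are that the paper bounds condition~2 via $m^{f_x}$ (a sup of $|f_x|$) rather than the Lipschitz constant $l^f$, and that you spell out the final appeal to Theorem~\ref{3.teorhyp} to conclude the two hyperbolic solutions form the attractor--repeller pair, whereas the paper leaves this implicit.
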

\begin{proof}
We denote $g(t,x):=f(t,x-\Delta(t))$, so that $g_x(t,x)=f_x(t,x-\Delta(t))$.
We take $\rho>0$ and look for $m^{f_x}>0$ and $l^{f_x}>0$ such that
$\sup_{x\in[-\rho,\rho]} |f_x(t,\wit b(t)+x)|\le m^{f_x}$ and
$\sup_{x_1,x_2\in[-\rho,\rho],\,x_1\ne x_2}|f_x(t,\wit b(t)+x_1)-
f_x(t,\wit b(t)+x_1)|/|x_1-x_2|\le l^{f_x}$ $l$-a.a. for
$\wit b=\wit a$ and $\wit b=\wit r$.
Let $\delta^*=\delta^*(\beta,k_f,\rho,l^{f_x})$
be the constant determined by Proposition
\ref{3.proppersiste}, valid for $\wit b=\wit a$ and
$\wit b=\wit r$. Assume without restriction that $\ep\in(0,\delta^*]$.
We first impose $\n{\Delta}_{L^\infty}\le\rho/2$. Then,
\[
\begin{split}
 &\sup_{x_1,x_2\in[-\rho/2,\,\rho/2],\,x_1\ne x_2}\Frac{|g_x(t,\wit b(t)+x_1)-
 g_x(t,\wit b(t)+x_2)|}{|x_1-x_2|}\\
 &\quad\;
 =\sup_{x_1,x_2\in[-\rho/2,\,\rho/2],\,x_1\ne x_2}\Frac{|f_x(t,\wit b(t)+x_1-\Delta(t))-
 f_x(t,\wit b(t)+x_2-\Delta(t))|}{|x_1-x_2|}\le l^{f_x}\,,
\end{split}
\]
which shows that condition
4 of Proposition \ref{3.proppersiste} holds for $\rho/2$. We also impose
$\n{\Delta}_{L^\infty}\le\beta\ep/(2\,k_f\,m^{f_x})$, which yields
$\big|g(t,\wit b(t))-f(t,\wit b(t))\big|\le \beta\,\ep/(2k_f)$ $l$-a.a.:
condition 2 holds. Finally, we impose
$\n{\Delta}_{L^\infty}<(\beta_f-\beta)/(l^{f_x})$
to get condition 3 for $\rho/2$, namely
$\sup_{x\in[-\rho/2,\rho/2]}|f_x(t,\wit b(t)+x)-g_x(t,\wit b(t)+x)|<(\beta_f-\beta)$ $l$-a.a. The minimum of the three constants is $\delta_\ep$.
\end{proof}
\begin{nota}
Assume Hypothesis \ref{4.hipo} for $x'=f(t,x)$.
It is easy to check that, for any $d\in\R$, the equation $x'=f(t,x-d)$
also satisfies Hypothesis \ref{4.hipo}, with attractor-repeller pair
$(\wit a+d,\wit r+d)$;
and that the constant $\delta_\ep$ found in Proposition \ref{4.proppersiste}
is independent of the value of $d$ and hence valid for all these translated equations.
\end{nota}
In particular, under Hypothesis \ref{4.hipo},
the equations \eqref{4.ecu-} and \eqref{4.ecu+}
have respective attractor-repeller pairs $(\wit a +\delta_-,\wit r+\delta_-)$
and $(\wit a +\delta_+,\wit r+\delta_+)$. The already mentioned connections
among the dynamics of \eqref{4.ecDelta} and those of \eqref{4.ecu-} and \eqref{4.ecu+}
are described in the next result.
\begin{teor}\label{4.teorexist}
Assume Hypothesis~{\rm \ref{4.hipo}}. Then,
\begin{itemize}
\item[\rm(i)] there exist the functions $\ma_\Delta$ and $\mr_\Delta$
associated by Theorem~{\rm \ref{3.teoruno}} to \eqref{4.ecDelta}.
\end{itemize}
Let us represent by $x_\Delta(t,s,x_0)$ the solution with $x_\Delta(s,s,x_0)=x_0$. Then,
\begin{itemize}
\item[\rm(ii)] $\lim_{t\to-\infty}|\ma_\Delta(t)-(\wit a(t)+\delta_-)|=0$,
$\lim_{t\to-\infty}|x_\Delta(t,s,x_0)-(\wit r(t)+\delta_-)|=0$ whenever
$\ma_\Delta(s)$ exists and $x_0<\ma_\Delta(s)$,
$\lim_{t\to +\infty}|\mr_\Delta(t)-(\wit r(t)+\delta_+)|=0$, and
$\lim_{t\to +\infty}|x_\Delta(t,s,x_0)-(\wit a(t)+\delta_+)|=0$ whenever
$\mr_\Delta(s)$ exists and $x_0>\mr_\Delta(s)$.
\item[\rm(iii)] The solutions $\ma_\Delta$ and $\mr_\Delta$
are respectively locally pullback attractive and locally
pullback repulsive.
\item[\rm(iv)] $\ma_\Delta$ and $\mr_\Delta$ are uniformly separated if and only if
they are globally defined and different; i.e., if and only if
$(\wma_\Delta,\wmr_\Delta):=(\ma_\Delta,\mr_\Delta)$ is an attractor-repeller pair.
\item[\rm(v)] If the equation \eqref{4.ecDelta} has no hyperbolic solutions, then
it has at most one bounded solution $\ma_\Delta=\mr_\Delta$.
\end{itemize}
\end{teor}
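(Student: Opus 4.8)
The plan is to transport the dynamics of the two frozen limit equations $x'=f(t,x-\delta_\pm)$ --- which carry the attractor-repeller pairs $(\wit a+\delta_\pm,\wit r+\delta_\pm)$ by Hypothesis~\ref{4.hipo} and the remark following Proposition~\ref{4.proppersiste} --- onto \eqref{4.ecDelta}, by comparing \eqref{4.ecDelta} on half-lines with small $L^\infty$-perturbations of these frozen equations. For (i): given $\ep>0$, let $\delta_\ep$ be the constant of Proposition~\ref{4.proppersiste} (for a fixed $\beta\in(0,\beta_f)$); since $\Delta(t)\to\delta_-$, I would choose $t_0$ with $\sup_{t\le t_0}|\Delta(t)-\delta_-|<\delta_\ep$ and let $\bar\Delta_-$ agree with $\Delta$ on $(-\infty,t_0]$ and equal $\delta_-$ on $(t_0,\infty)$. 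Then $\|\bar\Delta_--\delta_-\|_{L^\infty}<\delta_\ep$, so Proposition~\ref{4.proppersiste} applied to the $L^\infty$ function $\bar\Delta_--\delta_-$ (together with the translation remark) gives an attractor-repeller pair $(\wih a_-,\wih r_-)$ of $x'=f(t,x-\bar\Delta_-(t))$ with $\|\wih a_--(\wit a+\delta_-)\|_\infty\le\ep$, $\|\wih r_--(\wit r+\delta_-)\|_\infty\le\ep$ and dichotomy constant pair $(k,\beta)$. As this equation coincides with \eqref{4.ecDelta} on $(-\infty,t_0]$, the restriction of $\wih a_-$ is a bounded solution of \eqref{4.ecDelta} on a negative half-line, so $\mB^-$ is nonempty for $f_\Delta$; an analogous construction near $+\infty$ (with $\bar\Delta_+$ equal to $\Delta$ on some $[t_1,\infty)$ and to $\delta_+$ before, producing $(\wih a_+,\wih r_+)$ close to $(\wit a+\delta_+,\wit r+\delta_+)$) shows $\mB^+$ is nonempty. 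Theorem~\ref{3.teoruno} applied to $f_\Delta$ then yields $\ma_\Delta$ and $\mr_\Delta$, proving (i).

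The crucial step is the identity $\ma_\Delta\equiv\wih a_-$ on $(-\infty,t_0]$. On one hand $\wih a_-|_{(-\infty,t_0]}$ is bounded and solves \eqref{4.ecDelta}, so $\wih a_-(t_0)\le\ma_\Delta(t_0)$ by Theorem~\ref{3.teoruno}(i); on the other hand $\ma_\Delta|_{(-\infty,t_0]}$ is bounded and solves $x'=f(t,x-\bar\Delta_-(t))$, whose Theorem~\ref{3.teoruno}-solution ``$a$'' equals the hyperbolic attractor $\wih a_-$ by Theorem~\ref{3.teorhyp} (since $\wih a_-\ne\wih r_-$ are two hyperbolic solutions), so $\ma_\Delta(t_0)\le\wih a_-(t_0)$; equality at $t_0$ forces $\ma_\Delta\equiv\wih a_-$ on $(-\infty,t_0]$ by uniqueness. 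Letting $\ep\downarrow0$, so $t_0=t_0(\ep)\to-\infty$, gives $\lim_{t\to-\infty}|\ma_\Delta(t)-(\wit a(t)+\delta_-)|=0$. For the second limit in (ii): when $x_0<\ma_\Delta(s)$ the solution $z:=x_\Delta(\cdot,s,x_0)$ is past-bounded (Theorem~\ref{3.teoruno}(i)) and, by uniqueness, $z(t)<\ma_\Delta(t)=\wih a_-(t)$ for $t\le t_0$; applying the backward dichotomy estimate of Theorem~\ref{3.teorhyp}(i) for $x'=f(t,x-\bar\Delta_-(t))$ at the time $t_0$, with $\ep'=\wih a_-(t_0)-z(t_0)>0$, yields $z(t)\to\wih r_-(t)$ as $t\to-\infty$, hence $\limsup_{t\to-\infty}|z(t)-(\wit r(t)+\delta_-)|\le\ep$; since $\ep$ is arbitrary this limit is $0$. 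The two statements at $+\infty$ are proved the same way using $\bar\Delta_+$, Theorem~\ref{3.teoruno}(ii), and the forward estimate of Theorem~\ref{3.teorhyp}(i).

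For (iii), I would fix one small $\ep$ such that $\inf_t(\wih a_-(t)-\wih r_-(t))>0$ (possible since $\inf_t(\wit a-\wit r)>0$), put $s_0:=t_0(\ep)$, and pick $\delta>0$ with $2\delta<\inf_t(\wih a_-(t)-\wih r_-(t))$. For $s\le s_0$ one has $\ma_\Delta(s)=\wih a_-(s)$ and \eqref{4.ecDelta} coincides with $x'=f(t,x-\bar\Delta_-(t))$ on $(-\infty,s_0]$, so the dichotomy estimates of Theorem~\ref{3.teorhyp}(i) (the estimate for ``$x_0\ge a(s)$'' applied to $+\delta$, its $\ep$-version to $-\delta$, with $a=\wih a_-$) give $|\ma_\Delta(t)-x_\Delta(t,s,\ma_\Delta(s)\pm\delta)|\le k\,e^{-\beta(t-s)}\,\delta$ for $t\le s_0$, which tends to $0$ as $s\to-\infty$ for each fixed $t\le s_0$ and which keeps these solutions near $\wih a_-$, hence defined on $[s,s_0]$; thus $\ma_\Delta$ is locally pullback attractive. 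For $\mr_\Delta$, the symmetric construction near $+\infty$ gives that $\mr_\Delta$ coincides, on a half-line $[s_0',\infty)$, with the hyperbolic repeller $\wih r_+$, and the backward dichotomy estimates of Theorem~\ref{3.teorhyp}(i) (valid for $t\le s$) show $x_\Delta(t,s,\mr_\Delta(s)\pm\delta)\to\mr_\Delta(t)$ as $s\to+\infty$ for each fixed $t\ge s_0'$; unwinding the definition, $\mr_\Delta$ is locally pullback repulsive.

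For (iv) and (v) one applies the structure theory of Section~\ref{3.sec} to $f_\Delta$. By Theorem~\ref{3.teorlb*} with $\lambda=0$, exactly one of the following holds for \eqref{4.ecDelta}: $\lambda^*(f_\Delta)>0$ and there is no bounded solution; $\lambda^*(f_\Delta)=0$, $\inf_t(\ma_\Delta-\mr_\Delta)=0$, and there is no hyperbolic solution; or $\lambda^*(f_\Delta)<0$ and $(\ma_\Delta,\mr_\Delta)$ is a uniformly separated attractor-repeller pair. In the middle case the quantitative strict concavity \hyperlink{f7}{\bf f7} --- which makes $\tfrac12(\ma_\Delta+\mr_\Delta)$ a strict lower solution of \eqref{4.ecDelta} whose defect is bounded below by a multiple of $(\ma_\Delta-\mr_\Delta)^2$, cf.~Theorem~\ref{3.teoruno}(v) and \cite{lno} --- forces $\ma_\Delta\equiv\mr_\Delta$; this is (v) (no hyperbolic solution means we are not in the third case), and it also gives (iv), since ``$\ma_\Delta,\mr_\Delta$ globally defined and different'' then occurs only in the third case, which is precisely ``uniformly separated'', i.e.~the existence of an attractor-repeller pair. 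I expect the main obstacle to be the half-line identity $\ma_\Delta\equiv\wih a_-$ --- recognizing the abstractly defined maximal past-bounded solution of \eqref{4.ecDelta} as the genuine hyperbolic attractor of a nearby frozen equation --- since this is what upgrades the qualitative ``connection'' assertions in (ii)--(iii) to quantitative exponential control; the rigidity used in (v), ruling out a ``two bounded, non-hyperbolic solutions'' scenario where the strictly concave structure enters essentially, is the other delicate point.
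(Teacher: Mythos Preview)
Your approach is correct and is essentially the one the paper has in mind (the paper's proof just says ``repeat \cite[Theorem~3.4]{lno} using Proposition~\ref{4.proppersiste} and Theorems~\ref{3.teoruno}, \ref{3.teorhyp}''). In particular, the half-line identification $\ma_\Delta\equiv\wih a_-$ on $(-\infty,t_0]$ via the two opposite inequalities is exactly the mechanism that drives (i)--(iii), and your treatment of (iii) through the dichotomy estimates of Theorem~\ref{3.teorhyp}(i) is right.

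There is one genuine gap, in your argument for (v). The implication ``$\inf_{t\in\R}(\ma_\Delta(t)-\mr_\Delta(t))=0\;\Rightarrow\;\ma_\Delta\equiv\mr_\Delta$'' does \emph{not} follow from strict concavity \hyperlink{f7}{\bf f7} alone: Theorem~\ref{3.teorlb*}(iii) shows that for a general concave coercive $f$ (with no asymptotic hypothesis) one may well have $\inf(a_{\lb^*}-r_{\lb^*})=0$ with $a_{\lb^*}\ne r_{\lb^*}$. The strict-lower-solution observation you mention only re-proves $\mr_\Delta<b^{1/2}<\ma_\Delta$, which is automatic. What actually closes the argument is part (ii), which you have already established: if $\ma_\Delta\ne\mr_\Delta$ (hence $\mr_\Delta(s)<\ma_\Delta(s)$ for all $s$), then applying the second and fourth limits of (ii) to $x_0=\mr_\Delta(s)$ and $x_0=\ma_\Delta(s)$ respectively, together with the first and third limits, gives $\ma_\Delta(t)-\mr_\Delta(t)\to(\wit a-\wit r)(t)$ as $t\to\pm\infty$. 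Since $\inf_{t\in\R}(\wit a(t)-\wit r(t))>0$, the difference $\ma_\Delta-\mr_\Delta$ is bounded below by a positive constant outside a compact interval, and by continuity also on that interval; hence $\inf_{t\in\R}(\ma_\Delta-\mr_\Delta)>0$, and Theorem~\ref{3.teorhyp} produces two hyperbolic solutions, contradicting the hypothesis of (v). The same reasoning gives (iv) directly, without passing through the trichotomy.
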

\begin{proof}
The proof repeats step by step that of~\cite[Theorem 3.4]{lno}. The results used there
for the quadratic case have already been stated in this general concave case:
Proposition \ref{4.proppersiste} and Theorems \ref{3.teoruno} and \ref{3.teorhyp}.
\end{proof}
Let us summarize part of the information provided for \eqref{4.ecDelta}
by Theorem \ref{4.teorexist}
combined with Theorems \ref{3.teorlb*} and \ref{3.teorhyp}, under the assumptions
\hyperlink{f1}{\bf f1}-\hyperlink{f7}{\bf f7} and Hypothesis
\ref{4.hipo} on $f$, and for an $L^\infty$ function $\Delta$ with finite asymptotic
limits $\delta_\pm$.
\par
- \hyperlink{CA}{\sc Case A} holds for \eqref{4.ecDelta}
if and only if the equation has an attractor-repeller pair $(\wma_\Delta,\wmr_\Delta)$;
or, equivalently, if it has two different
bounded solutions. If so, the attractor-repeller pair connects
$(\wit a +\delta_-,\wit r +\delta_-)$ to $(\wit a +\delta_+,\wit a +\delta_+)$:
$\lim_{t\to\pm\infty}|\wma_\Delta(t)-(\wit a(t) +\delta_\pm)|=0$ and
$\lim_{t\to\pm\infty}|\wmr_\Delta(t)-(\wit r(t) +\delta_\pm)|=0$.
This situation is often called ({\em end-point}) {\em tracking}.
In addition, $\wma_\Delta(t)$ is the unique solution approaching $\wit a +\delta_-$ as time
decreases, and $\wmr_\Delta(t)$ is the unique solution approaching $\wit r +\delta_+$ as time
increases. Note also that \hyperlink{CA}{\sc Case A} holds if and only
$\lambda^*(f_\Delta)<0$: see Theorem \ref{3.teorlb*}, and recall that
$f_\Delta(t,x):=f(t,x-\Delta(t))$.
\par
- \hyperlink{CB}{\sc Case B} holds if and only if \eqref{4.ecDelta}
has a unique bounded solution $\mb_\Delta$. In this case, this
solution is locally pullback attractive and repulsive (see Subsection \ref{3.subsechyp}),
and it connects $\wit a +\delta_-$ to $\wit r +\delta_+$:
$\lim_{t\to-\infty}|\mb_\Delta(t)-(\wit a(t) +\delta_-)|=0$ and
$\lim_{t\to+\infty}|\mb_\Delta(t)-(\wit r(t) +\delta_+)|=0$.
And no other solution of \eqref{4.ecDelta} satisfies
any of these two properties. Note also that \hyperlink{CB}{\sc Case B}
holds if and only $\lambda^*(f_\Delta)=0$.
\par
- \hyperlink{CB}{\sc Case C} holds
if and only if the equation has no bounded solutions. In this case, there
exists a locally pullback attractive solution $\ma_\Delta$ which is the
unique solution bounded at $-\infty$ approaching $\wit a+\delta _-$
as time decreases (i.e., with
$\lim_{t\to-\infty}|\ma_\Delta(t)-(\wit a(t) +\delta_-)|=0$); and it exists a locally pullback
repulsive solution $\mr_\Delta\ne\ma_\Delta$ which is the unique solution
bounded at $+\infty$ approaching $\wit r+\delta_+$
as time increases (i.e., with $\lim_{t\to+\infty}|\mr_\Delta(t)-(\wit r(t) +\delta_+)|=0$).
Note also that \hyperlink{CC}{\sc Case C}
holds if and only $\lambda^*(f_\Delta)>0$.
This situation of loss of connection is sometimes called {\em tipping}.
\par
Some drawings showing the dynamical behavior in each one of these three cases
can be found in~\cite[Figures 1-6]{lnor} (with a typo: the
graphs of \hyperlink{CA}{\sc Cases A} and \hyperlink{CC}{C} are interchanged).
\section{Rigorous estimates of tracking and tipping for piecewise constant transitions}\label{5.sec}
From now, $f$ is assumed to satisfy conditions
\hyperlink{f1}{\bf f1}-\hyperlink{f7}{\bf f7} and
$\G\colon\R\to\R$ is a bounded and continuous function
such that there exist $\gamma_\pm:=\lim_{t\to\pm\infty}\G(t)$ in $\R$.
Observe that these hypotheses on $\G$ are stronger than those
assumed on $\Delta $ on Section \ref{4.sec}. We consider the differential equations
\begin{equation}\label{5.eqGamma}
 x'=f(t,x-\G(t))
\end{equation}
always assuming Hypothesis \ref{4.hipo} on $x'=f(t,x)$. Our global purpose is to determine
if the dynamical situation of \eqref{5.eqGamma} fits in
\hyperlink{CA}{\sc Case A}-tracking or \hyperlink{CC}{C}-tipping: see the
end of Section \ref{4.sec}.
Later on, we will be interested also in parametric perturbations of the type
$x'=f(t,x-d\G(ct))$, where $c>0$ and $d>0$: the parameter $c$ acts as the {\em rate\/}
of the transition $\G$, that is, of the change from the past
$d\gamma_-$ to the future $d\gamma_+$, and $d$ determines the
{\em size\/} of this change; and we will talk of {\em rate-induced
tipping} or {\em size-induced tipping} when a small change in $c$ or $d$ causes
the global dynamics to jump from \hyperlink{CA}{\sc Case A} to \hyperlink{CC}{\sc Case C}.
But, for our first results, we work
with fixed values of $c$ and $d$; that is, which a generic function $\G$
satisfying the assumed conditions.
\par
Our results for a continuous $\G$ as in \eqref{5.eqGamma} are, in fact, obtained
in Section \ref{6.sec}, just for the case in which $f$ is quadratic. The techniques
that we use require the discretization with respect to the time
of $\G$, as already presented in~\cite{lno}: the function $\G$
is replaced in \eqref{5.eqGamma} by a piecewise constant right-continuous
function which coincides with $\G$ at infinite equispaced points, which we describe now.
Let us define
\begin{equation}\label{5.defGch}
\begin{split}
 \G^h(t)&
 :=\G(jh)
 \quad\;\text{if }t\in\,[\,jh\,,(j+1)h\,)\text{ for }j\in\Z\text{ and }h>0\,,\\
 \G^0(t)&:=\G(t)
 \qquad \text{for }t\in\R\,,
\end{split}
\end{equation}
and consider the differential equations
\begin{equation}\label{5.ecutro}
 x'=f(t,x-\G^h(t))\,.
\end{equation}
We use \eqref{5.ecutro}$_h$ to refer to this equation for $h\ge 0$,
and we represent by $x_h(t,s,x_0)$ the maximal solution of
\eqref{5.ecutro} with value $x_0$ at $t=s$.
Our objective in this section is to determine the dynamical situation
for \eqref{5.ecutro}$_{h_0}$ for small $h_0>0$.
\begin{notas}\label{5.notatrans}
1.~Let us call $\gamma_{h,j}:=\G(jh)$ for $h>0$ and $j\in\Z$,
and consider
\begin{equation}\label{5.ecuchj}
 x'=f(t,x-\gamma_{h,j})\,.
\end{equation}
Recall once more that $x(t,s,x_0)$ is the maximal solution of the unperturbed equation
$x'=f(t,x)$ satisfying $x(s)=x_0$. Note that the solution of \eqref{5.ecuchj}$_{h,j}$ with value $x_0$ at $t=s$ is
$x(t,s,x_0-\gamma_{h,j})+\gamma_{h,j}$.
By comparing equations \eqref{5.ecutro}$_h$ and~\eqref{5.ecuchj}$_{h,j}$ we observe
that, if $h>0$ and $j\in\Z$, then
\begin{equation}\label{5.atrozos}
\begin{split}
 x_h(t,jh,x_0)=x(t,jh,x_0-\gamma_{h,j})+\gamma_{h,j}
\end{split}
\end{equation}
at those points $t$ of $[jh,(j+1)h]$ at which $x_h(t,jh,x_0)$ exists and
\begin{equation}\label{5.atrozos2}
\begin{split}
 x_h(t,jh,x_0)=x(t,jh,x_0-\gamma_{h,j-1})+\gamma_{h,j-1}
\end{split}
\end{equation}
at those points $t$ of $[(j-1)h,jh]$ at which $x_h(t,jh,x_0)$ exists.
\par
2.~Theorem~\ref{4.teorexist} ensures the existence
of the special solutions $\mah$ and $\mrh$ of \eqref{5.ecutro}$_h$
determined by Theorem \ref{3.teoruno} for
$h\ge 0$ under Hypothesis~\ref{4.hipo}.
\end{notas}
Let $(\wit a,\wit r)$ be the attractor-repeller pair provided by
Hypothesis \ref{4.hipo}. In the statements and proofs
of many of our results, the convex combinations
\begin{equation}\label{5.defbnu}
 b^\nu:=\nu\,\wit a+(1-\nu)\,\wit r
\end{equation}
for $\nu\in(0,1)$, as well as their properties, play a fundamental role.
The strict concavity assumption \hyperlink{f7}{\bf f7}
makes it easy to check that, if $\nu\in(0,1)$,
then $(b^\nu)'(t)\le f(t,b^\nu(t))$
for $l$-a.a.~$t\in\R$. Adapting the
argument of~\cite[Theorem 2]{olop}, we get
\begin{equation}\label{5.bcomp}
 x(t,s,b^\nu(s))> b^\nu(t) \quad\text{and}\quad
 x(s,t,b^\nu(t))< b^\nu(s)
 \quad\text{whenever $\;t>s$}\,.
\end{equation}
By adapting the arguments used in \cite[Proposition 4.3]{nono} (which require
to use the compactness of the set $\W(f)$ proved in Theorem \ref{A.teorcont}),
we can check that, given $\ep>0$ and $\nu\in(0,1)$, there exists
$\delta^\nu_\ep>0$ such that
\begin{equation}\label{5.defdelta}
 x(s+\ep,s,b^\nu(s))\ge b^\nu(s+\ep)+\delta^\nu_\ep
 \quad\text{and}\quad
 x(s-\ep,s,b^\nu(s))\le b^\nu(s-\ep)-\delta^\nu_\ep
\end{equation}
for all $s\in\R$.
Note also that the inequalities \eqref{5.bcomp} are equalities for $\nu=0,1$.
\par
More properties of these lower solutions are described in the next two technical
lemmas, required for the main results.
\begin{lema}\label{5.lemanu}
Take $\nu_1\le\nu_2$ in $(0,1)$. Then,
$h_{\nu_1,\nu_2}:=\inf\{h\ge 0\,|\;x(t,s,b^{\nu_1}(s))\ge b^{\nu_2}(t)$ if $t-s\ge h$\}
is well-defined
and, in addition, the map $\{(\nu_1,\nu_2)\in(0,1)^2\,|\;\nu_1\le\nu_2\}\to[0,\infty),
\;(\nu_1,\nu_2)\mapsto h_{\nu_1,\nu_2}$ is continuous, with $h_{\nu,\nu}=0$ for all $\nu\in(0,1)$.
\end{lema}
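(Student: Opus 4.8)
The plan is to establish, in order: (1) that $h_{\nu_1,\nu_2}$ is finite for every admissible pair; (2) that $h_{\nu,\nu}=0$; and (3) continuity. For step (1), fix $\nu_1\le\nu_2$ in $(0,1)$. Since $b^{\nu_1}$ is a strict lower solution of $x'=f(t,x)$ (by \hyperlink{f7}{\bf f7}, as recalled around \eqref{5.bcomp}), the solution $x(t,s,b^{\nu_1}(s))$ lies strictly above $b^{\nu_1}$ for $t>s$ and, more quantitatively, \eqref{5.defdelta} gives a uniform gain over any fixed time-step. Iterating \eqref{5.defdelta} (or, more cleanly, invoking Hypothesis \ref{4.hipo}: $x(t,s,b^{\nu_1}(s))$ is trapped between $\wit r$ and $\wit a$, bounded away from $\wit r$ once $t-s$ exceeds a threshold, and then attracted exponentially toward $\wit a$ by Theorem \ref{3.teorhyp}(i)) shows that $x(t,s,b^{\nu_1}(s))-\wit a(t)\to 0$ uniformly in $s$ as $t-s\to\infty$. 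Since $b^{\nu_2}=\wit a-(1-\nu_2)(\wit a-\wit r)$ and $\inf_t(\wit a(t)-\wit r(t))=:\eta>0$, we have $\wit a(t)-b^{\nu_2}(t)\ge(1-\nu_2)\eta>0$ uniformly in $t$; hence for all $t-s$ large enough, $x(t,s,b^{\nu_1}(s))\ge\wit a(t)-(1-\nu_2)\eta/2> b^{\nu_2}(t)$. This proves the defining set is nonempty, so $h_{\nu_1,\nu_2}<\infty$; nonnegativity is immediate, and one should note the set is of the form $[h_{\nu_1,\nu_2},\infty)$ because \eqref{5.bcomp} (for $\nu_1\le\nu_2$, comparing lower solutions) guarantees monotonicity of the comparison in $t-s$ — once $x(t,s,b^{\nu_1}(s))\ge b^{\nu_2}(t)$, re-solving from that later point and using that $b^{\nu_2}$ is itself a lower solution keeps the inequality for all larger $t-s$.

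For step (2), when $\nu_1=\nu_2=\nu$ the inequality $x(t,s,b^\nu(s))\ge b^\nu(t)$ holds for all $t\ge s$ — with equality only at $t=s$ — directly from \eqref{5.bcomp} and the convention that the inequalities there become equalities at $t=s$; thus the infimum defining $h_{\nu,\nu}$ is $0$.

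Step (3), continuity, is the main obstacle. The cleanest route is to use the compactness of the hull $\W(f)$ (Theorem \ref{A.teorcont}) together with continuous dependence of solutions on initial data and on the equation, so that $(\nu,s,t)\mapsto x(t,s,b^\nu(s))-b^{\nu'}(t)$ and its behavior are controlled uniformly in $s$. I would argue upper and lower semicontinuity of $(\nu_1,\nu_2)\mapsto h_{\nu_1,\nu_2}$ separately. For \emph{upper} semicontinuity: fix $(\nu_1^0,\nu_2^0)$ and $\epsilon>0$; at $t-s=h_{\nu_1^0,\nu_2^0}+\epsilon$ we have, by the trapping/attraction estimate above, a \emph{uniform} (in $s$) strict excess $x(s+h_{\nu_1^0,\nu_2^0}+\epsilon,s,b^{\nu_1^0}(s))\ge b^{\nu_2^0}(s+h_{\nu_1^0,\nu_2^0}+\epsilon)+\sigma$ for some $\sigma>0$; continuous dependence of $x(t,s,\cdot)$ on the initial value and uniform continuity of $\nu\mapsto b^\nu$ (affine in $\nu$, with $\wit a,\wit r$ bounded) then give the same inequality with $\nu_2^0$ replaced by any nearby $\nu_2$ and $\nu_1^0$ by any nearby $\nu_1$, whence $h_{\nu_1,\nu_2}\le h_{\nu_1^0,\nu_2^0}+\epsilon$ for $(\nu_1,\nu_2)$ close to $(\nu_1^0,\nu_2^0)$. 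For \emph{lower} semicontinuity: suppose $\nu_1^k\to\nu_1^0$, $\nu_2^k\to\nu_2^0$ with $h_{\nu_1^k,\nu_2^k}\to\ell<h_{\nu_1^0,\nu_2^0}$. By definition of $h_{\nu_1^0,\nu_2^0}$, there exist $s_k$ and $t_k-s_k\to\ell^+$ (or $=\ell$) with $x(t_k,s_k,b^{\nu_1^0}(s_k))< b^{\nu_2^0}(t_k)$; translating by $s_k$ and extracting — via compactness of $\W(f)$ — a limiting equation and limiting base point, continuous dependence yields a limit equation $x'=\hat f(t,x)$ with a solution starting from the corresponding limiting value of $b^{\nu_1^0}$ that fails to exceed the limit of $b^{\nu_2^0}$ at time-lag $\ell$. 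Because these convex combinations and their lower-solution property, as well as the defining inequalities, are preserved under the hull limit (this is exactly where Theorem \ref{A.teorcont} and the joint continuity of the skew-product flow enter), this contradicts the fact that on the hull the analogue of $h$ is $\le h_{\nu_1^0,\nu_2^0}$ — i.e. that $h_{\nu_1^0,\nu_2^0}$ was computed as an infimum over \emph{all} $s\in\R$ and is attained in the limit. Hence no such sequence exists and $(\nu_1,\nu_2)\mapsto h_{\nu_1,\nu_2}$ is lower semicontinuous; combined with upper semicontinuity, it is continuous. The subtle point throughout — and the part demanding care — is to keep every estimate \emph{uniform in the base time $s$}, which is precisely what the hull compactness is designed to provide.
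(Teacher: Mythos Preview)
Your steps (1) and (2), and the upper-semicontinuity half of (3), are essentially what the paper does. One small correction: the uniform strict excess $\sigma>0$ at time-lag $h_{\nu_1^0,\nu_2^0}+\epsilon$ is not a consequence of ``the trapping/attraction estimate above'' but of \eqref{5.defdelta}: at time-lag $h_{\nu_1^0,\nu_2^0}$ one is already $\ge b^{\nu_2^0}$, and then evolving a further time $\epsilon$ from a point $\ge b^{\nu_2^0}$ gives the uniform gain $\delta_\epsilon^{\nu_2^0}$. This is exactly the paper's argument.

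The lower-semicontinuity argument, however, has a genuine gap. You assume $h_{\nu_1^k,\nu_2^k}\to\ell<h_{\nu_1^0,\nu_2^0}$ and then work exclusively with failure points for the \emph{limit} pair $(\nu_1^0,\nu_2^0)$ at time-lags $\searrow\ell$; passing to a hull equation you obtain $\hat x(\ell,0,\hat b^{\nu_1^0}(0))\le \hat b^{\nu_2^0}(\ell)$. But this is not a contradiction: nothing prevents the hull equation from also having its analogue of $h$ at least $h_{\nu_1^0,\nu_2^0}>\ell$, in which case such a failure at time-lag $\ell$ is entirely expected. Crucially, your argument never uses the hypothesis $h_{\nu_1^k,\nu_2^k}\to\ell$, so it cannot produce the contradiction you claim.

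The fix is much simpler than hull compactness and is what the paper does: use the hypothesis directly. For every $s\in\R$ and every $k$ one has $x(s+h_k,s,b^{\nu_1^k}(s))\ge b^{\nu_2^k}(s+h_k)$ where $h_k:=h_{\nu_1^k,\nu_2^k}$. Fix $s$ and let $k\to\infty$; continuous dependence of the solution on $(t,x_0)$ and the affine dependence of $b^{\nu}$ on $\nu$ give $x(s+\ell,s,b^{\nu_1^0}(s))\ge b^{\nu_2^0}(s+\ell)$. Since $s$ was arbitrary, $\ell$ belongs to the defining set for $(\nu_1^0,\nu_2^0)$, hence $h_{\nu_1^0,\nu_2^0}\le\ell$, contradicting $\ell<h_{\nu_1^0,\nu_2^0}$. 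No translation to the hull is needed.
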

\begin{proof}
Let us check that the conditions \lq\lq$x(t,s,b^{\nu_1}(s))\ge b^{\nu_2}(t)$ if $t-s\ge h$" and
``$x(s+h,s,b^{\nu_1}(s))\ge b^{\nu_2}(s+h)$ for $s\in\R$" are equivalent: clearly, the first one
implies the second one; and, if the second one holds and $t\ge s+h$, then
$x(t,s,b^{\nu_1}(s))=x(t,s+h,x(s+h,s,b^{\nu_1}(s))\ge x(t,s+h,b^{\nu_2}(s+h))
\ge b^{\nu_2}(t)$, by \eqref{5.bcomp}.
\par
Theorem \ref{3.teorhyp}(i) ensures that, for certain $k\ge 1$ and $\beta>0$,
$x(s+h,s,b^{\nu_1}(s))\ge \wit a(s+h)-k\,(1-\nu_1)\,e^{-\beta h}\n{\wit a-\wit r}_\infty$ if $h\ge 0$.
Let us take a constant $h_0>0$ such that $k\,(1-\nu_1)\,e^{-\beta h_0}\n{\wit a-\wit r}_\infty
= \inf_{t\in\R} (\wit a(t)-b^{\nu_2}(t))\; (=
(1-\nu_2)\inf_{t\in\R} (\wit a(t)-\wit r(t)))$. Then,
$h_0\in\mI_{\nu_1,\nu_2}:=\{h\ge 0\,|\;x(s+h,s,b^{\nu_1}(s))\ge b^{\nu_2}(s+h)$ for $s\in\R\}$:
this set is nonempty (and a positive half line, as deduced from the equivalent
definition of the statement of the Lemma).
In particular, $h_{\nu_1,\nu_2}:=\inf\mI_{\nu_1,\nu_2}$ is well-defined. Clearly,
$h_{\nu,\nu}=0$ for all $\nu\in(0,1)$. Note also that
$h_{\nu_1,\nu_2}\le h_{\bar\nu_1,\bar\nu_2}$ if $\bar\nu_1\le\nu_1\le\nu_2\le\bar\nu_2$.
\par
Let us fix $\nu\in(0,1)$, $\ep>0$, and $\delta_\ep^\nu$ satisfying \ref{5.defdelta}.
We take $\nu_1<\nu$ and $\nu_2>\nu$, both in $(0,1)$ and with
$\nu-\nu_1$ and $\nu_2-\nu$ small enough to ensure that
$b^\nu-\delta^\nu_\ep\le b^{\nu_1}<b^\nu<b^{\nu_2}\le b^\nu+\delta^\nu_\ep$. Then,
\[
\begin{split}
 x(s+2\ep,s,b^{\nu_1}(s))&\ge x(s+2\ep,s,b^\nu(s)-\delta^\nu_\ep)\ge
 x(s+2\ep,s,x(s,s+\ep,b^\nu(s+\ep)))\\
 &=x(s+2\ep,s+\ep,b^\nu(s+\ep))\ge
 b^\nu(s+2\ep)+\delta_\ep^\nu\ge b^{\nu_2}(s+2\ep)
\end{split}
\]
for all $s\in\R$, which shows that $h_{\nu_1,\nu_2}\le 2\ep$.
This ensures the continuity of $(\nu_1,\nu_2)\mapsto h_{\nu_1,\nu_2}$ at the points
$(\nu,\nu)$.
\par
Let us now take $\nu_1<\nu_2$ (always in $(0,1)$)
and a sequence $((\nu_{1,n},\nu_{2,n}))$ with limit
$(\nu_1,\nu_2)$. There is no restriction in assuming $\nu_0\le\nu_{1,n}<\nu_{2,n}\le1-\nu_0$ for all
$n\in\N$ for an $\nu_0\in(0,1/2]$. Then, $h_n:=h_{\nu_{1,n},\nu_{2,n}}\le h_{\nu_0,1-\nu_0}$.
Hence, a given subsequence of $(h_n)$ has a convergent subsequence, say
$(h_k)$, with limit $h_*$. The goal is checking that $h_*=h_{\nu_1,\nu_2}$.
Taking limit as $k\to\infty$ in the inequality
$x(s+h_k,s,b^{\nu_{1,k}}(s))\ge b^{\nu_{2,k}}(s+h_k)$ we get
$x(s+h_*,s,b^{\nu_1}(s))\ge b^{\nu_2}(s+h_*)$ for all $s\in\R$, which implies
$h_*\ge h_{\nu_1,\nu_2}$. Now we take $\ep>0$ and associate $\delta^{\nu_2}_\ep$
to $\ep$ and $\nu_2$ as in \eqref{5.defdelta}. Then,
\[
\begin{split}
 &x(s+h_{\nu_1,\nu_2}+\ep,s,b^{\nu_1}(s))=x(s+h_{\nu_1,\nu_2}+\ep,s+h_{\nu_1,\nu_2},
 x(s+h_{\nu_1,\nu_2},s,b^{\nu_1}(s)))\\
 &\qquad\qquad\ge x(s+h_{\nu_1,\nu_2}+\ep,s+h_{\nu_1,\nu_2},b^{\nu_2}(s))
 \ge b^{\nu_2}(s+h_{\nu_1,\nu_2}+\ep)+\delta_\ep^{\nu_2}
\end{split}
\]
for all $s\in\R$. This ensures that, if $k$ is large enough, then
$x(s+h_{\nu_1,\nu_2}+\ep,s,b^{\nu_{1,k}}(s))\ge b^{\nu_{2,k}}(s+h_{\nu_1,\nu_2}+\ep)$
for all $s\in\R$, so that $h_{\nu_1,\nu_2}+\ep\ge h_k$ and hence
$h_{\nu_1,\nu_2}+\ep\ge h_*$. Since this is valid for all $\ep>0$,
$h_{\nu_1,\nu_2}\ge h_*$. The proof is complete.
\end{proof}
\begin{lema}\label{5.lemat*}
Assume Hypothesis {\rm\ref{4.hipo}} and fix $\nu_1,\nu_2\in(0,1)$ and $h\ge 0$.
Then, there exist $t^i_*=t_*^i(\nu_i,h)\in\mathbb R$ for $i=1,2$
such that $\mah(t)-\G(t)>b^{\nu_1}(t)$
if $t\le t^1_*$ and $\mrh(t)-\G(t)<b^{\nu_2}(t)$ if $t\ge t^2_*$.
\end{lema}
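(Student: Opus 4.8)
The plan is to obtain both inequalities directly from the asymptotic connections in Theorem~\ref{4.teorexist}(ii), combined with the uniform separation of the attractor-repeller pair $(\wit a,\wit r)$ supplied by Hypothesis~\ref{4.hipo}. No approximation or dynamical estimate is needed here beyond what is already stated.

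First I would record the setting. For every $h\ge 0$ the function $\G^h$ is essentially bounded with finite asymptotic limits $\lim_{t\to\pm\infty}\G^h(t)=\gamma_\pm$ (for $h=0$ it is $\G$ itself; for $h>0$ it holds because $\G^h(t)=\G(jh)$ on $[jh,(j+1)h)$ and $jh\to\pm\infty$ as $t\to\pm\infty$), so Section~\ref{4.sec} applies to $x'=f(t,x-\G^h(t))$; see Remark~\ref{5.notatrans}. Hence Theorem~\ref{4.teorexist}(i) provides $\mah$ and $\mrh$, defined respectively on a set containing a neighbourhood of $-\infty$ and on one containing a neighbourhood of $+\infty$, and Theorem~\ref{4.teorexist}(ii) yields
\[
 \lim_{t\to-\infty}\big|\mah(t)-(\wit a(t)+\gamma_-)\big|=0,
 \qquad
 \lim_{t\to+\infty}\big|\mrh(t)-(\wit r(t)+\gamma_+)\big|=0 .
\]
Finally, Theorem~\ref{3.teorhyp} applied to $x'=f(t,x)$ gives $\eta:=\tfrac{1}{2}\inf_{t\in\R}(\wit a(t)-\wit r(t))>0$.

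For the first assertion I would use, since $b^{\nu_1}=\nu_1\wit a+(1-\nu_1)\wit r$, the identity
\[
 \mah(t)-\G(t)-b^{\nu_1}(t)
 =\big[\mah(t)-(\wit a(t)+\gamma_-)\big]+\big[\gamma_--\G(t)\big]+(1-\nu_1)\big(\wit a(t)-\wit r(t)\big).
\]
As $t\to-\infty$ the first bracket tends to $0$ by the connection above and the second to $0$ because $\lim_{t\to-\infty}\G(t)=\gamma_-$, while the last summand is $\ge 2(1-\nu_1)\eta>0$ for all $t$. Thus the left-hand side is positive for all $t$ below some $t^1_*=t^1_*(\nu_1,h)$, which (shrinking $t^1_*$ if necessary so that $\mah$ is defined on $(-\infty,t^1_*]$) is the required threshold.

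The second assertion is symmetric: from $b^{\nu_2}=\nu_2\wit a+(1-\nu_2)\wit r$ one has
\[
 \mrh(t)-\G(t)-b^{\nu_2}(t)
 =\big[\mrh(t)-(\wit r(t)+\gamma_+)\big]+\big[\gamma_+-\G(t)\big]-\nu_2\big(\wit a(t)-\wit r(t)\big),
\]
where as $t\to+\infty$ the first two brackets vanish and the last summand is $\le-2\nu_2\eta<0$; hence the left-hand side is negative for all $t\ge t^2_*=t^2_*(\nu_2,h)$ for a suitable $t^2_*$ in the domain of $\mrh$. There is no genuine obstacle in this argument; the only points deserving care are checking that $\G^h$ meets the hypotheses of Section~\ref{4.sec} so that Theorem~\ref{4.teorexist} applies, and keeping track of the one-sided domains of $\mah$ and $\mrh$.
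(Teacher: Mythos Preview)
Your proof is correct and follows essentially the same approach as the paper: you use the identical decomposition $\mah(t)-\G(t)-b^{\nu_1}(t)=[\mah(t)-(\wit a(t)+\gamma_-)]+[\gamma_--\G(t)]+(1-\nu_1)(\wit a(t)-\wit r(t))$ (and its analogue for $\mrh$), then invoke Theorem~\ref{4.teorexist}(ii), the limit $\G(t)\to\gamma_\pm$, and the uniform separation $\inf_{t\in\R}(\wit a(t)-\wit r(t))>0$ to conclude.
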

\begin{proof}
We define
$\ep_{\nu_1}:=(1-\nu_1)\inf_{t\in\R}(\wit a(t)-\wit r(t))$ and
$\ep_{\nu_2}:=\nu_2\inf_{t\in\R}(\wit a(t)-\wit r(t))$.
Let us write
\[
 \mah(t)-\G(t)=(\mah(t)-(\wit a(t)+\gamma_-))+(\gamma_--\G(t))+(1-\nu_1)\,(\wit a(t)-\wit r(t))+b^{\nu_1}(t)\,.
\]
Theorem \ref{4.teorexist}(ii) and the definition of $\gamma_-$ provide $t_*^1$ such that
$\mah(t)-(\wit a(t)+\gamma_-)>-\ep_{\nu_1}/2$ and $\gamma_--\G(t)>-\ep_{\nu_1}/2$,
which shows the first assertion for all $t\le t_*^1$. Similarly, we get
$t_*^2\in\R$ such that, if $t\ge t_*^2$, then
\[
 \mrh(t)-\G(t)=(\mrh(t)-(\wit r(t)+\gamma_+))+(\gamma_+-\G(t))-\nu_2(\wit a(t)-\wit r(t))+b^{\nu_2}(t)<b^{\nu_2}(t)\,,
\]
which completes the proof.
\end{proof}
In what follows, we will develop some criteria guaranteing
\hyperlink{CA}{\sc Cases A} or \hyperlink{CC}{\sc C} for certain values
of $h$, always under Hypothesis \ref{4.hipo}.
The fundamental idea is given by Remark \ref{3.notabasta}: to get solutions
$\mah$ and $\mrh$ respectively defined on $(-\infty,t_0]$ and $[t_0,\infty)$, and
to compare their values at $t_0$. The information provided by
Remark \ref{5.notatrans}.2, regarding the existence of these solutions under
Hypothesis \ref{4.hipo}, will be constantly used, without further reference.
\subsection{Criteria for tracking}
Theorem \ref{5.teortrack} and Corollaries \ref{5.corotrack1} and \ref{5.corotrack2}
establish criteria for tracking. Lemma \ref{5.lemat*} shows how to accomplish
some of the hypotheses that the first two results require: the existence of $j_0$ and $n\ge 1$.
\begin{teor}\label{5.teortrack}
Assume Hypothesis {\rm\ref{4.hipo}} and fix $h>0$.
Assume the existence of $j_0\in\Z$, $n\in\N$ and
$\nu_{j_0},\nu_{j_0+1},\cdots,\nu_{j_0+n}$ in $[0,1]$ such that
$\mah(j_0h)-\G(j_0h)\ge b^{\nu_{j_0}}(j_0h)$, $\mrh((j_0+n)h)-\G((j_0+n)h)
<b^{\nu_{j_0+n}}((j_0+n)h)$, and
\begin{equation}\label{5.chaintrack}
 x((j+1)h,jh,b^{\nu_j}(jh))
 -b^{\nu_{j+1}}\!((j+1)h)\ge \G((j+1)h)-\G(jh)
\end{equation}
for $j\in\{j_0,\ldots,j_0+n-1\}$.
Then, \eqref{5.ecutro}$_h$ has an attractor-repeller pair.
\end{teor}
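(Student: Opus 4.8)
The strategy is to apply Remark~\ref{3.notabasta} with a well-chosen time $t_0=(j_0+n)h$: it suffices to show that $\mah$ is defined at least on $(-\infty,(j_0+n)h]$ and that $\mah((j_0+n)h)>\mrh((j_0+n)h)$, since then \eqref{5.ecutro}$_h$ has at least two bounded solutions, hence (by Theorem~\ref{3.teorhyp}) an attractor-repeller pair. The second hypothesis in the statement already tells us $\mrh((j_0+n)h)<b^{\nu_{j_0+n}}((j_0+n)h)+\G((j_0+n)h)$, so the crux is to prove the lower bound
\[
 \mah((j+1)h)\ge b^{\nu_{j+1}}((j+1)h)+\G((j+1)h)
 \qquad\text{for }j=j_0,\ldots,j_0+n-1,
\]
by induction on $j$, starting from the hypothesis $\mah(j_0h)\ge b^{\nu_{j_0}}(j_0h)+\G(j_0h)$.

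First I would set up the induction. Suppose $\mah$ is defined on $(-\infty,jh]$ with $\mah(jh)\ge b^{\nu_j}(jh)+\G(jh)=b^{\nu_j}(jh)+\gamma_{h,j}$. On the interval $[jh,(j+1)h]$, equation \eqref{5.ecutro}$_h$ coincides with \eqref{5.ecuchj}$_{h,j}$, so by the representation \eqref{5.atrozos} the solution through $(jh,\mah(jh))$ is $x(t,jh,\mah(jh)-\gamma_{h,j})+\gamma_{h,j}$ as long as it exists. Using the monotonicity of $x_0\mapsto x(t,jh,x_0)$ (for $t\ge jh$) together with $\mah(jh)-\gamma_{h,j}\ge b^{\nu_j}(jh)$, and the fact that $x(t,jh,b^{\nu_j}(jh))$ is globally defined (it stays above the globally defined solution $b^{\nu_j}$ is not literally true, but it stays below $\wit a$ and above $\wit r+\gamma_{h,j}$-type bounds — more simply, by \eqref{5.bcomp} it satisfies $x(t,jh,b^{\nu_j}(jh))>b^{\nu_j}(t)$ for $t>jh$, and it is bounded above by $\wit a$ hence global on $[jh,(j+1)h]$), the solution of \eqref{5.ecutro}$_h$ through $(jh,\mah(jh))$ exists on all of $[jh,(j+1)h]$ and, evaluating at $t=(j+1)h$,
\[
 x_h((j+1)h,jh,\mah(jh))
 = x((j+1)h,jh,\mah(jh)-\gamma_{h,j})+\gamma_{h,j}
 \ge x((j+1)h,jh,b^{\nu_j}(jh))+\gamma_{h,j}.
\]
By hypothesis \eqref{5.chaintrack}, the right-hand side is $\ge b^{\nu_{j+1}}((j+1)h)+\gamma_{h,j+1}+\gamma_{h,j}-\gamma_{h,j}$... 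I must be careful here: \eqref{5.chaintrack} reads $x((j+1)h,jh,b^{\nu_j}(jh))\ge b^{\nu_{j+1}}((j+1)h)+\gamma_{h,j+1}-\gamma_{h,j}$, so adding $\gamma_{h,j}$ gives exactly $x((j+1)h,jh,b^{\nu_j}(jh))+\gamma_{h,j}\ge b^{\nu_{j+1}}((j+1)h)+\gamma_{h,j+1}=b^{\nu_{j+1}}((j+1)h)+\G((j+1)h)$. Now I invoke Theorem~\ref{3.teoruno}(i): since the solution through $(jh,\mah(jh))$ is the restriction to $\mR^-$-times of a solution bounded in the past (because $\mah$ is), its value at $(j+1)h$ is $\le\mah((j+1)h)$, and in particular $\mah$ is still defined there. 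This closes the induction step and shows $\mah((j_0+n)h)\ge b^{\nu_{j_0+n}}((j_0+n)h)+\G((j_0+n)h)>\mrh((j_0+n)h)$, as desired.

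The main obstacle, and the point requiring the most care, is the bookkeeping around the translated solution operator and the special solution $\mah$: one must justify at each step that $\mah$ remains defined on the relevant interval (so that the comparison via Theorem~\ref{3.teoruno}(i) applies and $\mah((j+1)h)$ makes sense), and one must track correctly how the constant shift $\gamma_{h,j}$ changes across the jump at $t=(j+1)h$ of $\G^h$. The boundary cases $\nu_j\in\{0,1\}$ are harmless: for these, \eqref{5.bcomp} degenerates to equalities (as noted after \eqref{5.defdelta}), $b^0=\wit r$ and $b^1=\wit a$ are themselves solutions, and the same chain of inequalities goes through with equalities in place of \eqref{5.bcomp}. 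Finally, once $\mah((j_0+n)h)>\mrh((j_0+n)h)$ is established, Remark~\ref{3.notabasta} yields at least two bounded solutions and Theorem~\ref{3.teorhyp} upgrades this to the existence of an attractor-repeller pair, i.e., \hyperlink{CA}{\sc Case~A}.
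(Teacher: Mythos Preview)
Your argument follows essentially the same induction as the paper's proof. Two small points deserve cleaning up. First, the detour through Theorem~\ref{3.teoruno}(i) is unnecessary: $\mah$ \emph{is} a solution of \eqref{5.ecutro}$_h$, so by uniqueness $x_h(\cdot,jh,\mah(jh))=\mah(\cdot)$ on $[jh,(j+1)h]$ once you have shown this solution does not blow up there, and hence $\mah((j+1)h)=x((j+1)h,jh,\mah(jh)-\gamma_{h,j})+\gamma_{h,j}$ exactly, not merely $\ge$. Second, your final citation is off: Theorem~\ref{3.teorhyp} requires uniform separation (or hyperbolicity) as input, not merely two bounded solutions, so it does not directly upgrade $\mah((j_0+n)h)>\mrh((j_0+n)h)$ to an attractor--repeller pair. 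The paper closes this step with Theorem~\ref{4.teorexist}(iv), which is specific to the transition-equation setting under Hypothesis~\ref{4.hipo} and says that $\mah$ and $\mrh$ globally defined and different already forces uniform separation (equivalently, Theorem~\ref{4.teorexist}(v) gives the contrapositive).
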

\begin{proof}
Let us call $\gamma_j:=\Gamma(jh)$. By hypothesis,
\begin{equation}\label{5.induc1}
 \mah(jh)-\gamma_j\ge b^{\nu_j}(jh)
\end{equation}
for $j=j_0$. We will prove the following assertion: if, for a $j\in\{j_0,\ldots,j_0+n-1\}$,
the map $\mah$ exists on $(-\infty,jh]$ and \eqref{5.induc1} holds, then
$\mah$ exists on $(-\infty,(j+1)h]$ and in addition~\eqref{5.induc1} holds
for $j+1$ instead of $j$.
\par
So, we assume these hypotheses for $j$.
As explained in Remark \ref{5.notatrans}.1,
at those points $t$ of $[jh,(j+1)h]$ at which $\mah(t)$ exists,
\begin{equation}\label{5.para5}
\begin{split}
 &\mah(t)=x(t,jh,\mah(jh)-\gamma_j)+\gamma_j
 \ge x(t,jh,b^{\nu_j}(jh))+\gamma_j\ge b^{\nu_j}(t)+\gamma_j\,.
\end{split}
\end{equation}
We have used \eqref{5.induc1} and \eqref{5.bcomp}
in the last two inequalities. Therefore,
$\mah(t)$ exists on $[jh,(j+1)h]$ (since it is bounded from below), and
\[
 \mah((j+1)h)\ge x((j+1)h,jh,b^{\nu_j}(jh))+\gamma_j
 \ge b^{\nu_{j+1}}((j+1)h)+\gamma_{j+1}\,.
\]
We have used the bound in the statement.
Our assertion is proved. In particular, it ensures the existence of
$\mah$ on $(-\infty,(j_0+n)h]$,
and \eqref{5.induc1} for $j=j_0+n$ yields
\[
 \mah((j_0+n)h)\ge b^{\nu_{j_0+n}}((j_0+n)h)+\gamma_{j_0+n}>\mrh((j_0+n)h)\,.
\]
This inequality and Theorem \ref{4.teorexist}(iv)
ensure that $(\mah,\mrh)$ is an attractor-repeller
pair for \eqref{5.ecutro}$_h$
(see Remark~\ref{3.notabasta}). The proof is complete.
\end{proof}
To successfully apply Theorem \ref{5.teortrack} for a particular problem requires an
adequate choice of the parameters $\nu_{j_0},\nu_{j_0+1},\cdots,\nu_{j_0+n}\in[0,1]$.
The next corollaries show that a quite simple choice suffices to
show the existence of attractor-repeller pair in some interesting situations.
The first one extends \cite[Proposition 3.12(ii)]{lno},
which was proved with different arguments for a much less general setting.
A {\em nonincreasing function} $\G$ satisfies
$\G(t_1)\ge\G(t_2)$ if $t_1<t_2$.
\par
\begin{coro}\label{5.corotrackdecr}
Assume Hypothesis~{\rm\ref{4.hipo}}. If $\Gamma$ is nonincreasing,
then \eqref{5.ecutro}$_h$ has an attractor-repeller pair for all $h>0$.
\end{coro}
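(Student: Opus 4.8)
The plan is to apply Theorem~\ref{5.teortrack} with a single constant value of the parameter, exploiting the fact that monotonicity of $\Gamma$ makes the chain of inequalities \eqref{5.chaintrack} automatic.

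First I would fix any $\nu\in(0,1)$ and take all the parameters equal, $\nu_{j_0}=\nu_{j_0+1}=\cdots=\nu_{j_0+n}=\nu$, so that $b^{\nu_j}=b^\nu$ for every index. With this choice the left-hand side of \eqref{5.chaintrack} is $x((j+1)h,jh,b^\nu(jh))-b^\nu((j+1)h)$, which is strictly positive by the first inequality in \eqref{5.bcomp} (used with $s=jh<(j+1)h=t$). Since $\Gamma$ is nonincreasing, the right-hand side $\Gamma((j+1)h)-\Gamma(jh)$ is nonpositive. Hence \eqref{5.chaintrack} holds for every $j$, no matter how $j_0$ and $n$ are chosen.

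It remains to secure the two boundary hypotheses of Theorem~\ref{5.teortrack}, namely $\mah(j_0h)-\Gamma(j_0h)\ge b^\nu(j_0h)$ and $\mrh((j_0+n)h)-\Gamma((j_0+n)h)<b^\nu((j_0+n)h)$. These follow from Lemma~\ref{5.lemat*}: it provides $t^1_*=t^1_*(\nu,h)$ with $\mah(t)-\Gamma(t)>b^\nu(t)$ for $t\le t^1_*$, and $t^2_*=t^2_*(\nu,h)$ with $\mrh(t)-\Gamma(t)<b^\nu(t)$ for $t\ge t^2_*$. I would then pick $j_0\in\Z$ small enough that $j_0h\le\min\{t^1_*,\,t^2_*-h\}$, and afterwards $n\in\N$ large enough that $(j_0+n)h\ge t^2_*$; this can be arranged with $n\ge1$. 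All hypotheses of Theorem~\ref{5.teortrack} are then met, and the theorem yields an attractor-repeller pair for \eqref{5.ecutro}$_h$, which is the claim.

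There is essentially no serious obstacle: the argument is a short bookkeeping exercise once one observes that a nonincreasing $\Gamma$ kills the right-hand side of \eqref{5.chaintrack}, so that the trivial estimate \eqref{5.bcomp} for a constant $\nu$ suffices along the whole route. The only point needing a moment's care is checking that the two index constraints $j_0h\le t^1_*$ and $(j_0+n)h\ge t^2_*$ are simultaneously satisfiable with $n\ge1$, which holds because $j_0h$ may be taken arbitrarily negative; everything else is supplied by Lemma~\ref{5.lemat*}, which already encodes the asymptotic behaviour of $\mah$, $\mrh$ and $\Gamma$.
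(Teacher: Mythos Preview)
Your proof is correct and follows essentially the same approach as the paper: the paper also applies Theorem~\ref{5.teortrack} with a constant $\nu$ (it chooses $\nu=1/2$ while you allow any $\nu\in(0,1)$), uses \eqref{5.bcomp} together with the nonincreasing character of $\Gamma$ to verify \eqref{5.chaintrack}, and relies (implicitly) on Lemma~\ref{5.lemat*} to obtain $j_0$ and $n$. Your write-up is slightly more explicit about justifying the existence of suitable $j_0$ and $n$, but the argument is the same.
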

\begin{proof}
We look for $j_0\in\Z$ and $n\in\N$ such that
$\mah(j_0h)-\G(j_0h)\ge b^{1/2}(j_0h)$, $\mrh((j_0+n)h)-\G((j_0+n)h)
<b^{1/2}((j_0+n)h)$, take $\nu_{j_0}=\nu_{j_0+1}=\ldots=\nu_{j_0+n}=1/2$,
and observe that \eqref{5.bcomp} and the nonincreasing character of $\G$
yield \eqref{5.chaintrack} for
$j\in\{j_0,\ldots,j_0+n-1\}$. Theorem \ref{5.teortrack} proves the assertion.
\end{proof}
\begin{coro}\label{5.corotrack1}
Assume Hypothesis~{\rm\ref{4.hipo}}, and take $\nu_1<\nu_2$ in $(0,1)$ and
$h\ge h_{\nu_1,\nu_2}$ with $h_{\nu_1,\nu_2}$ provided by Lemma~{\rm\ref{5.lemanu}}.
Look for $j_0\in\Z$ and $n\in\N$ such that
$\mah(j_0h)-\G(j_0h)\ge b^{\nu_1}(j_0h)$ and $\mrh((j_0+n)h)-\G((j_0+n)h)<b^{\nu_2}((j_0+n)h)$.
Assume that
\begin{equation}\label{5.hipjcoro}
 (\nu_2-\nu_1)(\wit a((j+1)h)-\wit r((j+1)h))\ge\G((j+1)h)-\G(jh)
\end{equation}
for $j\in\{j_0,\ldots,j_0+n-1\}$. Then,
\eqref{5.ecutro}$_h$ has an attractor-repeller pair.
\par
In addition, if $n\ge 2$, $\mrh((j_0+n)h)-\G((j_0+n-1)h)<b^{\nu_2}((j_0+n)h)$ and
\eqref{5.hipjcoro} holds for $j\in\{j_0,\ldots,j_0+n-2\}$, then
\eqref{5.ecutro}$_h$ has an attractor-repeller pair.
\end{coro}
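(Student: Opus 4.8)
The plan is to obtain both assertions from Theorem~\ref{5.teortrack} --- the first essentially by a direct application, the second by rerunning its inductive step --- with the constant parameter choice $\nu_{j_0}=\nu_{j_0+1}=\cdots=\nu_{j_0+n-1}=\nu_1$ and $\nu_{j_0+n}=\nu_2$. Two elementary facts do all the work. First, $b^{\nu_2}(t)-b^{\nu_1}(t)=(\nu_2-\nu_1)\bigl(\wit a(t)-\wit r(t)\bigr)$, so \eqref{5.hipjcoro} is precisely the statement $b^{\nu_2}((j+1)h)-b^{\nu_1}((j+1)h)\ge\G((j+1)h)-\G(jh)$. Second, by the definition of $h_{\nu_1,\nu_2}$ in Lemma~\ref{5.lemanu}, the hypothesis $h\ge h_{\nu_1,\nu_2}$ gives $x((j+1)h,jh,b^{\nu_1}(jh))\ge b^{\nu_2}((j+1)h)$ for every $j$. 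Informally: over one step of length $h$ the solution issued from $b^{\nu_1}$ is pushed up to at least $b^{\nu_2}$, and the gain $b^{\nu_2}-b^{\nu_1}$ is exactly the room \eqref{5.hipjcoro} leaves for the jump $\G((j+1)h)-\G(jh)$.

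For the first assertion I would simply check the hypotheses of Theorem~\ref{5.teortrack} for this choice. Its two boundary conditions, $\mah(j_0h)-\G(j_0h)\ge b^{\nu_1}(j_0h)$ and $\mrh((j_0+n)h)-\G((j_0+n)h)<b^{\nu_2}((j_0+n)h)$, are exactly what is assumed. The chain inequality \eqref{5.chaintrack} for $j\in\{j_0,\dots,j_0+n-2\}$, where $\nu_j=\nu_{j+1}=\nu_1$, reads $x((j+1)h,jh,b^{\nu_1}(jh))-b^{\nu_1}((j+1)h)\ge\G((j+1)h)-\G(jh)$ and is immediate from the two facts above; for the last index $j=j_0+n-1$, where $\nu_j=\nu_1$ and $\nu_{j+1}=\nu_2$, one combines $x((j_0+n)h,(j_0+n-1)h,b^{\nu_1}((j_0+n-1)h))\ge b^{\nu_2}((j_0+n)h)$ with the $j=j_0+n-1$ instance of \eqref{5.hipjcoro} to obtain \eqref{5.chaintrack}. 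Theorem~\ref{5.teortrack} then yields the attractor-repeller pair.

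For the second assertion \eqref{5.hipjcoro} is only assumed for $j\in\{j_0,\dots,j_0+n-2\}$, so Theorem~\ref{5.teortrack} cannot be quoted as a black box; instead I would reproduce its induction with $\nu_j\equiv\nu_1$ on $\{j_0,\dots,j_0+n-1\}$. If $\mah$ is defined on $(-\infty,jh]$ with $\mah(jh)-\G(jh)\ge b^{\nu_1}(jh)$, then on $[jh,(j+1)h]$ --- where \eqref{5.ecutro}$_h$ carries the shift $\G(jh)$ --- inequality \eqref{5.bcomp} keeps $\mah(\cdot)\ge b^{\nu_1}(\cdot)+\G(jh)$, so $\mah$ reaches $(j+1)h$, and there, using $x((j+1)h,jh,b^{\nu_1}(jh))\ge b^{\nu_2}((j+1)h)$ and then \eqref{5.hipjcoro} (available since $j\le j_0+n-2$), one recovers $\mah((j+1)h)-\G((j+1)h)\ge b^{\nu_1}((j+1)h)$. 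After $n-1$ such steps $\mah$ is defined on $(-\infty,(j_0+n-1)h]$, and on the final interval \eqref{5.bcomp}, the monotone dependence of $x(t,s,\cdot)$ on the datum, and Lemma~\ref{5.lemanu} give $\mah((j_0+n)h)\ge b^{\nu_2}((j_0+n)h)+\G((j_0+n-1)h)$. Together with the hypothesis $\mrh((j_0+n)h)-\G((j_0+n-1)h)<b^{\nu_2}((j_0+n)h)$ this forces $\mah((j_0+n)h)>\mrh((j_0+n)h)$, whence Remark~\ref{3.notabasta} and Theorem~\ref{4.teorexist}(iv) give the attractor-repeller pair.

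The delicate point --- and the reason the two assertions are phrased with slightly different boundary conditions --- is the bookkeeping at the terminal node $(j_0+n)h$. Because the surplus $b^{\nu_2}-b^{\nu_1}$ obtained in one step is exactly the slack \eqref{5.hipjcoro} allows for $\G$, after every completed step the estimate for $\mah$ falls back precisely to $b^{\nu_1}$; the improvement to $b^{\nu_2}$ survives only over the very last interval and only with the ``left'' centering (shift $\G((j_0+n-1)h)$), which the second assertion uses directly and which the first assertion must convert --- via the extra $j=j_0+n-1$ instance of \eqref{5.hipjcoro} --- into the stated condition involving $\G((j_0+n)h)$. Making this reconciliation precise, equivalently verifying \eqref{5.chaintrack} at $j=j_0+n-1$ in the first assertion, is the only genuinely nontrivial step; everything else is routine use of \eqref{5.bcomp}, Lemma~\ref{5.lemanu}, and monotone dependence on initial data.
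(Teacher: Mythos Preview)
Your approach matches the paper's closely: the same constant choice $\nu_j\equiv\nu_1$ for $j\le j_0+n-1$, the same use of the identity $b^{\nu_2}-b^{\nu_1}=(\nu_2-\nu_1)(\wit a-\wit r)$ together with Lemma~\ref{5.lemanu} to obtain
\[
 x((j+1)h,jh,b^{\nu_1}(jh))\ge b^{\nu_2}((j+1)h)\ge b^{\nu_1}((j+1)h)+\G((j+1)h)-\G(jh)
\]
and hence \eqref{5.chaintrack}, and for the second assertion the same rerun of the induction in Theorem~\ref{5.teortrack} leading to $\ma_h((j_0+n)h)\ge b^{\nu_2}((j_0+n)h)+\G((j_0+n-1)h)$ and the direct comparison with $\mrh$ via Remark~\ref{3.notabasta} and Theorem~\ref{4.teorexist}(iv).

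One caveat on the first assertion. You set $\nu_{j_0+n}=\nu_2$ and claim that combining Lemma~\ref{5.lemanu} with the $j=j_0+n-1$ instance of \eqref{5.hipjcoro} yields \eqref{5.chaintrack} at that last index. With the terminal choice $\nu_{j+1}=\nu_2$, however, \eqref{5.chaintrack} there reads
\[
 x\big((j_0+n)h,(j_0+n-1)h,b^{\nu_1}((j_0+n-1)h)\big)-b^{\nu_2}((j_0+n)h)\ge\G((j_0+n)h)-\G((j_0+n-1)h)\,;
\]
the lemma only makes the left side $\ge 0$, while \eqref{5.hipjcoro} merely bounds the right side above by $(\nu_2-\nu_1)(\wit a-\wit r)((j_0+n)h)$, which can be strictly positive --- so the two facts do not combine to give this inequality. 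What the combination \emph{does} give is \eqref{5.chaintrack} with $\nu_{j+1}=\nu_1$, and this is exactly the displayed line in the paper's proof (which only specifies $\nu_j=\nu_1$ for $j\le j_0+n-1$ and does not name $\nu_{j_0+n}$). So Theorem~\ref{5.teortrack} cannot be invoked as a clean black box with your explicit choice $\nu_{j_0+n}=\nu_2$; the paper is equally terse at this point, but at least does not make the incorrect explicit claim. Your treatment of the second assertion is correct and essentially identical to the paper's.
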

\begin{proof}
We call $\gamma_j:=\G(jh)$. Observe that, since
$b^{\nu_2}-b^{\nu_1}=(\nu_2-\nu_1)(\wit a-\wit r)$,
the choice of $h$ (see Lemma \ref{5.lemanu}) ensures that
\[
 x((j+1)h,jh,b^{\nu_1}(jh))+\gamma_j\ge b^{\nu_2}((j+1)h)+\gamma_j\ge b^{\nu_1}((j+1)h)+\gamma_{j+1}
\]
as long as $j$ satisfies \eqref{5.hipjcoro}. Hence the first assertion follows from
Theorem~\ref{5.teortrack}, since all its hypotheses are fulfilled if we take
$\nu_j=\nu_1$ for $j\in\{j_0,\ldots,(j_0+n-1)\}$. When \eqref{5.hipjcoro} is
satisfied for $j\in\{j_0,j_0+1,\ldots,j_0+n-2\}$, we can repeat the proof of
Theorem~\ref{5.teortrack} applying \eqref{5.hipjcoro}
to get $\ma_h((j_0+n-1)h)\ge b^{\nu_1}((j_0+n-1)h)+\gamma_{j_0+n-1}$. By \eqref{5.atrozos},
$\ma_h((j_0+n)h)\ge x((j_0+n)h,j_0+n,b^{\nu_1}((j_0+n-1)h))+\gamma_{j_0+n-1}\ge
b^{\nu_2}((j_0+n)h)+\gamma_{j_0+n-1}$. The last hypothesis yields
$\ma_h((j_0+n)h)>\mr_h((j_0+n)h)$ which, according to
Remark~\ref{3.notabasta} and Theorem \ref{4.teorexist}(iv), shows the assertion.
\end{proof}
\begin{nota}\label{5.notah}
We point out that, for the previous result,
we do not need all the information provided by Lemma \ref{5.lemanu}: we just need
$x((j+1)h,jh,b^{\nu_1}(jh))\ge b^{\nu_2}((j+1)h)$ for $j\in\{j_0,\ldots,j_0+n-1\}$.
We will use this fact in Subsection \ref{5.subsecnum}.
\end{nota}
\begin{coro}\label{5.corotrack2}
Assume Hypothesis~{\rm\ref{4.hipo}} and that $\wit a(0)-\wit r(0)>\G(0)-\gamma_-$.
\begin{itemize}
\item[\rm(i)] If $\liminf_{t\to\infty}(\wit a(t)-\wit r(t))>\gamma_+-\G(0)$,
then there exists $h_0>0$ such that \eqref{5.ecutro}$_h$ has an attractor-repeller
pair for every $h\ge h_0$.
\item[\rm(ii)] If $\limsup_{t\to\infty}(\wit a(t)-\wit r(t))>\gamma_+-\G(0)$,
then there exists a sequence $(h_j)\uparrow\infty$ such that
\eqref{5.ecutro}$_{h_j}$ has an attractor-repeller
pair for every $n\in\N$.
\end{itemize}
\end{coro}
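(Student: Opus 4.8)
The plan is to reduce both parts to Corollary~\ref{5.corotrack1}: Lemma~\ref{5.lemat*} will supply indices $j_0$ and $n$ with $\mah(j_0h)-\G(j_0h)>b^{\nu_1}(j_0h)$ and $\mrh((j_0+n)h)-\G((j_0+n)h)<b^{\nu_2}((j_0+n)h)$, and the crux is that, for $h$ large, in the chain~\eqref{5.hipjcoro} running over $j\in\{j_0,\dots,j_0+n-1\}$ only the two indices $j=-1$ and $j=0$ are delicate. Indeed, because $\G$ has finite limits at $\pm\infty$, the difference $\G((j+1)h)-\G(jh)$ is arbitrarily small whenever $jh$ or $(j+1)h$ lies far from $0$, whereas $\wit a-\wit r$ stays above the constant $\kappa_0:=\inf_{t\in\R}(\wit a(t)-\wit r(t))>0$ (Theorem~\ref{3.teorhyp}(a)). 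At $j=-1$ and $j=0$ the inequality~\eqref{5.hipjcoro} becomes, respectively, $(\nu_2-\nu_1)(\wit a(0)-\wit r(0))\ge\G(0)-\G(-h)$ and $(\nu_2-\nu_1)(\wit a(h)-\wit r(h))\ge\G(h)-\G(0)$; the first is controlled by the hypothesis $\wit a(0)-\wit r(0)>\G(0)-\gamma_-$, the second by the $\liminf$ (in (i)) or $\limsup$ (in (ii)) hypothesis.

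Accordingly, I would first fix, independently of $h$, numbers $\nu_1<\nu_2$ in $(0,1)$ with $\nu_2-\nu_1$ so close to $1$ that $(\nu_2-\nu_1)(\wit a(0)-\wit r(0))>\G(0)-\gamma_-$ and $(\nu_2-\nu_1)L>\gamma_+-\G(0)$, where $L:=\liminf_{t\to\infty}(\wit a(t)-\wit r(t))$ in case~(i) and $L:=\limsup_{t\to\infty}(\wit a(t)-\wit r(t))$ in case~(ii); then $\ep>0$ small enough that $2\ep<(\nu_2-\nu_1)\kappa_0$ and that these two strict inequalities survive upon adding $2\ep$ to their right-hand sides and replacing $L$ by $L-\ep$; then $T>0$ with $|\G(s)-\gamma_+|<\ep$ for $s\ge T$ and $|\G(s)-\gamma_-|<\ep$ for $s\le-T$ (possible since $\gamma_\pm$ are finite); and finally $h_0\ge\max\{T,h_{\nu_1,\nu_2}\}$, with $h_{\nu_1,\nu_2}$ as in Lemma~\ref{5.lemanu}, large enough that, in case~(i), $\wit a(h)-\wit r(h)>L-\ep$ for all $h\ge h_0$.

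Fix now any $h\ge h_0$. Lemma~\ref{5.lemat*} provides $j_0\in\Z$ with $j_0h\le t_*^1(\nu_1,h)$, hence $\mah(j_0h)-\G(j_0h)>b^{\nu_1}(j_0h)$; taking $j_0$ more negative if necessary, it also provides $n\in\N$ with $(j_0+n)h\ge t_*^2(\nu_2,h)$, hence $\mrh((j_0+n)h)-\G((j_0+n)h)<b^{\nu_2}((j_0+n)h)$. For $j\in\{j_0,\dots,j_0+n-1\}$ with $j\notin\{-1,0\}$ one has $jh\ge h\ge T$ (if $j\ge1$) or $(j+1)h\le-h\le-T$ (if $j\le-2$), so $|\G((j+1)h)-\G(jh)|<2\ep<(\nu_2-\nu_1)\kappa_0\le(\nu_2-\nu_1)(\wit a((j+1)h)-\wit r((j+1)h))$, which is~\eqref{5.hipjcoro}; and for $j\in\{-1,0\}$ the bounds $|\G(-h)-\gamma_-|<\ep$, $|\G(h)-\gamma_+|<\ep$ and (in case~(i)) $\wit a(h)-\wit r(h)>L-\ep$ turn~\eqref{5.hipjcoro} into the inequalities arranged above. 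Since moreover $h\ge h_{\nu_1,\nu_2}$, all hypotheses of Corollary~\ref{5.corotrack1} are met (one may also invoke Remark~\ref{5.notah}), so~\eqref{5.ecutro}$_h$ has an attractor-repeller pair. As $\nu_1,\nu_2,\ep$ and $h_0$ were chosen before $h$, this proves~(i).

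For~(ii), the sole inequality that is not available for all large $h$ is the one at $j=0$, because $\wit a-\wit r$ need not stay above $\gamma_+-\G(0)$ along the whole positive half-line; but $\limsup_{t\to\infty}(\wit a(t)-\wit r(t))>\gamma_+-\G(0)$ gives $t_k\to\infty$ with $\wit a(t_k)-\wit r(t_k)\to L$, and extracting a strictly increasing subsequence $h_j:=t_{k_j}\uparrow\infty$ with $h_j\ge h_0$ and $\wit a(h_j)-\wit r(h_j)>L-\ep$, the argument of~(i) goes through unchanged for each $h_j$. I expect the only genuine difficulty to be bookkeeping the quantifiers — $\nu_1,\nu_2,\ep$ chosen before $h$, and $j_0,n$ only afterwards and separately for each $h$ via Lemma~\ref{5.lemat*} — together with the observation that, once $h$ is large, the two standing hypotheses are spent only at the indices $j=-1$ and $j=0$ of the otherwise automatic chain~\eqref{5.hipjcoro}.
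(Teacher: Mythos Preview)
Your proof is correct and follows the same strategy as the paper—reduce to Corollary~\ref{5.corotrack1} with $\nu_1<\nu_2$ close to $0$ and $1$, and observe that for large $h$ the only non-trivial instances of~\eqref{5.hipjcoro} are those at $j=-1$ and $j=0$, which are handled by the two standing hypotheses. The difference is one of economy: the paper takes $j_0=-1$, $n=2$, $\nu_1=1/4$, $\nu_2=3/4$ directly, so the chain~\eqref{5.hipjcoro} consists of exactly those two inequalities and no tail argument is needed; the boundary conditions $\mah(-h)-\G(-h)>b^{1/4}(-h)$ and $\mrh(h)-\G(h)<b^{1/4}(h)$ are claimed for all large $h$ via Lemma~\ref{5.lemat*}, which implicitly uses that for $h$ large one may take $t_*^1\ge -h$ and $t_*^2\le h$ (a uniformity-in-$h$ that follows from Proposition~\ref{4.proppersiste} and the fact that $|\G^h(t)-\gamma_\pm|$ is small on $\pm t\ge h$, but is not stated in the lemma). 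Your route sidesteps this uniformity issue entirely by allowing $j_0$ and $n$ to depend on $h$ through the $t_*^i(\nu_i,h)$ as written, at the price of verifying the extra indices $j\notin\{-1,0\}$ by a tail argument—which you do correctly. So your argument is a bit longer but more self-contained; the paper's is shorter but leans on an unstated uniformity.
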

\begin{proof}
Let us check (i). We observe that the hypotheses ensure that there exists
$h_*>0$ such that, if $h\ge h_*$, then
\begin{equation}\label{5.hh}
 (\wit a(0)-\wit r(0))/2>\G(0)-\G(-h)\quad\text{and}\quad (\wit a(h)-\wit r(h))/2>\G(h)-\G(0)\,.
\end{equation}
In addition, Lemma \ref{5.lemat*} provides $h^*>0$ large enough to ensure that
\begin{equation}\label{5.hhh}
\mah(-h)-\G(-h)> b^{1/4}(-h) \quad\text{and}\quad
\mrh(h)-\G(h)< b^{1/4}(h)
\end{equation}
if $h\ge h^*$. Let $h_{1/4,3/4}$ be provided by Lemma~{\rm\ref{5.lemanu}}.
Assertion (i) holds for $h_0:=\max(h_{1/4,3/4},h_*,h^*)$, as ensured by Corollary \ref{5.corotrack1}
for $\nu_1=1/4$, $\nu_2=3/4$, $j_0=-1$, and $n=2$.
\par
\par
To prove (ii), we observe that, under its hypothesis, there exists
a sequence $(h_j)\uparrow\infty$ with $h_j\ge h_0$
such that \eqref{5.hh} and \eqref{5.hhh} hold for all $h=h_j$.
\end{proof}
\subsection{Criteria for tipping}
Our next goal is to establish criteria for tipping, similar in their formulations
to the previous results, and valid for functions $\G$ which increase with $t$.
The next lemma will play a fundamental role. By {\em nondecreasing function $\G$},
we mean $\G(t_1)\le\G(t_2)$ if $t_1<t_2$.
\begin{lema}\label{5.lemainc}
Assume that Hypothesis~{\rm\ref{4.hipo}} hold. Fix $h>0$, and
assume also that $\G(t)$ is nondecreasing. Then,
\begin{itemize}
\item[\rm(i)] if $\mah$ is defined on $(-\infty,t]$
and $t\in[jh,(j+1)h)]$ for $j\in\Z$, then
\[
 \mah(t)\le\wit a(t)+\G(jh)\,.
\]
\item[\rm(ii)] If $\mrh$ is defined on $[t,\infty)$
and $t\in[jh,(j+1)h)]$ for $j\in\Z$, then
\[
 \mrh(t)\ge\wit r(t)+\G(jh)\,.
\]
\end{itemize}
\end{lema}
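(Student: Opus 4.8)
The plan is to bound $\mah$ from above (for (i)) by comparing it with a super-solution of \eqref{5.ecutro}$_h$ built from a slightly lifted copy of $\wit a$, propagated forward from $-\infty$, and symmetrically to bound $\mrh$ from below (for (ii)) by a sub-barrier propagated backward from $+\infty$. Note first that, since $\G$ is nondecreasing, $\G^h$ has nonnegative jumps at the nodes $jh$, so $t\mapsto\wit a(t)+\G^h(t)$ solves \eqref{5.ecutro}$_h$ between consecutive nodes and jumps upward at each of them — the right shape for an upper barrier swept to the right. The obstruction is the initialisation: by Theorem~\ref{4.teorexist}(ii) (applicable since $\G^h$ is $L^\infty$ with limits $\gamma_\pm$, cf.~Remark~\ref{5.notatrans}.2) together with $\lim_{t\to-\infty}\G^h(t)=\gamma_-$, both $\mah(t)$ and $\wit a(t)+\G^h(t)$ tend to $\wit a(t)+\gamma_-$ as $t\to-\infty$, leaving no room to start the comparison. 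I would fix this by opening a gap with a parameter.

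For $\lambda>0$ let $a_\lambda$ be the upper solution of \eqref{3.ecuconlb}$_\lambda$ provided by Theorem~\ref{3.teorlb*}; it exists because Hypothesis~\ref{4.hipo} and Theorem~\ref{3.teorlb*}(iii) force $\lb^*(f)<0$, and $a_\lambda>a_0=\wit a$ by Theorem~\ref{3.teorlb*}(ii). Two elementary facts are needed. Since $a_\lambda'=f(t,a_\lambda)+\lambda$ a.e., the shifted function $a_\lambda+c$ is, for every constant $c$, a strict super-solution of $x'=f(t,x-c)$. Moreover $\inf_{t\in\R}(a_\lambda(t)-\wit a(t))>0$: with $\phi:=a_\lambda-\wit a>0$ one has $\phi'\ge\lambda-l\,\phi$ a.e.\ for a Lipschitz constant $l$ of $f$ on a strip containing the bounded graphs of $a_\lambda$ and $\wit a$ (condition \hyperlink{f3}{\bf f3}), and a short differential inequality forbids $\phi$ from dropping below $\lambda/(2l)$. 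Thus $A_h^\lambda(t):=a_\lambda(t)+\G^h(t)$ is a super-solution of \eqref{5.ecutro}$_h$ on each $[jh,(j+1)h)$ with nonnegative jumps at the nodes.

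Now I would run the comparison. From the two limits noted above, $A_h^\lambda(t)-\mah(t)=(a_\lambda(t)-\wit a(t))+(\G^h(t)-(\mah(t)-\wit a(t)))$ has liminf at $-\infty$ at least $\inf_t(a_\lambda-\wit a)>0$, so $\mah(t)<A_h^\lambda(t)$ for all $t$ below some $t_\lambda$. The standard Carath\'{e}odory comparison under \hyperlink{f3}{\bf f3} then propagates $\mah\le A_h^\lambda$ to the right, node by node: on $[jh,(j+1)h)$ both functions live in $x'=f(t,x-\G(jh))$, with $A_h^\lambda$ a super-solution, so the inequality at $jh$ persists on the interval, and the nonnegative jump of $A_h^\lambda$ at $(j+1)h$ reinstates it at the next node. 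Hence $\mah(t)\le A_h^\lambda(t)$ wherever $\mah$ is defined, i.e.~$\mah(t)\le a_\lambda(t)+\G(jh)$ on $[jh,(j+1)h]$ (the right end-point by continuity of $\mah$ and $a_\lambda$). Finally, letting $\lambda\downarrow0$: Proposition~\ref{3.proppersiste} applied to $f$ and $f+\lambda$ yields an attractive hyperbolic solution of \eqref{3.ecuconlb}$_\lambda$ within $O(\lambda)$ of $\wit a$ in sup norm, which by the uniqueness in Theorem~\ref{3.teorlb*}(iv) must be $a_\lambda$; so $a_\lambda\to\wit a$ uniformly and $\mah(t)\le\wit a(t)+\G(jh)$, which is (i).

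Part (ii) is entirely symmetric. For $\mu>0$ take $r_\mu$, the lower solution of \eqref{3.ecuconlb}$_\mu$; now $r_\mu<r_0=\wit r$ and $\inf_t(\wit r-r_\mu)>0$, and — since $r_\mu'=f(t,r_\mu)+\mu$ with $\mu>0$ — the function $r_\mu+c$ is still a \emph{super}-solution of $x'=f(t,x-c)$, which is exactly what lets $L_h^\mu(t):=r_\mu(t)+\G^h(t)$ act as a lower barrier propagated \emph{backward}: swept towards $-\infty$ it solves \eqref{5.ecutro}$_h$ between nodes and jumps \emph{downward} at the nodes, which cannot obstruct a lower bound coming from the future. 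Initialising near $+\infty$, where $\mrh(t)\to\wit r(t)+\gamma_+$ and $\G^h(t)\to\gamma_+$, one gets $\mrh\ge L_h^\mu$ for $t$ large; backward comparison then gives $\mrh(t)\ge r_\mu(t)+\G(jh)$ on $[jh,(j+1)h]$, and $\mu\downarrow0$ yields $\mrh(t)\ge\wit r(t)+\G(jh)$. The one genuine obstacle is the initialisation already emphasised — the exact barrier and the pullback solution share the same asymptotics at the relevant infinity — so the $\lambda$- (resp.\ $\mu$-) lift is the essential device; everything else is routine comparison theory for scalar Lipschitz--Carath\'{e}odory equations.
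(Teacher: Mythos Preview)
Your proof is correct, but it follows a genuinely different route from the paper's. The paper lifts $\wit a$ by the \emph{extrapolated} linear combination $(1+\rho)\wit a-\rho\wit r$ (and symmetrically $(1+\rho)\wit r-\rho\wit a$ for (ii)) and propagates the bound step by step using the concavity of the forward solution map $x_0\mapsto x(t,s,x_0)$ (equivalently, convexity backward for (ii)); the limit $\rho\downarrow 0$ is then entirely elementary. You instead lift $\wit a$ to the attractor $a_\lambda$ of the $\lambda$-shifted equation $x'=f(t,x)+\lambda$, use that $a_\lambda+c$ is a \emph{strict} super-solution of $x'=f(t,x-c)$, and propagate via standard Carath\'{e}odory comparison with a strict super-solution---no concavity of the solution map enters the propagation step. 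The price is that your limit $\lambda\downarrow 0$ requires Proposition~\ref{3.proppersiste} to identify $a_\lambda$ with the persistent hyperbolic solution near $\wit a$, and you implicitly lean on Theorem~\ref{3.teorlb*} for the existence and ordering of $a_\lambda,r_\mu$ (so concavity is still used, just upstream). The paper's argument is more self-contained and uses only the data $(\wit a,\wit r)$; yours is more comparison-theoretic and would transplant to settings where the solution map is not concave but a family of strict super-solutions converging to $\wit a$ is available. Your treatment of (ii) is terse but correct: the key observation that $r_\mu+c$ remains a \emph{super}-solution (since $\mu>0$) is exactly what makes it a lower barrier under \emph{backward} propagation, and the downward jump of $L_h^\mu$ at the nodes (going right to left) does not obstruct the bound.
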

\begin{proof}
Let us prove (i).
We call $\gamma_j:=\G(jh)$ and $\gamma_\pm:=\Gamma(\pm\infty)$,
and fix $\rho>0$. Since $\inf_{t\in\R}(\wit a(t)-\wit r(t))>0$,
Theorem~\ref{4.teorexist}(ii) ensures the existence of
$j_\rho$ such that $\mah$ is defined on $(-\infty,j_\rho h]$ and
satisfies $\mah(t)-\gamma_-\le (1+\rho)\,\wit a(t)-\rho\,\wit r(t)$
for all $t\le j_\rho h$; and hence, since $(\gamma_j)\downarrow\gamma_-$ as $j\to-\infty$,
for any $j\le j_\rho-1$,
\begin{equation}\label{5.igutodo}
 \mah(t)-\gamma_j\le (1+\rho)\,\wit a(t)-\rho\,\wit r(t)
 \qquad\text{if $\;t\in[jh,(j+1)h]$}\,.
\end{equation}
Let us assume that~\eqref{5.igutodo} holds for a certain
$j-1\in\Z$ and prove it for $j$, assuming that $\mah$ is defined
on $(-\infty,t]$ for $t\in[jh,(j+1)h]$. Note first that our induction hypothesis
and the nondecreasing character of $\G$ yield
$\mah(jh)-\gamma_j\le\mah(jh)-\gamma_{j-1}
\le (1+\rho)\,\wit a(jh)-\rho\,\wit r(jh)$.
Hence, \eqref{5.atrozos} for $t\in[jh, (j+1)h]$
combined with the concavity of $x_0\mapsto x(t,s,x_0)$ yields
\[
\begin{split}
 \mah(t)&=x(t,jh,\mah(jh)-\gamma_j)+\gamma_j\\
 &\le x(t,jh,(1+\rho)\,\wit a(jh)-\rho\,\wit r(jh))+\gamma_j
 \le (1+\rho)\,\wit a(t)-\rho\,\wit r(t)+\gamma_j\,.
\end{split}
\]
Hence, \eqref{5.igutodo} holds as long as $\mah$ is defined. Taking
limit as $\rho\to 0$ proves (i).
\par
To prove (ii), we substitute~\eqref{5.igutodo} by
\[
 \mrh(t)-\gamma_j\ge (1+\rho)\,\wit r(t)-\rho\,\wit a(t)\,.
\]
for $j\ge j_\rho$ (for a possibly different $j_\rho$)
and $t\in[jh,(j+1)h]$. In order to prove it for any
$j\in\Z$, we assume that it is true for a certain $j$, and
that $\mrh$ is defined on $[t,\infty)$ for
a $t\in[(j-1)h,jh]$. Using \eqref{5.atrozos2}, we get
\[
\begin{split}
 \mrh(t)&=x(t,jh,\mrh(jh)-\gamma_{j-1})+\gamma_{j-1}
 \ge x(t,jh,\mrh(jh)-\gamma_j)+\gamma_{j-1}\\
 &\ge x(t,jh,(1+\rho)\,\wit r(jh)-\rho\,\wit a(jh))+\gamma_{j-1}
 \ge (1+\rho)\,\wit a(t)-\rho\,\wit r(t)+\gamma_{j-1}\,.
\end{split}
\]
So, we can conclude the proof of (ii) as that of (i).
\end{proof}
Recall that the functions $b^\nu$ of the next statement are defined by \eqref{5.defbnu}.
\begin{teor}\label{5.teortipping}
Assume Hypothesis {\rm\ref{4.hipo}}. Fix $h>0$, and
assume also that $\G(t)$ is nondecreasing,
and the existence of $j_0\in\Z$,
$n\in\N$ and $\nu_{j_0}=1,\nu_{j_0+1},\cdots,\nu_{j_0+n}=0$
in $[0,1]$ such that
\begin{equation}\label{5.hipj}
 x((j+1)h,jh,b^{\nu_j}(jh))
 -b^{\nu_{j+1}}\!((j+1)h)\le\G((j+1)h)-\G(jh)
\end{equation}
for $j\in\{j_0,\ldots,j_0+n-1\}$.
Then, \eqref{5.ecutro}$_h$ does not have an attractor-repeller
pair; and it has no bounded
solutions if, in addition, one of the $n$ inequalities \eqref{5.hipj} is strict.
\end{teor}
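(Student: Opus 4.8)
The plan is to mirror the proof of Theorem~\ref{5.teortrack}, but with every comparison inequality reversed: rather than pushing the graph of $\mah$ \emph{up} along a tracking route, I would trap it \emph{below} the curves $b^{\nu_j}+\G(jh)$ at the nodes $jh$, $j=j_0,\dots,j_0+n$, and at the two ends invoke Lemma~\ref{5.lemainc}, the monotonicity-in-$\G$ ingredient special to the nondecreasing case. Before the induction I would record a harmless reduction: if \eqref{5.ecutro}$_h$ has no bounded solution there is nothing to prove (it has no attractor-repeller pair, and the stronger conclusion holds as well), so I may assume it has one. Then, by Theorem~\ref{3.teoruno}(iv) applied to $x'=f(t,x-\G^h(t))$ (which still satisfies \hyperlink{f1}{\bf f1}--\hyperlink{f7}{\bf f7}), the solutions $\mah$ and $\mrh$ are globally defined and bounded with $\mrh\le\mah$ on $\R$; in particular Lemma~\ref{5.lemainc} applies to both of them and $\mah$ never ceases to exist on the intervals used below. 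This reduction is exactly where a little care is needed: the only potentially delicate scenario, $\mah$ reaching $-\infty$ before $(j_0+n)h$, can occur only when $\mR^-\neq\R$, i.e.\ when there is no bounded solution, a case already settled.

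Writing $\gamma_j:=\G(jh)$, the core is the inductive claim $\mah(jh)-\gamma_j\le b^{\nu_j}(jh)$ for $j\in\{j_0,\dots,j_0+n\}$. The base case $j=j_0$ is Lemma~\ref{5.lemainc}(i): since $\nu_{j_0}=1$ we have $b^{\nu_{j_0}}=\wit a$, so $\mah(j_0h)\le\wit a(j_0h)+\gamma_{j_0}$. For the inductive step I would use the piecewise representation $\mah(t)=x(t,jh,\mah(jh)-\gamma_j)+\gamma_j$ on $[jh,(j+1)h]$ (Remark~\ref{5.notatrans}.1, cf.\ \eqref{5.atrozos}), the monotonicity of $x_0\mapsto x(t,jh,x_0)$ and the inductive bound to obtain $\mah((j+1)h)\le x((j+1)h,jh,b^{\nu_j}(jh))+\gamma_j$, and then hypothesis \eqref{5.hipj} to replace the right-hand side by $b^{\nu_{j+1}}((j+1)h)+\gamma_{j+1}$. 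At $j=j_0+n$, where $\nu_{j_0+n}=0$ and hence $b^{\nu_{j_0+n}}=\wit r$, this gives $\mah((j_0+n)h)\le\wit r((j_0+n)h)+\gamma_{j_0+n}$.

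To close, Lemma~\ref{5.lemainc}(ii) yields $\mrh((j_0+n)h)\ge\wit r((j_0+n)h)+\gamma_{j_0+n}$, so $\mah((j_0+n)h)\le\mrh((j_0+n)h)$; combined with $\mah\ge\mrh$ this forces equality at $(j_0+n)h$ and hence, by uniqueness of solutions, $\mah\equiv\mrh$. Thus $\mah$ and $\mrh$ are globally defined but not different, so by Theorem~\ref{4.teorexist}(iv) they do not form an attractor-repeller pair and \eqref{5.ecutro}$_h$ is not in \hyperlink{CA}{\sc Case A}. For the final clause, suppose the inequality \eqref{5.hipj} is strict for some $j=j^*$; then the inductive step at $j^*$ produces the strict bound $\mah((j^*+1)h)-\gamma_{j^*+1}<b^{\nu_{j^*+1}}((j^*+1)h)$, and this strictness propagates through the remaining steps because $x_0\mapsto x(t,jh,x_0)$ is strictly increasing (distinct solutions never meet, by uniqueness). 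Hence $\mah((j_0+n)h)<\wit r((j_0+n)h)+\gamma_{j_0+n}\le\mrh((j_0+n)h)$, which contradicts $\mah\ge\mrh$; therefore in this case \eqref{5.ecutro}$_h$ has no bounded solution, as claimed. The only steps with any content beyond bookkeeping are the two endpoint applications of Lemma~\ref{5.lemainc}; everything else is the monotone-comparison machinery already used for Theorem~\ref{5.teortrack}.
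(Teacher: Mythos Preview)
Your proof is correct and follows essentially the same route as the paper: both use Lemma~\ref{5.lemainc}(i) for the base case $j=j_0$, the same induction via \eqref{5.atrozos} and \eqref{5.hipj} to propagate $\mah(jh)-\gamma_j\le b^{\nu_j}(jh)$, and Lemma~\ref{5.lemainc}(ii) at $j=j_0+n$ to compare with $\mrh$. The only cosmetic difference is the wrap-up: the paper invokes Remark~\ref{3.notabasta} directly from $\mah((j_0+n)h)\le\mrh((j_0+n)h)$ (respectively $<$), whereas you first reduce to the case of an existing bounded solution and then read off $\mah\equiv\mrh$ (respectively a contradiction) from $\mrh\le\mah$; both are equivalent.
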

\begin{proof}
Let us denote $\gamma_j:=\G(jh)$ for $j\in\Z$.
We assume that $\mah$ is defined on
$(-\infty, (j_0+n)h]$ and $\mrh$ is defined on
$[(j_0+n)h,\infty)$: otherwise, there is nothing to prove
(see Theorem \ref{4.teorexist}(iv)).
We will check that
\begin{equation}\label{5.desas}
 \mah(jh)\le b^{\nu_j}(jh)+\gamma_{j}
\end{equation}
for $j\in\{j_0,\ldots,j_0+n\}$. Lemma~\ref{5.lemainc}(i)
ensures that $\mah(j_0h)\le\wit a(j_0h)+\gamma_{j_0}= b^{\nu_{j_0}}(j_0h)+\gamma_{j_0}$.
This shows \eqref{5.desas}
for $j=j_0$. We assume that it is true for a $j\in\{j_0,\ldots,j_0+n-1\}$ and will prove it
for $j+1$.
According to \eqref{5.atrozos} and using \eqref{5.desas} and \eqref{5.hipj},
\[
\begin{split}
 \mah((j+1)h)&=x((j+1)h,jh,\mah(jh)-\gamma_j)+\gamma_j\\
 &\le x((j+1)h,jh,b^{\nu_j}(jh))+\gamma_j
 \le b^{\nu_{j+1}}((j+1)h)+\gamma_{j+1}\,,
\end{split}
\]
as asserted. In particular, \eqref{5.desas} for $j=j_0+n$ and Lemma \ref{5.lemainc}(ii) yield
\[
 \mah((j_0+n)h)\le b^{\nu_{j_0+n}}((j_0+n)h)+\gamma_{j_0+n}= \wit r((j_0+n)h)+\gamma_{j_0+n}
 \le\mrh((j_0+n)h)\,.
\]
This precludes the existence of attractor-repeller pair
(see Remark \ref{3.notabasta}).
\par
By reviewing the proof we observe that the strict character of
one of the inequalities \eqref{5.hipj} ensures that
$\mah((j_0+n)h)<\mrh((j_0+n)h)$, and this precludes
the existence of bounded solutions (see again Remark \ref{3.notabasta}).
The proof is complete.
\end{proof}
\begin{teor}\label{5.corotipping}
Assume Hypothesis~{\rm\ref{4.hipo}}. Fix $h>0$, and
assume that $\G(t)$ is nondecreasing.
Assume also one of the next three conditions:
\begin{itemize}[itemsep=2pt]
\item[\rm(i)] (A one-step criterion for tipping.) There is $j_0\in\Z$ such that
\[
 \wit a((j_0+1)h)-\wit r((j_0+1)h)\le\G((j_0+1)h)-\G(j_0h)\,.
\]
\item[\rm(ii)] (A first two-steps criterion for tipping.)
There are $j_0\in\Z$ and $\nu\in[0,1]$ such that
\[
\begin{split}
  (1-\nu)\,\big(\wit a((j_0+1)h)-\wit r((j_0+1)h)\big)
  &\le \G((j_0+1)h)-\G(j_0h)\,,\quad \text{and}\\
  x\big((j_0+2)h,(j_0+1)h,b^\nu((j_0+1)h)\big)&-\wit r((j_0+2)h)\\
  &\le \G((j_0+2)h)-\G((j_0+1)h)\,.
\end{split}
\]
\item[\rm(iii)] (A second two-steps criterion for tipping.) There is $j_0\in\Z$ such that
\[
\begin{split}
  &\qquad x\big((j_0+2)h,(j_0+1)h,\wit a((j_0+1)h)+\G(j_0h)-\G((j_0+1)h)\big)\\
  &\qquad\qquad\qquad\qquad\qquad -\wit r((j_0+2)h)\le \G((j_0+2)h)-\G((j_0+1)h)\,.
\end{split}
\]
\item[\rm(iv)] (A several-steps criterion for tipping.) There are $j_0\in\Z$ and $n\in\N$ such that
\[
 \qquad x_h((j_0+n)h,j_0h,\wit a(j_0h)+\G(j_0h))\le\widetilde r(j_0+n)h+\G((j_0+n)h)\,.
\]
\end{itemize}
Then, \eqref{5.ecutro}$_h$ does not have an
attractor-repeller pair. If, in addition, the inequality in {\rm(i)}, in {\rm(iii)}, in
{\rm (iv)}, or one of those in {\rm (ii)} is strict, then \eqref{5.ecutro}$_h$ has no bounded
solutions.
\end{teor}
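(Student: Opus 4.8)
The plan is, in each of the four cases, to exhibit an integer $N$ and to establish the inequality $\mah(Nh)\le\mrh(Nh)$: by Remark~\ref{3.notabasta} this rules out the existence of two distinct bounded solutions, hence, by Theorem~\ref{4.teorexist}(iv), of an attractor-repeller pair; and if the pertinent hypothesis is strict the same estimates give $\mah(Nh)<\mrh(Nh)$, which by Remark~\ref{3.notabasta} forbids any bounded solution. One may assume throughout that $\mah(Nh)$ and $\mrh(Nh)$ exist — equivalently, that $\mah$ is defined on $(-\infty,Nh]$ and $\mrh$ on $[Nh,\infty)$ — since otherwise \eqref{5.ecutro}$_h$ has no bounded solutions whatsoever (Theorem~\ref{3.teoruno}(iv)), which is the strongest of the claimed conclusions.

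Cases~(i) and~(ii) reduce to Theorem~\ref{5.teortipping} by choosing the data there suitably. For~(i) take $N=j_0+1$, $n=1$, $\nu_{j_0}=1$, $\nu_{j_0+1}=0$: since $b^{1}=\wit a$ is a solution we have $x((j_0+1)h,j_0h,b^{1}(j_0h))=\wit a((j_0+1)h)$, and $b^{0}=\wit r$, so the single inequality \eqref{5.hipj} is precisely the one assumed in~(i). For~(ii) take $N=j_0+2$, $n=2$, $\nu_{j_0}=1$, $\nu_{j_0+1}=\nu$, $\nu_{j_0+2}=0$: using $\wit a-b^{\nu}=(1-\nu)(\wit a-\wit r)$ for the step $j=j_0$ and $b^{0}=\wit r$ for the step $j=j_0+1$, the two inequalities \eqref{5.hipj} become exactly the two displayed in~(ii). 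Theorem~\ref{5.teortipping} then gives the absence of an attractor-repeller pair, and of bounded solutions when one of the relevant inequalities is strict.

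Cases~(iii) and~(iv) do not fit the template of Theorem~\ref{5.teortipping}, since the bound on $\mah$ at the intermediate index must now be supplied by the monotone estimate of Lemma~\ref{5.lemainc}(i) in place of a convex combination $b^{\nu}$; one argues directly instead. Write $\gamma_j:=\G(jh)$. For~(iv), with $N=j_0+n$: Lemma~\ref{5.lemainc}(i) at $t=j_0h$ gives $\mah(j_0h)\le\wit a(j_0h)+\gamma_{j_0}$; since $x_0\mapsto x_h(t,j_0h,x_0)$ is nondecreasing and $\mah((j_0+n)h)=x_h((j_0+n)h,j_0h,\mah(j_0h))$, the same monotonicity shows that $x_h(\cdot,j_0h,\wit a(j_0h)+\gamma_{j_0})$ stays above $\mah$, hence is defined up to $(j_0+n)h$ with $\mah((j_0+n)h)\le x_h((j_0+n)h,j_0h,\wit a(j_0h)+\gamma_{j_0})$; the hypothesis of~(iv) bounds the right-hand side by $\wit r((j_0+n)h)+\gamma_{j_0+n}$, while Lemma~\ref{5.lemainc}(ii) at $t=(j_0+n)h$ gives $\mrh((j_0+n)h)\ge\wit r((j_0+n)h)+\gamma_{j_0+n}$, so $\mah((j_0+n)h)\le\mrh((j_0+n)h)$. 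For~(iii), with $N=j_0+2$: Lemma~\ref{5.lemainc}(i) at $t=(j_0+1)h$ (with $j=j_0$) gives $\mah((j_0+1)h)\le\wit a((j_0+1)h)+\gamma_{j_0}$, and then \eqref{5.atrozos} on $[(j_0+1)h,(j_0+2)h]$ together with the monotonicity of $x_0\mapsto x(\cdot,\cdot,x_0)$ yields
\[
 \mah((j_0+2)h)\le x\bigl((j_0+2)h,(j_0+1)h,\wit a((j_0+1)h)+\gamma_{j_0}-\gamma_{j_0+1}\bigr)+\gamma_{j_0+1}\le\wit r((j_0+2)h)+\gamma_{j_0+2}\,,
\]
the last step being the hypothesis in~(iii); comparing with Lemma~\ref{5.lemainc}(ii) at $t=(j_0+2)h$ again gives $\mah((j_0+2)h)\le\mrh((j_0+2)h)$. (Unfolding \eqref{5.atrozos} shows that~(i) and~(iii) are the instances $n=1$ and $n=2$ of~(iv), and that the first inequality in~(ii) forces $b^{\nu}((j_0+1)h)\ge\wit a((j_0+1)h)+\gamma_{j_0}-\gamma_{j_0+1}$, so that~(ii) implies~(iii); but it seems cleanest to keep the four arguments separate.) In every case, a glance at the chain of inequalities shows that strictness of the pertinent hypothesis propagates to $\mah(Nh)<\mrh(Nh)$, hence to the absence of bounded solutions.

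The single point requiring genuine care — modest, given the machinery in place — is case~(iv) (and with it~(iii)): one must be sure that the comparison solution $x_h(\cdot,j_0h,\wit a(j_0h)+\G(j_0h))$ figuring in the hypothesis is actually defined up to time $(j_0+n)h$. This is why the argument first extracts $\mah(j_0h)\le\wit a(j_0h)+\G(j_0h)$ from Lemma~\ref{5.lemainc}(i) and then uses monotone dependence on the initial datum to trap that solution between the (assumed globally defined) special solution $\mah$ and $+\infty$. Beyond this, everything is a routine specialization of Theorem~\ref{5.teortipping} and Lemma~\ref{5.lemainc} together with the translation identity \eqref{5.atrozos}.
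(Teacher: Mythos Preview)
Your proof is correct and follows essentially the same approach as the paper: cases (i) and (ii) are reduced to Theorem~\ref{5.teortipping} with the identical choice of parameters, and cases (iii) and (iv) are handled directly via Lemma~\ref{5.lemainc} and the translation identity \eqref{5.atrozos}, exactly as in the paper. Your additional care about the well-definedness of $x_h(\cdot,j_0h,\wit a(j_0h)+\gamma_{j_0})$ in case~(iv), and your closing remarks on the logical relationships among the four cases, are welcome elaborations but do not constitute a different route.
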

\begin{proof}
Let us call $\gamma_j=\G(jh)$. Note that there is no restriction in
assuming that $\mah$ is defined on $(-\infty,(j_0+2)h]$ and $\mrh$ is defined on
$[j_0h,\infty)$, since otherwise there is nothing to prove (see Theorem \ref{4.teorexist}(iv)).
Assertion (i) is a trivial consequence of
Theorem \ref{5.teortipping}: we have $n=1$, $\nu_{j_0}=1$ and $\nu_{j_0+1}=0$,
and the unique inequality \eqref{5.hipj} is exactly that in (i).
Assertion (ii) also follows from Theorem \ref{5.teortipping}, with $n=2$,
$\nu_{j_0}=1$, $\nu_{j_0+1}=\nu$ and $\nu_{j_0+2}=0$: the two
inequalities \eqref{5.hipj} are those of (ii).
To prove (iii), we use \eqref{5.atrozos} and combine it with Lemma \ref{5.lemainc}(i),
the assumed inequality and Lemma \ref{5.lemainc}(ii) to check that
\[
\begin{split}
 \mah((j_0+2)h)
 &\le x\big((j_0+2)h,(j_0+1)h,\wit a((j_0+1)h)+\gamma_{j_0}-\gamma_{j_0+1}\big)+\gamma_{j_0+1}\\
 &\le \wit r((j_0+2)h)+\gamma_{j_0+2}\le\mrh((j_0+2)h)\,.
\end{split}
\]
According to Remark \ref{3.notabasta}, this inequality means the existence of at most
one bounded solution for \eqref{5.ecutro}$_h$. The same happens under the hypotheses in (iv),
which combined with  Lemma \ref{5.lemainc} yields
$\ma_h((j_0+n)h)\le\mr_h((j_0+n)h)$. The last assertion is checked
by reviewing the proof under the additional hypothesis, and using again
the information of Remark \ref{3.notabasta}.
\end{proof}
\subsection{Numerical evidence}\label{5.subsecnum}
We close the section by numerically validating some
of the criteria for tracking and tipping developed above.
Our benchmark will be a concave quadratic problem
\begin{equation}\label{eq:quad_concave}
x'=-\big(x-\Gamma^h(ct)\big)^2+p(t), \quad x\in\R,\; h\in\N,\; c>0
\end{equation}
for $\Gamma^h(t)$ defined as in \eqref{5.defGch} from
$\G(t):=(2/\pi)\arctan(t)$ and $p(t):=0.962-\sin(t/2)-\sin(\sqrt{5}\,t)$.
Fixing a value of $h\ge 0$ and letting $c$ vary, we
analyze the possible occurrence of rate-induced tipping.
It is proved in~\cite[Theorem 5.3]{lno} that the bifurcation map
$\lambda_*:[0,\infty]\times[0,\infty)$, $(c,h)\mapsto\lambda_*(c,h)$, with
$\lambda_*(c,h)$ associated to \eqref{eq:quad_concave} by Theorem \ref{3.teorlb*},
is bounded and continuous. Recall that the sign of
$\lambda_*(c,h)$ determines the dynamical situation of \eqref{eq:quad_concave}.
Specifically, we will highlight the pairs $(c,h)$ for which
the criteria of Corollary \ref{5.corotrack1} and Theorem \ref{5.corotipping}
allow us to correctly identify tracking or tipping.
The results are appreciable in Figure \ref{fig:num_track_tipp_sec5},
where the surface given by the numerical approximation of $\lambda_*(c,h)$ for $c,h\in[0,6]$
is complemented by chromatic
dots corresponding to the pairs $(c,h)$ where our criteria rigorously guarantee tracking (in green),
and tipping in one, two or more steps (in magenta, red and orange, respectively).
\begin{figure}
    \centering
    \includegraphics[width=0.48\textwidth,trim={1.3cm 0.6cm 1.2cm 1cm},clip]{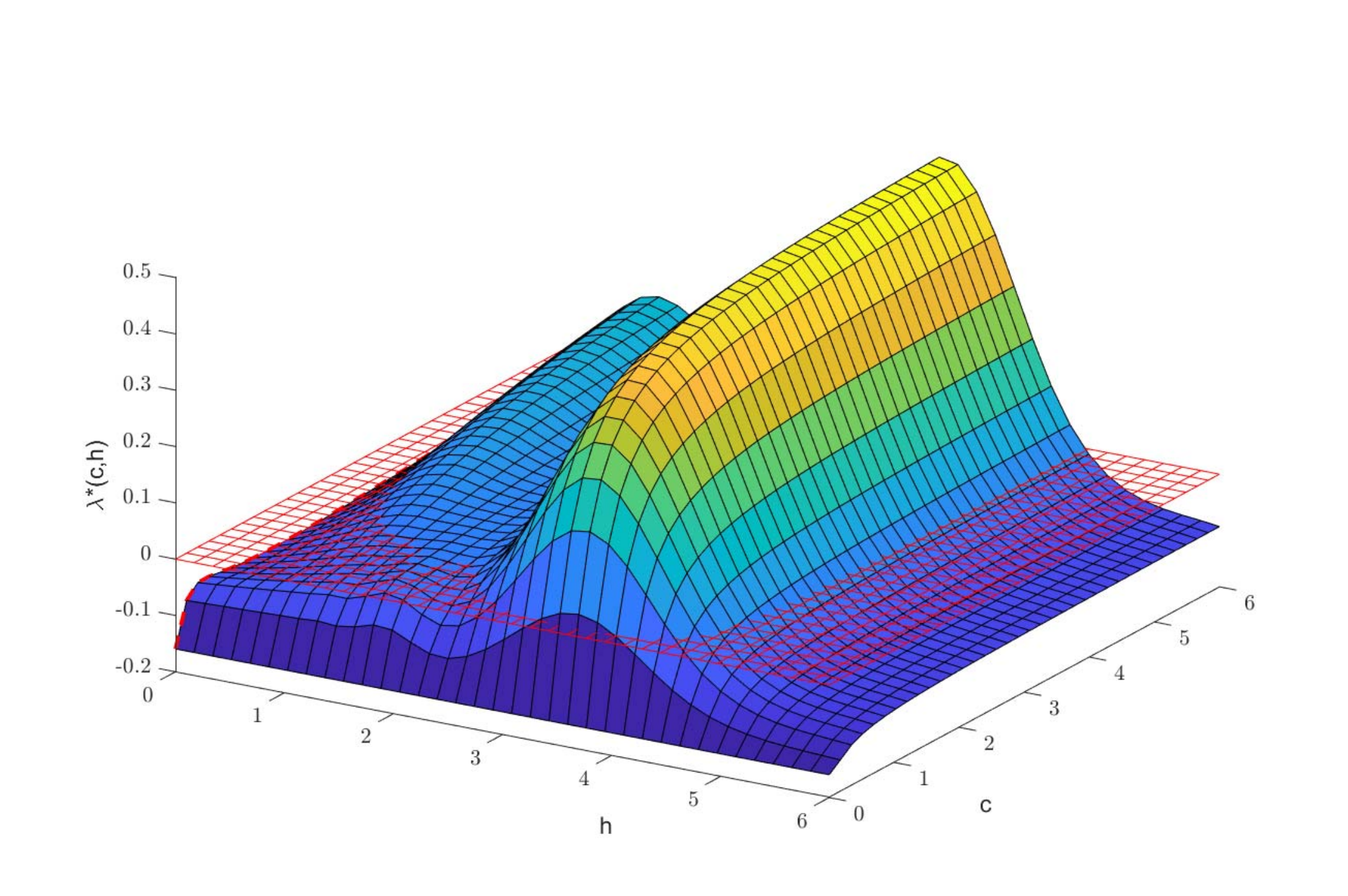}
    \includegraphics[width=0.48\textwidth,trim={1.3cm 0.6cm 1.2cm 1cm},clip]{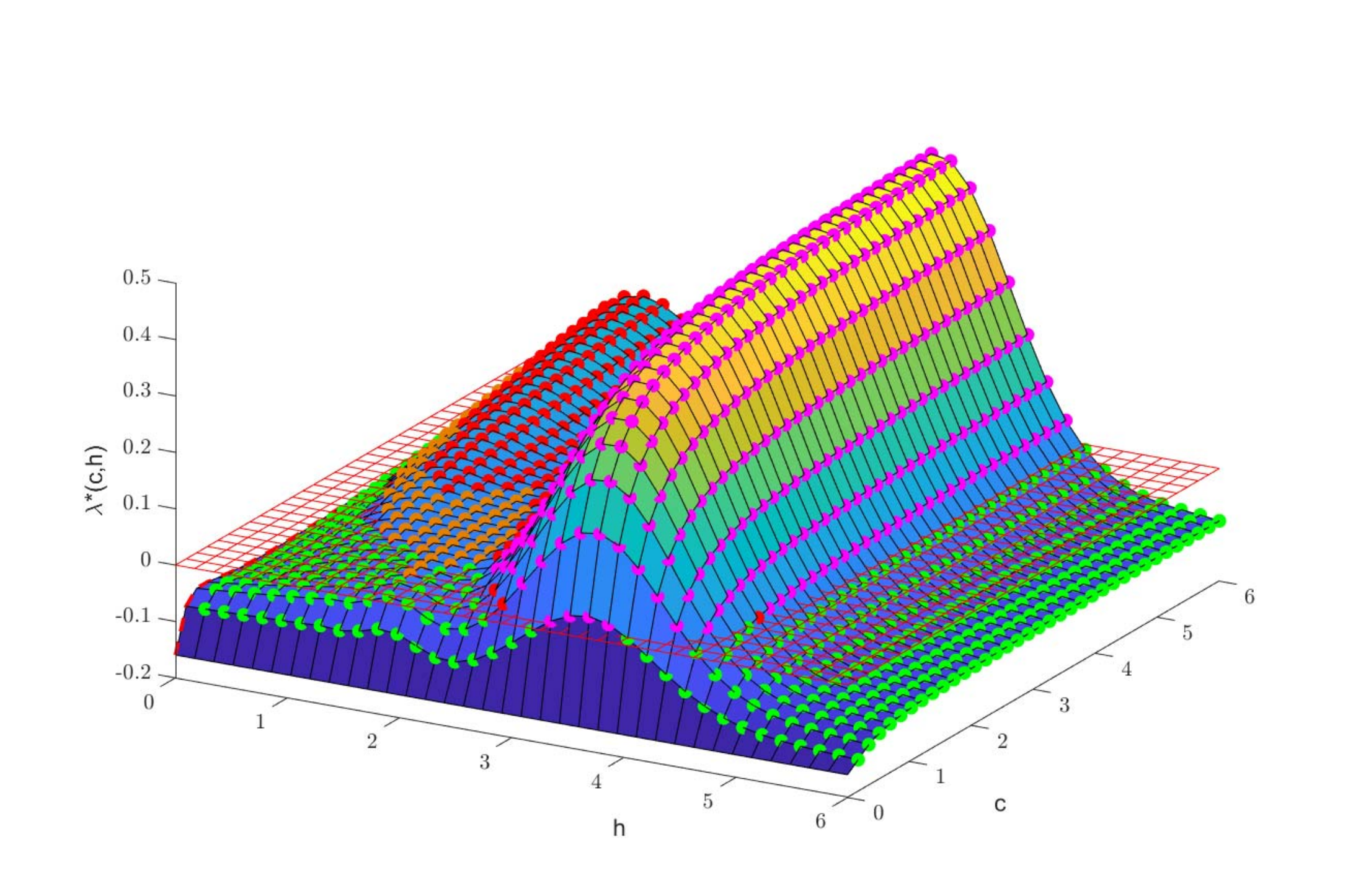}
    \includegraphics[width=0.48\textwidth,trim={1.3cm 0.6cm 1.2cm 1cm},clip]{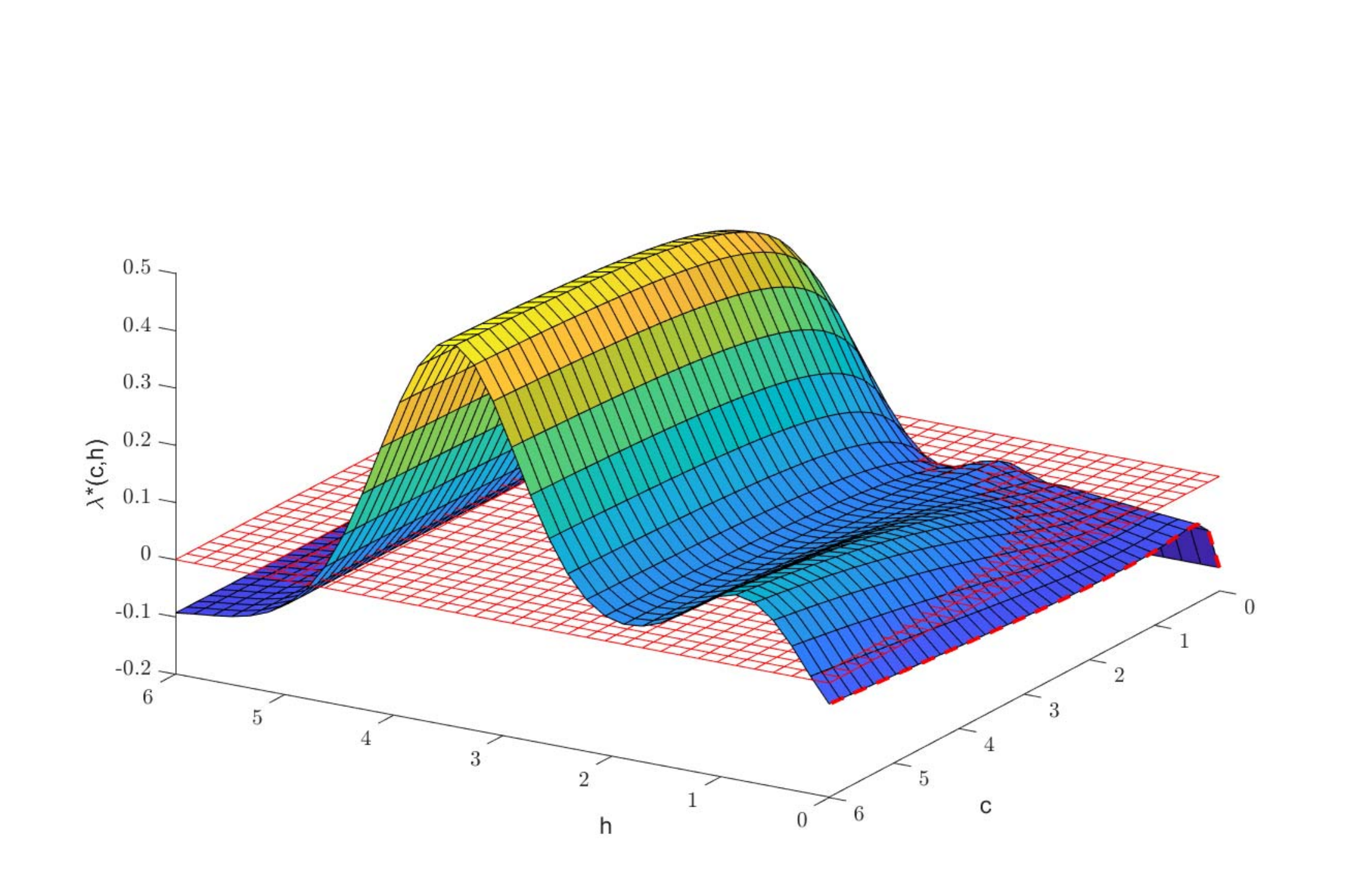}
    \includegraphics[width=0.48\textwidth,trim={1.3cm 0.6cm 1.2cm 1cm},clip]{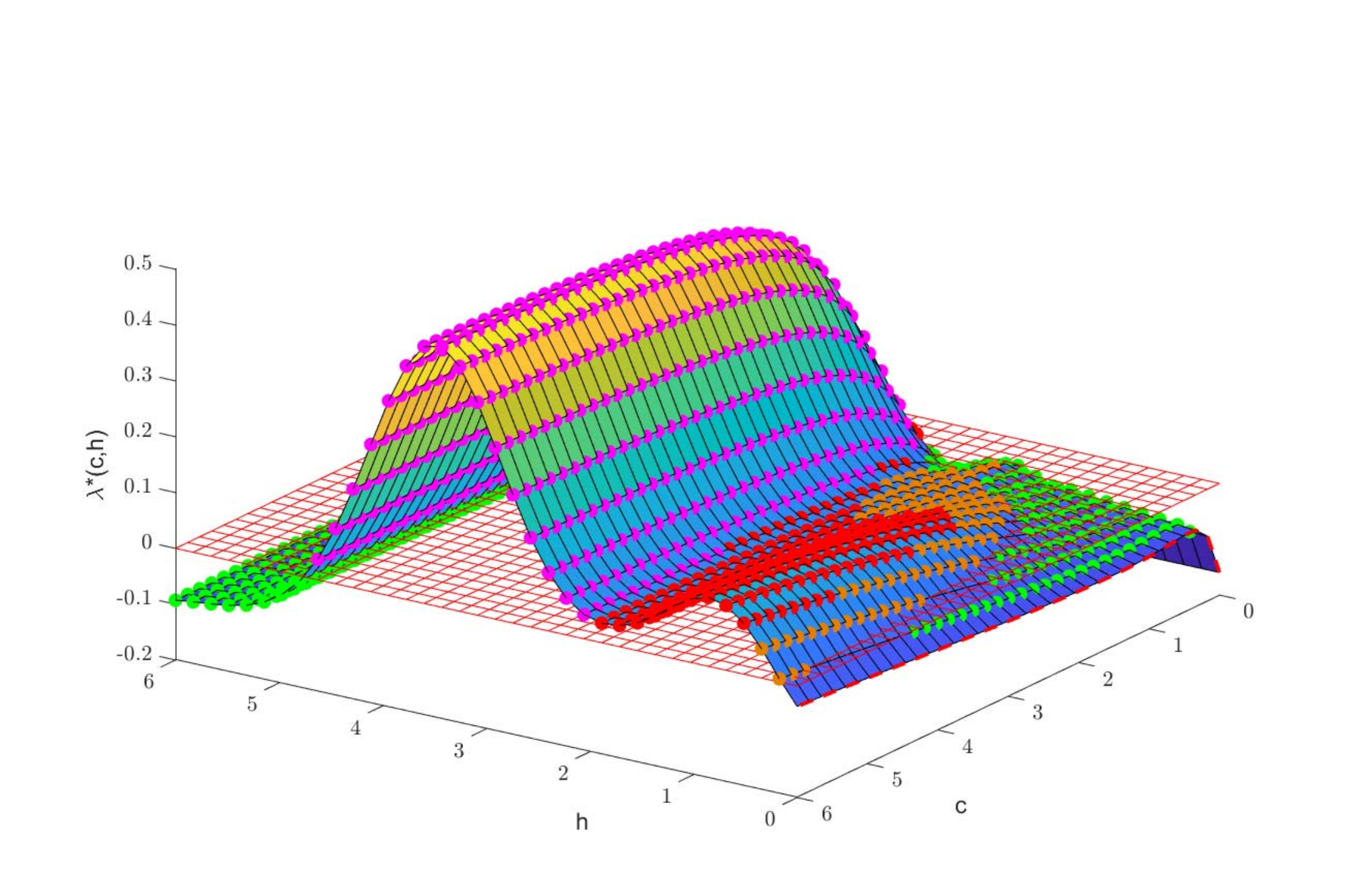}
    \caption{Visual representation of the effectiveness of the criteria of tracking and tipping of Corollary
    \ref{5.corotrack1} and Theorem \ref{5.corotipping}. The four pictures showcase the same surface
    $\lambda_*(c,h)$ (for the quadratic problem \eqref{eq:quad_concave}) numerically approximated
    using the algorithm presented in~\cite{lno}: the two rows correspond to different views.
    As explained in \cite{lno}, the part of the surface which lies below the red grid ($\lambda_*=0)$
    of left-hand side correspond to pairs $(c,h)$ providing end-point tracking, while the ones above provide
    tipping. On the right-hand side the surface is complemented by chromatic dots corresponding to pairs $(c,h)$
    where our methods can rigorously guarantee tracking (in green), tipping in one step (in magenta),
    tipping in two steps (in red), and tipping in more than two steps (in orange).}
    \label{fig:num_track_tipp_sec5}
\end{figure}
\par
Let us say a few words on the way the simulations are set up. The $(c,h)$-surface $\lambda_*(c,h)$
for the two-parametric problem \eqref{eq:quad_concave} is constructed via the algorithm presented in~\cite[Appendix B]{lno}.
A grid of $36\times 36$ values covering $[0,6]^2$ is use. For each one of these points we check our criteria of
tracking and tipping. For the points of tracking (in green),
we verify the criterion of Corollary \ref{5.corotrack1} and Remark \ref{5.notah}. We fix
$(c,h)$ with $h>0$ and take $I_{c,h}=[\texttt{floor}(-80/(ch)),\texttt{ceil}(80/ch)]$ as a suitable interval of integration.
Then, we look for a value $\nu\in(0,1/2)$ such that: the solutions $x(t+h,s,b^\nu(t))$ of $x'=-x^2+p(t)$ satisfy
$x(t+h,t,b^\nu(t))>b^{1-\nu}(t+h)$ whenever $t=jh\in I_{c,h}$ for some $j\in\Z$; there exist
$j_0\in \Z$ and $n\in\N$ such that $[j_0h,(j_0+n)h]\subseteq I_{c,h}$, and with
$\mah(j_0h)-\G(cj_0h)>b^{\nu}(j_0h)$ and $\mrh((j_0+n)h)-\G(c(j_0+n)h)<b^{\nu}((j_0+n)h)$; and
the (strict) inequalities \eqref{5.hipjcoro} hold
for all $j=j_0,\ldots, j_0+n-1$ for $\nu_1=\nu$ and $\nu_2=1-\nu$.
If such a value of $\nu$ is found, a green dot is then plotted on $\lambda_*(c,h)$.
\par
For the points of tipping, we fix once more a pair $(c,h)$
and take $k:=\texttt{floor}(30/h)$. Then,
we check if the one-step criterion for tipping is satisfied, i.e., if the inequality
$\wit a((j_0+1)h)-\wit r((j_0+1)h)<\G(c(j_0+1)h)-\G(cj_0h)$
of Theorem \ref{5.corotipping}(i) holds for some $j_0\in\{0,\ldots,2k\}$.
If so, a magenta
point is assigned to $\lambda_*(c,h)$. If not, we check a two-steps criterion:
if the inequality of Theorem \ref{5.corotipping}(iii)
(with $\G(ct)$ instead of $\G(t)$) holds for some $j_0\in\Z$, we
plot a red point on $\lambda_*(c,h)$.
If none of the previous inequalities is satisfied, we check the several steps
criterion of Theorem \ref{5.corotipping}(iv): we
numerically check the existence of $j\in\{0,\ldots,2k\}$ such that
$x_{c,h}((j-k)h,-kh,\wit a(-kh)+\G(-ckh))<\widetilde (r(j-k)h)+\Gamma(c(j-k)h)$,
which proves tipping; and,
in this case, an orange dot is plot on $\lambda_*(c,h)$.
Observe finally that the net of green (resp.~magenta, red and orange) points practically covers
the negative (resp.~positive) part of the surface of $\lb_*$; i.e., the part corresponding
to pairs $(c,h)$ for which there is tracking (resp.~tipping).
\par
We point out that our results allow us to carry out a similar analysis of phase-induced
tipping (see Section \ref{6.sec}) instead of rate-induced tipping.
\section{Rigorous estimates for continuous transitions}\label{6.sec}
In Section \ref{5.sec}, we have obtained several results concerning equations
\eqref{5.eqGamma} with $\G$ replaced by the piecewise constant function
$\G^h$ for $h>0$. The case $h=0$, which we consider in this section, is quite
more difficult to deal with. In fact,
for simplicity and to formulate
statements that are not too abstract,
our results for this case are restricted to a particular type of
$f$, namely $f(t,x)=-x^2+p(t)$ where $p\colon\R\to\R$ is an $L^\infty$ function.
That is, for a fixed bounded and continuous function $\G\colon\R\to\R$
with $\gamma_\pm:=\lim_{t\to\pm\infty}\G(t)\in\R$ and a fixed $L^\infty$-map
$p\colon\R\to\R$, we deal with the equation
\begin{equation}\label{6.ecGp}
 x'=-\big(x-\Gamma(t)\big)^2+p(t)\,,
\end{equation}
understood as a perturbation of
\begin{equation}\label{6.ecp}
 x'=-x^2+p(t)
\end{equation}
as well as a transition from the past equation \eqref{4.ecu-} to the
future one \eqref{4.ecu+}.
Let us reformulate Hypothesis \ref{4.hipo} for this setting:
\begin{hipo}\label{6.hipo}
The equation \eqref{6.ecp} has an attractor-repeller pair $(\wit a,\wit r)$,
\end{hipo}
Our objective is to establish
criteria ensuring that the dynamics of the transition equation
fits \hyperlink{CA}{\sc Cases A} or
\hyperlink{CC}{\sc C}. It is easy to check
that $f$ satisfies conditions \hyperlink{f1}{\bf f1}-\hyperlink{f7}{\bf f7}.
Note that the information provided by Proposition
\ref{4.proppersiste}, Theorem~\ref{4.teorexist} and Remarks~\ref{5.notatrans}
is valid for this formulation (as well as the rest of the results
of Section \ref{5.sec}).
\par
Throughout this section, we represent by $\lb_*(\G,p)$ the special
value of the parameter associated to \eqref{6.ecGp} by Theorem \ref{3.teorlb*}.
In particular, Hypotheses \ref{6.hipo} is equivalent to $\lb_*(0,p)<0$.
We will apply part of the results of \cite{lno}: please be advised that our
present $\lb_*(\G,p)$ is represented by $\lb_*(2\G,p-\G^2)$ in that paper.
Recall that a negative, zero, or positive $\lb_*(\G,p)$ corresponds to
{\sc Cases} \hyperlink{CA}{\sc A} (tracking), \hyperlink{CB}{\sc B} or \hyperlink{CC}{\sc C}
(tipping), respectively: see Theorem \ref{3.teorlb*} and Section \ref{4.sec}.
\subsection{Total and partial tracking on the hull}\label{4.subsec_partial_tipping}
Notions of total and partial tracking/tipping suitable for our framework where introduced in
\cite{lno}. Let us recall the idea. The properties which we mention here are
proved in Appendix \ref{appendix} for a more
general setting. Let $\W_p$ be the hull of $p$,
which is a compact metric space defined as the closure of
$\{p_s\,|\;s\in\R\}$, where $p_s(t):=p(s+t)$ and
the closure is taken in the set $L^\infty(\R)$
provided with the topology
$\sigma$ defined by the family of seminorms
\[
 n_{[r_1,r_2]}(q)=\Big|\int_{r_1}^{r_2}q(t)\,dt\,\Big|\quad
 \text{for $r_1,r_2\in\Q\:$ with $\;r_1<r_2$}\,.
\]
Then, the time shift $t\mapsto q_t$ defines a continuous global flow on
$\W_p$; and, if $u(t,q,x_0)$ represents the solution of
\begin{equation}\label{6.ecqhull}
 x'=-x^2+q(t)
\end{equation}
for $q\in\W_p$ with $u(0,q,x_0)=x_0$, then
the map $(t,q,x_0)\mapsto (q_t,u(t,q,x_0))$ is a local continuous {\em skew-product} flow
on $\W_p\times\R$, defined of an open subset $\mU\subseteq\R\times\W_p\times\R$.
\par
By reviewing the proof of Theorem \ref{3.teorhyp}, sketched in
Appendix \ref{appendix}, we observe
that Hypothesis \ref{6.hipo} ensures the existence two hyperbolic copies
of the base $\W_p$ for this skew-product flow: the graphs of two continuous real maps
on $\W_p$, which are exponentially asymptotically stable sets as time increases
(the upper one) or decreases (the lower one). This property ensures that each equation
\eqref{6.ecqhull} has an attractor-repeller pair
(given by the corresponding sections of the hyperbolic copies of the base).
Since Hypothesis \ref{6.hipo} ensures the existence of an attractor-repeller
pair for $y'=-(y-\gamma_\pm)^2+p(t)$, the
previous argument shows the same property for $y'=-(y-\gamma_\pm)^2+q(t)$
for any $q\in\W_p$. So, a natural question arises:
are all the equations
\begin{equation}\label{6.eqGq}
 y'=-(y-\G(t))^2+q(t)
\end{equation}
for $q\in\W_p$ in the same dynamical case?
The global occurrence of \hyperlink{CA}{\sc Case A} is called
{\em total tracking on the hull $\W_p$ for \eqref{6.ecGp}}, and we talk about
{\em partial tracking} ({\em or tipping}) {\em on the hull} when
\hyperlink{CA}{\sc Cases A} and \hyperlink{CC}{C}
coexist for different functions in $\W_p$.
\par
Recall that the sign of the bifurcation value $\lb_*(\G,q)$ associated to \eqref{6.eqGq}
by Theorem \ref{3.teorlb*} determines its dynamical situation.
As shown in \cite[Remark 2.13.2]{lno}, the map $\W_p\to\R\,\;q\mapsto\lb_*(\G,q)$ is
not continuous, in general. The next result shows some continuity properties
and provides criteria for total and partial tacking.
\begin{teor}\label{6.teorconthull}
Let $\W_p$ be the hull of $p$. Then,
\begin{itemize}
\item[\rm(i)] the map $\R\to\R\,,\;s\mapsto\lb_*(\G,q_s)$ is uniformly
continuous for all $q\in\W_p$.
\item[\rm(ii)] The map $\W_p\to\R\,,\;q\mapsto \lb_*(\G,q)$ is lower semicontinuous.
Consequently, $\lb_*(\G,q)\le\lb_*(\G,p)$ for all $q\in\W_p$.
\item[\rm(iii)] If there exists $\rho>0$ with $\lb_*(\G,p_s)\le-\rho$ for all
$s\in\R$, then there is total tracking on the hull for \eqref{6.ecGp}.
\item[\rm (iv)] If $p$ is almost periodic, then the map $\W_p\to\R\,,\;q\mapsto \lb_*(\G,q)$ is
continuous.
\item[\rm(v)] If $p$ is almost periodic and there exist $s_1,s_2\in\R$
with $\lb_*(\G,p_{s_1})<0<\lb_*(\G,p_{s_2})$, then there exist four
sequences $(s_{1,n}^\pm)$, $(s_{2,n}^\pm)$ with $\lim_{n\to\infty}s_{i,n}^\pm=\pm\infty$
and $\pm s_{1,n}^\pm\le\pm s_{2,n}^\pm\le\pm s_{1,n+1}^\pm$ for all $n\in\N$ such that
$\lb_*(\G,p_{s_{1,n}^\pm})<0<\lb_*(\G,p_{s_{2,n}^\pm})$ for all $n\in\N$.
\end{itemize}
\end{teor}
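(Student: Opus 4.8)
Everything rests on the behaviour of the bifurcation value $\lb_*(\G,q)$ --- the $\lb^*$ of Theorem~\ref{3.teorlb*} for $y'=-(y-\G(t))^2+q(t)+\lb$ --- under two perturbations of $q$: a phase shift $q\mapsto q_s$, and a $\sigma$-small change of $q$ inside $\W_p$. A phase shift is converted into a \emph{shift of $\G$} by the change of variables $\tilde y(\tau):=y(\tau-s)$ and the obvious time-translation invariance of $\lb^*$, yielding
\[
 \lb_*(\G,q_s)=\lb_*\big(\G(\cdot-s),q\big),
\]
which is advantageous because $\G$, being continuous with finite limits $\gamma_\pm$ at $\pm\infty$, is uniformly continuous. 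I use throughout that $\|q\|_{L^\infty}\le\|p\|_{L^\infty}$ for $q\in\W_p$ and that $\G$ is bounded, so the coercivity radius of Theorem~\ref{3.teorlb*} can be chosen once for the whole family \eqref{6.eqGq} and a bounded range of $\lb$ (which suffices, as $\lb^*$ is a priori bounded); call it $\rho$, so every bounded solution in play lies in $\R\times[-\rho,\rho]$.

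\textbf{Parts (i) and (ii).} For (i) I first prove a Lipschitz bound: if $g_i(t,y)=-(y-\Delta_i(t))^2+q(t)$ with $\|\Delta_i\|_{L^\infty}\le\|\G\|_{L^\infty}$, then $|\lb^*(g_1)-\lb^*(g_2)|\le C\|\Delta_1-\Delta_2\|_{L^\infty}$ with $C:=2\rho+2\|\G\|_{L^\infty}$. Indeed $|g_1-g_2|\le C\|\Delta_1-\Delta_2\|_{L^\infty}$ on $[-\rho,\rho]$; for $\mu>\lb^*(g_2)$, Theorem~\ref{3.teorlb*} provides a bounded solution $b$ of $x'=g_2(t,x)+\mu$, necessarily valued in $[-\rho,\rho]$, whence $b'=g_2(t,b)+\mu\le g_1(t,b)+\mu+C\|\Delta_1-\Delta_2\|_{L^\infty}$; so $b$ is a lower solution of $x'=g_1(t,x)+(\mu+C\|\Delta_1-\Delta_2\|_{L^\infty})$ and Theorem~\ref{3.teoruno}(v) gives bounded solutions, i.e.\ $\lb^*(g_1)\le\mu+C\|\Delta_1-\Delta_2\|_{L^\infty}$; let $\mu\downarrow\lb^*(g_2)$ and symmetrise. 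Combined with the displayed identity and the uniform continuity of $\G$, this gives $|\lb_*(\G,q_s)-\lb_*(\G,q_{s'})|\le C\,\omega_\G(|s-s'|)$ with $\omega_\G$ the modulus of continuity of $\G$, uniformly in $q\in\W_p$; that is (i). For lower semicontinuity in (ii): if $q_n\to q$ in $\W_p$ with $\lb_*(\G,q_n)\le\lb$ for all $n$, fix $\lb'>\lb$; each $y'=-(y-\G(t))^2+q_n(t)+\lb'$ has an attractor-repeller pair (Theorem~\ref{3.teorlb*}(iv)), hence a bounded solution $a_n$ valued in $[-\rho,\rho]$ with uniformly bounded derivative; by Arzel\`a--Ascoli a subsequence converges uniformly on compacta to a bounded $a$, and passing to the limit in the integral form of the equation (using $\sigma$-convergence of $q_n$, uniform convergence of $a_n$, and the continuity of the Carath\'eodory flow on the hull from the Appendix) shows $a$ solves $y'=-(y-\G(t))^2+q(t)+\lb'$; thus $\lb_*(\G,q)\le\lb'$ for all $\lb'>\lb$, so $\lb_*(\G,q)\le\lb$, i.e.\ $\{q:\lb_*(\G,q)\le\lb\}$ is closed. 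The inequality $\lb_*(\G,q)\le\lb_*(\G,p)$ then follows by approximating $q$ by translates of $p$ and invoking lower semicontinuity.

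\textbf{Parts (iii), (iv), (v).} (iii) is immediate: for $q$ a $\sigma$-limit of $p_{s_n}$, lower semicontinuity gives $\lb_*(\G,q)\le\liminf_n\lb_*(\G,p_{s_n})\le-\rho<0$, so \eqref{6.eqGq} is in \hyperlink{CA}{\sc Case A} for every $q\in\W_p$. For (iv), only upper semicontinuity is missing, and the almost periodicity of $p$ is essential here: on the hull of an almost periodic function the $\sigma$-topology coincides with the $L^\infty$-topology (the $L^\infty$-orbit closure is $L^\infty$-compact by Bochner's criterion, contains the orbit, is $\sigma$-closed, hence equals $\W_p$; a continuous bijection of a compact onto a Hausdorff space is a homeomorphism), so convergence in $\W_p$ is uniform convergence. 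If $\lb_*(\G,q)<\lb$, Theorem~\ref{3.teorlb*}(iv) gives an attractor-repeller pair of hyperbolic solutions for the $q$-equation at parameter $\lb$, and Proposition~\ref{3.proppersiste} persists it for $q_n\to q$ (its hypotheses hold since the perturbing term $q_n-q$ depends on $t$ alone, is small in $L^\infty$, and leaves $g_x$ unchanged), whence $\lb_*(\G,q_n)<\lb$ eventually; so $\{q:\lb_*(\G,q)<\lb\}$ is open and $\lb_*(\G,\cdot)$ is continuous. For (v): by (i) the map $s\mapsto\lb_*(\G,p_s)$ is continuous, so $U_-:=\{s:\lb_*(\G,p_s)<0\}$ and $U_+:=\{s:\lb_*(\G,p_s)>0\}$ are open, nonempty ($s_1\in U_-$, $s_2\in U_+$) and disjoint; each is relatively dense, since for $\varepsilon$ small the $\varepsilon$-almost-periods $\tau$ of $p$ (a relatively dense set) give $\|p_{s_1+\tau}-p_{s_1}\|_{L^\infty}<\varepsilon$, hence $\lb_*(\G,p_{s_1+\tau})<0$ by continuity of $\lb_*(\G,\cdot)$ from (iv), so $s_1+\{\varepsilon\text{-almost-periods}\}\subseteq U_-$, and likewise near $s_2$ for $U_+$. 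Picking $L$ so that every interval of length $L$ meets both $U_-$ and $U_+$, one builds inductively an increasing sequence $s_{1,1}^+<s_{2,1}^+<s_{1,2}^+<\cdots$ with consecutive terms at distance $<L$ and with $s_{1,n}^+\in U_-$, $s_{2,n}^+\in U_+$, hence diverging to $+\infty$ with the stated sign pattern; a symmetric construction gives a decreasing sequence to $-\infty$.

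\textbf{Main obstacle.} The crux is the clash of topologies. Lower semicontinuity, and hence (iii), lives in the weak $\sigma$-topology: that is where the hull is compact and the Carath\'eodory flow continuous, so bounded solutions of nearby equations subconverge to bounded solutions of the limit. But full continuity of $\lb_*(\G,\cdot)$ fails in general in that topology, because the persistence of hyperbolic solutions (Proposition~\ref{3.proppersiste}) needs genuine $L^\infty$-closeness; this is exactly why (iv) must assume $p$ almost periodic, so that the two topologies agree on $\W_p$ and persistence delivers upper semicontinuity. Granting (i) and (iv), the interleaving argument in (v) and the Lipschitz estimate in (i) are routine, the only care being to keep all constants uniform over the family \eqref{6.eqGq}, which the single coercivity radius $\rho$ secures.
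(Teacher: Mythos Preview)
Your proof is correct and follows essentially the same route as the paper: the identity $\lb_*(\G,q_s)=\lb_*(\G_{-s},q)$ plus a Lipschitz bound in $\G$ for (i), Arzel\`a--Ascoli together with the Appendix's $\sigma$-continuity of the flow for (ii), and the coincidence of the $\sigma$ and $L^\infty$ topologies on the hull of an almost periodic function for (iv) and (v). The only noteworthy variation is in (iv), where you obtain upper semicontinuity via persistence of hyperbolic solutions (Proposition~\ref{3.proppersiste}), whereas the paper simply reuses the Lipschitz estimate---this time in the $q$-variable---from \cite[Theorem~2.12]{lno}; both arguments are valid and of comparable length.
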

\begin{proof}
To begin, let us check that
\begin{equation}\label{6.common}
 \n{q}_{L^\infty}\le\n{p}_{L^\infty}  \quad\text{ for all $q\in\W_p$}\,.
\end{equation}
We take $q\in\W_p$ and $(s_n)$ with $p_{s_n}\xrightarrow{\sigma}q$. If $r,h\in\R$
and $\ep>0$,
\[
 \frac{1}{h}\int_r^{r+h} p_{s_n}(t)\,dt-\ep\le
 \frac{1}{h}\int_r^{r+h} q_{s_n}(t)\,dt\le \frac{1}{h}\int_r^{r+h} p(t)\,dt+\ep
\]
for large enough $n\in\N$ (see Lemma \ref{A.lema}). Since $\n{p_s}_{L^\infty}=\n{p}_{L^\infty}$,
we get $-\n{p}_{L^\infty}-\ep\le (1/h)\int_r^{r+h} q(t)\,dt \le \n{p}_{L^\infty}+\ep$.
Lebesgue's Differentation Theorem ensures that $|q(r)|\le \n{p}_{L^\infty}+\ep$
for $l$-a.a.~$r\in\R$, which proves our assertion. We will use \eqref{6.common}
to prove (ii) and (iv).
\smallskip\par
(i) For each $s\in\R$, the change $y(t)=x(t-s)$ takes $x'=-(x-\G(t))^2+q_s(t)$ to
$y'=-(y-\G_{-s}(t))^2+q(t)$, and preserves the number of bounded solutions, which means
that $\lb_*(\G,q_s)=\lb_*(\G_{-s},q)$. According to \cite[Theorem 2.12]{lno}
(whose proof can be repeated for $L^\infty$ maps using Theorem \ref{3.teorlb*}),
there exists $m>0$ such that $|\lb_*(\G,q_{s_1})-\lb_*(\G,q_{s_2})|
\le m \n{\G_{-s_1}-\G_{-s_2}}_\infty$.
Let us define $\theta(h):=\sup\{|\G(t+s)-\G(t)|\,|\;t\in\R,\,s\in[-h,h]\}$ and
deduce from the uniform continuity of $\G$ that $\theta$
is a continuous increasing map on $[0,\infty)$ with $\theta(0)=0$. Then,
$|\lb_*(\G,q_{s_1})-\lb_*(\G,q_{s_2})|\le m\,\theta(|s_2-s_1|)$,
which proves the assertion.
\smallskip\par
(ii) We fix $\ep>0$, define $m:=\n{\G}_\infty+(\n{p}_{L^\infty}+\ep)^{1/2}$, and deduce from
\eqref{6.common} that
$f_q(t,x)\le-\ep$ for all $q\in\W_p$ if $|x|\ge m$, where $f_q(t,x)=
-(x-\G(t))^2 + q(t)$. This fact, \eqref{6.common}, and Theorem \ref{3.teorlb*} show
the existence of a common bound $m_1\ge|\lb_*(\G,q)|$ for all $q\in\W_p$.
Let us take an element $q$ and a sequence $(q_n)$, all in $\W_p$, with
$q_n\xrightarrow{\sigma} q$, assume that $\bar\lb=\lim_{k\to\infty}
\lb_*(\G,q_k)$ for a subsequence $(q_k)$, and check that
$\lb_*(\G,q)\le \bar\lb$. Let $\mb_k$ be a bounded solution
of $x'=-(x-\G(t))^2+q_k(t)+\lb_*(\G,q_k)$ (see Theorem~\ref{3.teorlb*}).
Theorem \ref{3.teoruno} shows that
$\n{\mb_k}_\infty\le m_2$ for all $k\in\N$,
where $m_2:=\n{\G}_\infty+(\n{p}_{L^\infty}+m_1+\ep)^{1/2}$. Therefore, there exists
$m_3$ with $\n{\mb'_k}_{L^\infty}\le m_3$ for all $k\in\N$. Arzel\'{a}-Ascoli's
Theorem provides a subsequence $(\mb_j)$ which converges to a new map $\mb_0$
on the compact subsets of $\R$.
It follows from Theorem \ref{A.teorcont} that the bounded map $\mb_0$ solves
$x'=-(x-\G(t))^2+q(t)+\bar\lb$, and Theorem \ref{3.teorlb*}
shows that $\lb_*(\G,q)\le \bar\lb$. The definition of $\W_p$
proves the last assertion.
\smallskip\par
(iii) According to (ii), $\lb_*(\G,q)\le-\rho$ for all $q\in\W_p$ if
$\lb_*(\G,p_s)\le-\rho$ for all $s\in\R$, which proves assertion.
\smallskip\par
(iv) Property \eqref{6.common} and \cite[Theorem 2.12]{lno} (see the proof of (i))
provide $k=k(\n{p}_{L^\infty})>0$ such that
$|\lb_*(\G,q_1)-|\lb_*(\G,q_2)|\le k\n{q_1-q_2}_{L^\infty}$ if $q_1,q_2\in\W_p$.
If $p$ is almost periodic, the topology $\sigma$ on $\W_p$ coincides
with the topology of the uniform convergence
on $\R$: see \cite[Chapter 1]{fink}. These facts prove (iv).
\smallskip\par
(v) If $p$ is almost periodic, then its hull coincides with the
alpha-limit set and with the omega-limit set of $\{p_t\,|\;t\in\R\}$.
Hence, there are sequences $(s_{i,n}^\pm)$
with $\lim_{n\to\infty}s_{i,n}^\pm=\pm\infty$ and
$\lim_{n\to\pm\infty}p_{s_{i,n}^\pm}=p_{s_i}$
for $i=1,2$. There is no restriction in assuming that $\pm s_{1,n}^\pm\le\pm s_{2,n}^\pm
\le\pm s_{1,n+1}^\pm$ for all $n\in\N$. So, (v) follows from (iv).
\end{proof}
A {\em phase-induced critical transition} occurs when
a change in the initial phase $s$ of $p$
(i.e., replacing $p$ by $p_s$ in \eqref{6.ecGp}, or, equivalently,
replacing $\G$ by $\G_{-s}$) causes a critical transition for
\eqref{6.ecGp} as $s$ varies. If so, we have partial tracking on the hull.
In addition, Theorem \ref{6.teorconthull}(iv) implies that partial tracking on the
hull ensures infinitely many phase-induced critical transitions in the
case of an almost periodic map $p$.
\subsection{Tracking and tipping for the piecewise constant and the unperturbed
cases}\label{6.sec51}
Let us define $\G^h\colon\R\to\R$ for $h\ge 0$ by \eqref{5.defGch},
and consider the equations
\begin{equation}\label{6.ectrozos}
 x'=-\big(x-\Gamma^h(t)\big)^2+p(t)\,.
\end{equation}
Their special shape allows us to go deeper in the criteria for the existence
or absence of an attractor-repeller for small values of $h$,
which we will do under Hypothesis \ref{6.hipo}. In addition, unlike in
Section \ref{5.sec}, here the criteria will also be valid for $h=0$;
that is, for \eqref{6.ecGp}.
\par
Recall that $\lambda_*(\G,p)$ is the bifurcation value
of the parameter associated to \eqref{6.ecGp} by Theorem \ref{3.teorlb*},
and that $\lambda_*(0,p)$ (associated to \eqref{6.ecp})
is negative under Hypothesis {\rm\ref{6.hipo}}.
Recall also that $x(t,s,x_0)$ stands for the solution of \eqref{6.ecp}
with $x(s,s,x_0)=x_0$. Let
$x_h(t,s,x_0)$ be the solution of \eqref{6.ectrozos}$_h$ with
$x_h(s,s,x_0)=x_0$. We will also make use of the functions $b^\nu$ defined by
\eqref{5.defbnu}.
\par
Our first result shows that, in fact,
tracking for $h=0$ ensures tracking for small positive $h$, and
a uniform property of propagation to the hull.
\begin{prop} \label{6.prophull}
Let $\W_p$ be the hull of $p$. Then,
\begin{itemize}
\item[\rm(i)] if $\lb_*(\G,p)<0$, then there exists $h_*>0$ such that
$\lb_*(\G^h,p)<0$ (i.e., \eqref{6.ectrozos}$_h$ has an attractor-repeller pair)
for all $h\in[0,h_*]$.
\item[\rm(ii)] If there exists $\rho>0$ with $\lb_*(\G,p_s)\le-\rho$ for
all $s\in\R$, then there exists $h_*>0$ such that
$\lb_*(\G^h,q)<0$ for all $q\in\W_p$ and $h\in[0,h_*]$: there is
total tracking on the hull $\W_p$ for \eqref{6.ectrozos}$_h$ for all $h\in[0,h_*]$.
\end{itemize}
\end{prop}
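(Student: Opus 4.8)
The plan is to derive both assertions from two ingredients which are, up to minor bookkeeping, already available: the uniform convergence $\G^h\to\G$ as $h\downarrow 0$, and the Lipschitz dependence of the bifurcation value $\lb_*$ on the transition function, in the form used in the proof of Theorem \ref{6.teorconthull}. First I would observe that, since $\G$ is continuous on $\R$ and has finite limits $\gamma_\pm$ at $\pm\infty$, it is uniformly continuous; setting $\omega_\G(h):=\sup\{|\G(t)-\G(s)|\,:\,|t-s|\le h\}$, the function $\omega_\G$ is nondecreasing with $\omega_\G(h)\downarrow 0$ as $h\downarrow 0$, and \eqref{5.defGch} yields at once $\n{\G^h-\G}_\infty\le\omega_\G(h)$ for every $h\ge 0$. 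Moreover $\n{\G^h}_\infty=\sup_{j\in\Z}|\G(jh)|\le\n{\G}_\infty$, and $\G^h$ is an $L^\infty$ map with asymptotic limits $\gamma_\pm$, so Theorem \ref{3.teorlb*} applies both to \eqref{6.ectrozos}$_h$ and to $x'=-(x-\G^h(t))^2+q(t)$ for every $q\in\W_p$, and the numbers $\lb_*(\G^h,p)$, $\lb_*(\G^h,q)$ are well defined.

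For (i): following the proof of Theorem \ref{6.teorconthull}(i) and (iv), \cite[Theorem 2.12]{lno} (whose proof can be repeated for $L^\infty$ maps using Theorem \ref{3.teorlb*}), together with \eqref{6.common}, provides a constant $m>0$ depending only on $\n{\G}_\infty$ and $\n{p}_{L^\infty}$ such that $|\lb_*(\G_1,q)-\lb_*(\G_2,q)|\le m\,\n{\G_1-\G_2}_\infty$ for every $q\in\W_p$ and all $L^\infty$ maps $\G_1,\G_2$ with $\n{\G_i}_\infty\le\n{\G}_\infty$. In particular $\lb_*(\G^h,p)\le\lb_*(\G,p)+m\,\omega_\G(h)$. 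Since $\lb_*(\G,p)<0$ by hypothesis, I would pick $h_*>0$ with $m\,\omega_\G(h_*)<-\lb_*(\G,p)$; then, as $\omega_\G$ is nondecreasing, $\lb_*(\G^h,p)<0$ for all $h\in[0,h_*]$, which by Theorem \ref{3.teorlb*} is precisely the existence of an attractor-repeller pair for \eqref{6.ectrozos}$_h$ (for $h=0$ this is the hypothesis itself).

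For (ii): by Theorem \ref{6.teorconthull}(ii) (lower semicontinuity of $q\mapsto\lb_*(\G,q)$ on $\W_p$) together with the hypothesis $\lb_*(\G,p_s)\le-\rho$ for all $s\in\R$, one obtains $\lb_*(\G,q)\le-\rho$ for every $q\in\W_p$, exactly as in the proof of Theorem \ref{6.teorconthull}(iii). Using the constant $m$ above --- uniform over $q\in\W_p$ by \eqref{6.common} --- I would choose $h_*>0$ with $m\,\omega_\G(h_*)<\rho$; then $\lb_*(\G^h,q)\le\lb_*(\G,q)+m\,\omega_\G(h)\le-\rho+m\,\omega_\G(h_*)<0$ for all $q\in\W_p$ and $h\in[0,h_*]$, which is total tracking on the hull $\W_p$ for \eqref{6.ectrozos}$_h$. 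The main obstacle is to ensure that the Lipschitz constant furnished by \cite[Theorem 2.12]{lno} can be chosen independent of $h$ and of $q\in\W_p$: independence of $h$ relies on the bound $\n{\G^h}_\infty\le\n{\G}_\infty$, so that the constants entering Theorem \ref{3.teorlb*} (in particular the radius beyond which the vector field is negative) are the same for all $\G^h$, while independence of $q$ is precisely \eqref{6.common}; once this is granted, everything else is routine.
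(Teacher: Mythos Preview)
Your proof is correct and follows essentially the same approach as the paper: both parts rely on the Lipschitz estimate $|\lb_*(\G_1,q)-\lb_*(\G_2,q)|\le m\,\n{\G_1-\G_2}_\infty$ from \cite[Theorem 2.12]{lno} (as invoked in the proof of Theorem~\ref{6.teorconthull}) together with the uniform convergence $\G^h\to\G$, and for (ii) the lower semicontinuity of Theorem~\ref{6.teorconthull}(ii) to propagate the bound $-\rho$ to all $q\in\W_p$. Your version is in fact more explicit than the paper's about why the Lipschitz constant $m$ may be taken uniform in $h$ and in $q\in\W_p$, which the paper leaves implicit.
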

\begin{proof}
(i) As explained in the proof of Theorem \ref{6.teorconthull}(i), there exists
a positive constant $m$ such that $|\lb_*(\G,p)-\lb_*(\G^h,p)|
\le m \n{\G-\G^h}_\infty$. Therefore, the assertion follows
from the uniform convergence of $\G^h$ to $\G$ as $h\downarrow 0$.
\smallskip\par
(ii) Theorem \ref{6.teorconthull}(ii) ensures that
$\lb_*(\G,q)\le-\rho$. As above, there exists a positive constant $m$ such that
$\lb_*(\G^h,q)\le\lb_*(\G,q)+|\lb_*(\G,q)-\lb_*(\G^h,q)|\le
-\rho+m\n{\G-\G^h}_\infty$ for all $q\in\W_p$.
So, (ii) follows from $\lim_{h\downarrow 0}\n{\G-\G^h}_\infty=0$.
\end{proof}
Following similar ideas to those of Section \ref{5.sec},
the next results, Proposition \ref{6.propcambio} and Theorems
\ref{6.teortrack1} and \ref{6.teortrack2}, provide sufficient conditions for the
existence of an attractor-repeller pair for the quadratic equation
\eqref{6.ectrozos}$_h$ if $h\ge 0$ is small enough. Lemma \ref{6.lemaconverg},
which is based on results of~\cite{lno}, is fundamental to complete their proofs.
The maps $\mah$ and $\mrh$ are those associated to \eqref{6.ectrozos}$_h$
by Theorem \ref{3.teoruno} for $h\ge 0$.
\begin{lema}\label{6.lemaconverg}
Assume Hypothesis {\rm\ref{6.hipo}}.
\begin{itemize}
\item[\rm(i)] If $\ma_0$ exists on $(-\infty,\bar t\,]$, then there exists
$h_0=h_0(\bar t)>0$ such that $\mah$ exists on $(-\infty,\bar t\,]$ for all $h\in[0,h_0]$,
and $\lim_{h\to 0^+}\mah(t)=\ma_0(t)$ uniformly on $(-\infty, \bar t\,]$.
\item[\rm(ii)] If $\mr_0$ exists on $[\bar t,\infty)$, then there exists
$h_0=h_0(\bar t)>0$ such that $\mrh$ exists on $[\bar t,\infty)$ for all $h\in[0,h_0]$,
and $\lim_{h\to 0^+}\mrh(t)=\mr_0(t)$ uniformly on $[\bar t,\infty)$.
\end{itemize}
\end{lema}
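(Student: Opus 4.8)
The plan is to prove (i); assertion (ii) is obtained by the dual argument, replacing the past equation $x'=-(x-\gamma_-)^2+p(t)$ by the future one $x'=-(x-\gamma_+)^2+p(t)$ and reversing time. The guiding idea is to split the half-line $(-\infty,\bar t\,]$ into a ``past piece'' $(-\infty,\tau]$, with $\tau\le\bar t$ chosen so negative that \eqref{6.ectrozos}$_h$ is, uniformly in $h\ge0$, a small $L^\infty$-perturbation of the past equation there, and the compact piece $[\tau,\bar t\,]$. On the compact piece one only needs the elementary continuous dependence of solutions of Carath\'{e}odory equations on their coefficients; the whole difficulty is concentrated on the past piece, where I will use the persistence of the hyperbolic attractor of the past equation.

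Concretely, set $g(t,x):=-(x-\gamma_-)^2+p(t)$, which satisfies \hyperlink{f1}{\bf f1}--\hyperlink{f7}{\bf f7} and Hypothesis \ref{6.hipo} with attractor-repeller pair $(\wit a+\gamma_-,\wit r+\gamma_-)$. Fix $\tau\le\bar t$ with $\sup_{s\le\tau}|\G(s)-\gamma_-|$ smaller than the constant $\delta_\ep$ furnished by Proposition \ref{4.proppersiste} for $g$ (say with $\ep=1$); such $\tau$ exists because $\G(s)\to\gamma_-$, and moreover $\sup_{t\le\tau}|\G^h(t)-\gamma_-|\le\sup_{s\le\tau}|\G(s)-\gamma_-|$ for every $h\ge0$, since $\G^h(t)=\G(jh)$ for some $jh\le t$. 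For each $h\ge0$ let $\widehat\Delta_h$ be the $L^\infty$ function equal to $\G^h(t)-\gamma_-$ on $(-\infty,\tau]$ and to $0$ on $(\tau,\infty)$; Proposition \ref{4.proppersiste} then gives that $\widetilde f_h(t,x):=g(t,x-\widehat\Delta_h(t))$ has an attractor-repeller pair $(\widehat a_h,\widehat r_h)$, which by Theorems \ref{3.teoruno} and \ref{3.teorhyp} is exactly the pair of special solutions associated to $x'=\widetilde f_h(t,x)$ by Theorem \ref{3.teoruno}. Since $\widetilde f_h$ agrees with the right-hand side of \eqref{6.ectrozos}$_h$ on $(-\infty,\tau]$, the globally bounded function $\widehat a_h$ is, on $(-\infty,\tau]$, a solution of \eqref{6.ectrozos}$_h$ which is bounded as $t\to-\infty$; hence, by Theorem \ref{3.teoruno}(i), $\mah$ is defined on $(-\infty,\tau]$ and $\mah\ge\widehat a_h$ there. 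Conversely, were $\mah(\tau)>\widehat a_h(\tau)$, then $\mah$ restricted to $(-\infty,\tau]$ --- a solution of $x'=\widetilde f_h(t,x)$ starting strictly above its attractor $\widehat a_h$ --- would fail to be bounded as $t\to-\infty$ by Theorem \ref{3.teoruno}(i) applied to $\widetilde f_h$, contradicting the fact that $\mah$ is bounded near $-\infty$ (it is $\le m$ by Theorem \ref{3.teoruno}(i) and bounded below as time decreases). Therefore $\mah=\widehat a_h$ on $(-\infty,\tau]$ for every $h\ge0$; in particular $\ma_0=\widehat a_0$ on $(-\infty,\tau]$.

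It remains to let $h\downarrow0$. Since $\G^h\to\G$ uniformly, one has $\n{\widehat\Delta_h-\widehat\Delta_0}_{L^\infty}\to0$, so a second application of Proposition \ref{4.proppersiste}, now with base equation $x'=\widetilde f_0(t,x)$ and perturbation $\widehat\Delta_h-\widehat\Delta_0$, together with the uniqueness of the attractor-repeller pair (Theorem \ref{3.teorlb*}(iv)), yields $\n{\widehat a_h-\widehat a_0}_\infty\to0$. Consequently $\sup_{t\le\tau}|\mah(t)-\ma_0(t)|=\n{\widehat a_h-\widehat a_0}_{(-\infty,\tau]}\to0$, and in particular $\mah(\tau)\to\ma_0(\tau)$. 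On $[\tau,\bar t\,]$ the solution $\ma_0$ exists by hypothesis, $\mah$ for $t\ge\tau$ is the solution of \eqref{6.ectrozos}$_h$ through $(\tau,\mah(\tau))$, and the right-hand sides of \eqref{6.ectrozos}$_h$ and \eqref{6.ecGp} differ by $(\G^h(t)-\G(t))\,(2x-\G^h(t)-\G(t))$, hence converge uniformly on $\R\times K$ for every compact $K$; so, by continuous dependence on the compact interval $[\tau,\bar t\,]$ --- equivalently, by the continuity of the local skew-product flow of Theorem \ref{A.teorcont} --- for $h$ small $\mah$ exists on all of $[\tau,\bar t\,]$, hence on $(-\infty,\bar t\,]$, and $\sup_{t\in[\tau,\bar t\,]}|\mah(t)-\ma_0(t)|\to0$. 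Combining the two estimates gives $\mah\to\ma_0$ uniformly on $(-\infty,\bar t\,]$; and the single fixed choice of $\tau$ above already shows that $\mah$ is defined on $(-\infty,\bar t\,]$ for all $h$ in a fixed interval $[0,h_0(\bar t\,)]$, as required.

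The main obstacle is the non-compactness of $(-\infty,\bar t\,]$: plain continuous dependence on the coefficients is uniform only on compact time intervals, so it cannot by itself turn $\G^h\to\G$ into a uniform convergence $\mah\to\ma_0$ near $-\infty$. The resolution is that near $-\infty$ every solution bounded in the past is slaved to the persistent, uniformly hyperbolic attractor of the past equation, and the truncation $\widehat\Delta_h$ together with Proposition \ref{4.proppersiste} is precisely what makes this rigorous and $h$-uniform. A subordinate but essential step is the identification $\mah=\widehat a_h$ on $(-\infty,\tau]$, which rests on the characterization of $\mah$ in Theorem \ref{3.teoruno}(i) and on $\mah$ being bounded as $t\to-\infty$.
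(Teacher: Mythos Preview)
Your proof is correct and follows essentially the same two-piece decomposition as the paper: a past half-line handled via the persistence of the hyperbolic attractor of the past equation, and a compact interval handled by continuous dependence. The paper is terser, simply writing ``by reviewing the proof of Theorem~\ref{4.teorexist}(i)\&(ii)'' for the past piece and citing \cite[Theorem A.3]{lno} for the compact piece; your truncation $\widehat\Delta_h$ together with Proposition~\ref{4.proppersiste} and the identification $\mah=\widehat a_h$ on $(-\infty,\tau]$ is a careful, self-contained spelling-out of exactly that. One small inaccuracy: Theorem~\ref{A.teorcont} concerns the skew-product flow on the hull of a \emph{single} $f$, so it does not directly cover the family $\{f_{\G^h}\}_{h\ge0}$; the continuous dependence you need on $[\tau,\bar t\,]$ follows instead from a standard Gronwall argument (using the uniform convergence of coefficients on compacts that you already noted) or from \cite[Theorem A.3]{lno} as the paper does.
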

\begin{proof}
Let us fix $\ep>0$. By reviewing the proof of Theorem \ref{4.teorexist}(i)\&(ii)
(in~\cite{lno}), we observe that there exists $t_*>0\in\mathbb R$ such that, if $h\in[0,1]$,
then $|\mah(-t)-\ma_0(-t)|\le\ep$ and $|\mrh(t)-\mr_0(t)|\le-\ep$ if $t\ge t_*$.
Now, to prove (i), we assume that $\ma_0$ exists in $(-\infty,\bar t\,]$ for a $\bar t>-t_*$.
According to~\cite[Theorem A.3]{lno} (whose proof can be repeated without any change for
an $L^\infty$ function $p$), $\lim_{h\to 0^+}\mah(s)=\lim_{h\to 0^+}x_h(s,-t_*,\ma_h(-t_*))=
x_0(s,-t_*,\ma_0(-t_*))=\ma_0(s)$ uniformly in $s\in[-t_*,\bar t\,]$,
which proves (i). And we check (ii) with the same argument.
\end{proof}
\begin{prop}\label{6.propcambio}
Assume Hypothesis {\rm\ref{6.hipo}}, that $\G$ is $C^1$ with $\G'$ bounded on $\R$,
and that $\lambda_*(0,p)\le -\sup_{t\in\R}\G'(t)$. Then, there exists $h_*>0$
such that \eqref{6.ectrozos}$_h$ has an attractor-repeller pair
for all $h\in[0,h_*]$. In addition, there is total tracking on the hull $\W_p$ of $p$
for \eqref{6.ectrozos}$_{0}$ (i.e., for \eqref{6.ecGp}).
\end{prop}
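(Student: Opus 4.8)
The plan is to reduce the first assertion to the single inequality $\lb_*(\G,p)<0$. If this holds then, by Theorem~\ref{3.teorlb*}, equation \eqref{6.ecGp}, which is \eqref{6.ectrozos}$_0$, falls in \hyperlink{CA}{\sc Case A}, i.e.\ it has an attractor-repeller pair, and Proposition~\ref{6.prophull}(i) then produces $h_*>0$ with $\lb_*(\G^h,p)<0$ (hence an attractor-repeller pair for \eqref{6.ectrozos}$_h$) for all $h\in[0,h_*]$. So the real task is to prove $\lb_*(\G,p)<0$, and, for the last statement, $\lb_*(\G,q)<0$ for every $q\in\W_p$. I would do this by exhibiting a strict lower solution of \eqref{6.ecGp} and invoking the comparison statement Theorem~\ref{3.teoruno}(v) together with Theorem~\ref{3.teorlb*}(v).

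Write $\lb_0:=\lb_*(0,p)$, which is negative by Hypothesis~\ref{6.hipo}, and set $M:=\sup_{t\in\R}\G'(t)$ (finite, since $\G'$ is bounded), so the hypothesis reads $M\le-\lb_0$. Let $\phi:=a_{\lb_0}$ be the globally defined bounded upper solution of $x'=-x^2+p(t)+\lb_0$ provided by Theorems~\ref{3.teorlb*} and~\ref{3.teoruno}, so $\phi'=-\phi^2+p+\lb_0$ $l$-a.a. Put $\mb:=\phi+\G$; since $\phi$ is absolutely continuous and $\G$ is $C^1$, $\mb$ is bounded, continuous and absolutely continuous, with $\mb'(t)=-(\mb(t)-\G(t))^2+p(t)+(\lb_0+\G'(t))$ $l$-a.a. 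As $\G'(t)\le M\le-\lb_0$ we get $\lb_0+\G'(t)\le0$, and with $\ep_0:=-\lb_0-M\ge0$ the function $\mb$ is a lower solution of $x'=-(x-\G(t))^2+p(t)-\ep_0$. If $\ep_0>0$ (which happens automatically, for instance, when $\G$ is nonincreasing), Theorem~\ref{3.teoruno}(v) shows this equation has a bounded solution, whence $\lb_*(\G,p)\le-\ep_0<0$ by Theorem~\ref{3.teorlb*}(v). If $\ep_0=0$, i.e.\ $M=-\lb_0$, then $\lb_0<0$ forces $M>0$, so $\G$ is nonconstant and, $\G'$ being continuous, $\{t\in\R:\G'(t)<M\}$ is a nonempty open set on which $\mb'(t)<-(\mb(t)-\G(t))^2+p(t)$; the strict clause of Theorem~\ref{3.teoruno}(v) then gives $\mr_0<\ma_0$, and a further comparison argument upgrades this to $\lb_*(\G,p)<0$. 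In all cases $\lb_*(\G,p)<0$, and Proposition~\ref{6.prophull}(i) completes the first part.

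For total tracking on $\W_p$ I would run the same construction with $p$ replaced by an arbitrary $q\in\W_p$. The substitution $y=x-\G(t)$ turns $x'=-(x-\G(t))^2+q(t)+\lb$ into $y'=-y^2+(q(t)-\G'(t))+\lb$ and, $\G$ being bounded, is a bijection on bounded solutions, so $\lb_*(\G,q)=\lb_*(0,q-\G')$; combining $q(t)-\G'(t)\ge q(t)-M$ with the monotonicity of $q\mapsto\lb_*(0,q)$ (immediate from Theorem~\ref{3.teoruno}(v): a bounded solution for a smaller forcing is a lower solution for a larger one, so $\lb_*(0,q_2)\le\lb_*(0,q_1)$ whenever $q_1\le q_2$ $l$-a.a.) and Theorem~\ref{3.teorlb*}(v) gives $\lb_*(\G,q)\le\lb_*(0,q)+M$. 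By Theorem~\ref{6.teorconthull}(ii), applied with $\G\equiv0$, one has $\lb_*(0,q)\le\lb_*(0,p)=\lb_0$ for all $q\in\W_p$, hence $M\le-\lb_0\le-\lb_*(0,q)$, so the lower solution $\phi_q+\G$ of $x'=-(x-\G(t))^2+q(t)$ — with $\phi_q$ the bounded upper solution of $x'=-x^2+q(t)+\lb_*(0,q)$ — is available exactly as in the previous paragraph, yielding $\lb_*(\G,q)<0$ for every $q\in\W_p$. This is precisely total tracking on the hull for \eqref{6.ecGp}. (When $M<-\lb_0$ the bound $\lb_*(\G,q)\le-\ep_0<0$ is uniform in $q$, and one may instead simply quote Theorem~\ref{6.teorconthull}(iii).)

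I expect the one nonroutine point to be the \emph{strict} inequality $\lb_*(\G,p)<0$ in the borderline case $M=-\lb_*(0,p)$: a merely nonstrict lower solution guarantees only a bounded solution, which a priori could leave \eqref{6.ecGp} in \hyperlink{CB}{\sc Case B} rather than \hyperlink{CA}{\sc Case A}, so one must exploit that this case forces $\G$ nonconstant and combine the strict clause of Theorem~\ref{3.teoruno}(v) with the dynamics on the hull (with a little additional care should the closed set $\{t\in\R:\G'(t)=M\}$ happen to carry positive Lebesgue measure). Once $\lb_*(\G,p)<0$ is established, the remaining ingredients — the lower-solution computation, the monotonicity of $\lb_*$ in the forcing, and the transfer from $h=0$ to small $h>0$ via Proposition~\ref{6.prophull} — are routine.
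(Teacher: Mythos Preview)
Your approach is correct and essentially identical to the paper's: the paper makes the change of variables $y=x-\G(t)$ and uses the unique bounded solution $b$ of $x'=-x^2+p(t)+\lb_*(0,p)$ as a lower solution of $y'=-y^2+p(t)-\G'(t)$ (this is your $\phi+\G$ read in the $y$ variable), and then passes from ``two different bounded solutions'' to an attractor-repeller pair via Theorem~\ref{4.teorexist}(v) rather than via a ``further comparison argument''. Your worry about the Lebesgue measure of $\{\G'=M\}$ is unnecessary, since Theorem~\ref{3.teoruno}(v) needs strict inequality only at a single $t_0$, and such a point exists because $\G'\equiv-\lb_*(0,p)>0$ would prevent $\G$ from having finite asymptotic limits; for total tracking the paper simply notes (Theorem~\ref{6.teorconthull}(ii) with $\G\equiv0$) that $\lb_*(0,q)\le\lb_*(0,p)\le-\sup_{t\in\R}\G'(t)$ for every $q\in\W_p$ and reapplies the first assertion with $q$ in place of $p$.
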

\begin{proof}
Let us check the result for $h=0$.
The change of variable $y=x-\G(t)$ takes the equation \eqref{6.ectrozos}$_0$ to
$y'=-y^2+p(t)-\G'(t)$ without changing the dynamics (i.e., \hyperlink{CA}{\sc Case A},
\hyperlink{CB}{\sc B} or \hyperlink{CC}{\sc C} persists). Let $b$ be
the unique bounded solution of $x'=-x^2+p(t)+\lb_*(0,p)$ (see Theorem \ref{3.teorlb*}).
Then $b'(t)\le -b^2(t)+p(t)+\lambda_*(0,p)\le-b^2(t)+p(t)-\G'(t)$. In addition,
since $\lambda_*(0,p)<0$ and $\G$ has finite asymptotic limits, it cannot
be $\lambda_*(0,p)=-\G'(t)$ for all $t\in\R$, and hence there exists
$t_0$ such that $b'(t_0)< -b^2(t_0)+p(t_0)-\G'(t_0)$. Theorem
\ref{3.teoruno}(v) ensures that $y'=-y^2+p(t)-\G'(t)$ (and hence
\eqref{6.ectrozos}$_0$) has two different bounded solutions,
and Theorems \ref{4.teorexist}(v) and \ref{3.teorlb*}
ensure that \eqref{6.ectrozos}$_0$ has an attractor-repeller pair.
Proposition \ref{6.prophull}(i) ensures the existence of
an $h_*>0$ such that \eqref{6.ectrozos}$_h$ has an attractor-repeller pair
for all $h\in[0,h_*]$, as asserted. The last property follows from
Theorem \ref{6.teorconthull}(iii), which states that
$\lambda_*(0,q)\le -\sup_{t\in\R}\G'(t)$ for all $q\in\W_p$,
and from the first assertion.
\end{proof}
\begin{lema}\label{6.lemader}
Assume Hypothesis {\rm\ref{6.hipo}}. There exists a constant $\kappa>0$
such that any bounded solution $b$ of \eqref{6.ecp} satisfies
\begin{equation}\label{6.igulema}
 b(t+h)=b(t)-b^2(t)\,h+\int_{t}^{t+h}p(s)\,ds+\rho_b(t,t+h)
\end{equation}
for any $t\in\R$ and $h\ge0$, with $|\rho_b(t,t+h)|\le k\,h^2$.
\end{lema}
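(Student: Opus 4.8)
The plan is to integrate the equation \eqref{6.ecp} over $[t,t+h]$ and estimate the resulting remainder by a crude Lipschitz bound on the bounded solution, the only delicate point being the \emph{uniformity} of the constant over all bounded solutions. First I would fix the constant $m>0$ of Theorem \ref{3.teoruno} (which exists because $f(t,x)=-x^2+p(t)$ satisfies the coercivity condition \hyperlink{f6}{\bf f6}). Hypothesis \ref{6.hipo} provides an attractor--repeller pair, so in particular $\mB\ne\emptyset$, and Theorem \ref{3.teoruno}(iv) gives $\mB\subset\R\times[-m,m]$. Consequently $\n{b}_\infty\le m$ for \emph{every} bounded solution $b$ of \eqref{6.ecp}, with one and the same $m$; this is exactly what makes the forthcoming $\kappa$ independent of $b$.

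Next, since a bounded solution $b$ is absolutely continuous on compact intervals and solves $b'(s)=-b^2(s)+p(s)$ for $l$-a.a.\ $s$, integration yields
\[
 b(t+h)=b(t)-\int_t^{t+h}b^2(s)\,ds+\int_t^{t+h}p(s)\,ds\,,
\]
so that the error term $\rho_b(t,t+h)$ in \eqref{6.igulema} equals $-\int_t^{t+h}\big(b^2(s)-b^2(t)\big)\,ds$. It then remains to bound $|b^2(s)-b^2(t)|$ for $s\in[t,t+h]$.

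For that I would use the same integral identity once more, now on $[t,s]$: from $|b'(u)|=|-b^2(u)+p(u)|\le m^2+\n{p}_{L^\infty}$ for $l$-a.a.\ $u$ we get $|b(s)-b(t)|\le (m^2+\n{p}_{L^\infty})\,(s-t)$, and hence, using $|b(s)+b(t)|\le 2m$,
\[
 \big|b^2(s)-b^2(t)\big|\le 2m\,(m^2+\n{p}_{L^\infty})\,(s-t)\qquad\text{for }s\in[t,t+h]\,.
\]
Integrating this bound over $[t,t+h]$ gives $|\rho_b(t,t+h)|\le m\,(m^2+\n{p}_{L^\infty})\,h^2$, so the statement holds with $\kappa:=m\,(m^2+\n{p}_{L^\infty})$.

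There is essentially no hard step here: the argument is a one-line integration plus a Lipschitz estimate in the spirit of the expansion already used in the proof of Lemma \ref{3.proppersiste} for $r(t,y)$. The only thing worth double-checking is that $m$ (and therefore $\kappa$) can be chosen once and for all, valid simultaneously for all bounded solutions — which is precisely the content of the inclusion $\mB\subset\R\times[-m,m]$ in Theorem \ref{3.teoruno}(iv).
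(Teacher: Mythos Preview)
Your proof is correct and follows essentially the same route as the paper: integrate the equation over $[t,t+h]$, identify $\rho_b(t,t+h)=-\int_t^{t+h}\big(b^2(s)-b^2(t)\big)\,ds$, bound $|b(s)-b(t)|$ via the equation itself, and factor $b^2(s)-b^2(t)=(b(s)-b(t))(b(s)+b(t))$. Your justification of the uniform bound on all bounded solutions via Theorem~\ref{3.teoruno}(iv) is exactly what the paper uses (there written as $-\kappa_1\le\wit r\le b\le\wit a\le\kappa_1$), and your final constant is in fact slightly sharper because you keep $(s-t)$ rather than replacing it by $h$ before integrating.
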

\begin{proof}
Let $\kappa_1$ satisfy $-\kappa_1\le\inf_{t\in\R}\wit r(t)<\sup_{t\in\R}\wit a(t)<\kappa_1$
and take $\kappa_2\ge\n{p}_{L^{\infty}}$.
It is easy to deduce from the equation solved by $b$ that, if $s\in[t,t+h]$, then
\[
 |b(s)-b(t)|\le \int_{t}^s (|b^2(l)|+|p(l)|)\,dl\le (\kappa_1^2+\kappa_2)\,h
 =:\kappa_3\,h\,;
\]
and also that \eqref{6.igulema} holds for
\[
 \rho_b(t+h,t):=\int_{t}^{t+h}(-b^2(s)+b^2(t))\,ds=
 \int_{t}^{t+h}(-b(s)+b(t))(b(s)+b(t))\,ds\,.
\]
Hence, $|\rho_b(t,t+h)|\le\int_{t}^{t+h}2\kappa_1\kappa_3\,h\,ds=:
\kappa\,h^2$.
\end{proof}
Lemma \ref{5.lemat*} ensures the existence of the times $\bar t_1$ and $\bar t_2$
satisfying the initial requirements of the next two theorems.
Observe that
the statement of Theorem \ref{6.teortrack1}(i) could be included in Theorem \ref{6.teortrack2}
(taking $\alpha=0$). We separate them since the requirements regarding $(\wit a(t)-\wit r(t))$
in Theorem \ref{6.teortrack2} are easier to verify, while that
of Theorem \ref{6.teortrack2} is less restrictive and more suitable for small $\inf_{t\in\R}(\wit a(t)-\wit r(t))$.
\begin{teor}\label{6.teortrack1}
Assume Hypothesis {\rm\ref{6.hipo}}.
\begin{itemize}
\item[\rm(i)]
Let $\bar t_1<\bar t_2$ satisfy
$\ma_0(\bar t_1)-\G(\bar t_1)>b^{1/2}(\bar t_1)$ and
$\mr_0(\bar t_2)-\G(\bar t_2)<b^{1/2}(\bar t_2)$.
Assume that $\G$ is $C^2$ on $[\bar t_1,\bar t_2]$, with
\begin{equation}\label{6.deri}
 \frac{1}{4}\big(\wit a(t)-\wit r(t)\big)^2>\G'(t)
 \quad \text{for all $t\in[\bar t_1,\bar t_2]$}\,.
\end{equation}
Then, there exists $h_*>0$ such that
\eqref{6.ectrozos}$_h$ has an attractor-repeller pair
for all $h\in[0,h_*]$.
\item[\rm(ii)]
Assume that $\G$ is $C^2$ on $\R$ and that
\begin{equation}\label{6.deri2}
 \frac{1}{4}\inf_{t\in\R}\big(\wit a(t)-\wit r(t)\big)^2>\sup_{t\in\R}\G'(t)\,.
\end{equation}
Then, there exists $h_*>0$ such that there is total tracking on the hull
$\W_p$ of $p$ for \eqref{6.ectrozos}$_h$ for all $h\in[0,h_*]$.
\end{itemize}
\end{teor}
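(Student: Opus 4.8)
The plan is to establish each assertion first at the single transition $h=0$ and then to propagate it to small $h>0$ by Proposition~\ref{6.prophull}. Everything rests on one elementary identity: since $\wit a'=-\wit a^2+p$ and $\wit r'=-\wit r^2+p$ hold $l$-a.a., the function $\beta:=b^{1/2}+\G$, with $b^{1/2}=\tfrac12(\wit a+\wit r)$ as in \eqref{5.defbnu}, satisfies
\[
 \beta'(t)-\bigl(-(\beta(t)-\G(t))^2+p(t)\bigr)=-\tfrac14\bigl(\wit a(t)-\wit r(t)\bigr)^2+\G'(t)\qquad l\text{-a.a.}
\]
Hence \eqref{6.deri} says \emph{exactly} that $\beta$ is a strict lower solution of \eqref{6.ecGp} on $[\bar t_1,\bar t_2]$; this is the only use I would make of \eqref{6.deri}. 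The same computation with $\wit a,\wit r$ replaced by their time translates will turn \eqref{6.deri2} into a global strict lower-solution inequality.

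\emph{Part (i).} By Proposition~\ref{6.prophull}(i) it suffices to prove that \eqref{6.ecGp} is in \hyperlink{CA}{\sc Case A}, i.e.\ $\lb_*(\G,p)<0$. Recall that $\ma_0$ is defined on $(-\infty,\bar t_1]$ and $\mr_0$ on $[\bar t_2,\infty)$. Since $\ma_0(\bar t_1)>b^{1/2}(\bar t_1)+\G(\bar t_1)=\beta(\bar t_1)$ and $\beta$ is a subsolution on $[\bar t_1,\bar t_2]$, a standard comparison argument (Gronwall applied to $\ma_0-\beta$, using the local Lipschitz continuity of $x\mapsto -(x-\G(t))^2+p(t)$) yields $\ma_0(t)\ge\beta(t)$ for every $t\in[\bar t_1,\bar t_2]$ at which $\ma_0$ exists. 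As $\ma_0$ is always bounded above (Theorem~\ref{3.teoruno}(i)) and is now bounded below by $\min_{[\bar t_1,\bar t_2]}\beta$, the maximal solution $\ma_0$ cannot stop before $\bar t_2$; thus $\ma_0$ is defined on $(-\infty,\bar t_2]$ and $\ma_0(\bar t_2)\ge b^{1/2}(\bar t_2)+\G(\bar t_2)>\mr_0(\bar t_2)$, the last inequality being the second hypothesis on $\bar t_2$. Remark~\ref{3.notabasta} then gives at least two bounded solutions of \eqref{6.ecGp}; hence $\mB\ne\emptyset$, so $\ma_0,\mr_0$ are globally defined by Theorem~\ref{3.teoruno}(iv), and being distinct they are an attractor--repeller pair by Theorem~\ref{4.teorexist}(iv). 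This is \hyperlink{CA}{\sc Case A}, and Proposition~\ref{6.prophull}(i) supplies the required $h_*$.

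\emph{Part (ii).} The goal is the \emph{uniform} estimate $\lb_*(\G,p_s)\le-\rho$ for all $s\in\R$, after which Proposition~\ref{6.prophull}(ii) yields the statement verbatim. Put $\delta_1:=\tfrac14\inf_{t}(\wit a(t)-\wit r(t))^2-\sup_t\G'(t)$, which is positive by \eqref{6.deri2}. For fixed $s$, the attractor--repeller pair of $x'=-x^2+p_s(t)$ is $\bigl(\wit a(\cdot+s),\wit r(\cdot+s)\bigr)$, and $\inf_t(\wit a(t+s)-\wit r(t+s))^2=\inf_t(\wit a(t)-\wit r(t))^2$; so, by the identity above applied to this translated pair, the function $\beta_s(t):=\tfrac12\bigl(\wit a(t+s)+\wit r(t+s)\bigr)+\G(t)$ satisfies, $l$-a.a.\ on all of $\R$,
\[
 \beta_s'(t)-\Bigl(-(\beta_s(t)-\G(t))^2+p_s(t)-\tfrac{\delta_1}{2}\Bigr)=-\tfrac14\bigl(\wit a(t+s)-\wit r(t+s)\bigr)^2+\G'(t)+\tfrac{\delta_1}{2}\le-\tfrac{\delta_1}{2}<0\,.
\]
Thus $\beta_s$ is a globally defined, bounded, absolutely continuous (hence of bounded variation with vanishing singular part) strict lower solution of $x'=-(x-\G(t))^2+p_s(t)-\delta_1/2$, and Theorem~\ref{3.teoruno}(v) produces two different bounded solutions of that equation; by Theorem~\ref{3.teorlb*}(i) this forces $\lb_*(\G,p_s)\le -\delta_1/2=:-\rho$, uniformly in $s$. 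Proposition~\ref{6.prophull}(ii) concludes.

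The one genuine difficulty is that a \emph{value} at $h=0$ is needed, not merely a limit. Discretizing $\G$ and letting $h\downarrow0$ (proving the statement for each small $h>0$ by checking the chain inequalities of Theorem~\ref{5.teortrack} with $\nu_j\equiv\tfrac12$, via the Lemma~\ref{6.lemader}-type expansion $x((j+1)h,jh,b^{1/2}(jh))-b^{1/2}((j+1)h)=\tfrac14(\wit a(jh)-\wit r(jh))^2h+O(h^2)$ and Lemma~\ref{6.lemaconverg} for the endpoint conditions) would give only $\lb_*(\G,p)\le0$ in the limit; working directly with the explicit lower solution $b^{1/2}+\G$ at $h=0$ circumvents this, which is why I take that route. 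The remaining points --- the extension bookkeeping in Part (i) and the regularity of $\beta,\beta_s$ --- are routine.
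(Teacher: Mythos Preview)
Your proof is correct and takes a genuinely different, more direct route than the paper's. Your key observation is the identity
\[
(b^{1/2}+\G)'(t)+\bigl(b^{1/2}(t)\bigr)^2-p(t)=-\tfrac14\bigl(\wit a(t)-\wit r(t)\bigr)^2+\G'(t)\,,
\]
which makes $\beta=b^{1/2}+\G$ a strict lower solution of \eqref{6.ecGp} on $[\bar t_1,\bar t_2]$ under \eqref{6.deri}; a comparison with $\ma_0$ then yields $\ma_0(\bar t_2)>\mr_0(\bar t_2)$ directly at $h=0$, and Proposition~\ref{6.prophull}(i) finishes. For (ii) the global version of the identity, applied to the equation shifted by $-\delta_1/2$, feeds Theorem~\ref{3.teoruno}(v) and gives the uniform bound $\lb_*(\G,p_s)\le-\delta_1/2$ at once. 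The paper instead reaches $h=0$ through the discretization machinery: Lemma~\ref{6.lemader} produces the expansion $x((j{+}1)h,jh,b^{1/2}(jh))-b^{1/2}((j{+}1)h)=\tfrac14(\wit a-\wit r)^2h+O(h^2)$, which together with $\G((j{+}1)h)-\G(jh)=\G'(jh)h+O(h^2)$ verifies the chain \eqref{5.chaintrack} of Theorem~\ref{5.teortrack} for all small $h$, and Lemma~\ref{6.lemaconverg} passes to the limit; for (ii) it perturbs $p$ to $p+\rho$ via Proposition~\ref{3.proppersiste} and invokes (i). Your argument is cleaner here and avoids both the discrete-to-continuous limit and the persistence step; the paper's template, though heavier for this statement, is the one that carries over to Theorems~\ref{6.teortrack2} and~\ref{6.teortip1} (and indeed your identity extends too: with a time-varying $\nu(t)$ one gets $(b^{\nu(t)}+\G)'+(b^{\nu(t)})^2-p=\nu'(\wit a-\wit r)-\nu(1-\nu)(\wit a-\wit r)^2+\G'$, recovering those hypotheses). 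One small correction to your closing remark: the discretization route does give the strict inequality at $h=0$, because the limit yields $\ma_0(\tilde t_2)\ge b^{1/2}(\tilde t_2)+\G(\tilde t_2)$ while the endpoint hypothesis on $\bar t_2$ supplies the strict gap to $\mr_0(\tilde t_2)$.
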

\begin{proof}
(i) Recall that $b^{1/2}=(1/2)(\wit a+\wit r)$: see \eqref{5.defbnu}.
Applying Lemma \ref{6.lemader} to the solutions $x(\cdot,t,b^{1/2}(t))$,
$\wit a$ and $\wit r$ of \eqref{6.ecp}, we get
\[
\begin{split}
 x(t+h,t,b^{1/2}(t))&=\frac{1}{2}\,(\wit a(t)+\wit r(t))
 +\int_{t}^{t+h}p(s)\,ds-\frac{1}{4}\,(\wit a(t)+\wit r(t))^2\,h+O(h^2)\,,\\
 b^{1/2}(t+h)&=\frac{1}{2}\,(\wit a(t)+\wit r(t))
 +\int_{t}^{t+h}p(s)-\frac{1}{2}\,(\wit a^2(t)+\wit r^2(t))\,h+O(h^2)
\end{split}
\]
for $h\ge 0$. Hence,\vspace{-.2cm}
\[
 x(t+h,t,b^{1/2}(t))-b^{1/2}(t+h)=\frac{1}{4}(\wit a(t)-\wit r(t))^2\,h+O(h^2)
\]
for $h\ge 0$.
Since $\G''(t)$ is bounded on $[\bar t_1,\bar t_2]$,
we have $\G(t+h)-\G(t)=\G'(t)\,h+ O(h^2)$ for $t\in[\bar t_1,\bar t_2-h]$.
Therefore, the condition \eqref{6.deri}
allows us to determine $h_0>0$ such that, if $h\in[0,h_0]$, then
\[
 x(t+h,t,b^{1/2}(t))-b^{1/2}(t+h)\ge\G(t+h)-\G(t) \quad
 \text{for $\,t\in[\bar t_1,\bar t_2-h]$}\,.
\]
Applying Lemma \ref{6.lemaconverg}, we deduce the  existence of $\delta>0$
with $2\delta<\bar t_2-\bar t_1$ and $h_1\in(0,h_0]$ such that,
for all $h\in[0,h_1]$,
$\mah(t)-\G(t)>b^{1/2}(t)$ for all $t\in[\bar t_1,\bar t_1+\delta]$ and
$\mrh(t)-\G(t)<b^{1/2}(t)$ for all $t\in[\bar t_2-\delta,\bar t_2]$. We choose
$m_1<m_2$ in $\Z$ and $\tilde h\in(0,h_1]$ such that
$\tilde t_1:=m_1\tilde h\in [\bar t_1,\bar t_1+\delta]$ and
$\tilde t_2:=m_2\tilde h\in [\bar t_2-\delta,\bar t_2]$.
We also define $t_j:=j\tilde h$ for $j=m_1,\ldots,m_2$.
With all these choices and definitions, we get
\[
 \mah(t_{m_1})-\G(t_{m_1})>b^{1/2}(t_{m_1})\,,\quad
 \mrh(t_{m_2})-\G(t_{m_2})<b^{1/2}(t_{m_2})
\]
and
\[
 x(t_j+\tilde h,t_j,b^{1/2}(t_j))-b^{1/2}(t_j+\tilde h)\ge\G(t_j+\tilde h)-\G(t_j)
 \quad\text{\,for $j=m_1,\ldots,m_2-1$}\,.
\]
We call $\nu_j=1/2$ for all $j\in\Z$ and
observe that all the conditions of Theorem \ref{5.teortrack}
are fulfilled (for $j_0=m_1$ and $n=m_2-m_1$).
This proves that \eqref{6.ectrozos}$_{\tilde h}$ has an attractor-repeller pair.
\par
Let us deduce this property for $h=0$. For each $k\in\N$, we observe that
the previous procedure can be repeated for $h_k:=\tilde h/k=\tilde t_1/(km_1)
=\tilde t_2/(km_2)$.
Relation \eqref{5.para5} for $j=km_2$ ensures that $\ma_{h_k}(\tilde t_2)\ge
b^{1/2}(\tilde t_2)+\G(\tilde t_2)$. According to Lemma \ref{6.lemaconverg},
$\ma_0(\tilde t_2)=\lim_{k\to\infty}\ma_{h_k}(\tilde t_2)$, and hence
$\ma_0(\tilde t_2)\ge b^{1/2}(\tilde t_2)+\G(\tilde t_2)>\mr_0(\tilde t_2)$.
Hence, there exists $\ma_0(\tilde t_2)>\mr_0(\tilde t_2)$, which according
to Remark \ref{3.notabasta} and Theorem \ref{4.teorexist}(iv)
ensures that \eqref{6.ectrozos}$_0$ has an attractor-repeller pair.
\par
The existence of an $h_*>0$ such that \eqref{6.ectrozos}$_h$
has an attractor-repeller pair for all $h\in[0,h_*]$ follows
from Proposition \ref{6.prophull}(i).
\smallskip\par
(ii) Proposition \ref{3.proppersiste}, Hypotheses \ref{6.hipo} and
relation \eqref{6.deri2} ensure
the existence of $\rho>0$ such that $x'=-x^2+(p(t)+\rho)$ has
an attractor repeller par $(\wit a_{\rho},\wit r_{\rho})$ with
\begin{equation}\label{6.deri3}
 \frac{1}{4}\min_{t\in\R}\big(\wit a_{\rho}(t)-\wit r_{\rho}(t)\big)^2
 >\sup_{t\in\R}\G'(t)\,.
\end{equation}
Since $((\wit a_\rho)_s, (\wit r_\rho)_s)$ is an attractor repeller par
for $x'=-x^2+(p_s+\rho)(t)$ and \eqref{6.deri3} also holds for this pair,
we conclude from Lemma \ref{5.lemat*} and (i) that $\lb_*(\G,p_s+\rho)<0$.
Hence, by Theorem \ref{3.teorlb*}(v), $\lb_*(\G,p_s)<-\rho$,
and Proposition \ref{6.prophull}(ii) proves the assertion.
\end{proof}
\begin{teor}\label{6.teortrack2}
Assume Hypothesis {\rm \ref{6.hipo}}.
Let us fix $\alpha\in(0,1/2)$ and let $\bar t_1<\bar t_2$ satisfy
$\ma_0(\bar t_1)-\G(\bar t_1)>b^{1/2+\alpha}(\bar t_1)$ and
$\mr_0(\bar t_2)-\G(\bar t_2)<b^{1/2-\alpha}(\bar t_2)$.
Assume that $t\mapsto\G(t)$ is $C^2$ on $[\bar t_1,\bar t_2]$,
and that
\[
 \left(\frac{1}{4}-\frac{\alpha^2\,(\bar t_2+\bar t_1-2t)^2}{(\bar t_2-\bar t_1)^2}
 \right)(\wit a(t)-\wit r(t))^2+
 \frac{2\alpha}{\bar t_2-\bar t_1}\,(\wit a(t)-\wit r(t))> \G'(t)\quad
 \text{if $t\in[\bar t_1,\bar t_2]$}\,.
\]
Then, there exists $h_*>0$ such that \eqref{6.ectrozos}$_h$
has an attractor-repeller pair for all $h\in[0,h_*]$.
\end{teor}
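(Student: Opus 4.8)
The plan is to mimic the proof of Theorem~\ref{6.teortrack1}(i), replacing the constant weight $1/2$ by the affine interpolation
\[
 \nu(t):=\frac{1}{2}+\alpha\,\frac{\bar t_2+\bar t_1-2t}{\bar t_2-\bar t_1}\,,
\]
which decreases from $\nu(\bar t_1)=1/2+\alpha$ to $\nu(\bar t_2)=1/2-\alpha$, so that $\nu(t)\in(1/2-\alpha,1/2+\alpha)\subset(0,1)$ for $t\in[\bar t_1,\bar t_2]$; the graph of $\mah$ will be tracked along the moving strict lower solution $b^{\nu(t)}(t)=\nu(t)\,\wit a(t)+(1-\nu(t))\,\wit r(t)$. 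We record the two elementary facts $\nu'(t)\equiv-2\alpha/(\bar t_2-\bar t_1)$ and $\nu(t)\,(1-\nu(t))=\frac14-\alpha^2(\bar t_2+\bar t_1-2t)^2/(\bar t_2-\bar t_1)^2$.

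The only genuinely new ingredient is a first-order expansion with a sliding weight. Applying Lemma~\ref{6.lemader} to the three bounded solutions $x(\cdot,t,b^{\nu(t)}(t))$, $\wit a$ and $\wit r$ of \eqref{6.ecp}, using that $\nu(t+h)=\nu(t)+\nu'(t)\,h$ exactly and the identity $\nu\,\wit a^2+(1-\nu)\,\wit r^2-(\nu\,\wit a+(1-\nu)\,\wit r)^2=\nu(1-\nu)(\wit a-\wit r)^2$, one obtains
\[
 x(t+h,t,b^{\nu(t)}(t))-b^{\nu(t+h)}(t+h)
 =\Big(\frac{2\alpha}{\bar t_2-\bar t_1}\,\big(\wit a(t)-\wit r(t)\big)+\nu(t)(1-\nu(t))\,\big(\wit a(t)-\wit r(t)\big)^2\Big)h+O(h^2)
\]
for $h\ge0$ small, uniformly in $t\in[\bar t_1,\bar t_2]$ (the $O(h^2)$ is uniform because the constant $\kappa$ of Lemma~\ref{6.lemader} bounds all bounded solutions simultaneously). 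The bracketed coefficient is exactly the left-hand side of the hypothesis. Since $\G$ is $C^2$ on $[\bar t_1,\bar t_2]$, there $\G(t+h)-\G(t)=\G'(t)\,h+O(h^2)$, and the strict inequality in the statement has a positive minimum over the compact set $[\bar t_1,\bar t_2]$; hence there is $h_0>0$ with
\[
 x(t+h,t,b^{\nu(t)}(t))-b^{\nu(t+h)}(t+h)\ge\G(t+h)-\G(t)\qquad\text{for all }h\in[0,h_0],\ t\in[\bar t_1,\bar t_2-h]\,.
\]

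From here I would follow the proof of Theorem~\ref{6.teortrack1}(i) step by step. Since $\ma_0(\bar t_1)-\G(\bar t_1)>b^{\nu(\bar t_1)}(\bar t_1)$ and $\mr_0(\bar t_2)-\G(\bar t_2)<b^{\nu(\bar t_2)}(\bar t_2)$, continuity lets these strict inequalities persist on small closed neighbourhoods of $\bar t_1$ and $\bar t_2$ in $[\bar t_1,\bar t_2]$, and Lemma~\ref{6.lemaconverg} then provides $\delta>0$ with $2\delta<\bar t_2-\bar t_1$ and $h_1\in(0,h_0]$ such that $\mah(t)-\G(t)>b^{\nu(t)}(t)$ on $[\bar t_1,\bar t_1+\delta]$ and $\mrh(t)-\G(t)<b^{\nu(t)}(t)$ on $[\bar t_2-\delta,\bar t_2]$ for every $h\in[0,h_1]$. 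Choosing integers $m_1<m_2$ and $\tilde h\in(0,h_1]$ with $\tilde t_1:=m_1\tilde h\in[\bar t_1,\bar t_1+\delta]$, $\tilde t_2:=m_2\tilde h\in[\bar t_2-\delta,\bar t_2]$, and setting $t_j:=j\tilde h$, $\nu_j:=\nu(t_j)$ for $j=m_1,\dots,m_2$, the affine character of $\nu$ gives $\nu_{j+1}=\nu(t_j+\tilde h)$, so the displayed inequality is precisely \eqref{5.chaintrack} and the neighbourhood bounds give the two boundary conditions of Theorem~\ref{5.teortrack} (with $j_0=m_1$, $n=m_2-m_1$); hence \eqref{6.ectrozos}$_{\tilde h}$ has an attractor-repeller pair. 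Finally, running the same construction on the meshes $h_k:=\tilde h/k$ ($k\in\N$; all $\le h_1$, with $\tilde t_1,\tilde t_2$ still nodes), relation \eqref{5.para5} at the terminal node yields $\ma_{h_k}(\tilde t_2)\ge b^{\nu(\tilde t_2)}(\tilde t_2)+\G(\tilde t_2)$, and Lemma~\ref{6.lemaconverg} gives $\ma_0(\tilde t_2)=\lim_k\ma_{h_k}(\tilde t_2)\ge b^{\nu(\tilde t_2)}(\tilde t_2)+\G(\tilde t_2)>\mr_0(\tilde t_2)$; by Remark~\ref{3.notabasta} and Theorem~\ref{4.teorexist}(iv), \eqref{6.ectrozos}$_0$ has an attractor-repeller pair, and Proposition~\ref{6.prophull}(i) extends this to all $h\in[0,h_*]$ for some $h_*>0$. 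The step to watch is the expansion of the second paragraph: letting the weight move with $t$ produces the extra positive term $-\nu'(t)(\wit a(t)-\wit r(t))=\frac{2\alpha}{\bar t_2-\bar t_1}(\wit a(t)-\wit r(t))$ — this is what makes the hypothesis more amenable than \eqref{6.deri} when $\inf_t(\wit a(t)-\wit r(t))$ is small — and one must keep every $O(h^2)$ remainder (in the solutions and in $\G$) uniform in $t$ so that both the discretization and the limit $k\to\infty$ are legitimate.
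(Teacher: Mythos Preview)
Your proof is correct and follows essentially the same approach as the paper: define an affine weight $\nu$ decreasing from $1/2+\alpha$ to $1/2-\alpha$, use Lemma~\ref{6.lemader} to expand $x(t+h,t,b^{\nu}(t))-b^{\nu}(t+h)$ to first order in $h$, verify the chain condition \eqref{5.chaintrack} for small step size, apply Theorem~\ref{5.teortrack} on a mesh of width $\tilde h$, pass to $h=0$ via the refinements $h_k=\tilde h/k$ and Lemma~\ref{6.lemaconverg}, and conclude by Proposition~\ref{6.prophull}(i). The only organizational difference is that you anchor the affine weight at the fixed endpoints $\bar t_1,\bar t_2$ and derive the one-step inequality for all $t\in[\bar t_1,\bar t_2-h]$ before choosing the mesh, whereas the paper anchors $\nu_j$ at the mesh endpoints $\tilde t_1,\tilde t_2$ and therefore first extends the hypothesis to nearby $t_1,t_2$ (its inequality \eqref{6.desalpha}); your ordering spares that preliminary step but is otherwise the same argument.
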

\begin{proof}
Let us look for $\delta>0$ with $2\delta<\bar t_2-\bar t_1$ such that, if
$t_1\in[\bar t_1,\bar t_1+\delta]$, $t_2\in[\bar t_2-\delta,\bar t_2]$ and
$t\in[t_1,t_2]$,
\begin{equation}\label{6.desalpha}
 \left(\frac{1}{4}-\frac{\alpha^2\,(t_2+t_1-2t)^2}{(t_2-t_1)^2}
 \right)(\wit a(t)-\wit r(t))^2+
 \frac{2\alpha}{t_2-t_1}\,(\wit a(t)-\wit r(t))> \G'(t)\,.\end{equation}
Applying Lemma \ref{6.lemaconverg} and taking a smaller $\delta>0$ if needed, we find
$h_0>0$ such that, for all $h\in[0,h_0]$,
$\mah(t)-\G(t)>b^{\alpha+1/2}(t)$ for all $t\in[\bar t_1,\bar t_1+\delta]$
and $\mrh(t)-\G(t)<b^{\alpha-1/2}(t)$ for all $t\in[\bar t_2-\delta,\bar t_2]$.
We choose $m_1<m_2$ in $\Z$ and $\tilde h\in(0,h_0]$ such that
$\tilde t_1:=m_1\tilde h\in [\bar t_1,\bar t_1+\delta]$ and
$\tilde t_2:=m_2\tilde h\in [\bar t_2-\delta,\bar t_2]$.
Let us define
\begin{equation}\label{6.nujtj}
 \nu_j:=\frac{1}{2}+\frac{m_2+m_1-2j}{m_2-m_1}\,\alpha
 \quad\text{and}\quad t_j:=j\,\tilde h
\end{equation}
for $j=m_1,\ldots,m_2$, so that $\nu_{m_1}=1/2+\alpha$, $\nu_{m_2}=1/2-\alpha$,
$t_{m_1}=\tilde t_1$ and $t_{m_2}=\tilde t_2$. It is easy to check that
\begin{equation}\label{6.ineqs}
 \nu_j-\nu_{j+1}= \frac{2\alpha}{\tilde t_2-\tilde t_1}\,\tilde h\quad\text{\;and\;}
 \quad\nu_j(1-\nu_j)=\frac{1}{4}-
 \frac{\alpha^2\,(\tilde t_2+\tilde t_1-2t_j)^2}{(\tilde t_2-\tilde t_1)^2}\,.
\end{equation}
Let us fix $j\in\{m_1,\ldots,m_2-1\}$ and
calculate $x(t_j+\tilde h,t_j,b^{\nu_j}(t_j))-b^{\nu_{j+1}}(t_{j+1})$ as the sum
of $s1$, $s2$ and $s3$, described below. We will apply Lemma \ref{6.lemader} and
the first equality in \eqref{6.ineqs}:
\[
\begin{split}
 s1:=&\;x(t_j+\tilde h,t_j,b^{\nu_j}(t_j))-b^{\nu_j}(t_j)\\
 =&\;-(\nu_j\,\wit a(t_j)+(1-\nu_j)\,\wit r(t_j))^2\,\tilde h+\int_{t_j}^{t_{j+1}}\!p(s)\,ds + O(\tilde h^2)\,,\\
 s2:=&\;b^{\nu_j}(t_j)-b^{\nu_j}(t_{j+1})\\
 =&\;\big(\nu_j\,\wit a^2(t_j)+(1-\nu_j)\,\wit r^2(t_j)\big)\,\tilde h-\int_{t_j}^{t_{j+1}}\!p(s)\,ds + O(\tilde h^2)\,,\\
 s3:=&\;b^{\nu_j}(t_{j+1})-b^{\nu_{j+1}}(t_{j+1})\\
 =&\;(\nu_j-\nu_{j+1})\big(\wit a(t_j)-\wit r(t_j)-\tilde h(\wit a^2(t_j)-\wit r^2(t_j))\big)+ O(\tilde h^2)\\
 = &\;\frac{2\alpha}{\tilde t_2-\tilde t_1}\,(\wit a(t_j)-\wit r(t_j))\,\tilde h+
 O(\tilde h^2)\,.
\end{split}
\]
That is, using the second equality in \eqref{6.ineqs},
\[
\begin{split}
 &x(t_j+\tilde h,t_j,b^{\nu_j}(t_j))-b^{\nu_{j+1}}(t_{j+1})\\
 &\quad\;=\left(\nu_j\,(1-\nu_j) (\wit a(t_j)-\wit r(t_j))^2+
 \frac{2\alpha}{\tilde t_2-\tilde t_1}\,(\wit a(t_j)-\wit r(t_j))\right)\tilde h+O(\tilde h^2)\\
 &\quad\;= \left(\!\left(\frac{1}{4}-
 \frac{\alpha^2\,(\tilde t_2+\tilde t_1-2t_j)^2}{(\tilde t_2-\tilde t_1)^2}
 \right)(\wit a(t_j)-\wit r(t_j))^2+
  \frac{2\alpha}{\tilde t_2-\tilde t_1}\,(\wit a(t_j)-\wit r(t_j))\right)\tilde h+O(\tilde h^2)\,.
\end{split}
\]
Since $\G''(t)$ is bounded on $[-t_\alpha,t_\alpha]$ and hence
$\G(t_{j+1})-\G(t_j)=\G'(t_j)\,\tilde h+ O(\tilde h^2)$ for $j\in\{m_1,\ldots,m_2-1\}$,
we conclude from \eqref{6.desalpha} that we can take from the beginning
$\tilde h>0$ small enough (i.e., $|m_1|$ and $|m_2|$ large enough) to ensure that
\[
 x(t_j+\tilde h,t_j,b^{\nu_j}(t_j))-b^{\nu_{j+1}}(t_{j+1})\ge \G(t_{j+1})-\G(t_j)
\]
for $j\in\{m_1,\ldots,m_2-1\}$, with $\nu_j$ and $t_j$ defined by \eqref{6.nujtj}.
\par
Recall also that $\ma_{\tilde h}(m_1\tilde h)>
b^{\nu_{m_1}}(m_1\tilde h)+\G(m_1\tilde h)$ and
$\mr_{\tilde h}(m_2\tilde h)<b^{\nu_{m_2}}(m_2\tilde h)-\G(m_2\tilde h)$,
since $\tilde h\le h_0$, $m_1\tilde h=\tilde t_1$, $m_2\tilde h=\tilde t_2$,
$\nu_{m_1}=1/2+\alpha$ and $\nu_{m_1}=1/2-\alpha$.
Altogether, we have checked all the hypotheses of Theorem~\ref{5.teortrack}
(for $j_0=m_1$ and $n=m_2-m_1$),
and hence \eqref{6.ectrozos}$_{\tilde h}$ has an attractor-repeller pair.
\par
To deduce the result first for $h=0$ and then for all $h\in[0,h_*]$, where $h_*>0$,
we repeat the arguments used to complete the proofs of Proposition \ref{6.propcambio}
and Theorem \ref{6.teortrack1}. In this case, we get
$\ma_0(\tilde t_2)\ge b^{1/2-\alpha}(\tilde t_2)+\G(\tilde t_2)>\mr_0(\tilde t_2)$.
\end{proof}
Theorem \ref{6.teortip1}, based on Theorem \ref{5.teortipping} and on some ideas of
the proof of Theorem \ref{6.teortrack2}, establishes a condition guaranteing tipping
if $h$ is small enough.
\begin{teor}\label{6.teortip1}
Assume Hypothesis {\rm\ref{6.hipo}}.
Assume also that $t\mapsto\G(t)$ is nondecreasing, and that there exists
$\bar t_1<\bar t_2$ such that $t\mapsto\G(t)$ is $C^2$ on $[\bar t_1,\bar t_2]$ and
\[
 \left(\frac{1}{4}-\frac{(\bar t_1+\bar t_2-2t)^2}{4\,(\bar t_2-\bar t_1)^2}\right)
 (\wit a(t)-\wit r(t))^2+\frac{1}{\bar t_2-\bar t_1}\,(\wit a(t)-\wit r(t))<\G'(t)
 \quad\text{for all $t\in[\bar t_1,\bar t_2]$}\,.
\]
Then, there exists $h_*>0$
such that \eqref{6.ectrozos}$_h$ has no bounded solutions
for all $h\in[0,h_*]$.
\end{teor}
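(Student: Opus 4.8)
The plan is to follow the proof of Theorem~\ref{6.teortrack2}, replacing the tracking criterion Theorem~\ref{5.teortrack} by the tipping criterion Theorem~\ref{5.teortipping} and using the linear interpolation $\nu_j:=(j_0+n-j)/n$ from $\nu_{j_0}=1$ down to $\nu_{j_0+n}=0$. For a small $h>0$ I would take $j_0:=\lceil\bar t_1/h\rceil$ and $n:=\lfloor\bar t_2/h\rfloor-j_0$, so that $n\ge 1$, the points $\tilde t_1:=j_0h$ and $\tilde t_2:=(j_0+n)h$ satisfy $[\tilde t_1,\tilde t_2]\subseteq[\bar t_1,\bar t_2]$, and $\tilde t_1\to\bar t_1$, $\tilde t_2\to\bar t_2$ as $h\downarrow 0$; then set $t_j:=jh$ and $\nu_j:=(j_0+n-j)/n$ for $j\in\{j_0,\dots,j_0+n\}$. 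Applying Lemma~\ref{6.lemader} to the solution $x(\cdot,t_j,b^{\nu_j}(t_j))$ of \eqref{6.ecp} and to $\wit a,\wit r$, and using $\nu\,a^2+(1-\nu)\,r^2-(\nu\,a+(1-\nu)\,r)^2=\nu(1-\nu)(a-r)^2$ together with $\nu_j-\nu_{j+1}=1/n=h/(\tilde t_2-\tilde t_1)$, one obtains
\[
 x(t_j+h,t_j,b^{\nu_j}(t_j))-b^{\nu_{j+1}}(t_{j+1})
 =\left(\nu_j(1-\nu_j)\,\big(\wit a(t_j)-\wit r(t_j)\big)^2
 +\frac{1}{\tilde t_2-\tilde t_1}\,\big(\wit a(t_j)-\wit r(t_j)\big)\right)h+O(h^2)\,,
\]
while $\G(t_{j+1})-\G(t_j)=\G'(t_j)\,h+O(h^2)$ since $\G$ is $C^2$ on $[\bar t_1,\bar t_2]$, the $O(h^2)$ being uniform in $j$. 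Because $\nu_j(1-\nu_j)=\frac14-\big(\frac{\tilde t_1+\tilde t_2-2t_j}{2(\tilde t_2-\tilde t_1)}\big)^2$, the bracketed coefficient is exactly the left-hand side of the hypothesis inequality with $\bar t_1,\bar t_2$ replaced by $\tilde t_1,\tilde t_2$ and evaluated at $t=t_j$.

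Since that inequality is strict on the compact set $[\bar t_1,\bar t_2]$ and its left-hand side is continuous in $(t_1,t_2,t)$ for $t_1<t_2$, there is $\ep_0>0$ such that the inequality still holds, with slack $\ep_0$, for all $(t_1,t_2)$ close enough to $(\bar t_1,\bar t_2)$ and all $t\in[t_1,t_2]\subseteq[\bar t_1,\bar t_2]$; in particular it holds for $(\tilde t_1,\tilde t_2)$ once $h$ is small. Together with the expansions above this yields, for all $j\in\{j_0,\dots,j_0+n-1\}$ and $h$ small (uniformly in $j$), $x(t_j+h,t_j,b^{\nu_j}(t_j))-b^{\nu_{j+1}}(t_{j+1})\le\G(t_{j+1})-\G(t_j)-\ep_0 h+O(h^2)<\G(t_{j+1})-\G(t_j)$. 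Hence all hypotheses of Theorem~\ref{5.teortipping} hold with every inequality \eqref{5.hipj} strict, and \eqref{6.ectrozos}$_h$ has no bounded solutions for $h$ in some interval $(0,h_1]$. Note that no a priori condition on $\mah,\mrh$ at $\bar t_1$ or $\bar t_2$ is needed, since Theorem~\ref{5.teortipping} only uses the chain \eqref{5.hipj} and the monotonicity of $\G$.

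The delicate point is $h=0$, where this scheme by itself only gives that \eqref{6.ecGp} has no attractor--repeller pair, not that it has no bounded solution. To get the sharp conclusion I would run the whole argument for $p+\lb_0$ in place of $p$, with $\lb_0>0$ fixed and small. This is admissible: Hypothesis~\ref{6.hipo} means $\lb_*(0,p)<0$, so for $0<\lb_0<-\lb_*(0,p)$ the equation $x'=-x^2+p(t)+\lb_0$ has an attractor--repeller pair $(\wit a^{\lb_0},\wit r^{\lb_0})$ by Theorem~\ref{3.teorlb*}; Proposition~\ref{3.proppersiste} gives $\n{\wit a^{\lb_0}-\wit a}_\infty,\n{\wit r^{\lb_0}-\wit r}_\infty\to 0$ as $\lb_0\downarrow 0$, so for $\lb_0$ small the hypothesis inequality remains strict on $[\bar t_1,\bar t_2]$ with the pair $(\wit a^{\lb_0},\wit r^{\lb_0})$, and Lemma~\ref{6.lemader} applies verbatim to $x'=-x^2+(p+\lb_0)(t)$. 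The first two paragraphs then produce $h_2>0$ such that $x'=-(x-\G^h(t))^2+p(t)+\lb_0$ has no bounded solutions for $h\in(0,h_2]$, i.e.\ $\lb_*(\G^h,p)>\lb_0$ there; in particular \eqref{6.ectrozos}$_h$ has no bounded solutions for $h\in(0,h_2]$. Finally, exactly as in the proof of Proposition~\ref{6.prophull}(i) there is $m>0$ with $|\lb_*(\G^h,p)-\lb_*(\G,p)|\le m\,\n{\G^h-\G}_\infty$, and $\n{\G^h-\G}_\infty\to 0$ as $h\downarrow 0$ because $\G$ is uniformly continuous; hence $\lb_*(\G,p)\ge\lb_0>0$ and \eqref{6.ecGp} has no bounded solutions either. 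Taking $h_*:=h_2$ finishes the proof. The expansion algebra mirrors Theorem~\ref{6.teortrack2}; the real obstacle is precisely this upgrade at $h=0$, which the $\lb_0$-shift resolves.
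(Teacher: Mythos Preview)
Your proof is correct. The Taylor-expansion part leading to the strict inequalities \eqref{5.hipj} for small $h>0$ is essentially the paper's argument (the paper phrases it as ``repeat the initial steps of Theorem~\ref{6.teortrack2} with $\alpha=1/2$''); the only cosmetic difference is that you let $j_0,n$ depend on $h$ while the paper fixes one $\tilde h$ and then passes to the subsequence $h_k=\tilde h/k$.

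The $h=0$ step is where you genuinely diverge. The paper keeps the quantitative slack $\rho>0$ from the expansion, notes that it persists along $h_k=\tilde h/k$, and then invokes Lemma~\ref{6.lemaconverg} to pass to the limit in $\ma_{h_k}(\tilde t_2)\le\mr_{h_k}(\tilde t_2)-\rho$, obtaining $\ma_0(\tilde t_2)<\mr_0(\tilde t_2)$ by contradiction; a second application of Lemma~\ref{6.lemaconverg} then yields the full interval $[0,h_*]$. Your $\lambda_0$-shift replaces this limit argument by the Lipschitz bound $|\lambda_*(\G^h,p)-\lambda_*(\G,p)|\le m\,\|\G^h-\G\|_\infty$ (already used in Proposition~\ref{6.prophull}(i)) together with Theorem~\ref{3.teorlb*}(v). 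This buys you a clean passage to $h=0$ without Lemma~\ref{6.lemaconverg} and without reopening the proof of Theorem~\ref{5.teortipping}; the price is relying once more on the external estimate \cite[Theorem~2.12]{lno}, which the paper uses anyway. Either route is fine.
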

\begin{proof}
Let us look for $\delta>0$ with $2\delta<\bar t_2-\bar t_1$ such that, if
$t_1\in[\bar t_1,\bar t_1+\delta]$ and $t_2\in[\bar t_2-\delta,\bar t_2]$ and
$t\in[t_1,t_2]$, then
\[
 \left(\frac{1}{4}-\frac{(t_1+t_2-2t)^2}{4\,(t_2-t_1)^2}\right)
 (\wit a(t)-\wit r(t))^2+\frac{1}{t_2-t_1}\,(\wit a(t)-\wit r(t))<\G'(t)
 \quad\text{for all $t\in[t_1,t_2]$}\,.
\]
We choose $m_1,m_2\in\Z$ and $\tilde h>0$ such that
$\tilde t_1:=m_1\tilde h\in [t_1,t_1+\delta]$ and
$\tilde t_2:=m_2\tilde h\in [t_2-\delta,t_2]$.
We repeat the initial steps of the proof of Theorem \ref{6.teortrack2}
taking $\alpha=1/2$,
with the same definitions \eqref{6.nujtj} to conclude
that there exists $\rho>0$ such that
\[
 x(t_j+\tilde h,t_j,b^{\nu_j}(t_j))-b^{\nu_{j+1}}(t_{j+1})\le \G(t_{j+1})-\G(t_j)-\rho
\]
if $\tilde h$ is small enough and $t_j\in\{m_1,\ldots,m_2-1\}$. Note that
$\nu_{m_1}=1$ and $\nu_{m_2}=0$. Theorem \ref{5.teortipping} ensures that
\eqref{6.ectrozos}$_{\tilde h}$ does not have an attractor-repeller pair.
\par
To prove it for $h=0$, we take $k\in\N$ and observe that
the previous procedure can be repeated for $h_k:=\tilde h/k=\tilde t_1/(km_1)
=\tilde t_2/(km_2)$.
Let us assume for contradiction the
global existence of $\ma_0$ and $\mr_0$ with $\ma_0(\tilde t_2)
\ge\mr_0(\tilde t_2)$. This means that $\ma_{\tilde h/k}(\tilde t_2)$ and
$\mr_{\tilde h/k}(\tilde t_2)$ exist if $k$ is large enough, with $\ma_0(\tilde t_2)=\lim_{k\to\infty}\ma_{\tilde h/k}(\tilde t_2)$
and $\mr_0(\tilde t_2)=\lim_{k\to\infty}\mr_{\tilde h/k}(\tilde t_2)$: see Lemma
\ref{6.lemaconverg}. By reviewing the proof of
Theorem \ref{5.teortipping}, we observe that
$\ma_{\tilde h/k}(\tilde t_2)\le\mr_{\tilde h/k}(\tilde t_2)-\rho$ if $n$
is large enough, and
we obtain the contradiction by taking limits as $k\to\infty$.
Finally, we deduce the existence of an $h_*>0$ such that \eqref{6.ectrozos}$_h$
has no bounded solutions
for all $h\in[0,h_*]$ from the inequality $\ma_0(\tilde t_2)<\mr_0(\tilde t_2)$
combined with Lemma \ref{6.lemaconverg} (applied to $\ma_0$ on $(-\infty, \tilde t_2]$
and to $\mr_0$ on $[\tilde t_2,\infty)$) and with Remark \ref{3.notabasta}.
\end{proof}
The applicability of these criteria is shown in Figure \ref{6.figSec62} for a suitable example.

\begin{figure}
    \centering
\begin{overpic}[trim={0.8cm 0.6cm 1cm 0.8cm},clip,width=0.48\textwidth]{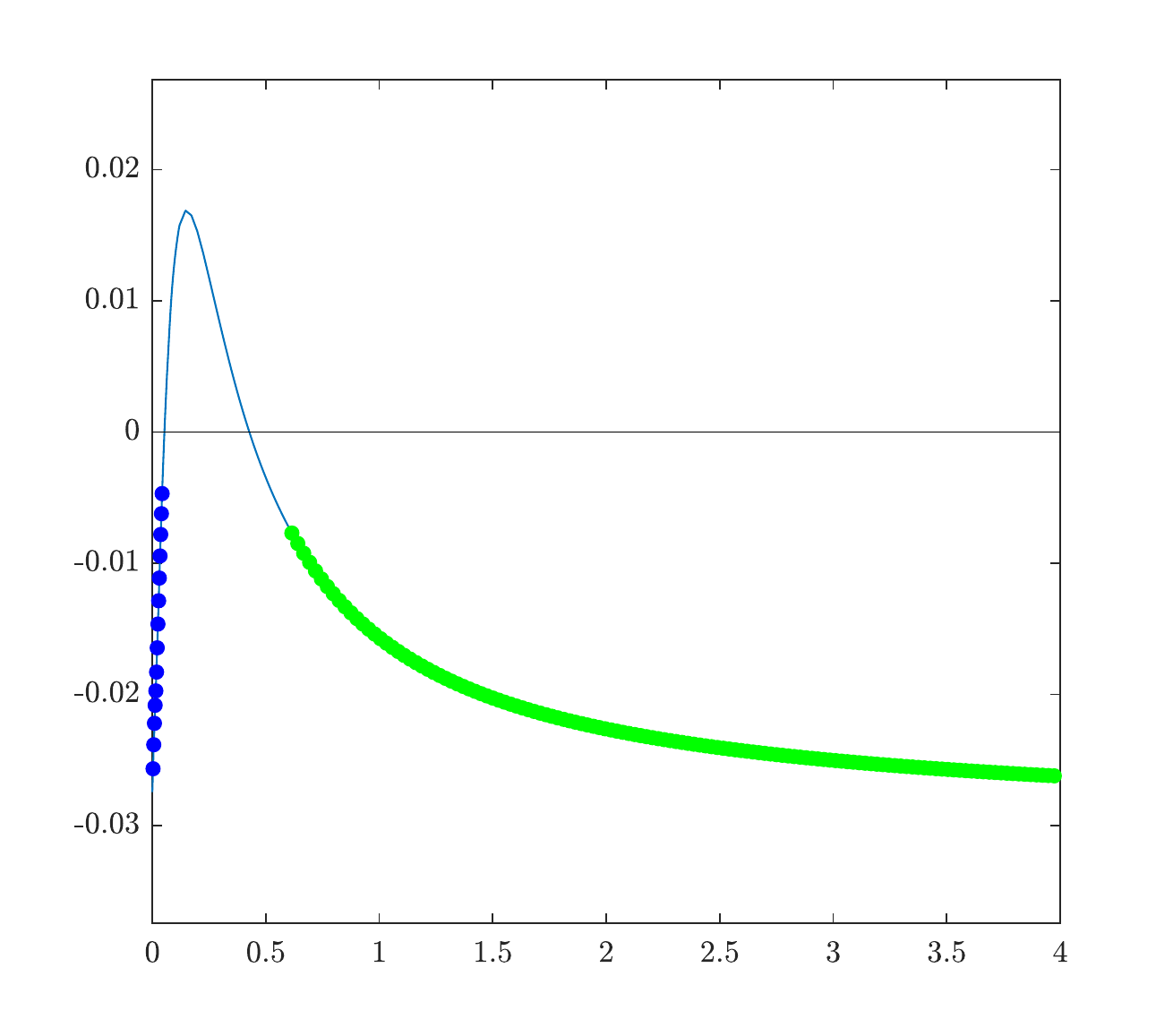}
\put(11,82){(a)}
\put(51,-4){$c$}	
\put(-4,48){$\lambda_*$}
\end{overpic}
\begin{overpic}[trim={0.7cm 0.6cm 1.15cm 0.8cm},clip,width=0.48\textwidth]{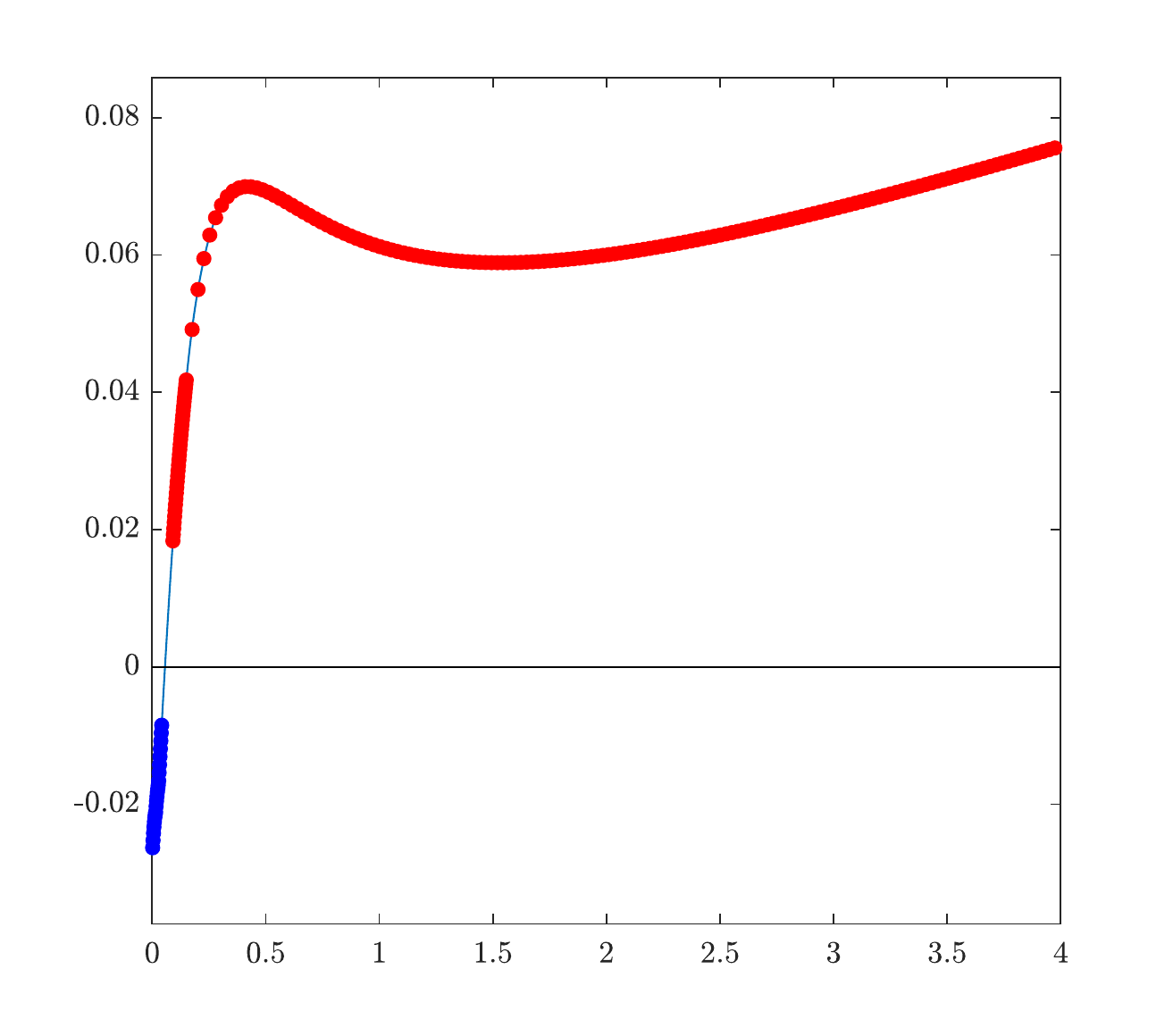}
\put(11,82){(b)}
\put(103,48){$\lambda_*$}
\put(51,-4){$c$}
\end{overpic}\\[4ex]
\begin{overpic}[trim={0.8cm 0.6cm 1cm 0.8cm},clip,width=0.48\textwidth]{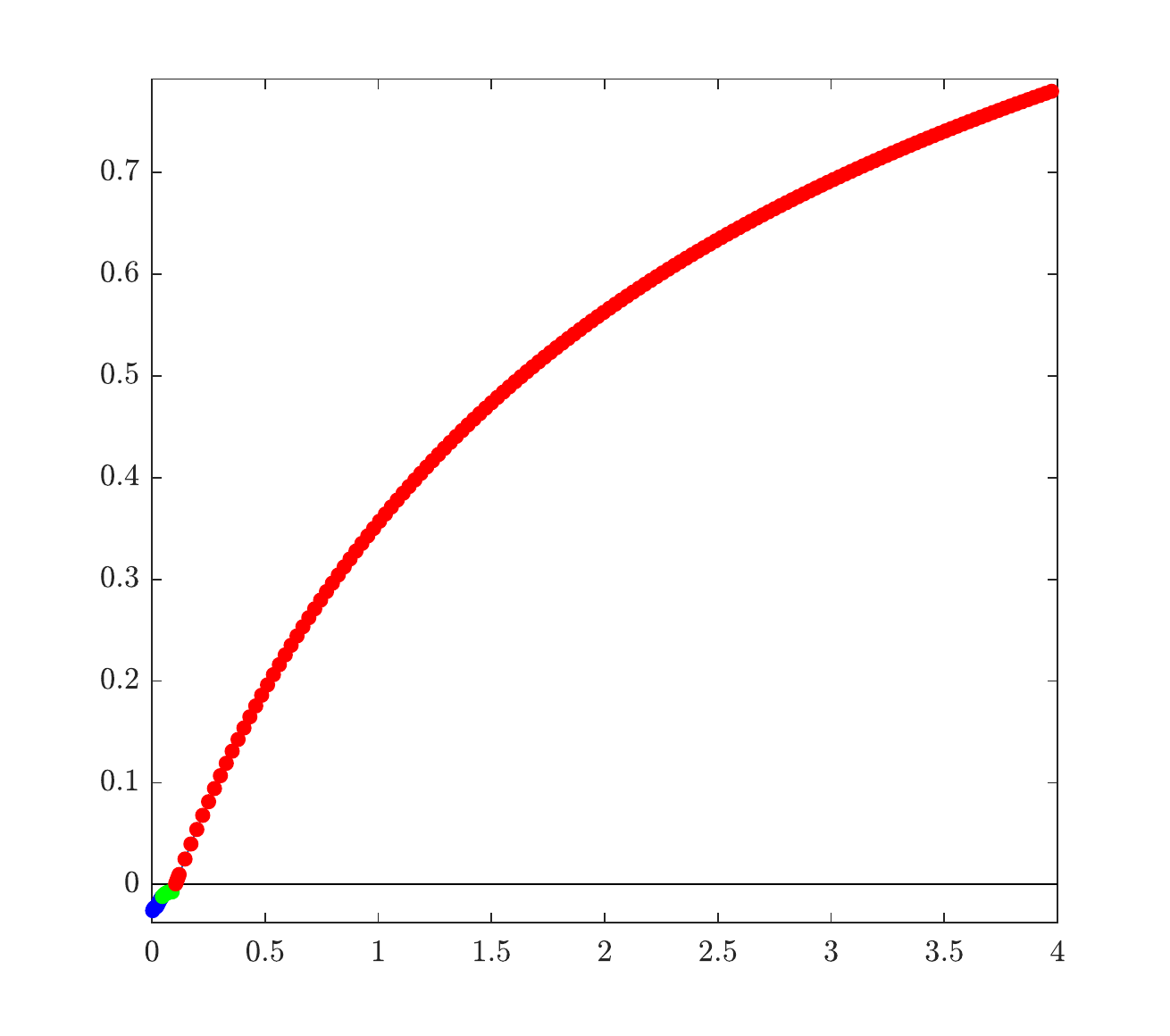}
\put(11,82){(c1)}
\put(-4,48){$\lambda_*$}
\put(51,-4){$c$}
\end{overpic}
\begin{overpic}[trim={0.7cm 0.6cm 1.15cm 0.8cm},clip,width=0.48\textwidth]{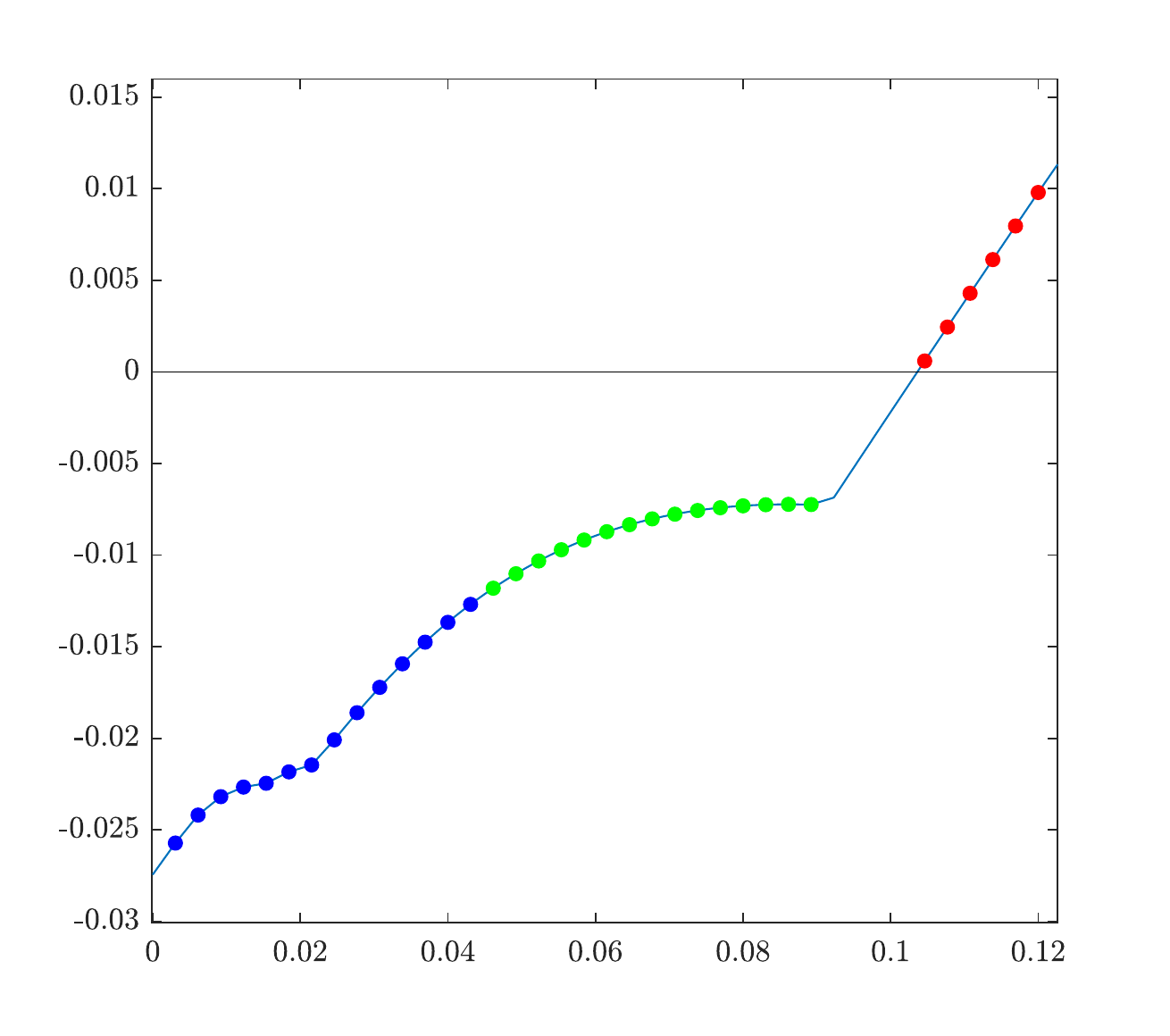}
\put(11,82){(c2)}
\put(103,48){$\lambda_*$}
\put(51,-4){$c$}
\end{overpic}\\[2ex]
\caption{The panels above show the applicability of the rigorous criteria of tracking and tipping developed in Theorems \ref{6.teortrack1}, \ref{6.teortrack2} and \ref{6.teortip1}. In all the panels, the light-blue continuous curve is the graph of the bifurcation curve $\lambda_*(c)$
of $x'=-(x-\G(ct))^2+p_s(t)$ for $c\in[0,4]$, where $\G(t):=(2/\pi)\arctan(t)$ and $p_s(t)=0.83-\sin((t+s)/2)-\sin(\sqrt{5}(t+s))$ (see also Subsection \ref{5.subsecnum}). Particularly, $s=-15$ in panel (a), $s=0$ in panel (b), and $s=15$ in panels (c1) and (c2). Panel (c2) is a magnification of panel (c1) for $c\in[0,0.12]$. The black horizontal lines correspond to the value $\lambda_*=0$. Note that the scale on the y-axis changes in each panel. In each panel, dark blue points correspond to values of $c$ for which the criterion in Theorem \ref{6.teortrack1} guarantees tracking. Green points correspond to values of $c$ for which the criterion in Theorem \ref{6.teortrack2} guarantees tracking even though the criterion in Theorem \ref{6.teortrack1} is not verified. And red points correspond to values of $c$ for which the criterion in Theorem \ref{6.teortip1} guarantees tipping. Given the high slope of the bifurcation map for values close to zero, a denser grid of points has been considered in $[0,0.15]$ (80 points), whereas only 150 evenly spaced points have been analyzed for $c\in (0.15,4]$.
}
\label{6.figSec62}
\end{figure}

\subsection{Sharp estimates for asymptotically constant polygonal transition functions}
In this section, the transition function $\G$ will be a simple continuous piecewise-linear map:
we define
\begin{equation}\label{6.defpol}
\G(t):=\begin{cases}
-1&\text{if }t<-1\,,\\
\;\;\,t&\text{if }t\in[-1,1]\,,\\
\;\;\,1&\text{if }t>1\,.
\end{cases}
\end{equation}
So, if $c>0$ and $d>0$, $\G_{c,d}(t):=d\,\G(ct)$ is the polygonal map
going from the constant
value $-d$ for $t\le 1/c$ to $d$ for $t\ge 1/c$ at speed $cd$ on $[-1/c,1/c]$.
In what follows, we will describe conditions determining the dynamical
situations of the equations of the two-parametric family
\begin{equation}\label{6.ecucd}
 x'=-(x-\G_{c,d}(t))^2+p(t)\,,
\end{equation}
where $p\colon\R\to\R$ is a fixed $L^\infty$ function, $c$ and $d$ are positive constants,
and always under Hypothesis \ref{6.hipo}.
\begin{lema}\label{6.lemamamr}
Assume Hypothesis {\rm\ref{6.hipo}} and fix $c>0$ and $d\ge 0$. Then, there exist
the functions $\ma_{c,d}$ and $\mr_{c,d}$ associated to \eqref{6.ecucd}$_{c,d}$
by Theorem {\rm\ref{3.teoruno}}. In addition,
$\ma_{c,d}(t)=\wit a(t)-d$ for
$t\le -1/c$ and $\mr_{c,d}(t)=\wit r(t)+d$ for $t\ge 1/c$. In particular,
if $\nu\in(0,1)$, then $\ma_{c,d}(-1/c)-\G_{c,d}(-1/c)>b^\nu(-1/c)$
and $\mr_{c,d}(1/c)-\G_{c,d}(1/c)<b^\nu(1/c)$.
\end{lema}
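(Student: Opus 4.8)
The plan is to exploit that the transition function $\G_{c,d}(t):=d\,\G(ct)$ built from \eqref{6.defpol} is \emph{eventually constant}: by \eqref{6.defpol} it equals $-d$ on $(-\infty,-1/c]$ and $d$ on $[1/c,\infty)$. On each of these half-lines equation \eqref{6.ecucd}$_{c,d}$ is, after a pure translation of the state variable, exactly the unperturbed equation \eqref{6.ecp}, and everything we need will follow from this identification together with Theorems \ref{3.teoruno} and \ref{3.teorhyp}.

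First I would observe that $\G_{c,d}$ is continuous and bounded with $\gamma_\pm=\lim_{t\to\pm\infty}\G_{c,d}(t)=\pm d$, so that under Hypothesis \ref{6.hipo} Theorem \ref{4.teorexist}(i) applies with $\Delta=\G_{c,d}$ and produces the maps $\ma_{c,d}$, $\mr_{c,d}$ associated to \eqref{6.ecucd}$_{c,d}$ by Theorem \ref{3.teoruno}; this gives the first assertion. (For $d=0$ the rest is trivial, since then $\G_{c,0}\equiv0$, $\ma_{c,0}=\wit a$, $\mr_{c,0}=\wit r$; so one may assume $d>0$.) Writing $x_{c,d}(t,s,x_0)$ for the solution of \eqref{6.ecucd}$_{c,d}$ with value $x_0$ at $t=s$, I note that on $(-\infty,-1/c]$ the equation reads $x'=-(x+d)^2+p(t)$, and the change $z=x+d$ turns it into \eqref{6.ecp}; hence $x_{c,d}(t,s,x_0)=x(t,s,x_0+d)-d$ for all $t\le s\le-1/c$, so $x_{c,d}(t,s,x_0)$ stays bounded as $t\to-\infty$ exactly when $x(t,s,x_0+d)$ does. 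Comparing the characterization of Theorem \ref{3.teoruno}(i) for \eqref{6.ecucd}$_{c,d}$ (boundedness in the past iff $x_0\le\ma_{c,d}(s)$) with the one for \eqref{6.ecp} (iff $x_0+d\le\wit a(s)$) yields $\ma_{c,d}(s)=\wit a(s)-d$ for every $s\le-1/c$; since $\wit a-d$ is bounded, Theorem \ref{3.teoruno}(i) then forces the domain of $\ma_{c,d}$ to contain all of $(-\infty,-1/c]$. The symmetric computation on $[1/c,\infty)$, where $\G_{c,d}\equiv d$ and $z=x-d$ again gives \eqref{6.ecp}, produces $x_{c,d}(t,s,x_0)=x(t,s,x_0-d)+d$ for $1/c\le s\le t$ and, via Theorem \ref{3.teoruno}(ii), $\mr_{c,d}(s)=\wit r(s)+d$ for every $s\ge1/c$.

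For the final clause I would simply evaluate at $\mp1/c$: since $\G(-1)=-1$ and $\G(1)=1$ we have $\G_{c,d}(-1/c)=-d$ and $\G_{c,d}(1/c)=d$, hence $\ma_{c,d}(-1/c)-\G_{c,d}(-1/c)=\wit a(-1/c)$ and $\mr_{c,d}(1/c)-\G_{c,d}(1/c)=\wit r(1/c)$. By Theorem \ref{3.teorhyp} the pair $(\wit a,\wit r)$ is uniformly separated, so $\wit a(t)-\wit r(t)>0$ for all $t$; therefore, for $\nu\in(0,1)$, the function $b^\nu$ of \eqref{5.defbnu} satisfies $b^\nu=\wit a-(1-\nu)(\wit a-\wit r)<\wit a$ and $b^\nu=\wit r+\nu(\wit a-\wit r)>\wit r$ pointwise, and evaluating the first inequality at $-1/c$ and the second at $1/c$ gives $\ma_{c,d}(-1/c)-\G_{c,d}(-1/c)>b^\nu(-1/c)$ and $\mr_{c,d}(1/c)-\G_{c,d}(1/c)<b^\nu(1/c)$.

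No step is really an obstacle; the only point requiring a little care is that the state translations on the two half-lines transfer verbatim the \emph{bounded-in-the-past} and \emph{bounded-in-the-future} characterizations of Theorem \ref{3.teoruno} (immediate, since a translation is a bijection of the solution sets preserving boundedness), and that the resulting identities $\ma_{c,d}=\wit a-d$ on $(-\infty,-1/c]$ and $\mr_{c,d}=\wit r+d$ on $[1/c,\infty)$ automatically guarantee these solutions are defined there.
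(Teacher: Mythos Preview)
Your proof is correct and follows essentially the same route as the paper: both use Theorem~\ref{4.teorexist} for existence, identify the transition equation on each half-line with a translate of \eqref{6.ecp}, and invoke the characterization in Theorem~\ref{3.teoruno} to conclude $\ma_{c,d}=\wit a-d$ on $(-\infty,-1/c]$ and $\mr_{c,d}=\wit r+d$ on $[1/c,\infty)$, after which the final inequalities are immediate from $\G_{c,d}(\pm1/c)=\pm d$ and $\wit r<b^\nu<\wit a$.
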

\begin{proof}
Theorem \ref{4.teorexist} shows the first assertion. Note also that
the equation \eqref{6.ecucd}$_{c,d}$ coincides with $x'=-(x+d)^2+p(t)$
(with attractor-repeller pair $(\wit a-d,\wit r-d)$)
on $(-\infty,-1/c]$ and with $x'=-(x-d)^2+p(t)$
(with attractor-repeller pair $(\wit a+d,\wit r+d)$)
on $[1/c,\infty)$. Therefore, the solution $x_{c,d}(t,-1/c,\wit a(-1/c)-d)$
coincides with $\wit a(t)-d$ for $t\le -1/c$ and hence its graph on $(-\infty,-1/c]$
bounds from above the set of the solutions which are bounded as time decreases,
which is the definition of $\ma_{c,d}$; and, analogously, $\mr_{c,d}(t)=\wit r(t)+d$
for $t\in[1/c,\infty)$. The third assertion is an almost immediate consequence of these
equalities, since $\G_{c,d}(\pm 1/c)=\pm d$.
\end{proof}
\begin{teor}\label{6.teortrack3}
Assume Hypothesis {\rm\ref{6.hipo}} and fix $c>0$ and $d\ge 0$. Then,
\eqref{6.ecucd}$_{c,d}$ has an
attractor-repeller pair if one of the following
inequalities holds for all $t\in[-1/c,1/c]$:
\begin{itemize}
\item[\rm(i)] $2d<(1/2c)(\wit a(t)-\wit r(t))^2$\,,
\item[\rm(ii)] $2d<(\wit a(t)-\wit r(t))+
((1-c^2t^2)/2c)(\wit a(t)-\wit r(t))^2$,
\item[\rm(iii)] $2d<\nu(\wit a(t)-\wit r(t))+((1-\nu)/(2c))
(\wit a(t)-\wit r(t))^2$ for a value $\nu\in[0,1]$.
\end{itemize}
In addition,
\begin{itemize}
\item[\rm(iv)] if $2d<\max(\inf_{t\in\R}\big((\wit a(t)-\wit r(t)),(1/2c)\inf_{t\in\R}
(\wit a(t)-\wit r(t))^2\big)$, then there
is total tracking on the hull $\W_p$ of $p$ for \eqref{6.ecucd}$_{c,d}$.
\end{itemize}
\end{teor}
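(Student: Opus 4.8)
The plan is to reduce conditions (i), (ii) and (iii) to Theorems \ref{6.teortrack1} and \ref{6.teortrack2}, applied to the polygonal transition $\G_{c,d}$, and then to obtain (iv) from a persistence argument combined with Theorem \ref{6.teorconthull}(iii). I may assume $d>0$, since for $d=0$ equation \eqref{6.ecucd}$_{c,d}$ is \eqref{6.ecp} and Hypothesis \ref{6.hipo} gives the conclusion at once. Observe that $\G_{c,d}$ is bounded and continuous with $\gamma_\pm=\pm d$, that it is $C^\infty$ on the \emph{open} interval $(-1/c,1/c)$ with $\G_{c,d}'\equiv cd$ there (its only corners are at $\pm1/c$), and that by Lemma \ref{6.lemamamr} the maps $\ma_0=\ma_{c,d}$ and $\mr_0=\mr_{c,d}$ of \eqref{6.ecucd}$_{c,d}$ satisfy $\ma_{c,d}\equiv\wit a-d$ on $(-\infty,-1/c]$ and $\mr_{c,d}\equiv\wit r+d$ on $[1/c,\infty)$, so $\ma_{c,d}(-1/c)-\G_{c,d}(-1/c)=\wit a(-1/c)$ and $\mr_{c,d}(1/c)-\G_{c,d}(1/c)=\wit r(1/c)$. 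Since $\wit a-b^{\mu}=(1-\mu)(\wit a-\wit r)>0$ and $b^{\mu}-\wit r=\mu(\wit a-\wit r)>0$ for $\mu\in(0,1)$, continuity lets me select, for any prescribed $\mu_1,\mu_2\in(0,1)$, a pair $\bar t_1<\bar t_2$ in $(-1/c,1/c)$, with $\bar t_1$ arbitrarily close to $-1/c$ and $\bar t_2$ arbitrarily close to $1/c$, such that $\ma_{c,d}(\bar t_1)-\G_{c,d}(\bar t_1)>b^{\mu_1}(\bar t_1)$ and $\mr_{c,d}(\bar t_2)-\G_{c,d}(\bar t_2)<b^{\mu_2}(\bar t_2)$; on any such $[\bar t_1,\bar t_2]\subset(-1/c,1/c)$ the map $\G_{c,d}$ is $C^2$ with constant derivative $cd$.

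Condition (i) is the case $\nu=0$ of (iii), so it suffices to handle (i), (ii) and (iii) with $\nu\in(0,1]$. For (i), with $\mu_1=\mu_2=1/2$ the hypothesis $2d<\tfrac1{2c}(\wit a(t)-\wit r(t))^2$, i.e. $cd<\tfrac14(\wit a(t)-\wit r(t))^2$, holds on $[-1/c,1/c]\supset[\bar t_1,\bar t_2]$, which is precisely \eqref{6.deri} for $\G=\G_{c,d}$; hence Theorem \ref{6.teortrack1}(i) yields an attractor-repeller pair for \eqref{6.ectrozos}$_0$, i.e. for \eqref{6.ecucd}$_{c,d}$. For (ii) and for (iii) with $\nu\in(0,1]$ I invoke Theorem \ref{6.teortrack2}. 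With $\alpha\in(0,1/2)$ and $\G=\G_{c,d}$, the derivative hypothesis of that theorem on an interval $[\bar t_1,\bar t_2]\subset(-1/c,1/c)$ reads
\[
 \left(\tfrac14-\tfrac{\alpha^2(\bar t_1+\bar t_2-2t)^2}{(\bar t_2-\bar t_1)^2}\right)(\wit a(t)-\wit r(t))^2+\tfrac{2\alpha}{\bar t_2-\bar t_1}\,(\wit a(t)-\wit r(t))>cd\qquad(t\in[\bar t_1,\bar t_2]),
\]
and as $(\bar t_1,\bar t_2)\to(-1/c,1/c)$ its left-hand side tends, uniformly in $t\in[-1/c,1/c]$ and in $\alpha$, to $\Psi_\alpha(t):=\bigl(\tfrac14-\alpha^2c^2t^2\bigr)(\wit a(t)-\wit r(t))^2+\alpha c\,(\wit a(t)-\wit r(t))$. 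For (iii) with $\nu\in(0,1)$ I set $\alpha=\nu/2$ and use that $1-\nu^2c^2t^2\ge1-\nu$ on $[-1/c,1/c]$ (equivalently $\nu c^2t^2\le1$), which gives $\Psi_\alpha(t)\ge\tfrac{1-\nu}4(\wit a(t)-\wit r(t))^2+\tfrac{\nu c}2(\wit a(t)-\wit r(t))>cd$ there, the last step being (iii) multiplied by $c/2$. For (ii), and for (iii) with $\nu=1$ (where the $(1-\nu)$-term drops out), I let $\alpha\uparrow1/2$: (ii) is exactly $cd<\Psi_{1/2}(t)$ on $[-1/c,1/c]$, while (iii) with $\nu=1$, namely $cd<\tfrac c2(\wit a(t)-\wit r(t))$, implies $cd<\Psi_{1/2}(t)$ since $\tfrac14-\tfrac{c^2t^2}4\ge0$ on $[-1/c,1/c]$, and $\Psi_\alpha\to\Psi_{1/2}$ uniformly as $\alpha\to1/2$. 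Thus, in each case, $cd<\Psi_\alpha(t)$ holds with strict inequality on the compact set $[-1/c,1/c]$ for a suitable $\alpha\in(0,1/2)$; by the uniform convergences above there is then a choice of $\bar t_1,\bar t_2$ close to $-1/c$ and $1/c$ respectively making the displayed hypothesis of Theorem \ref{6.teortrack2} hold on $[\bar t_1,\bar t_2]$, and taking the same $\bar t_1,\bar t_2$ to realise the boundary conditions with $\mu_1=1/2+\alpha$, $\mu_2=1/2-\alpha$, Theorem \ref{6.teortrack2} produces an attractor-repeller pair for \eqref{6.ectrozos}$_0$, i.e. for \eqref{6.ecucd}$_{c,d}$. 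This establishes (i), (ii) and (iii).

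For (iv), the hypothesis gives either $2d<\inf_{t\in\R}(\wit a(t)-\wit r(t))$, so that condition (iii) with $\nu=1$ holds for every $t$, or $2d<\tfrac1{2c}\inf_{t\in\R}(\wit a(t)-\wit r(t))^2$, so that condition (i) holds; in either case the relevant strict inequality survives a small uniform perturbation of $(\wit a,\wit r)$. Since Hypothesis \ref{6.hipo} means $\lb_*(0,p)<0$, for $0<\rho<-\lb_*(0,p)$ Theorem \ref{3.teorlb*} gives that $x'=-x^2+(p(t)-\rho)$ has an attractor-repeller pair $(\wit a_{-\rho},\wit r_{-\rho})$, and by Proposition \ref{3.proppersiste} (applied to the constant perturbation $f\mapsto f-\rho$) one has $\wit a_{-\rho}\to\wit a$ and $\wit r_{-\rho}\to\wit r$ uniformly as $\rho\to0^+$. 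Fix such a $\rho$ small enough that this pair exists and the pertinent strict inequality of (iv) still holds for $(\wit a_{-\rho},\wit r_{-\rho})$. For each $s\in\R$, $((\wit a_{-\rho})_s,(\wit r_{-\rho})_s)$ is the attractor-repeller pair of $x'=-x^2+(p_s(t)-\rho)$ and $\inf_t((\wit a_{-\rho})_s(t)-(\wit r_{-\rho})_s(t))=\inf_t(\wit a_{-\rho}(t)-\wit r_{-\rho}(t))$; hence, applying the already proved part (i) (resp. part (iii) with $\nu=1$) to $x'=-(x-\G_{c,d}(t))^2+(p_s(t)-\rho)$, this equation has an attractor-repeller pair, i.e. $\lb_*(\G_{c,d},p_s-\rho)<0$. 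By Theorem \ref{3.teorlb*}(v), $\lb_*(\G_{c,d},p_s)=\lb_*(\G_{c,d},p_s-\rho)-\rho<-\rho$ for all $s\in\R$, and Theorem \ref{6.teorconthull}(iii) then gives total tracking on the hull $\W_p$ for \eqref{6.ecucd}$_{c,d}$.

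The bulk of the work is the routine algebra matching conditions (i)--(iii) to the hypotheses of Theorems \ref{6.teortrack1} and \ref{6.teortrack2}; the genuinely delicate points I foresee are the elementary estimate $1-\nu^2c^2t^2\ge1-\nu$ on $[-1/c,1/c]$ (which lets the $t$-dependent coefficient of Theorem \ref{6.teortrack2} be replaced by the constant $1-\nu$), the limiting and compactness argument needed to handle the failure of $\G_{c,d}$ to be $C^2$ at $\pm1/c$ while keeping the boundary conditions through Lemma \ref{6.lemamamr}, and, in (iv), extracting a \emph{uniform} $\rho$ with $\lb_*(\G_{c,d},p_s)\le-\rho$ from the persistence of the attractor-repeller pair under the shift $p\mapsto p-\rho$.
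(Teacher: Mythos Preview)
Your proof is correct and follows essentially the same route as the paper: reduce (i) to Theorem~\ref{6.teortrack1}, reduce (ii) and (iii) to Theorem~\ref{6.teortrack2} via the function $\Psi_\alpha$, and obtain (iv) by a $\rho$-perturbation of $p$ combined with Theorem~\ref{6.teorconthull}(iii). Two minor remarks: since $\G_{c,d}$ is linear on $[-1/c,1/c]$ it is $C^2$ on that \emph{closed} interval, so the paper simply takes $\bar t_1=-1/c$, $\bar t_2=1/c$ via Lemma~\ref{6.lemamamr} without the limiting argument you set up; and in (iii) the paper chooses $\alpha\in(\nu/2,\sqrt{\nu}/2)$ to get strict inequalities in both factors, whereas your direct choice $\alpha=\nu/2$ with the estimate $1-\nu^2c^2t^2\ge 1-\nu$ works just as well and is a bit cleaner.
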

\begin{proof}
According to Lemma \ref{6.lemamamr}, we can take $t_*:=1/c$ to apply
Theorem \ref{6.teortrack1}, which proves the assertion if (i) holds.
\par
Under the assumption of (ii), there exists $\alpha\in(0,1/2)$ such that
\begin{equation}\label{6.inteor}
 \frac{(1/c-2\alpha t)(2\alpha t+1/c)}{4/c^2}\,(\wit a(t)-\wit r(t))^2+
 \frac{\alpha}{1/c}\,(\wit a(t)-\wit r(t))>cd=\G'_{c,d}(t)
\end{equation}
for all $t\in[-1/c,1/c]$. According to Lemma \ref{6.lemamamr},
we can take $t_*:=1/c$ to apply Theorem \ref{6.teortrack2}, which
proves the result.
\par
Let us check (iii). For $\nu=0$, the assertion is that of (i), and
it follows from (ii) if $\nu=1$. So that we assume that the condition
in (iii) holds for $\nu\in(0,1)$, and take $\alpha\in(\nu/2,\sqrt{\nu}/2)$.
Then, $\nu<2\alpha$ and $1-\nu<1-4\alpha^2 c^2 t^2$ for all
$t\in[-1/c,1/c]$, which ensures that
$2d<2\alpha (\wit a(t)-\wit r(t))+((1-4\alpha^2 c^2 t^2)/(2c))
(\wit a(t)-\wit r(t))^2$. In turn, this last inequality ensures
\eqref{6.inteor}, and hence we can reason as in (ii).
\par
To prove (iv), we reason as in the proof of Theorem \ref{6.teortrack1}(ii),
using (i) and (ii).
\end{proof}
\begin{prop}\label{6.proptip2}
Assume Hypothesis {\rm\ref{6.hipo}} and fix $c>0$ and $d\ge 0$. Then,
\eqref{6.ecucd}$_{c,d}$ has no bounded solutions
if $2d>(\wit a(t)-\wit r(t))+((1-c^2t^2)/2c)(\wit a(t)-\wit r(t))^2$ for all
all $t\in[-1/c,1/c]$.
\end{prop}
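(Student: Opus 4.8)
The plan is to obtain the statement as a direct application of Theorem~\ref{6.teortip1} to the transition function $\G_{c,d}$, using the interval $[\bar t_1,\bar t_2]=[-1/c,1/c]$. First I would record that $\G_{c,d}(t)=d\,\G(ct)$, with $\G$ given by \eqref{6.defpol}, is bounded, continuous and nondecreasing (since $d\ge 0$, $c>0$ and $\G$ is nondecreasing) and has finite asymptotic limits $\gamma_\pm=\pm d$; moreover $\G_{c,d}$ is affine, hence $C^2$, on $[-1/c,1/c]$, with $\G_{c,d}'(t)\equiv cd$ there. Observe also that \eqref{6.ecucd}$_{c,d}$ is precisely \eqref{6.ectrozos}$_0$ for $\G=\G_{c,d}$, so that Theorem~\ref{6.teortip1} applies in this setting under Hypothesis~\ref{6.hipo}.

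Next I would carry out the elementary computation matching the two inequalities. With $\bar t_1=-1/c$ and $\bar t_2=1/c$ one has $\bar t_2-\bar t_1=2/c$ and $\bar t_1+\bar t_2-2t=-2t$, whence
\[
 \frac{1}{4}-\frac{(\bar t_1+\bar t_2-2t)^2}{4\,(\bar t_2-\bar t_1)^2}
 =\frac{1}{4}-\frac{c^2t^2}{4}=\frac{1-c^2t^2}{4}
 \qquad\text{and}\qquad
 \frac{1}{\bar t_2-\bar t_1}=\frac{c}{2}\,.
\]
Therefore the hypothesis of Theorem~\ref{6.teortip1} for $\G=\G_{c,d}$ on $[-1/c,1/c]$ reads
\[
 \frac{1-c^2t^2}{4}\,(\wit a(t)-\wit r(t))^2+\frac{c}{2}\,(\wit a(t)-\wit r(t))<cd
 \qquad\text{for all }t\in[-1/c,1/c]\,,
\]
and multiplying by $2/c>0$ this is exactly the assumed inequality
$2d>(\wit a(t)-\wit r(t))+\big((1-c^2t^2)/(2c)\big)(\wit a(t)-\wit r(t))^2$. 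Incidentally, since $1-c^2t^2\ge 0$ for $|t|\le 1/c$ and $\wit a>\wit r$, the assumption forces $2d>\wit a(t)-\wit r(t)>0$ on that interval, so it can hold only when $d>0$, consistently with the fact that \eqref{6.ecucd}$_{c,0}$ already has an attractor-repeller pair by Hypothesis~\ref{6.hipo}.

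Finally I would invoke Theorem~\ref{6.teortip1}: it provides $h_*>0$ such that \eqref{6.ectrozos}$_h$ has no bounded solutions for every $h\in[0,h_*]$, and the case $h=0$ --- which is \eqref{6.ecucd}$_{c,d}$ --- is precisely the claim. I do not expect a genuine obstacle here; the only point deserving care is that $\G_{c,d}$ is merely piecewise linear on all of $\R$, but it is affine (hence $C^2$, with $\G_{c,d}''\equiv 0$) on the closed interval $[-1/c,1/c]$, which is all that Theorem~\ref{6.teortip1} needs, since its proof works strictly inside $[\bar t_1,\bar t_2]$ and only uses boundedness of $\G''$ there.
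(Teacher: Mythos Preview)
Your proof is correct and follows exactly the same approach as the paper: both apply Theorem~\ref{6.teortip1} with $\bar t_1=-1/c$, $\bar t_2=1/c$, noting that on this interval $\G_{c,d}$ is affine with $\G_{c,d}'(t)=cd$, and then verify that the inequality in the hypothesis of Theorem~\ref{6.teortip1} reduces (after multiplying by $2/c$) to the assumed bound $2d>(\wit a(t)-\wit r(t))+((1-c^2t^2)/2c)(\wit a(t)-\wit r(t))^2$. Your additional remarks on $d>0$ and on the $C^2$ regularity being needed only on the closed subinterval are accurate and harmless.
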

\begin{proof}
The assumed inequality ensures that, for all $t\in[-1/c,1/c]$,
\[
 \frac{(1/c-t)(t+1/c)}{4/c^2}\,(\wit a(t)-\wit r(t))^2+
 \frac{1}{1/2c}\,(\wit a(t)-\wit r(t))<cd=\G'_{c,d}(t)\,,
\]
and hence the result is an immediate consequence of Theorem \ref{6.teortip1}.
\end{proof}
\begin{figure}
    \centering
\begin{overpic}[trim={0.8cm 0.6cm 1cm 0.6cm},clip,width=0.47\textwidth]{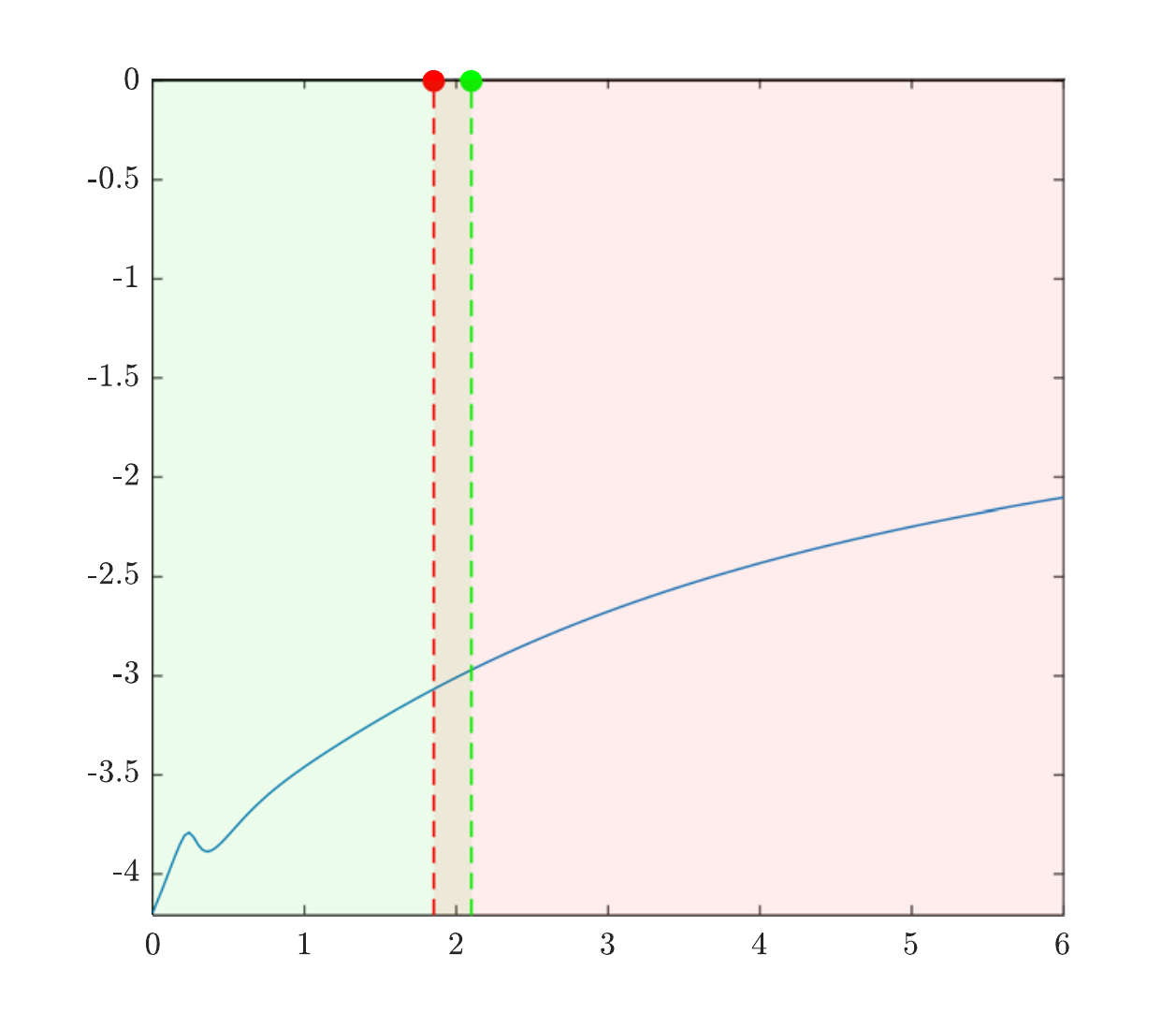}
\put(51,-3){$c$}	
\put(-4,48){$\lambda_*$}
\put(30.5,70){\color{green}$\longleftarrow$}	
\put(30.5,50){\color{green}$\longleftarrow$}
\put(30.5,30){\color{green}$\longleftarrow$}
\put(30.5,10){\color{green}$\longleftarrow$}
\put(35,80){\color{red}$\longrightarrow$}
\put(35,60){\color{red}$\longrightarrow$}
\put(35,40){\color{red}$\longrightarrow$}
\put(35,20){\color{red}$\longrightarrow$}
\end{overpic}
\begin{overpic}[trim={0.8cm 0.6cm 1cm 0.6cm},clip,width=0.47\textwidth]{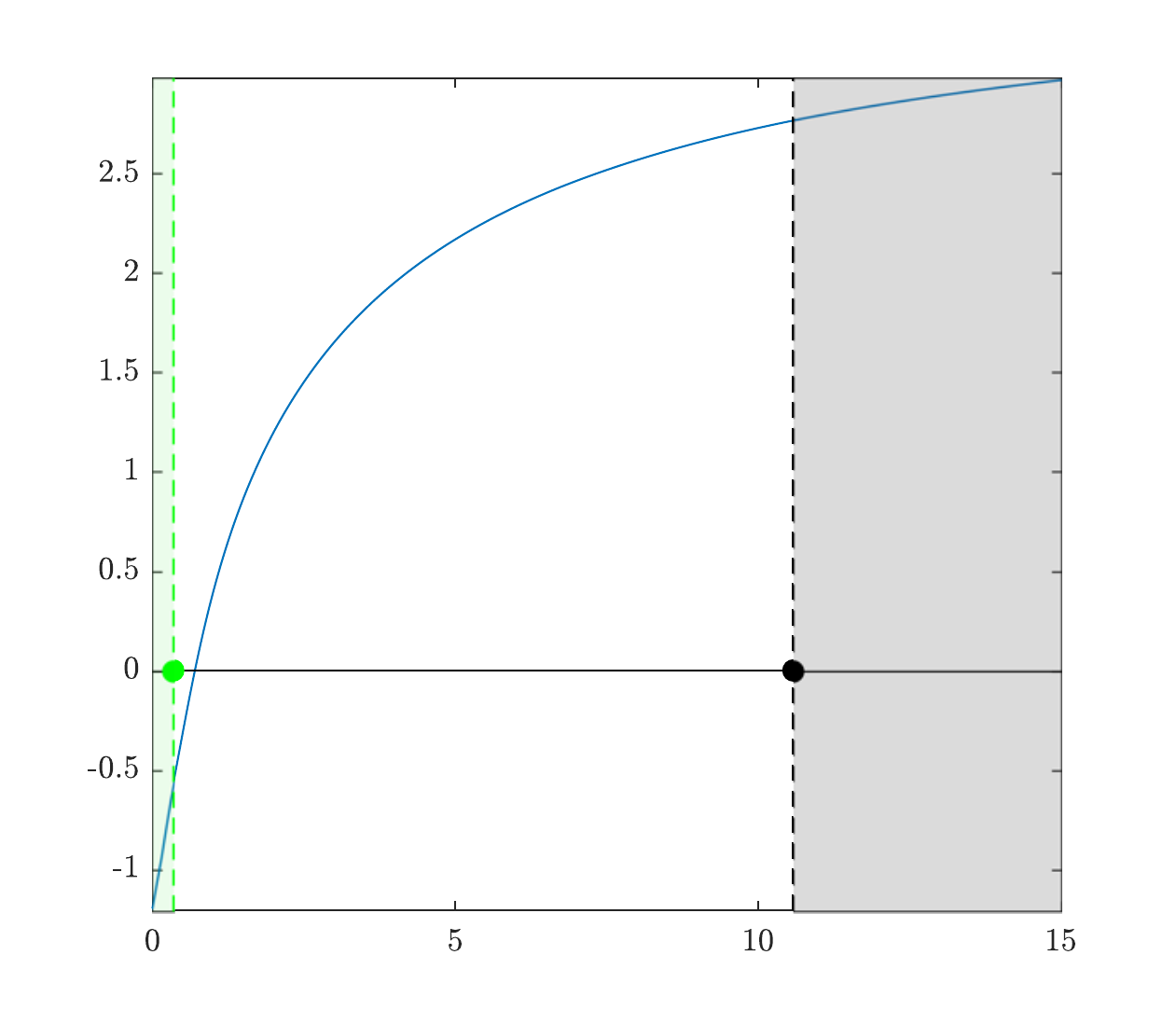}
\put(51,-3){$c$}	
\put(71,75){$\longrightarrow$}
\put(71,55){$\longrightarrow$}
\put(71,35){$\longrightarrow$}
\put(71,15){$\longrightarrow$}
\end{overpic}
\caption{The panels above exemplify the applicability of Theorem \ref{6.teortrack3}(i) and Corollary \ref{6.corofin}(i)\&(iii) on the tipping analysis of equation \eqref{6.ecucd},
with $\G_{c,1}(t)=\G(ct)$ for $\G$ defined by \eqref{6.defpol}. We plot in blue, on the left, the map $c\mapsto\lb_*(\G_{c,1},p)$, for $p(t):=0.962-\sin(t/2)-\sin(\sqrt{5}\,t)$; and, on the right, $c\mapsto\lb(\G_{c,1},p_{15})$, with $p_{15}(t)=p(t+15)$ (see also Subsection \ref{5.subsecnum}).
In both panels, the green dot and dashed green vertical lines identify the highest value of rate $c$ for which Theorem \ref{6.teortrack3}(i) is satisfied. So, the smaller values of $c$ (green area) guarantee tracking. In the left panel, the red dot and dashed red vertical line identify the value $c$ provided by Corollary \ref{6.corofin}(i). Higher values of $c$ (red area) also guarantee tracking in accordance with the result. Notably, Theorem \ref{6.teortrack3}(i) and Corollary \ref{6.corofin}(i) together show that tracking holds for every $c>0$ for this choice of $p(t)$. In the right panel, the black dot and dashed black vertical line identify the value $c$ provided by Corollary \ref{6.corofin}(iii). Higher values of $c$ (grey area) also guarantee tipping ({\sc Cases B} or {\sc C}) in accordance with the result.
}
\label{6.figSec63}
\end{figure}

\begin{coro}\label{6.corofin}
Assume Hypothesis {\rm\ref{6.hipo}}.
\begin{itemize}
\item[\rm(i)] If there exists $t_d>0$ such that $2d<\wit a(t)-\wit r(t)$
for all $t\in[-t_d,t_d]$, then \eqref{6.ecucd}$_{c,d}$ has an
attractor-repeller pair for all $c\ge 1/t_d$.
\item[\rm(ii)] If there exist $\nu\in[0,1)$, $\mu>0$ and $t_d\ge 2\mu/(1-\nu)$ such that
$2d<\nu(\wit a(t)-\wit r(t))+\mu(\wit a(t)-\wit r(t))^2$
for all $t\in[-t_d,t_d]$, then \eqref{6.ecucd}$_{c,d}$ has an
attractor-repeller pair for all $c\in [1/t_d,(1-\nu)/2\mu]$.
\item[\rm(iii)] If there exist $\mu>0$ and $t_d>0$ such that
$2d>(\wit a(t)-\wit r(t))+\mu(\wit a(t)-\wit r(t))^2$
for all $t\in[-t_d,t_d]$, then \eqref{6.ecucd}$_{c,d}$ has no
bounded solutions for all $c\ge\max(1/t_d,1/(2\mu))$.
\end{itemize}
\end{coro}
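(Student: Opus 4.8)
The plan is to read off all three assertions directly from Theorem \ref{6.teortrack3} and Proposition \ref{6.proptip2}, since those results already carry out the dynamical analysis for the polygonal transition $\G_{c,d}(t)=d\,\G(ct)$ with $\G$ as in \eqref{6.defpol} (the admissibility of the window $[-1/c,1/c]$ being guaranteed by Lemma \ref{6.lemamamr}). The only thing left to do is, for each prescribed range of the rate $c$, to exhibit the member of the relevant family of inequalities that holds. The mechanism is a simple monotonicity in $c$: increasing $c$ shrinks the symmetric interval $[-1/c,1/c]$, so that any inequality assumed on a fixed window $[-t_d,t_d]$ continues to hold on $[-1/c,1/c]$ as soon as $c\ge 1/t_d$; at the same time the coefficient $1/(2c)$ of the quadratic term $(\wit a(t)-\wit r(t))^2$ decreases. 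Recall also that $\wit a(t)-\wit r(t)>0$ for every $t$, by Hypothesis \ref{6.hipo} and Theorem \ref{3.teorhyp}.

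For (i), I would invoke Theorem \ref{6.teortrack3}(iii) with the extreme choice $\nu=1$: its hypothesis then reads $2d<\wit a(t)-\wit r(t)$ for $t\in[-1/c,1/c]$, and for $c\ge 1/t_d$ one has $[-1/c,1/c]\subseteq[-t_d,t_d]$, so the assumed inequality transfers to this smaller interval. For (ii), I would use Theorem \ref{6.teortrack3}(iii) with the given $\nu\in[0,1)$, imposing on $c$ the two conditions $c\ge 1/t_d$ (to keep $[-1/c,1/c]$ inside $[-t_d,t_d]$) and $c\le(1-\nu)/(2\mu)$ (so that $\mu\le(1-\nu)/(2c)$); then, since $(\wit a(t)-\wit r(t))^2\ge0$,
\[
 2d<\nu(\wit a(t)-\wit r(t))+\mu(\wit a(t)-\wit r(t))^2\le\nu(\wit a(t)-\wit r(t))+\frac{1-\nu}{2c}(\wit a(t)-\wit r(t))^2
\]
for all $t\in[-1/c,1/c]$, which is precisely the hypothesis of Theorem \ref{6.teortrack3}(iii). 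The interval $[1/t_d,(1-\nu)/(2\mu)]$ of admissible rates is nonempty exactly because of the standing assumption $t_d\ge 2\mu/(1-\nu)$.

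For (iii), I would feed Proposition \ref{6.proptip2}. On $[-1/c,1/c]$ we have $0\le 1-c^2t^2\le1$, hence the coefficient appearing there satisfies $0\le(1-c^2t^2)/(2c)\le 1/(2c)$; if $c\ge 1/(2\mu)$ this gives $(1-c^2t^2)/(2c)\le\mu$, and if in addition $c\ge 1/t_d$ the window inclusion holds, so that
\[
 (\wit a(t)-\wit r(t))+\frac{1-c^2t^2}{2c}(\wit a(t)-\wit r(t))^2\le(\wit a(t)-\wit r(t))+\mu(\wit a(t)-\wit r(t))^2<2d
\]
for all $t\in[-1/c,1/c]$. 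Proposition \ref{6.proptip2} then yields the absence of bounded solutions for every $c\ge\max(1/t_d,1/(2\mu))$.

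I do not expect a genuine obstacle: the whole argument is bookkeeping of inequalities on top of Theorem \ref{6.teortrack3} and Proposition \ref{6.proptip2}. The one place that needs care is to make the comparisons of the quadratic coefficients ($\mu$ against $1/(2c)$ in (i)--(ii), and against $(1-c^2t^2)/(2c)$ in (iii)) point in the direction imposed by the prescribed range of $c$, and to check that in (ii) the admissible range of rates is actually nonempty — which is exactly the role played by the hypothesis $t_d\ge 2\mu/(1-\nu)$.
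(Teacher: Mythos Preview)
Your proposal is correct and follows essentially the same route as the paper: parts (ii) and (iii) are handled identically, via Theorem \ref{6.teortrack3}(iii) and Proposition \ref{6.proptip2} respectively, with the same monotonicity bookkeeping. For (i) you cite Theorem \ref{6.teortrack3}(iii) at $\nu=1$ while the paper cites Theorem \ref{6.teortrack3}(ii), but as the proof of that theorem itself notes, these coincide.
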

\begin{proof}
Assertion (i) follows from Theorem \ref{6.teortrack3}(ii), since $1/c\le t_d$.
Assertion (ii) is a consequence of Theorem \ref{6.teortrack3}(iii), since
$1/c\le t_d$ and $\mu\le(1-\nu)/(2c)$. Assertion (iii) follows from
Proposition \ref{6.proptip2}, since $1/c\le t_d$ and $\mu\ge 1/(2c)\ge
(1-c^2t^2)/(2c)$ for all $t\in[-1/c,1/c]$.
\end{proof}
Figure \ref{6.figSec63} shows an application of
these criteria to determine the dynamical case
for some examples of equation \eqref{6.ecucd}.
\par
Let us finally consider the one-parametric family of equations
\begin{equation}\label{6.ecudinfty}
 x'=-(x-d\,\G_\infty(t))^2+p(t)\,,\quad \text{where\;\;}
 \G_\infty(t):=\begin{cases}
 -1&\text{if }t<0\,,\\
 \;\;\,0&\text{if }t= 0\,,\\
 \;\;\,1&\text{if }t> 0\,,
 \end{cases}
 \end{equation}
which, for each fixed $d\ge 0$, can be understood as the limiting equation
of \eqref{6.ecucd}$_{c,d}$ as $c\to\infty$. Let us fix $d>0$ and assume that
$2d<\wit a(0)-\wit r(0)$ (resp.~$2d>\wit a(0)-\wit r(0)$).
Reasoning as in~\cite[Theorem 4.5]{lno}, we can prove that \eqref{6.ecudinfty}$_d$ has an
attractor-repeller pair (resp. has no bounded solutions) and that, in this case,
there exists a minimum $c_d$ such that \eqref{6.ecucd} is in the same dynamical situation
for $c>c_d$. Hence, Corollary \ref{6.corofin}(i) (resp.~(iii)) shows a way to determine
an upper-bound for the value of $c_d$.
\vspace{.4cm}\par\noindent
{\bf DATA AVAILABILITY STATEMENT:}
Data sharing not applicable to this article as no datasets were generated or analysed during the current study.
\appendix
\section{Skewproduct setting and proofs of some results}\label{appendix}
Let us define $\mF$ as the quotient space given by $\{f\colon\R^2\to\R\,
\text{\;satisfying \hyperlink{f1}{\bf f1}, \hyperlink{f2}{\bf f2},
\hyperlink{f3}{\bf f3}}\}$ and the equivalence relation
which identifies $f$ and $g$ if, for $l$-a.a.~$t\in\R$,
$f(t,x)=g(t,x)$ for all $x\in\R$. By ``$f\in\mF$" we mean that
$f$ is any element of its equivalence class.
Together with the countable family of seminorms
\[
 n_{[p,q],x}(f)=\Big|\int_{p}^{q}f(t,x)\,dt\,\Big|\quad
 \text{for $p,\,q,\,x\in\Q\:$ with $\;p<q$}\,,
\]
$\mF$ is a locally convex vector space. We denote by $\sigma$ the induced
topology on $\mF$. The set $\mF$ is metrizable, with the distance defined by
\[
 d(f,g)=\sum_{i,j}^\infty\,\frac{1}{2^{i+j}}\;\min\{1,n_{[p_i,q_i],x_j}(f-g)\}\,,
\]
where $(p_i,q_i)$ is a sequence of $\Q^2$ which is dense in $\R^2$
and satisfies $p_i<q_i$ for all $i\in\N$, and $(x_j)$ is a sequence of $\Q$ which is dense in $\R$.
Note that if $f\in\mF$, then its time-translation $f_t$, given by $f_t(s,x):=f(t+s,x)$, belongs to
$\mF$ for all $t\in\R$. Note also that $(f,f_x)\in\mF\times\mF$ if
$f$ satisfies \hyperlink{f1}{\bf f1}-\hyperlink{f5}{\bf f5}. In this case, we define
\[
 \W(f,f_x):=\text{\rm{closure}}_{(\mF\times\mF,\sigma\times\sigma)}\{(f_t,(f_x)_t)\,|\;t\in\R\}\subset\mF\times\mF\,.
\]
Given $(g,G)\in\W(f,f_x)$, we represent by $(x(t,g,x_0),y(t,g,G,x_0,y_0))$
the solution of the {\em triangular\/} system
\begin{equation}\label{eq:triangEQ}\begin{cases}
 x'=g(t,x)\,, & x(0)=x_0\,,\\
 y'=G(t,x)y\,, &y(0)=y_0\,,
\end{cases}
\end{equation}
and by $\mI_{g,x_0}$ the maximal interval of definition of $x(t,g,x_0)$.
Recall that the results about existence, uniqueness and regularity
properties of the maximal solutions for Carath\'{e}odory differential equations
can be found in~\cite[Chapter 2]{cole}.
\begin{lema}\label{A.lema}
Take $f,g\in\mF$ and assume that $f_{t_n}\xrightarrow{\sigma}g$ for a sequence $(t_n)$ in $\R$. Then,
$\lim_{n\to\infty}\int_p^q (f_{t_n}(t,x)-g(t,x))\,dt=0$
for all $p,q,x\in\R$ with $p<q$.
\end{lema}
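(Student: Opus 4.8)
The plan is to combine the definition of the topology $\sigma$ on $\mF$ --- which by construction only guarantees convergence of the seminorms $n_{[p,q],x}$ for \emph{rational} endpoints $p<q$ and \emph{rational} evaluation points $x$ --- with a density argument that upgrades this to arbitrary real $p<q$ and $x\in\R$. The observation that makes the argument go through is that the $m$-bounds and $l$-bounds of conditions \hyperlink{f2}{\bf f2} and \hyperlink{f3}{\bf f3} are invariant under time translation: if $m^f_j$ and $l^f_j$ are the constants for $f$ on the band $[-j,j]$, then $|f_{t_n}(t,x)|\le m^f_j$ and $|f_{t_n}(t,x_1)-f_{t_n}(t,x_2)|\le l^f_j\,|x_1-x_2|$ hold $l$-a.a.~$t$ for all $x,x_1,x_2\in[-j,j]$, with the \emph{same} constants for every $n$; and $g\in\mF$ has its own constants $m^g_j,l^g_j$.

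First I would fix $p<q$ in $\R$, $x\in\R$ and $\ep>0$, choose $j\in\N$ with $|x|<j$, and pick rationals $p'<q'$ and $x'\in(-j,j)$ with $|p-p'|\le\delta$, $|q-q'|\le\delta$ and $|x-x'|\le\eta$ for parameters $\delta,\eta>0$ still to be chosen. For any $h\in\mF$ with constants $m^h_j,l^h_j$, the $m$-bound gives $\bigl|\int_p^q h(t,x)\,dt-\int_{p'}^{q'}h(t,x)\,dt\bigr|\le 2\delta\,m^h_j$, since the two intervals differ by a set of measure at most $2\delta$; and the $l$-bound gives $\bigl|\int_{p'}^{q'}\bigl(h(t,x)-h(t,x')\bigr)\,dt\bigr|\le (q-p+2\delta)\,l^h_j\,\eta$, since $x,x'\in[-j,j]$. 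Applying these bounds to $h=f_{t_n}$ (with constants independent of $n$) and to $h=g$, and choosing first $\delta$ and then $\eta$ small enough, I can arrange that $\bigl|\int_p^q f_{t_n}(t,x)\,dt-\int_{p'}^{q'}f_{t_n}(t,x')\,dt\bigr|<\ep/3$ for every $n$, and likewise $\bigl|\int_p^q g(t,x)\,dt-\int_{p'}^{q'}g(t,x')\,dt\bigr|<\ep/3$.

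It then only remains to invoke the hypothesis: since $p',q',x'$ are rational with $p'<q'$, the convergence $f_{t_n}\xrightarrow{\sigma}g$ forces $n_{[p',q'],x'}(f_{t_n}-g)=\bigl|\int_{p'}^{q'}\bigl(f_{t_n}(t,x')-g(t,x')\bigr)\,dt\bigr|\to 0$, so there is $N$ beyond which this quantity is $<\ep/3$; the triangle inequality then yields $\bigl|\int_p^q\bigl(f_{t_n}(t,x)-g(t,x)\bigr)\,dt\bigr|<\ep$ for all $n\ge N$, which is the assertion. I do not expect any genuine difficulty here --- the proof is a routine $\ep/3$ approximation --- and the only point requiring care is that every error term involving $f_{t_n}$ must be estimated \emph{uniformly in $n$}, which is precisely what the translation invariance of the $m$- and $l$-bounds supplies.
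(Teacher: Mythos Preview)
Your proof is correct and follows essentially the same approach as the paper's: both use the $l$-bound to pass from rational to real evaluation points $x$ and the $m$-bound to pass from rational to real endpoints $p,q$, with the crucial observation that the bounds for the translates $f_{t_n}$ are uniform in $n$. The only cosmetic difference is that the paper does the two approximations sequentially (first $x$, then the endpoints) while you do them simultaneously in a single $\ep/3$ argument; also, the paper quotes a result to show that $g$ inherits the \emph{same} constants $m^f_j,l^f_j$ as $f$, whereas you simply use that $g\in\mF$ has its own constants, which suffices.
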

\begin{proof}
By reviewing the proof of \cite[Proposition 2.26]{thesis:iacopo},
we see that the positive constants $m_j^f$ and $l_j^f$ provided by conditions
\hyperlink{f2}{\bf f2} and \hyperlink{f3}{\bf f3} for $f$ are also $m$-bounds and $l$-bounds
for all the maps $f_{t_n}$ and $g$.
We proceed in two steps. First, we take for $p<q$ in $\Q$, $x\in\R$ with
$|x|<j\in\N$, and $y\in[-j,j]\cap\Q$. Then
\[
 \left|\int_p^q \big(f_{t_n}(t,x)-g(t,x)\big)\,dt\,\right|\le
 \left|\int_p^q \big(f_{t_n}(t,y)-g(t,y)\big)\,dt\,\right|+ 2\,l^f_j(q-p)\,|y-x|\,,
\]
which combined with the convergence $f_{t_n}\xrightarrow{\sigma}g$ and the density of $\Q$ in $\R$ proves the assertion.
Now we take $p<q$ in $\R$ and $\bar p<\bar q$ in $\Q$. Then, if $|x|\le j\in\N$,
\[
 \left|\int_p^q \big(f_{t_n}(t,x)-g(t,x)\big)\,dt\,\right|\le
 \left|\int_{\bar p}^{\bar q}\big(f_{t_n}(t,x)-g(t,x)\big)\,dt\,\right| +2\,m_j^f(|\bar p-p|+|\bar q-q|)\,,
\]
and we use the previously proved property and the density of $\Q$ in $\R$.
\end{proof}
\begin{teor} \label{A.teorcont}
Let $f\colon\R^2\to\R$ satisfy \hyperlink{f1}{\bf f1}-\hyperlink{f5}{\bf f5}. Then,
\begin{itemize}
\item[\rm(i)] $\W(f,f_x)$ is a compact metric space.
\item[\rm(ii)] If $g\in\mF$ and $f_{t_n}\xrightarrow{\sigma}g$ for a sequence $(t_n)$ in $\R$, then $g$ satisfies
\hyperlink{f1}{\bf f1}-\hyperlink{f5}{\bf f5}, $(g,g_x)\in\W(f,f_x)$ and
$G=g_x$ in $\mF$ for all $(g,G)\in \W(f,f_x)$.
\item[\rm(iii)] The set
\[
 \qquad\mU=\bigcup_{(g,g_x)\in\W(f,f_x),\,x_0\in\R}\big\{ (t,g,g_x,x_0,y_0)\mid t\in \mI_{g,x_0},\,y_0\in\R\big\}
\]
is open on $\R\times\W(f,f_x)\times\R\times\R$, and the map
\[
\begin{array}{rccc}
 \qquad \Psi\colon& \mU&\to&\W(f,f_x)\times\R\times\R\\
 &(t,g,g_x, x_0, y_0)&\mapsto& \big(g_t,(g_x)_t, x(t,g,x_0), y(t,g,g_x,x_0,y_0)\big)
\end{array}
\]
defines a local continuous flow on $\W(f,f_x)\times\R\times\R$,
with $y(t,g,g_x,x_0,1)=(\partial/\partial x_0)\,x(t,g,x_0)$.
\item[\rm(iv)] If, in addition, $f$ satisfies \hyperlink{f6}{\bf f6} and \hyperlink{f7}{\bf f7}, and
$f_{t_n}\xrightarrow{\sigma}g\in\mF$ for a sequence $(t_n)$ in $\R$, then also $g$
satisfies \hyperlink{f6}{\bf f6} and \hyperlink{f7}{\bf f7}.
In particular, $\Psi_1(t,g,x_0):=(g_t,x(t,g,x_0))$ defines a strictly concave local flow
on $\W(f)\times\R$, where $\W(f):=\{g\,|\;(g,g_x)\in\W(f,f_x)\}\subset\mF$.
\end{itemize}
\end{teor}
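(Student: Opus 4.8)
The overall plan is to run the standard compactification machinery for Carath\'{e}odory equations, whose backbone is the observation that the $m$-bounds $m^f_j$ and $l$-bounds $l^f_j$ of $f$ --- and, through \hyperlink{f5}{\bf f5}, those of $f_x$ --- are simultaneously valid for \emph{every} time translate, hence (via Lemma \ref{A.lema} and Lebesgue's differentiation theorem) for every $\sigma$-limit of translates. For part (i), since $(\mF\times\mF,\sigma\times\sigma)$ is metrizable it suffices to prove sequential compactness of $\{(f_t,(f_x)_t)\mid t\in\R\}$. Given a sequence of translates, I would consider the equi-Lipschitz, locally equibounded primitives $t\mapsto\int_0^t f_{s_k}(\tau,x)\,d\tau$ for $x\in\Q$, extract by Arzel\'{a}--Ascoli and a diagonal argument a subsequence converging locally uniformly for each rational $x$, differentiate to obtain a candidate $g(\cdot,x)$ on $\Q$, extend it to all $x\in\R$ using the uniform $l$-bound, and verify that the resulting $g$ lies in $\mF$ and is the $\sigma$-limit of the chosen subsequence; the same (once more refined) subsequence handles $(f_x)_{s_k}$. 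This is the argument of \cite[Chapter 2]{thesis:iacopo}, which I would invoke.

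For part (ii), (i) produces a subsequence $(f_{t_{n_k}},(f_x)_{t_{n_k}})\to(g,G)\in\W(f,f_x)$, whose first component is the prescribed $g$ because $f_{t_n}\xrightarrow{\sigma}g$. Integrating the identity $f(\tau,x+s)-f(\tau,x)=\int_x^{x+s}f_x(\tau,u)\,du$ over a rational interval $[p,q]$, applying Fubini, and letting $k\to\infty$ with Lemma \ref{A.lema} and dominated convergence (the integrals $u\mapsto\int_p^q(f_x)_{t_{n_k}}(\tau,u)\,d\tau$ being uniformly bounded on compacta) yields $\int_p^q\big(g(\tau,x+s)-g(\tau,x)\big)d\tau=\int_x^{x+s}\big(\int_p^q G(\tau,u)\,d\tau\big)du$ for all rational $p<q$ and all $x,s$, hence for all reals by continuity. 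Lebesgue's differentiation theorem then gives, for a.a.\ $t$, $g(t,x)=g(t,x_*)+\int_{x_*}^{x}G(t,u)\,du$ for a countable dense set of anchor points $x_*$; extending in $x$ by the Lipschitz continuity that $g$ and $G$ inherit from the $l$-bounds produces this identity for all $x$ and a.a.\ $t$, so $g(t,\cdot)\in C^1$ with $g_x=G$. This proves \hyperlink{f4}{\bf f4}--\hyperlink{f5}{\bf f5} for $g$, shows $(g,g_x)\in\W(f,f_x)$, and (being the same computation applied to an arbitrary limiting pair) gives $G=g_x$ for every $(g,G)\in\W(f,f_x)$.

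For part (iii), the cocycle identities $\Psi(0,\cdot)=\mathrm{id}$ and $\Psi(t+s,\cdot)=\Psi(t,\Psi(s,\cdot))$ are immediate from uniqueness of maximal solutions and the group structure of the time shift, using $G=g_x$ so that the $y$-equation is the genuine variational equation $y'=g_x(t,x(t,g,x_0))\,y$. The substance is joint continuity together with openness of $\mU$. Fixing $(t_0,g_0,(g_0)_x,\xi_0,\eta_0)\in\mU$ with, say, $t_0>0$, taking data $(t_k,g_k,(g_k)_x,\xi_k,\eta_k)$ converging to it, writing $\bar x:=x(\cdot,g_0,\xi_0)$ and $x_k:=x(\cdot,g_k,\xi_k)$ via $x(t)=\xi_k+\int_0^t g_k(s,x(s))\,ds$, I would split $\int_0^t\big(g_k(s,x_k(s))-g_0(s,\bar x(s))\big)ds$ into a term bounded by $l^f_j\int_0^t|x_k-\bar x|\,ds$ and the term $\int_0^t\big(g_k(s,\bar x(s))-g_0(s,\bar x(s))\big)ds$, which tends to $0$ after approximating $\bar x(\cdot)$ uniformly by step functions --- the approximation error again absorbed by $l^f_j$ --- and applying Lemma \ref{A.lema} on each constant piece; Gronwall's inequality then forces $x_k\to\bar x$ uniformly on $[0,t_0]$, and a maximality/continuation argument shows the $x_k$ are defined on all of $[0,t_0]$ for large $k$, which is precisely the openness of $\mU$ (the $\eta$-variable being free, since the $y$-equation is linear). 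The same estimates on that \emph{linear} $y$-equation, using the $l$-bounds of $(f_x)$ granted by \hyperlink{f5}{\bf f5}, give convergence of the $y$-components, and $t_k\to t_0$ is handled by the uniform Lipschitz bound on the solutions. Finally $y(t,g,g_x,x_0,1)=(\partial/\partial x_0)\,x(t,g,x_0)$ is the classical variational identity: by \hyperlink{f4}{\bf f4}--\hyperlink{f5}{\bf f5} and $G=g_x$, the difference quotients $\ep^{-1}\big(x(t,g,x_0+\ep)-x(t,g,x_0)\big)$ solve linear Carath\'{e}odory equations whose (bounded, measurable) coefficients converge to $g_x(t,x(t,g,x_0))$, so the quotients converge to the variational solution; I would reproduce this short estimate or cite \cite[Chapter 2]{cole}. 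I expect this continuous-dependence step to be the main obstacle, the delicate point being the step-function approximation that makes the \emph{weak} ($\sigma$) convergence of the coefficients usable along the moving curves $s\mapsto x_k(s)$.

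For part (iv), I would first rephrase the hypotheses: \hyperlink{f6}{\bf f6} as ``for every $\delta'<\delta$ there are $M>0$ and a full-measure set with $f(t,x)\le-\delta'x^2$ whenever $|x|\ge M$'', and \hyperlink{f7}{\bf f7} as ``for each $j\in\N$, $f_x(t,x_2)-f_x(t,x_1)\le-\delta_j(x_2-x_1)$ for a.a.\ $t$ and all $x_1\le x_2$ in $[-j,j]$''. Since the relevant full-measure sets are invariant under time shift up to null sets, for a countable dense family of admissible values $x$ (resp.\ $x_1<x_2$) these inequalities, integrated over any rational interval, pass to the $\sigma$-limit by Lemma \ref{A.lema}; Lebesgue differentiation followed by the $x$-Lipschitz continuity of $g$ and $g_x$ upgrades them to hold for all $x$ (resp.\ all $x_1\le x_2$) and a.a.\ $t$, so $g$ satisfies \hyperlink{f6}{\bf f6}--\hyperlink{f7}{\bf f7} (shrinking $\delta_j$ to $\delta_j/2$ to recover the strict inequality). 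Hence every $g\in\W(f)$ fulfils \hyperlink{f1}{\bf f1}--\hyperlink{f7}{\bf f7}, and the strict concavity of $x_0\mapsto x(t,g,x_0)$ for $t>0$ follows exactly as recalled after \hyperlink{f7}{\bf f7} in Section \ref{3.sec} (i.e.\ as in \cite[Proposition 2.3]{lno}); combined with the continuity from (iii), this means $\Psi_1$ is a strictly concave local flow on $\W(f)\times\R$, which completes the proof.
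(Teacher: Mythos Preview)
Your proposal is correct and follows essentially the same route as the paper. The paper's proof of (i) invokes the uniform $m$- and $l$-bounds together with Artstein's compactness theorem \cite[Theorem 4.1]{paper:ZA1}, which is the Arzel\'{a}--Ascoli-on-primitives argument you sketch; for (ii) the paper uses the same fundamental-theorem-of-calculus identity (written in the parametrized form $F_{\bar x,2}(t,y)=(y-\bar x)\int_0^1 f_x(t,sy+(1-s)\bar x)\,ds$), Fubini, dominated convergence and Lemma \ref{A.lema} to identify $G=g_x$, just as you do; and (iv) is handled via the general monotonicity-under-$\sigma$-limits claim you describe. The only notable difference is (iii): the paper obtains openness of $\mU$ and joint continuity of $\Psi$ by citing \cite[Theorem 3.9(ii)]{paper:LNO2} (after noting that the present topology coincides with the one used there) and the variational identity from \cite[Theorem 2.3.1]{brpi}, whereas you expand the Gronwall/step-function argument that underlies those citations---so you are writing out what the paper imports.
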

\begin{proof}
(i) As said in the proof on Lemma \ref{A.lema},
the pairs of positive constants $(m_j^f,m_j^{f_x})$ and $(l_j^f,l_j^{f_x})$ provided by conditions
\hyperlink{f2}{\bf f2} and \hyperlink{f3}{\bf f3} for $(f,f_x)$ are $m$-bounds and $l$-bounds
for any $(g,G)\in\W(f,f_x)$. This fact and \cite[Theorem 4.1]{paper:ZA1} ensure
the compactness of $\W(f,f_x)$.
\smallskip\par
(ii) The compactness of $\W(f,f_x)$ ensures that, if $f_{t_n}\xrightarrow{\sigma}g\in\mF$,
then there exists a subsequence $(t_m)$ of $(t_n)$ and $G\in\mF$ such that
$(f_x)_{t_m}\xrightarrow{\sigma}G$. Therefore, to prove (ii), we must check that,
for $l$-a.a.~$t\in\R$, there exists $g_x(t,x)=G(t,x)$ for all $x\in\R$.
\par
We fix $\bar x\in\Q$. It is not hard to check that the four functions
\[
\begin{split}
 F_{\bar x,1}(t,y)&:=f(t,y)-f(t,\bar x)\,,\quad F_{\bar x,2}(t,y):=(y-\bar x)\int _0^1f_x\big(t,sy+(1-s)\bar x\big)\, ds\,,\\
 G_{\bar x,1}(t,y)&:=g(t,y)-g(t,\bar x)\,,\quad G_{\bar x,2}(t,y):=(y-\bar x)\int _0^1 G\big(t,sy+(1-s)\bar x\big)\, ds\\
\end{split}
\]
belong to $\mF$, being their $m$-bounds and $l$-bounds determined by $|\bar x|$ and those of $f$ and $f_x$.
It follows easily from Lemma \ref{A.lema} that $(F_{\bar x,1})_{t_m}\xrightarrow{\sigma}G_{\bar x,1}$.
Let us check that $(F_{\bar x,2})_{t_m}\xrightarrow{\sigma}G_{\bar x,2}$.
We fix $p,q$ in $\Q$ with $p<q$ and $y\in[-j,j]$ for a $j\in\N$. By Fubini's Theorem,
\[
\begin{split}
&I_{\bar x,m}(p,q,y):=\left|\int_p^q(F_{\bar x,2})_{t_m}(t,y)\,dt-G_{\bar x,2}(t,y)\,dt\,\right|\\
&\qquad=|y-\bar x|\left|\int_0^1\int_p^q \big(f_x\big(t+t_m,sy+(1-s)\bar x\big)-G\big(t,sy+(1-s)\bar x\big)\big)\,dt\,ds\,\right| \\
&\qquad\le |y-\bar x|\int_0^1\left|\int_p^q\big(f_x\big(t+t_m,sy+(1-s)\bar x\big)-G\big(t,sy+(1-s)\bar x\big)\big)\,dt\,\right|ds.
\end{split}
\]
Lemma \ref{A.lema} shows that the inner integral converges to zero as $n\to\infty$
for every fixed $s\in[0,1]$.
Moreover, if $k\ge |\bar x|+j$ belongs to $\N$, then
\[
 \left|\int_p^q\big(f_x\big(t+t_m,sy+(1-s)\bar x\big)-G\big(t,sy+(1-s)\bar x\big)\big)\,dt\,\right|\le\,2\,(q-p)\,m^{f_x}_k.
\]
Lebesgue's Dominated Convergence Theorem implies that $I_{\bar x,m}(p,q,y)\to0$ as $m\to\infty$, which proves our assertion.
\par
Since $(f_x)_{t_m}=(f_{t_m})_x$, Barrow's Rule yields $(F_{\bar x,1})_{t_m}=(F_{\bar x,2})_{t_m}$ for all $n\in\N$.
Taking limit as $n\to\infty$ in $(\mF,\sigma)$, we deduce that, for $t$ in a full Lebesgue measure subset
$\R_{\bar x}\subseteq\R$ (i.e., with $l(\R-\R_{\bar x})=0$), $G_{\bar x,1}(t,y)=G_{\bar x,2}(t,y)$ for all $y\in\R$.
Since the map $(x,y)\mapsto G_{x,1}(t,y)-G_{x,2}(t,y)$ is continuous at $l$-a.a.~$t\in\R$ and
$\bigcap_{\bar x\in\Q}\R_{\bar x}$ has full measure, there exists $\R_0\subset\R$
with full measure such that, if $t\in\R_0$, then
$G_{x,1}(t,y)=G_{x,2}(t,y)$ for all $x,y\in\R$.
Taking $(t,x)\in\R_0\times\R$ and $y=x+h$ yields
\[
 \frac{1}{h}\:(g(t,x+h)-g(t,x))=\int _0^1G\big(t,x+sh\big)\, ds\,,
\]
and hence, taking limit as $h\to 0$, we get $g_x(t,x)=G(t,x)$. This proves (ii).
\smallskip\par
(iii) The initial properties follow by \cite[Theorem 3.9(ii)]{paper:LNO2}, since condition \hyperlink{f3}{\bf f3}
for $f$ and $f_x$ ensure that our topology $\sigma$ coincides with the topology there used
(see \cite[Theorem 2.20(ii)]{paper:LNO3}). The last one is standard:
see e.g.~\cite[Theorem 2.3.1]{brpi}.
\smallskip\par
(iv) Let us check that, if the sequences $(f_n)_{n\in\N}$ and $(h_n)_{n\in\N}$ of $\mF$ converge
(with respect to $\sigma$) to the functions $f$ and $h$ of $\mF$ and,
for $l$-a.a.~$t\in\R$, $f_n(t,x)< h_n(t,x)$ for all $x\in\R$ and any $n\in\N$,
then, for $l$-a.a.~$t\in\R$, $f(t,x)\le h(t,x)$ for all $x\in\R$.
\par
Since $\int_p^q f_n(s,x)\,ds<\int_p^q h_n(s,x)\,ds$ for $p,q,x\in\Q$ with $p<q$, taking limit
as $n\to\infty$ yields $\int_p^q f(s,x)\,ds\le \int_p^q h(s,x)\,ds$. As in the proof on Lemma
\ref{A.lema}, we use the existence of constant (although not common) $m$-bounds and $l$-bounds
for $f$ and $h$ to check that the last inequality holds for any $p,q\in\R$ and $x\in\Q$.
This ensures that,
for any fixed $\bar x\in\Q$, $(1/l)\int_t^{t+l}(h(s,\bar x)-f(s,\bar x))\,ds\ge 0$ for all $t,l\in\R$,
and Lebesgue's Differentiation Theorem ensures that $h(t,\bar x)-f(t,\bar x)\ge 0$
for $l$-a.a.~$t\in\R$. The density of $\Q$ and the strong Carath\'{e}odory character of the map
$h-f$ provide a subset $\R_1\subseteq\R$ with full Lebesgue measure such that, for all $t\in\R_1$,
$h(t,x)-f(t,x)\ge 0$ for all $x\in\R$.
This proves the claim.
\par
Assume now that $f$ satisfies \hyperlink{f6}{\bf f6} and that
$f_{t_n}\xrightarrow{\sigma}g\in\mF$ for a sequence $(t_n)$ in $\R$.
Then, for every $n\in\N$, there exists
$r_n>0$ such that, if $|x|>r_n$, then $f(t,x)<(-\delta+1/n)x^2$ for all $t\in\mR$.
The previously proved property yields $g(t,x)\le -\delta x^2$ if $|x|>r_n$ and $t\in\mR$,
which in turn ensures that $g$ satisfies \hyperlink{f6}{\bf f6}.
\par
Finally, if $f$ satisfies \hyperlink{f7}{\bf f7} and
$f_{t_n}\xrightarrow{\sigma}g\in\mF$, we have
$(f_x)_{t_m}\xrightarrow{\sigma}g_x\in\mF$ for a suitable subsequence
$(t_m)$, by (ii). The same argument as above ensures that $g_x$ inherits the
property required to $f_x$.
\par
The last assertion is standard. See e.g.~\cite[Proposition 2.3]{nuos5}.
\end{proof}
Theorem~\ref{A.teorcont} provides the suitable dynamical framework to
to prove Theorems \ref{3.teorhyp} and \ref{3.teorlb*}, which we do by
adapting to our present setting the proofs of \cite[Theorem 3.5 and 3.6]{lnor}.
A sketch of these proofs, insisting in those steps which are different,
complete this appendix and the paper.
\smallskip\par\noindent
\noindent{\em Proofs of Theorems {\rm \ref{3.teorhyp}} and {\rm \ref{3.teorlb*}}}.
Since $f$ satisfies \hyperlink{f1}{\bf f1}-\hyperlink{f7}{\bf f7},
Theorem \ref{A.teorcont} ensures that
$\W(f):=\{g\,|\;(g,g_x)\in\W(f,f_x)\}$ is compact and the map
$\Psi_1(t,g,x_0):=(g_t,x(t,g,x_0))$ defines
a local flow on $\W(f)\times\R$ which is continuous, $C^1$ with respect to the state variable,
strictly concave, and given by a family of equations with coercive
coefficients. The arguments used in the (very long and technical)
proof of~\cite[Theorem 3.5]{lnor} can be adapted to our current setting in order
to prove (a)$\Rightarrow$(b), as well as points (i) and (ii) of Theorem \ref{3.teorhyp}.
One of these arguments applies a ``first approximation theorem"~to
a scalar equation of the type $z'=f_x(t,\wit b(t))\,z+h(t,z)$, where
$\wit b(t)$ is a hyperbolic solution of $x'=f(t,x)$ and
$h(t,z):=f(t,z+\wit b(t))-f(t,\wit b(t))-f_x(t,\wit b(t))\,z$.
Since $h(t,0)=0$, the existence of $L^\infty$ $l$-bounds for
$f_x$ ensures that, given $\ep>0$, there exists $\rho_\ep>0$
such that $|h(t,z)|\le \ep|z|$ $l$-a.a.~if $|z|\le\rho_\ep$.
This condition suffices to repeat the proof of
Theorem III.2.4 in~\cite{hale}, which is the required first
approximation result.
\par
The assertion (b)$\Rightarrow$(c) of Theorem \ref{3.teorhyp} is trivial, and
(c)$\Rightarrow$(a) can be deduced, for instance, of Theorem \ref{3.teorlb*}(iv),
whose proof (below explained) is independent of Theorem \ref{3.teorhyp}.
\par
The proof of Theorem \ref{3.teorlb*} adapts to the current setting that
of~\cite[Theorem 3.6]{lnor}. The arguments to check (i), (ii), (iii)
and (v) are completely analogous. (In particular, the assertion
(a)$\Rightarrow$(b) of Theorem \ref{3.teorhyp}
is used in the proof of (iii).) The unique significative difference is in
point (iv), so that we will describe it in detail.
\par
Let us assume for contradiction that
$\inf_{t\in\R}(a_\lb(t)-r_\lb(t))=0$ for a $\lb>\lb^*$, which we fix.
It follows from point (ii) (of Theorem \ref{3.teorlb*}) that
$r_\lb(t)<x_\lb(t,s,a_{\lb^*}(s))<a_\lb(t)$ for any $s,t\in\R$, and
from a standard comparison result that
$d_s(t):=x_\lb(t,s,a_{\lb^*}(s))-a_{\lb^*}(t)\ge 0$ for any
$s\in\R$ and $t\ge s$. Note that there exists $\kappa_1>0$
such that $|d_s(t)|\le\kappa_1$ for any $s\in\R$ and $t\ge s$.
In addition, for all $s\in\R$ and $l$-a.a.~$t\in\R$,
\[
 d_s'(t)=f_x(t,a_{\lb_*}(t)+\xi_{s,t})\,d_s(t)+\lb-\lb_*
\]
for a point $\xi_{s,t}\in[0,d_s(t)]$. We use condition \hyperlink{f2}{\bf f2}
for $f_x$ (ensured by \hyperlink{f3}{\bf f3} for $f$)
to find a constant $\kappa>0$ such that $|f_x(t,a_{\lb_*}(t)+\xi_{s,t})|
\le\kappa$ for all $s\in\R$ and $l$-a.a.~$t\in\R$.
Then,
\[
 d'_s(t)\ge-\kappa\,d_s(t)+\lb-\lb^*
 \quad\text{for all $s\in\R$ and $l$-a.a.~$t\in\R$}\,,
\]
and hence
\[
 d_s(t)\ge \frac{\lb-\lb^*}{\kappa}\:\big( 1-e^{-\kappa(t-s)}\big)
 \quad \text{for all $s\in\R$ and $t\ge s$}\,.
\]
In particular, there exists $l>0$ such that
$d_s(t)\ge (\lb-\lb^*)/2\kappa=:\wit \kappa$
whenever $s\in\R$ and $t\ge s+l$.
Now we take $t_0\in\R$ such that
$a_\lb(t_0)-r_\lb(t_0)<\wit\kappa$. But then, by (ii),
\[
 \wit\kappa>x_\lb(t_0,t_0-l,a_\lb(t_0-l))-r_\lb(t)>
 x_\lb(t_0,t_0-l,a_{\lb^*}(t_0-l))-a_{\lb^*}(t)=d_{t_0-l}(t_0)\,,
\]
which provides the required contradiction.
\qed

\end{document}